 \theoremstyle{plain}
\newtheorem{theorem}{Theorem}[section]
\newtheorem{remark}[theorem]{Remark}
\newtheorem{lemma}[theorem]{Lemma}
\newtheorem{corollary}[theorem]{Corollary}
\newtheorem{proposition}[theorem]{Proposition}
\newtheorem{observation}[theorem]{Observation}
\newtheorem{fact}[theorem]{Fact}	
\theoremstyle{definition}
\newtheorem{assumption}[theorem]{Assumption}
\newtheorem{definition}[theorem]{Definition}
\newtheorem{notation}[theorem]{Notation}
\newtheorem{example}[theorem]{Example}
\numberwithin{equation}{section}
 \newcommand{\ca}{\mbox{$C\sp*$-}al\-ge\-bra\xspace}
\newcommand{\cas}{\mbox{$C\sp*$-}al\-ge\-bras\xspace}
\newcommand{\starhomo}{\mbox{$\sp*$-}ho\-mo\-morphism\xspace}
\newcommand{\starhomos}{\mbox{$\sp*$-}ho\-mo\-morphisms\xspace}
\newcommand{\stariso}{\mbox{$\sp*$-}iso\-morphism\xspace}
\newcommand{\starisos}{\mbox{$\sp*$-}iso\-morphisms\xspace}
\newcommand{\fct}[3]{\ensuremath{#1\colon #2\rightarrow #3}\xspace}
\newcommand{\fctw}[3]{$#1$ from $#2$ to $#3$\xspace}
\newcommand{\Z}{\ensuremath{\mathbb{Z}}\xspace}
\newcommand{\N}{\ensuremath{\mathbb{N}}\xspace}
\newcommand{\C}{\ensuremath{\mathbb{C}}\xspace}
\newcommand{\K}{\ensuremath{\mathbb{K}}\xspace}
\newcommand{\ie}{\emph{i.e.}\xspace}
\newcommand{\cf}{\emph{cf.}\xspace}
\newcommand{\eg}{\emph{e.g.}\xspace}
\newcommand{\id}{\ensuremath{\operatorname{id}}\xspace}
\newcommand{\cok}{\operatorname{cok}}
\newcommand{\FKRplus}{\ensuremath{\operatorname{FK}^+_\mathcal{R}}\xspace}
\newcommand{\FKR}{\ensuremath{\operatorname{FK}_\mathcal{R}}\xspace}
\newcommand{\A}{\ensuremath{\mathfrak{A}}\xspace}
\newcommand{\B}{\ensuremath{\mathfrak{B}}\xspace}
\newcommand{\Asf}{\mathsf{A}}
\newcommand{\Bsf}{\mathsf{B}}
\newcommand{\Csf}{\mathsf{C}}
\newcommand{\Esf}{\mathsf{E}}
\newcommand{\Prim}{\operatorname{Prim}}
\newcommand{\Prime}{\operatorname{Prime}}
\newcommand{\calP}{\ensuremath{\mathcal{P}}\xspace}
\newcommand{\GLZ}[1][n]{\ensuremath{\operatorname{GL}(#1,\Z)}\xspace}
\newcommand{\GL}{\ensuremath{\operatorname{GL}}\xspace}
\newcommand{\SL}{\ensuremath{\operatorname{SL}}\xspace}
\newcommand{\GLPZ}[1][\mathbf{n}]{\ensuremath{\operatorname{GL}_\calP(#1,\Z)}\xspace}
\newcommand{\GLP}{\ensuremath{\operatorname{GL}_\calP}\xspace}
\newcommand{\SLPZ}[1][\mathbf{n}]{\ensuremath{\operatorname{SL}_\calP(#1,\Z)}\xspace}
\newcommand{\SLP}{\ensuremath{\operatorname{SL}_\calP}\xspace}
\newcommand{\MZ}[1][\mathbf{m}\times\mathbf{n}]{\ensuremath{\mathfrak{M}(#1,\Z)}\xspace}
\newcommand{\MPZ}[1][\mathbf{m}\times\mathbf{n}]{\ensuremath{\mathfrak{M}_\calP(#1,\Z)}\xspace}
\newcommand{\MPZc}[1][\mathbf{m}\times\mathbf{n}]{\ensuremath{\mathfrak{M}^\circ_\calP(#1,\Z)}\xspace}
\newcommand{\MPZcc}[1][\mathbf{m}\times\mathbf{n}]{\ensuremath{\mathfrak{M}^{\circ\circ}_\calP(#1,\Z)}\xspace}
\newcommand{\MPZccc}[1][\mathbf{m}\times\mathbf{n}]{\ensuremath{\mathfrak{M}^{\circ\circ\circ}_\calP(#1,\Z)}\xspace}
\newcommand{\Mplus}[1][m\times n]{\ensuremath{\mathfrak{M}^+(#1,\Z)}\xspace}
\newcommand{\MPplusZ}[1][\mathbf{m}\times\mathbf{n}]{\ensuremath{\mathfrak{M}^+_\calP(#1,\Z)}\xspace}
\newcommand{\ftn}[3]{ #1 \colon #2 \rightarrow #3 }
\newcommand{\setof}[2]{\left\{ #1 \;\middle|\; #2 \right\}}
\newcommand{\GLPE}{\GLP-equivalent\xspace}
\newcommand{\SLPE}{\SLP-equivalent\xspace}
\newcommand{\GLPEe}{\GLP-equivalence\xspace}
\newcommand{\SLPEe}{\SLP-equivalence\xspace}
\newcommand{\Meq}{\ensuremath{\sim_{M\negthinspace E}}\xspace}
\newcommand{\MCeq}{\ensuremath{\sim_{C\negthinspace E}}\xspace}
\newcommand{\OO}{\mbox{\texttt{\textup{(O)}}}\xspace}
\newcommand{\II}{\mbox{\texttt{\textup{(I)}}}\xspace}
\newcommand{\RR}{\mbox{\texttt{\textup{(R)}}}\xspace}
\newcommand{\SSS}{\mbox{\texttt{\textup{(S)}}}\xspace}
\newcommand{\CC}{\mbox{\texttt{\textup{(C)}}}\xspace}
\newcommand{\CO}{\mbox{\texttt{\textup{(Col)}}}\xspace}
\newenvironment{smallpmatrix}{\left(\begin{smallmatrix}}{\end{smallmatrix}\right)}
\newenvironment{proofsk}{\noindent\emph{Sketch of proof. }}{\hfill$\square$}
\newcommand{\CAtemp}{{{0}}}
\newcommand{\frX}[2]{\FKRplus(#1;C^*(#2))}
\newcommand{\AF}{\fcolorbox{black}{blue}{\textcolor{white}{-1}}}
\newcommand{\CA}{\fbox{0}}
\newcommand{\PI}{\fcolorbox{black}{red}{\textcolor{white}{1}}}
\newcommand{\nn}{{\underline{n}}}
\newcommand{\mm}{{\underline{m}}}
\newcommand{\coker}{\operatorname{cok}}
\newcommand{\efvs}[1]{{#1}^0_{\operatorname{cycl}}}
\newcommand{\Primt}{\Prime_\gamma}
\newcommand{\I}{\mathfrak I}	
\newcommand{\J}{\mathfrak J}
\newcommand{\oooI}[1]{\ifthenelse{\equal{#1}{gray}}{\AFPI}{\ifthenelse{\isodd{#1}}{\PI}{\CA}}}
\newcommand{\ooIo}[1]{\ifthenelse{\equal{#1}{gray}}{\AFPI}{\ifthenelse{\isodd{\intcalcDiv{#1}{2}}}{\PI}{\CA}}}
\newcommand{\oIoo}[1]{\ifthenelse{\equal{#1}{gray}}{\AFPI}{\ifthenelse{\isodd{\intcalcDiv{#1}{4}}}{\PI}{\CA}}}
\newcommand{\Iooo}[1]{\ifthenelse{\equal{#1}{gray}}{\AFPI}{\ifthenelse{\isodd{\intcalcDiv{#1}{8}}}{\PI}{\CA}}}
	\newcommand{\corona}[1]{\mathcal{Q}(#1)}
\newcommand{\Ab}{\boldsymbol{Ab}}
\newcommand{\oo}{\equiv_O}
\newcommand{\ii}[1]{\equiv_{I,#1}}
\newcommand{\twop}[1]{
\begin{tikzpicture}
\node at ( 0,0) {\ooIo{#1}};
\node at ( 1,0) {\oooI{#1}};
\draw [->]  (0.2,0) -- (0.8,0);
\end{tikzpicture}}
\newcommand{\threeplin}[1]{
\begin{tikzpicture}
\node at ( 0,0) {\oIoo{#1}};
\node at ( 1,0) {\ooIo{#1}};
\node at ( 2,0) {\oooI{#1}};
\draw [->]  (0.2,0) -- (0.8,0);
\draw [->]  (1.2,0) -- (1.8,0);
\end{tikzpicture}}
\newcommand{\threeplinmo}[1]{
\begin{tikzpicture}
\node at ( 0,0) {\oIoo{#1}};
\node at ( 1,0) {\ooIo{#1}};
\node at ( 2,0) {\AF};
\draw [->]  (0.2,0) -- (0.8,0);
\draw [->]  (1.2,0) -- (1.7,0);
\end{tikzpicture}}
\newcommand{\threepin}[1]{
\begin{tikzpicture}
\node at ( 0,0) {\oIoo{#1}};
\node at ( 1,0) {\ooIo{#1}};
\node at ( 2,0) {\oooI{#1}};
\draw [->]  (0.2,0) -- (0.8,0);
\draw [->]  (1.8,0) -- (1.2,0);
\end{tikzpicture}}
\newcommand{\threepout}[1]{
\begin{tikzpicture}
\node at ( 0,0) {\oIoo{#1}};
\node at ( 1,0) {\ooIo{#1}};
\node at ( 2,0) {\oooI{#1}};
\draw [->]  (0.8,0) -- (0.2,0);
\draw [->]  (1.2,0) -- (1.8,0);
\end{tikzpicture}}
\pgfmathsetmacro{\nodedistancethree}{0.5}
\pgfmathsetmacro{\nodedistancefour}{0.5}
\pgfmathsetmacro{\boundingboxscalethree}{2}
\pgfmathsetmacro{\boundingboxscalefour}{2}
\tikzset{  mynode/.style = {circle, fill, draw, minimum size = 2 mm, inner sep =
    1pt,thick},
    arrow/.style = {->,>=stealth',shorten >=1pt, shorten <=1pt,semithick },
    loop_3_1/.style = {in=110,out=70,loop,looseness = 15 },
    loop_3_2/.style = {in=230,out=190,loop,looseness = 15 },
    loop_3_3/.style = {in=350,out=310,loop,looseness = 15 },
    loop_4_1/.style = {loop above },
    loop_4_2/.style = {loop left },
    loop_4_3/.style = {loop below },
    loop_4_4/.style = {loop right }}
\newcommand{\fournodes}{\coordinate (O) at (0,0);
\node[mynode] (1) at (0,\nodedistancefour) {};
\node[mynode] (2) at ($(O)!1!90:(1)$) {};
\node[mynode] (3) at ($(O)!1!180:(1)$) {};
\node[mynode] (4) at ($(O)!1!270:(1)$) {};
\useasboundingbox ($\boundingboxscalefour*(1)$) -- ($\boundingboxscalefour*(2)$) -- ($\boundingboxscalefour*(3)$)-- ($\boundingboxscalefour*(4)$);}
\newcommand{\fed}[2]{\draw[arrow] (#1) to (#2);}
\newcommand{\flo}[1]{\draw[arrow,loop_4_#1] (#1) to (#1);}
\newcommand{\wastheta}{{\upsilon}}
\newcommand{\mytheta}[1][E]{{\mathcal Y}_{\Bsf_{#1}}}
\newcommand{\mytau}[1][E]{\mathcal{T}_{\Bsf_{#1}}}
\title[Graph $C^*$-algebras over finite graphs]{Geometric classification of graph $C^*$-algebras over finite graphs}
\date{\today}
\author{S\o{}ren Eilers}
\address{Department of Mathe\-matical Sciences, University of Copen\-hagen, Universi\-tets\-park\-en~5, DK-2100 Copen\-hagen, Den\-mark}
\email{eilers@math.ku.dk}
\author{Gunnar Restorff}
\address{Department of Science and Technology, University of the Faroe Islands, N\'{o}at\'{u}n~3, FO-100 T\'{o}rshavn, the Faroe Islands}
\email{gunnarr@setur.fo}
\author{Efren Ruiz}
\address{Department of Mathematics, University of Hawaii, Hilo, 200 W.~Kawili St., Hilo, Hawaii, 96720-4091 USA}
\email{ruize@hawaii.edu}
\author{Adam P.~W.~S\o{}rensen}
\address{Department of Mathematics, University of Oslo, PO BOX 1053 Blindern, N-0316 Oslo, Norway}
\email{apws@math.uio.no}
\keywords{Graph $C^*$-algebras, geometric classification, $K$-theory, flow equivalence}
\subjclass[2010]{46L35, 46L80 (46L55, 37B10)}
\begin{document}

\begin{abstract}
We address the classification problem for graph \cas of
finite graphs (finitely many edges and vertices), containing the class of Cuntz-Krieger algebras as a
prominent special case. Contrasting earlier work, we do not assume
that the graphs satisfy the standard condition (K), so that the graph
\cas may come with uncountably many ideals.

We find that in this generality, stable isomorphism of graph
\cas does not coincide with the geometric notion of Cuntz
move equivalence. However, adding a modest condition on the
graphs, the two notions are proved to be mutually equivalent and
equivalent to the \cas having isomorphic $K$-theories. This
proves in turn that under this condition, the graph
\cas are in fact classifiable by $K$-theory, providing in
particular complete classification when the \cas in question
are either of real rank zero or type I/postliminal. The key ingredient in obtaining these results is a characterization of Cuntz move equivalence using the adjacency matrices of the graphs.

Our results are applied to discuss the classification problem for the quantum lens spaces defined by Hong and Szyma\'nski, and to complete the classification of graph \cas associated to all simple graphs with four vertices or less.
\end{abstract}

\maketitle

\section{Introduction}

The classification problem for Cuntz-Krieger algebras has a long and prominent history. Indeed, R\o rdam's classification
(\cite{MR1340839}) of the simple such $C^*$-algebras by appealing to
fundamental results in symbolic dynamics paved the way for the
sweeping generalization by Kirchberg and Phillips (\cite{MR1796912} and \cite{MR1745197}) to all simple,
nuclear, separable and purely infinite $C^*$-algebras in the UCT class, and Restorff's
generalization (\cite{MR2270572}) to the general case of Cuntz-Krieger
algebras with finitely many ideals (equivalent to Cuntz'
Condition (II)) was a key inspiration for the recent surge in results
concerning nonsimple purely infinite $C^*$-algebras.

Until now, almost nothing has been known about the classification of
Cuntz-Krieger $C^*$-algebras having infinitely many ideals --- failing
Condition (II) --- even though the symbolic dynamical systems that
define them are often extremely simple. In this paper, we will
establish classification up to stable isomorphism between the Cuntz-Krieger algebras
defined from a large class of graphs including the pairs of graphs
given in (a) and (b) of Figure \ref{firstexx}, but must leave open the
question concerning some more complicated graphs such as the ones in (c).
 
 \begin{figure}
 \begin{center}
 \begin{tabular}{|cc|cc|ccc|}\hline
 \qquad&&&&&&\\
 $\xymatrix{\bullet\ar[d]\ar@(ur,dr)[]\ar@(u,r)[]\\\bullet\ar[d]\ar@(ur,dr)[]\\\bullet\ar@(ur,dr)[]}$\qquad\qquad&
 $\xymatrix{\bullet\ar@/_15pt/[dd]\ar@/_/[d]\ar@/^/[d]\ar@(ur,dr)[]\ar@(u,r)[]\\\bullet\ar[d]\ar@(ur,dr)[]\\\bullet\ar@(ur,dr)[]}$\qquad\qquad&$\xymatrix{\bullet\ar[d]\ar@(ur,dr)[]\\\bullet\ar@(u,r)[]\ar[d]\ar@(ur,dr)[]\\\bullet\ar@(ur,dr)[]}$\qquad\qquad\qquad&
 $\xymatrix{\bullet\ar[d]\ar@(ur,dr)[]\ar@/_15pt/[dd]\ar@/_25pt/[dd]\\\bullet\ar[d]\ar@(u,r)[]\ar@(ur,dr)[]\\\bullet\ar@(ur,dr)[]}$\qquad\qquad&
 $\xymatrix{\bullet\ar[d]\ar@(ur,dr)[]\\\bullet\ar[d]\ar@(ur,dr)[]\\\bullet\ar[d]\ar@(u,r)[]\ar@(ur,dr)[]\\\bullet\ar@(ur,dr)[]}$\qquad\qquad\qquad&
 $\xymatrix{\bullet\ar[d]\ar@(ur,dr)[]\\\bullet\ar[d]\ar@(ur,dr)[]\ar@/_15pt/[dd]\ar@/_25pt/[dd]\\\bullet\ar[d]\ar@(u,r)[]\ar@(ur,dr)[]\\\bullet\ar@(ur,dr)[]}$&\\
 &&&&&&
\\\hline
 \multicolumn{2}{|c|}{(a)}& \multicolumn{2}{|c|}{(b)}& \multicolumn{3}{|c|}{(c)}\\\hline
 \end{tabular}
 \end{center}
 \caption{Six graphs}\label{firstexx}
 \end{figure}
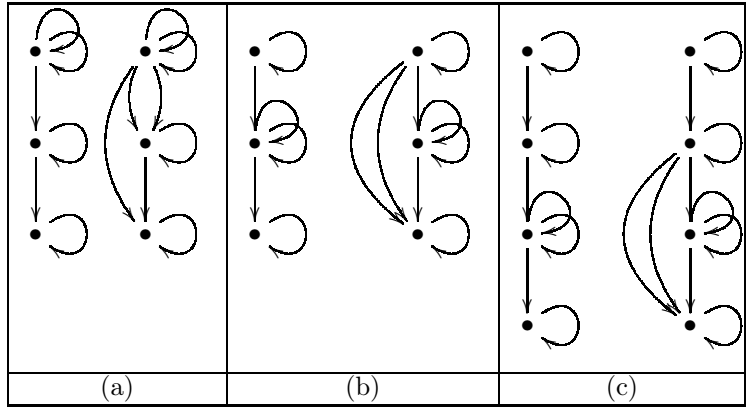
 
 We work in the more general (and more natural) setting of graph $C^*$-algebras over finite graphs, where Condition (II) is replaced by the standard Condition (K). 
Following \cite{MR3082546} and \cite{MR3047630} we emphasize the question of  \emph{geometric} classification, the aim being to generate the equivalence relation on graphs induced by stable isomorphism of the associated $C^*$-algebras as the coarsest equivalence relation containing the class of  basic moves on the graphs, resembling the role of Reidemeister moves on knots. These moves are closely related to those defining flow equivalence for shift spaces, apart from the 
 the so-called \emph{Cuntz splice} which plays a special role and also fails to preserve the canonical diagonal Abelian subalgebra of the graph algebras (cf.\ \cite{MR3276420}, \cite{arXiv:1410.2308v1}).
 
We will largely approach the problem  following the strategy from \cite{MR1340839} and \cite{MR2270572} to reduce the stable  classification problem for graph $C^*$-algebras to questions concerning flow equivalence of shifts of finite type. To do so requires three new tools as listed below.

First and foremost, we need to know that the Cuntz splice leaves the $C^*$-algebras in question invariant up to stable isomorphism also in this generality. This we proved in \cite{arXiv:1602.03709v2}. Second, we need to develop the theory of a \emph{gauge invariant prime ideal space} which in our case will serve as a substitute for the standard primitive ideal space. The fact that this space is finite is key to our largely combinatorial approach throughout the paper, and we will equip it with a \emph{temperature map} to help us align the graphs so that the various types of gauge simple subquotients are matched. Finally,  we introduce a procedure of \emph{plugging} and \emph{unplugging} sinks to pass between the cases allowing sinks and cases disallowing them, giving us the option to appeal to stronger general classification results in one case and a direct connection to symbolic dynamics in the other.

In the course of proving the above mentioned results, we extract and generalize from \cite{MR2270572} and \cite{MR1990568} some strong results concerning \GLPEe and \SLPEe, allowing us from the existence  of certain such equivalences to deduce conclusions about the existence  of move equivalences or Cuntz move equivalences between the graphs or about the existence of (stable) isomorphisms between the graph \cas, and \emph{vice versa}. This gives us some very concrete and hands-on tools to decide such questions.

In most cases, such as the one illustrated in Figure~\ref{firstexx}~(a), stable isomorphism
of the $C^*$-algebras associated to a pair of graphs allow for a
geometric realization by a finite number of moves, and we crystallize this out via the notion of  \emph{Condition (H)} which we introduce here. In
sporadic cases failing this condition, such as the ones illustrated in Figure~\ref{firstexx}~(b) and~(c), we will
establish that no finite sequence of the moves defining the concept of Cuntz move equivalence can connect the two graphs
in each pair, even though the $K$-theoretical invariants of the
associated $C^*$-algebras are the same. In the case of (b), we may in fact 
prove by appealing to \emph{ad hoc} classification results that the
$C^*$-algebras are stably isomorphic, proving that stable isomorphism of $C^*$-algebras is not always attainable via the moves hitherto studied.

Condition (H) generalizes Condition (K) and turns out to be met in a lot of other important special cases. When the graph $C^*$-algebras
defined are of type I/postliminary, our results may be refined further and lead to
the classification of a class of quantum lens spaces introduced and
studied by Hong and Szyma\'nski in \cite{MR2015735}. Moreover, specializing
to the graph $C^*$-algebras associated to simple graphs with four
vertices or less, we give a complete classification. These results have bearing on the Abrams-Tomforde conjecture (\cite{MR2775826}).

In forthcoming work
(\cite{Eilers-Restorff-Ruiz-Sorensen-2}) we will  introduce a final new move and prove, among
many other things, that indeed all Cuntz-Krieger algebras are
classified by their $K$-theory, because any isomorphism at the level
of $K$-theory may be realized using an  enlarged family of moves, all leaving the stabilized $C^*$-algebra invariant. The
present paper is self-contained and does not draw on the much more
complicated approach in \cite{Eilers-Restorff-Ruiz-Sorensen-2}. We will, however, develop basic results in the paper at hand in generality not needed here to anticipate applications in \cite{Eilers-Restorff-Ruiz-Sorensen-2}.

 The paper is organized as follows. In Section \ref{genprel} we outline key concepts for the paper, mainly stemming from the theory of graph \cas, and discuss the moves that constitute our fundamental notion of Cuntz move equivalence. In Section \ref{gipis} we develop the idea of the gauge invariant prime ideal space, which is completely essential for everything that follows, and we connect this to $K$-theory,  block matrices and partially ordered sets in Section \ref{sec:notation-for-proof}, introducing also the key notion of tempered ideal spaces.
 
 In Section \ref{CC} we then prove a complete characterization of Cuntz move equivalence for finite graphs, drawing heavily on ideas from \cite{MR2270572} augmented with a trick of \emph{plugging} sinks which we also develop there. Section \ref{Ccas} contains our geometric classification theorem for finite graphs with Condition (H), as well as examples showing the necessity of this condition, and in Section \ref{applications} we detail the applications listed above.

\section{General preliminaries}\label{genprel}

In this section, we introduce notation and fundamental concepts concerning graphs and their $C^*$-algebras.

\subsection{\texorpdfstring{$C^*$}{C*}-algebras over topological spaces}
Let $X$ be a topological space satisfying the $T_0$ separation condition and let $\mathbb{O}( X)$ be the set of open subsets of $X$, partially ordered by set inclusion $\subseteq$.  
A subset $Y$ of $X$ is called \emph{locally closed} if $Y = U \setminus V$ where $U, V \in \mathbb{O} ( X )$ and $V \subseteq U$.  
The set of all locally closed subsets of $X$ will be denoted by $\mathbb{LC}(X)$.  
The partially ordered set $( \mathbb{O} ( X ) , \subseteq )$ is a \emph{complete lattice}, that is, any subset $S$ of $\mathbb{O} (X)$ has both an infimum $\bigwedge S$ and a supremum $\bigvee S$, which are for any subset $S$ of $\mathbb{O} ( X )$ defined as 
\begin{equation*}
\bigwedge_{ U \in S } U = \left( \bigcap_{ U \in S } U \right)^{\circ} \quad \mathrm{and} \quad \bigvee_{ U \in S } U = \bigcup_{ U \in S } U.
\end{equation*}
Note that if $S$ is empty, these are $X$ and $\emptyset$, respectively.

For a \ca \A, let $\mathbb{I} ( \A )$ be the set of closed ideals of \A, partially ordered by $\subseteq$.  The partially ordered set $( \mathbb{I} ( \A ), \subseteq )$ is a complete lattice.  More precisely, for any subset $S$ of $\mathbb{I} ( \A )$, 
\begin{equation*}
\bigwedge_{ \mathfrak{I} \in S } \mathfrak{I} = \bigcap_{ \mathfrak{I} \in S } \mathfrak{I}  \quad \mathrm{and} \quad \bigvee_{ \mathfrak{I} \in S } \mathfrak{I} = \overline{ \sum_{ \mathfrak{I} \in S } \mathfrak{I} }.
\end{equation*}

\begin{definition}
Let \A be a \ca.  Let $\Prim ( \A )$ denote the \emph{primitive ideal space} of \A, equipped with the usual hull-kernel topology, also called the Jacobson topology.

Let $X$ be a topological space.  A \emph{\ca over $X$} is a pair $( \A
, \psi )$ consisting of a \ca \A and a continuous map $\ftn{ \psi
}{ \Prim ( \A ) }{ X }$.  \end{definition}

We identify $\mathbb{O} ( \Prim ( \A ) )$ and $\mathbb{I} ( \A )$ using the lattice isomorphism
\begin{equation*}
U \mapsto \bigcap_{ \mathfrak{p} \in \Prim ( \A ) \setminus U } \mathfrak{p}.
\end{equation*}
 Let $( \A , \psi )$ be a \ca over $X$.  Then we get a map $\ftn{ \psi^{*} }{ \mathbb{O} ( X ) }{ \mathbb{O} ( \Prim ( \A ) )  \cong \mathbb{I} ( \A ) }$ defined by
\begin{equation*}
U \mapsto \setof{ \mathfrak{p} \in \Prim ( \A ) }{ \psi ( \mathfrak{p} ) \in U }.
\end{equation*}
Using the isomorphism $\mathbb{O} ( \Prim ( \A ) ) \cong \mathbb{I} ( \A )$, we get a map from $\mathbb{O}(X)$ to $\mathbb{I}(\A )$ by
\begin{align*}
U \mapsto \bigcap \setof{ \mathfrak{p} \in \Prim ( \A ) }{ \psi ( \mathfrak{p} ) \notin U }.
\end{align*}
Denote this ideal by $\A(U)$.  For $Y = U \setminus V \in \mathbb{LC} ( X )$, set $\A(Y) = \A (U) / \A(V)$.   By \cite[Lemma~2.15]{MR2545613}, $\A ( Y)$ does not depend (up to a canonical \stariso) on $U$ and $V$.

We can equivalently define an $X$-algebra by giving a map from $\mathbb{O}(X)$ to $\mathbb{O}(\Prim(\A))$ that preserves finite infima and arbitrary suprema (so the empty set is mapped to the empty set, and $X$ is mapped to $\Prim(\A)$). 

\begin{example}
For any \ca \A, the pair $( \A , \id_{ \Prim ( \A ) } )$ is a  \ca over $\Prim ( \A )$.  For each $U \in \mathbb{O} ( \Prim ( \A ) )$, the ideal $\A ( U )$ equals $\bigcap_{ \mathfrak{p} \in \Prim ( \A ) \setminus U } \mathfrak{p}$. 
\end{example}

\begin{definition}
Let \A and \B be \cas over $X$.  
A \starhomo $\Phi\colon\A\rightarrow\B$ is \emph{$X$-equivariant} if $\Phi ( \A (U) ) \subseteq \B ( U )$ for all $U \in \mathbb{O}(X)$.  
Hence, for every $Y = U \setminus V$, $\Phi$ induces a \starhomo $\Phi_{Y}\colon\A ( Y ) \rightarrow\B (Y)$.  
Let $\mathcal{C}_X$ be the category whose objects are \cas over $X$ and whose morphisms are $X$-equivariant homomorphisms.  
\end{definition}

\subsection{Graphs and their matrices}

By a \emph{graph} we mean a directed graph. Formally:

\begin{definition}
A graph $E$ is a quadruple $E = (E^0 , E^1 , r, s)$ where $E^0$ and $E^1$ are sets, and $r$ and $s$ are maps from $E^1$ to $E^0$. 
The elements of $E^0$ are called \emph{vertices}, the elements of $E^1$ are called \emph{edges}, the map $r$ is called the \emph{range map}, and the map $s$ is called the \emph{source map}. 
\end{definition}

All graphs considered will be \emph{countable}, \ie, there are countably many vertices and edges. 
We call a graph \emph{finite}, if there are only finitely many vertices and edges.
We will freely identify graphs up to graph isomorphism.

\begin{definition}
A \emph{loop} is an edge with the same range and source. 

A \emph{path} $\mu$ in a graph is a finite sequence $\mu = e_1 e_2 \cdots e_n$ of edges satisfying 
$r(e_i)=s(e_{i+1})$, for all $i=1,2,\ldots, n-1$, and we say that the \emph{length} of $\mu$ is $n$. 
We extend the range and source maps to paths by letting $s(\mu) = s(e_1)$ and $r(\mu) = r(e_n)$. 
Vertices in $E$ are regarded as \emph{paths of length $0$} (also called empty paths). 

A \emph{cycle} is a nonempty path $\mu$ such that $s(\mu) = r(\mu)$.
We call a cycle $e_1e_2\cdots e_n$ a \emph{vertex-simple cycle} if $r(e_i)\neq r(e_j)$ for all $i\neq j$. A cycle $e_1e_2\cdots e_n$ is said to have an \emph{exit} if there exists an edge $f$ such that $s(f)=s(e_k)$ for some $k=1,2,\ldots,n$ with $e_k\neq f$. 
A \emph{return path} is a cycle $\mu = e_1 e_2 \cdots e_n$ such that $r(e_i) \neq r(\mu)$ for $i < n$.

For a loop, cycle or return path, we say that it is \emph{based} at the source vertex of its path. 
We also say that a vertex \emph{supports} a certain loop, cycle or return path if it is based at that vertex. 

Note that in \cite{MR1988256,MR1989499,MR2023453,MR1914564}, the authors use the term \emph{loop} where we use \emph{cycle}.
\end{definition}

\begin{definition}
A vertex $v\in E^0$ in $E$ is called \emph{regular} if $s^{-1}(v) := \setof{ e \in E^1 }{ s(e) = v }$ is finite and nonempty. We denote the set of regular vertices by
$E_{\mathrm{reg}}^0$. We call the remaining vertices \emph{singular}
and write $E_{\mathrm{sing}}^0=E^0\setminus E_{\mathrm{reg}}^0$.

A vertex $v\in E^0$ in $E$ is called a \emph{source} if $r^{-1}(v) := \setof{ e \in E^1 }{ r(e) = v }$ is the empty set. 
A vertex $v\in E^0$ in $E$ is called a \emph{sink} if $s^{-1}(v)=\emptyset$.
An \emph{isolated vertex} is both a sink and a source. 
\end{definition}

\begin{definition}
Let $E$ be a graph.  
We say that $E$ satisfies \emph{Condition~(K)} if for every vertex $v\in E^0$ in $E$, either there is no return path based at $v$ or there are at least two distinct return paths based at $v$. 
\end{definition}

\begin{notation}
If there exists a path from vertex $u$ to vertex $v$, then we write $u \geq v$ --- this is a preorder on the vertex set, \ie, it is reflexive and transitive, but need not be antisymmetric. 
\end{notation}

It is essential for our approach to graph \cas to be able to shift between a graph and its adjacency matrix. 
In what follows, we let \N denote the set of positive integers, while $\N_0$ denotes the set of nonnegative integers.

\begin{definition}
Let $E = (E^0 , E^1 , r, s)$ be a graph.
We define its \emph{adjacency matrix} $\Asf_E$ as an $E^0\times E^0$ matrix with the $(u,v)$'th entry being
$$\left\vert\setof{e\in E^1}{s(e)=u, r(e)=v}\right\vert.$$
As we only consider countable graphs, $\Asf_E$ will be a finite matrix or a countably infinite matrix, and it will have entries from $\N_0\sqcup\{\infty\}$.

Let $X$ be a set.
If $A$ is an $X \times X$ matrix with entries from $\N_0\sqcup\{\infty\}$, we let $\Esf_{A}$ be the graph with vertex set $X$ such that between two vertices $x,x' \in X$ we have $A(x,x')$ edges.
\end{definition}

It will be convenient for us to alter the adjacency matrix of a graph in two very specific ways, removing singular rows and subtracting the identity, so we introduce notation for this. 

\begin{notation}
Let $E$ be a graph and $\Asf_E$ its adjacency matrix. 
Denote by $\Asf_{E}^\bullet$ the matrix obtained from $\Asf_{E}$ by removing all rows corresponding to singular vertices of $E$.

Let $\Bsf_E$ denote the matrix $\Asf_{E} - I$, and let $\Bsf_{E}^\bullet$ be $\Bsf_E$ with the rows corresponding to singular vertices of $E$ removed. 
\end{notation}

\subsection{Graph \texorpdfstring{$C^*$}{C*}-algebras}
We follow the notation and definition for graph \cas in \cite{MR1670363}; this is not the convention used in Raeburn's monograph \cite{MR2135030}. 

\begin{definition} \label{def:graphca}
Let $E = (E^0,E^1,r,s)$ be a graph.
The \emph{graph \ca} $C^*(E)$ is defined as the universal \ca generated by
a set of mutually orthogonal projections $\setof{ p_v }{ v \in E^0 }$ and a set $\setof{ s_e }{ e \in E^1 }$ of partial isometries satisfying the relations
\begin{itemize}
	\item $s_e^* s_f = 0$ if $e,f \in E^1$ and $e \neq f$,
	\item $s_e^* s_e = p_{r(e)}$ for all $e \in E^1$,
	\item $s_e s_e^* \leq p_{s(e)}$ for all $e \in E^1$, and,
	\item $p_v = \sum_{e \in s^{-1}(v)} s_e s_e^*$ for all $v \in E^0$ with $0 < |s^{-1}(v)| < \infty$.
\end{itemize}
Whenever we have a set of mutually orthogonal projections $\setof{ p_v }{ v \in E^0 }$ and a set $\setof{ s_e }{ e \in E^1 }$ of partial isometries in a \ca satisfying the relations, then we call these elements a \emph{Cuntz-Krieger $E$-family}. 
\end{definition}

It is clear from the definition that an isomorphism between graphs induces a canonical isomorphism between the corresponding graph \cas.

\begin{definition}
Let $E=(E^0,E^1,r,s)$ be a graph. 
By universality there is a canonical gauge action $\gamma\colon\mathbb{T}\rightarrow\operatorname{Aut}(C^*(E))$ such that for any $z\in\mathbb{T}$, we have that $\gamma_z(p_v)=p_v$ for all $v\in E^0$ and $\gamma_z(s_e)=zs_e$ for all $e\in E^1$. 
We say that an ideal $\mathfrak{I}$ of $C^*(E)$ is gauge invariant, if $\gamma_z(\mathfrak{I})\subseteq \mathfrak{I}$ for all $z\in\mathbb{T}$, and we let $\mathbb{I}_\gamma(C^*(E))$ denote the subset of $\mathbb{I}(C^*(E))$ consisting of gauge invariant ideals. 
\end{definition}

It is clear that the lattice operations preserve the gauge invariance, so $\mathbb{I}_\gamma(C^*(E))$ is a sublattice. We collect some standard facts about graph \cas below.

\begin{remark}
Every graph \ca (of a countable graph) is separable, nuclear in the UCT class (\cite{MR1738948},\cite{MR2117597}). 
A graph \ca is unital if and only if the corresponding graph has finitely many vertices. 
A graph \ca is isomorphic to a Cuntz-Krieger algebra if and only if the corresponding graph is finite with no sinks, see~\cite[Theorem~3.12]{MR3391894}.
\end{remark}

 \subsection{Moves on graphs}\label{sec:moves}
In this section we describe the moves on graphs we will allow.

\begin{definition}[Move \SSS: Remove a regular source] 
Let $E = (E^0 , E^1 , r, s)$ be a graph, and let $w\in E^0$ be a source that is also a regular vertex. 
Let $E_S$ denote the graph $(E_S^0 , E_S^1 , r_S , s_S )$ defined by
$$E_S^0 := E^0 \setminus \{w\}\quad
E_S^1 := E^1 \setminus s^{-1} (w)\quad
r_S := r|_{E_S^1}\quad
s_S := s|_{E_S^1}.$$
We call $E_S$ the \emph{graph obtained by removing the source $w$ from $E$}, and say $E_S$ is formed by performing Move \SSS to $E$.
\end{definition}

\begin{definition}[Move \RR: Reduction at a regular vertex] 
Suppose that $E = (E^0 , E^1 , r, s)$ is a graph, and let $w \in E^0$ be a regular vertex with the property that $s(r^{-1} (w)) = \{x\}$, $s^{-1} (w) = \{f \}$, and $r(f ) \neq w$. 
Let $E_R$ denote the graph $(E_R^0, E_R^1, r_R , s_R )$ defined by
\begin{align*}
E_R^0&:= E^0 \setminus \{w\} \\
E_R^1&:= \left(E^1 \setminus (\{f \} \cup r^{-1}(w))\right) \cup \setof{e_f}{e \in E^1 \text{ and } r(e) = w} \\
r_R (e) &:= r(e)\text{ if }e \in E^1 \setminus (\{f \} \cup r^{-1}(w)) \quad\text{and}\quad r_R (e_f ) := r(f ) \\
s_R (e) &:= s(e)\text{ if }e \in E^1 \setminus (\{f \} \cup r^{-1}(w)) \quad\text{and}\quad s_R (e_f ) := s(e) = x.
\end{align*}
We call $E_R$ the \emph{graph obtained by reducing $E$ at $w$}, and say $E_R$ is a reduction
of $E$ or that $E_R$ is formed by performing Move \RR to $E$.
\end{definition}

\begin{definition}[Move \OO: Outsplit at a non-sink]
Let $E = (E^0 , E^1 , r, s)$ be a graph, and let $w \in E^0$ be a vertex that is not a sink. 
Partition $s^{-1} (w)$ as a disjoint union of a finite number of nonempty sets
$$s^{-1}(w) = \mathcal{E}_1\sqcup \mathcal{E}_2\sqcup \cdots \sqcup\mathcal{E}_n$$
with the property that at most one of the $\mathcal{E}_i$ is infinite. 
Let $E_O$ denote the graph $(E_O^0, E_O^1, r_O , s_O )$ defined by 
\begin{align*}
E_O^0&:= \setof{v^1}{v \in E^0\text{ and }v \neq w}\cup\{w^1, \ldots, w^n\} \\
E_O^1&:= \setof{e^1}{e \in E^1\text{ and }r(e) \neq w}\cup \setof{e^1, \ldots , e^n}{e \in E^1\text{ and }r(e) = w} \\
r_{E_O} (e^i ) &:= 
\begin{cases}
r(e)^1 & \text{if }e \in E^1\text{ and }r(e) \neq w\\
w^i & \text{if }e \in E^1\text{ and }r(e) = w
\end{cases} \\
s_{E_O} (e^i ) &:= 
\begin{cases}
s(e)^1 & \text{if }e \in E^1\text{ and }s(e) \neq w \\
s(e)^j & \text{if }e \in E^1\text{ and }s(e) = w\text{ with }e \in \mathcal{E}_j.
\end{cases}
\end{align*}
We call $E_O$ the \emph{graph obtained by outsplitting $E$ at $w$}, and say $E_O$ is formed by
performing Move \OO to $E$.
\end{definition}

\begin{definition}[Move \II: Insplit at a regular non-source]
Suppose that $E = (E^0 , E^1 , r, s)$ is a graph, and let $w \in E^0$ be a regular vertex that is not a source.
Partition $r^{-1} (w)$ as a disjoint union of a finite number of nonempty sets 
$$r^{-1} (w) = \mathcal{E}_1\sqcup \mathcal{E}_2\cdots\sqcup\mathcal{E}_n.$$
Let $E_I$ denote the graph $(E_I^0 , E_I^1 , r_I , s_I )$ defined by
\begin{align*}
E_I^0 &:= \setof{v^1}{v \in E^0\text{ and }v \neq w} \cup \{w^1,\ldots, w^n \} \\
E_I^1 &:= \setof{e^1}{e \in E^1\text{ and }s(e) \neq w} \cup \setof{e^1, \ldots, e^n}{e \in E^1\text{ and }s(e) = w} \\
r_{E_I} (e^i ) &:= 
\begin{cases}
r(e)^1 &\text{if }e \in E^1\text{ and }r(e) \neq w \\
r(e)^j &\text{if }e \in E^1\text{ and }r(e) = w\text{ with }e \in \mathcal{E}_j
\end{cases} \\
s_{E_I} (e^i ) &:= 
\begin{cases}
s(e)^1 &\text{if }e \in E^1\text{ and }s(e) \neq w \\
w^i &\text{if }e \in E^1\text{ and }s(e) = w.
\end{cases}
\end{align*}
We call $E_I$ the \emph{graph obtained by insplitting $E$ at $w$}, and say $E_I$ is formed by performing Move \II to $E$.
\end{definition}

\begin{definition}[Move \CC: Cuntz splicing] \label{def:cuntzsplice}
Let $E = (E^0 , E^1 , r , s )$ be a graph and let $v \in E^0$ be a regular vertex that supports at least two distinct return paths.
Let $E_C$ denote the graph $(E_C^0 , E_C^1 , r_C , s_C)$ defined by 
\begin{align*}
E_C^0 &:= E^0\sqcup\{u_1 , u_2 \} \\
E_C^1 &:= E^1\sqcup\{e_1 , e_2 , f_1 , f_2 , h_1 , h_2 \},
\end{align*}
where $r_{C}$ and $s_{C}$ extend $r$ and $s$, respectively, and satisfy
$$s_{C} (e_1 ) = v,\quad s_{C} (e_2 ) = u_1 ,\quad s_{C} (f_i ) = u_1 ,\quad s_{C} (h_i ) = u_2 ,$$
and
$$r_{C} (e_1 ) = u_1 ,\quad r_{C} (e_2 ) = v,\quad r_{C} (f_i ) = u_i ,\quad r_{C} (h_i ) = u_i . $$
We call $E_C$ the \emph{graph obtained by Cuntz splicing $E$ at $v$}, and say $E_C$ is formed by performing Move \CC to $E$. 
\end{definition}

\begin{definition}\label{def:graph-equivalences}
The equivalence relation generated by the moves \OO, \II, \RR, \SSS together with graph isomorphism is called \emph{move equivalence}, and denoted \Meq. 
The equivalence relation generated by the moves \OO, \II, \RR, \SSS, \CC together with graph isomorphism is called \emph{Cuntz move equivalence}, and denoted \MCeq. 
\end{definition}

The following two theorems were essentially proved in \cite{MR2054048}, see also  \cite[Propositions~3.1, 3.2 and 3.3 and Theorem~3.5]{MR3082546}.

\begin{theorem}[\cite{MR3082546}]\label{thm:moveimpliesstableisomorphism}
Let $E_1$ and $E_2$ be graphs such that $E_1\Meq E_2$. 
Then $C\sp*(E_1)\otimes \K\cong C\sp*(E_2)\otimes \K$. 
\end{theorem}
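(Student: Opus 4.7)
\begin{proofsk}
Since $\Meq$ is the equivalence relation generated by the four moves \OO, \II, \RR, \SSS together with graph isomorphism, it suffices to verify that each of these moves individually preserves the stable isomorphism class of the graph \ca. Graph isomorphism trivially induces an isomorphism of graph \cas via the universal property, so only the four moves require argument.

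For moves \OO and \II, the plan is stronger: show that $C^*(E)\cong C^*(E_O)$ and $C^*(E)\cong C^*(E_I)$ outright (not just up to stabilisation), which certainly gives stable isomorphism. For each move one uses the universal property from Definition~\ref{def:graphca} and exhibits a Cuntz-Krieger family in one algebra indexed by the vertices and edges of the other graph, verifying the four defining relations. In the outsplit case, the projections $p_{w^i}$ are defined as sums $\sum_{e\in \mathcal{E}_i} s_es_e^*$ for the bounded classes and with a residual summand for the (at most one) infinite class so that $p_w = \sum_i p_{w^i}$; in the insplit case, one chooses bijective labelings of the partition and splits each $s_e$ accordingly. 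The uniqueness of the constructed Cuntz-Krieger families on both sides, combined with universality, yields mutually inverse \starhomos.

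For move \RR, the cleanest route is to verify that the projection $P = \sum_{v\in E^0\setminus\{w\}} p_v$ in the multiplier algebra of $C^*(E)$ is a full projection whose corner $PC^*(E)P$ is naturally isomorphic to $C^*(E_R)$: indeed, the composed partial isometries $s_es_f$ with $r(e)=w$ implement precisely the new edges $e_f$ in $E_R$, and all defining relations transfer. Fullness follows because any primitive ideal of $C^*(E)$ containing $P$ would have to contain $p_v$ for every $v\neq w$, but then the sole edge $f$ emitted from $w$ forces $p_w$ into the ideal as well. Stable isomorphism is then automatic since $C^*(E)$ is separable. For move \SSS the situation is even simpler: the hereditary subalgebra $P_S C^*(E) P_S$ generated by $P_S = \sum_{v\neq w} p_v$ is all of $C^*(E_S)$ (since no edges into $w$ exist, the source's $p_w$ and $s_e$ for $e\in s^{-1}(w)$ are entirely decoupled from the rest after using the Cuntz-Krieger relation at $w$ to express $p_w$ in terms of the $s_es_e^*$, and in fact the complementary ideal is Morita equivalent to the finite-dimensional algebra generated by $p_w$ and the $s_e$). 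Again fullness of $P_S$ plus Brown's theorem on full hereditary subalgebras of separable \cas delivers the stable isomorphism $C^*(E)\otimes\K\cong C^*(E_S)\otimes\K$.

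The main obstacle I anticipate is carrying out the Cuntz-Krieger relation bookkeeping in full generality: the graphs here are not assumed row-finite and may contain sinks and infinite emitters, so the existing proofs of Bates-Pask (written originally in the row-finite setting) need to be checked to extend, in particular handling singular vertices correctly in Moves \OO and \II (where the definition already accommodates at most one infinite class in each partition) and ensuring that Move \RR is only applied at a regular vertex $w$ with a single emitted edge so that the Cuntz-Krieger relation at $w$ can be cleanly substituted. These checks are essentially what is done in \cite{MR3082546}; the bulk of our work here is simply to collect them in our present notation.
\end{proofsk}
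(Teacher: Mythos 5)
Your overall architecture---reduce to the four generating moves and graph isomorphism, prove honest isomorphism where possible and use full corners plus Brown's theorem otherwise---is precisely the route taken in the sources the paper cites for this theorem (the paper itself offers no proof beyond the reference to \cite{MR2054048} and \cite{MR3082546}), and your treatments of \OO, \RR and \SSS are sound. However, your step for Move \II contains a genuine error: insplitting does \emph{not} preserve the isomorphism class of the graph \ca, only the stable isomorphism class, so the ``mutually inverse \starhomos'' you propose to build from the two universal properties do not exist in general. Concretely, let $E$ be the graph with one vertex and three loops, so that $C^*(E)\cong\mathcal{O}_3$ with $(K_0,[1],K_1)=(\Z/2,1,0)$, and perform Move \II at the vertex using the partition of $r^{-1}(w)$ into a singleton and a pair. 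The insplit graph has adjacency matrix $\begin{smallpmatrix}1&2\\1&2\end{smallpmatrix}$, and one computes $(K_0,[1],K_1)=(\Z/2,0,0)$ for $C^*(E_I)$; by the Kirchberg--Phillips classification these two unital Kirchberg algebras are stably isomorphic but not isomorphic. The paper itself signals this asymmetry by singling out \OO in Proposition~\ref{prop:moveOimpliesisomorphism} as the one move that upgrades to isomorphism.

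The gap is repairable without disturbing the rest of your outline: for \II one must settle for strong Morita equivalence (equivalently, by Brown--Green--Rieffel, stable isomorphism, since all algebras in sight are separable), established for instance by realizing the two algebras as complementary full corners of a common \ca or via the imprimitivity-bimodule argument of \cite{MR2054048}, rather than by matching Cuntz--Krieger families in both directions. Your closing caveat about singular vertices is well taken: the original arguments are written for row-finite graphs, and extending them to graphs with sinks and infinite emitters is exactly the content of the propositions in \cite{MR3082546} that the paper cites.
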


For the move \OO, we actually  obtain isomorphism rather than just stable isomorphism. 
\begin{proposition}\label{prop:moveOimpliesisomorphism}
Let $E_1$ and $E_2$ be graphs such that one is obtained from the other using Move \OO, then $C\sp*(E_1)\cong C\sp*(E_2)$. 
\end{proposition}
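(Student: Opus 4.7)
The approach is to construct an explicit isomorphism by appealing to the universal property of $C^*(E)$, then argue injectivity via the gauge-invariant uniqueness theorem and surjectivity by directly recovering the generators of $C^*(E_O)$. Concretely, let $\{p_{v^i},s_{e^i}\}$ denote the canonical Cuntz-Krieger $E_O$-family and define inside $C^*(E_O)$ the elements
\[
Q_v := p_{v^1} \text{ if } v \neq w, \qquad Q_w := \sum_{j=1}^{n} p_{w^j},
\]
\[
T_e := s_{e^1} \text{ if } r(e) \neq w, \qquad T_e := \sum_{i=1}^{n} s_{e^i} \text{ if } r(e) = w.
\]
I would verify that $\{Q_v, T_e\}$ is a Cuntz-Krieger $E$-family. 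The orthogonality of the $Q_v$ is immediate; the identity $T_e^* T_e = Q_{r(e)}$ splits into the two cases of $r(e)$ and uses $s_{e^i}^* s_{e^j} = \delta_{ij} p_{r_O(e^i)}$. The relation $T_e T_e^* \leq Q_{s(e)}$ reduces to the fact that the summands $s_{e^i} s_{e^i}^*$ are mutually orthogonal (which follows from $s_{e^i} p_{w^k} = \delta_{ik} s_{e^i}$) and bounded by the appropriate vertex projection. The key Cuntz-Krieger sum-relation at a regular vertex of $E$ reduces, via the partition $s^{-1}(w) = \bigsqcup_j \mathcal{E}_j$, to the CK-relations at the vertices $v^i$ in $E_O$; note that if $v=w$ is regular in $E$ then every $\mathcal{E}_j$ is finite and therefore each $w^j$ is regular in $E_O$, so the relevant CK-relations are available. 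Universality of $C^*(E)$ then produces a $*$-homomorphism $\phi\colon C^*(E) \to C^*(E_O)$ with $\phi(p_v)=Q_v$ and $\phi(s_e)=T_e$.

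For injectivity, $\phi$ intertwines the gauge actions since it sends each canonical partial isometry to a sum of canonical partial isometries of $C^*(E_O)$ scaling identically under $\gamma_z$, and each $Q_v$ is a nonzero sum of mutually orthogonal nonzero vertex projections. The gauge-invariant uniqueness theorem for graph \cas therefore gives injectivity.

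For surjectivity, one must exhibit every generator of $C^*(E_O)$ as an element of $\im(\phi)$. The vertex projections $p_{v^1}=\phi(p_v)$ for $v\neq w$ and the edge partial isometries $s_{e^1}=\phi(s_e)$ for $r(e)\neq w$ are trivially in the image. To recover the $p_{w^j}$, note that whenever $\mathcal{E}_j$ is finite, the vertex $w^j$ is regular in $E_O$, and expanding the CK-relation at $w^j$ using the explicit formulas for the $T_e$ gives $p_{w^j} = \sum_{e\in\mathcal{E}_j} T_e T_e^* = \phi\bigl(\sum_{e\in\mathcal{E}_j} s_e s_e^*\bigr)$. The partition condition guarantees that at most one $\mathcal{E}_j$ can be infinite; if $\mathcal{E}_{j_0}$ is that exceptional set, then $p_{w^{j_0}} = Q_w - \sum_{k\neq j_0} p_{w^k} = \phi(p_w) - \sum_{k\neq j_0} p_{w^k}$ is also in $\im(\phi)$. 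Once all $p_{w^j}$ are in the image, the remaining edges with $r(e)=w$ are obtained as $s_{e^i} = T_e \, p_{w^i} = \phi(s_e)\,p_{w^i}$, completing the proof.

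The main obstacle is the case in which $w$ is singular (so that, since $w$ is not a sink, $s^{-1}(w)$ is infinite and exactly one $\mathcal{E}_{j_0}$ is infinite); one cannot simply build an inverse $\psi\colon C^*(E_O)\to C^*(E)$ by the naive formula $\psi(p_{w^j}) = \sum_{e\in\mathcal{E}_j} s_e s_e^*$, because that sum need not converge in $C^*(E)$. The trick of expressing the problematic projection as the difference $Q_w - \sum_{k\neq j_0} p_{w^k}$ sidesteps this issue and is what makes the surjectivity argument go through without passing to a multiplier or enlargement.
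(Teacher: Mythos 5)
Your proof is correct and is essentially the standard argument for invariance under outsplitting: the paper does not prove this proposition itself but defers to Bates--Pask and S\o{}rensen, where precisely this Cuntz--Krieger $E$-family $\{Q_v,T_e\}$ inside $C^*(E_O)$ is constructed and injectivity/surjectivity are obtained from the gauge-invariant uniqueness theorem and recovery of the generators, just as you do. Your treatment of the singular case --- recovering $p_{w^{j_0}}$ for the unique infinite $\mathcal{E}_{j_0}$ as $Q_w-\sum_{k\neq j_0}p_{w^k}$ rather than as a (divergent) sum of range projections --- is exactly the right way to extend the row-finite argument to graphs with infinite emitters.
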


For the move \CC, it has recently been proved in \cite{arXiv:1602.03709v2} that it preserves the Morita equivalence class for arbitrary graphs. 

\begin{theorem}[{\cite[Theorem~4.8]{arXiv:1602.03709v2}}]
\label{thm:cuntz-splice-implies-stable-isomorphism}
Let $E$ be a graph and let $v$ be a vertex that supports two distinct return paths. Then $C\sp*(E)\otimes\K\cong C\sp*(E_C)\otimes\K$. 
\end{theorem}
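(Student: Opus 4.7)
My plan is to exhibit $C^*(E)$ as a full hereditary subalgebra of $C^*(E_C) \otimes \K$, thereby obtaining the stable isomorphism via Brown's theorem. Since the Cuntz splice modifies $E$ only locally near the vertex $v$, adding only the two vertices $u_1, u_2$ and six edges, the strategy is to realize a copy of $C^*(E)$ inside $C^*(E_C) \otimes \K$ that absorbs the splice structure into the ambient compact operators.

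The first and most crucial step is to exploit the existence of two distinct return paths at $v$. These paths yield partial isometries $t_1, t_2 \in p_v C^*(E_C) p_v$ with $t_i^* t_i = p_v$ and $t_1 t_1^* + t_2 t_2^*$ a proper subprojection of $p_v$; in particular $p_v$ is properly infinite in $p_v C^*(E_C) p_v$. This internal infiniteness at $v$ is what allows the two new projections $p_{u_1}, p_{u_2}$ added by the splice to be absorbed into a stabilized version of $p_v$ without altering the stable isomorphism class of the algebra.

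Next I would construct an explicit Cuntz-Krieger $E$-family inside $C^*(E_C) \otimes \K$. Vertices and edges not incident to $v$ are represented by the obvious generators tensored with a fixed rank-one matrix unit, while at $v$ one forms a larger projection combining $p_v$ with $p_{u_1} \otimes e_{22}$ and $p_{u_2} \otimes e_{33}$, tied together by partial isometries built from the splice edges $e_1, e_2, f_i, h_i$ and the properly infinite structure at $v$. After verifying the Cuntz-Krieger relations, the remaining point is that every ideal of $C^*(E_C) \otimes \K$ meets the generated subalgebra nontrivially, which follows from the fact that $p_{u_1}$ and $p_{u_2}$ each sit inside the ideal generated by $p_v$ via the splice edges, so the hereditary subalgebra is full.

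The hardest part is verifying the summation relation $p_v = \sum_{e \in s^{-1}(v)} s_e s_e^*$ at a regular $v$ for the embedded family, because the enlarged projection at $v$ must still admit this decomposition once the splice contributions have been folded in; here one must arrange the matrix-unit bookkeeping so that the new summands exactly compensate for the extra terms coming from $p_{u_1}, p_{u_2}$. Handling graphs with singular vertices elsewhere in $E$ and with uncountably many ideals introduces further technicalities but does not interfere with the local nature of the argument at $v$. This approach succeeds precisely because the two return paths at $v$ supply enough infinity to accommodate the added projections, which is why the hypothesis on $v$ is essential.
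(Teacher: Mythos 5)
First, a point of reference: the paper does not prove this theorem at all --- it is imported from \cite{arXiv:1602.03709v2}, where the proof occupies most of that paper. Your proposal therefore has to be measured against that argument, and unfortunately it contains a fatal gap precisely at the step you yourself flag as ``the hardest part.'' The summation relation at $v$ is not a matter of matrix-unit bookkeeping; it is obstructed in $K_0$. The Cuntz--Krieger relations at $u_2$ and $u_1$ force $[p_{u_2}]=[p_{u_1}]+[p_{u_2}]$ and $[p_{u_1}]=[p_v]+[p_{u_1}]+[p_{u_2}]$ in $K_0(C^*(E_C))$, hence $[p_{u_1}]=0$ and $[p_{u_2}]=-[p_v]$, so your candidate projection at $v$ has class $[p_v]+[p_{u_1}]+[p_{u_2}]=0$. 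Take $E$ to be the graph with one vertex and three loops, so that $C^*(E)\cong\mathcal{O}_3$ and $[p_v]$ generates $K_0\cong\Z/2\Z$. If your Cuntz--Krieger $E$-family existed with full hereditary image, that image would be the full corner $P_v\bigl(C^*(E_C)\otimes\K\bigr)P_v$, a unital \ca isomorphic to $\mathcal{O}_3$ whose unit has trivial $K_0$-class; but $[1_{\mathcal{O}_3}]\neq 0$ and an isomorphism preserves the class of the unit. More generally, keeping $P_w=p_w\otimes e_{11}$ away from $v$ forces $A_E(w,v)\bigl([P_v]-[p_v]\bigr)=0$ for every regular $w$ emitting into $v$, which is incompatible with absorbing $p_{u_1}$ and $p_{u_2}$ into $P_v$. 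No rearrangement of the splice edges can repair a class computation.

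This obstruction is exactly why the theorem is hard, and why the proof in \cite{arXiv:1602.03709v2} takes a detour through the graph $E_{CC}$ obtained by Cuntz splicing \emph{twice} at $v$. There one shows by legal row and column additions that $E\Meq E_{CC}$ (consistent with the determinant computation $\det(I-\Asf_{E_C})=-\det(I-\Asf_E)$, whose sign the double splice restores), and separately constructs an explicit isomorphism relating $C^*(E_C)\otimes\K$ and $C^*(E_{CC})\otimes\K$ using the two return paths at $v$ much as you describe. That absorption succeeds precisely because $C^*(E_C)$ already contains honest projections $p_{u_1}$ and $p_{u_2}$ of classes $0$ and $-[p_v]$, which is what is needed to accommodate a further splice --- projections that $C^*(E)$ itself has no reason to contain. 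Your argument is, in effect, an attempt to absorb the first splice directly into the unspliced graph, which is the one step that cannot be done locally; your correct observation that $v$ is properly infinite supplies room for more copies of $p_v$, but not for a projection of class $-[p_v]$.
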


We also extend the notation of equivalences to adjacency matrices. 

\begin{definition}
If $A,A'$ are square matrices with entries in $\N_0\sqcup\{\infty\}$, we define them to be \emph{move equivalent}, and write $A \Meq A'$ if $\Esf_A \Meq \Esf_{A'}$. 
We define \emph{Cuntz move equivalence} similarly. 
\end{definition}

\begin{remark}
The Cuntz move equivalence, $\MCeq$, is called \emph{move prime equivalence} in \cite{MR3082546}. Since the similarity of the two terms could create confusion, we have chosen to use the term \emph{Cuntz move equivalence} instead. 
\end{remark}

\subsection{Derived moves}

We now discuss --- following and  generalizing  \cite[Section~5]{MR3082546} --- ways of changing the graphs without   changing their move equivalence class. We will introduce  a \emph{collapse} move, and present criteria allowing us to conclude that two graphs are move equivalent when one arises from the other by a row or column addition of the $\Bsf$-matrices.
As we shall see, knowing move invariance of these derived moves dramatically  simplifies working with $\Meq$. 

\begin{definition}[Collapse a regular vertex that does not support a loop] \label{def:collapse}
Let $E = (E^0 , E^1 , r , s )$ be a graph and let $v$ be a regular vertex in $E$ that does not support a loop. 
Define a graph $E_{COL}$ by 
\begin{align*}
E_{COL}^0 &= E^0 \setminus \{v\}, \\
E_{COL}^1 &= E^1 \setminus (r^{-1}(v) \cup s^{-1}(v))
\sqcup \setof{[ef ]}{e \in r^{-1}(v)\text{ and }f \in s^{-1}(v)},
\end{align*}
the range and source maps extend those of $E$, and satisfy $r_{E_{COL}}([ef ]) = r (f )$
and $s_{E_{COL}} ([ef ]) = s (e)$.
\end{definition}

According to \cite[Theorem~5.2]{MR3082546} $E\Meq E_{COL}$ when $|E^0|<\infty$ --- in fact, the collapse move can be obtained using the moves \OO and \RR. We denote the move \CO.

Below, we will show how we can perform row and column additions on $\Bsf_E$ without changing the move equivalence class of the associated graphs, when $E$ is a graph with finitely many vertices. 

The setup we need is slightly different from what was considered in \cite[Section 7]{MR3082546} --- it was considered in \cite{arXiv:1602.03709v2}.
For the convenience of the reader, we collect the needed results from \cite{arXiv:1602.03709v2} in one proposition. Note that the definition of move equivalence in \cite{arXiv:1602.03709v2} is slightly different from the one above in order to be able to deal with graphs with infinitely many vertices --- but in the case of finitely many vertices they do in fact coincide. 

\begin{proposition}[\cite{arXiv:1602.03709v2}]
\label{prop:matrix-moves}
Let $E = (E^0, E^1, r,s)$ be a graph with finitely many vertices. 
Suppose $u,v \in E^0$ are distinct vertices with a path from $u$ to $v$.
Let $E_{u,v}$ be equal to the identity matrix except for on the $(u,v)$'th entry, where it is $1$. 
Then $\Bsf_E E_{u,v}$ is the matrix formed from $\Bsf_E$ by adding the $u$'th column into the $v$'th column, while $E_{u,v}\Bsf_E $ is the matrix formed from $\Bsf_E$ by adding the $v$'th row into the $u$'th row.
Then the following holds.
\begin{enumerate}[(i)]
\item Suppose $u$ supports a loop or suppose that there is an edge from $u$ to $v$ and $u$ emits at least two edges.  Then
\[
	\Asf_E \Meq \Bsf_E E_{u,v} + I.
\]\label{prop:matrix-moves:I}
\item Suppose $v$ is regular and either $v$ supports a loop or there is an edge from $u$ to $v$.  Then  
\[
	\Asf_E \Meq E_{u,v}\Bsf_E  + I.
\]\label{prop:matrix-moves:II}
\end{enumerate}
\end{proposition}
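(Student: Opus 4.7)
The plan is to realize the column addition $\Bsf_E \mapsto \Bsf_E E_{u,v}+I$ through Move \OO applied at $u$, and the row addition $\Bsf_E \mapsto E_{u,v}\Bsf_E+I$ through Move \II applied at $v$, possibly combined with Move \RR and the derived collapse move \CO to bridge the path from $u$ to $v$. The conditions in (i) and (ii) — namely the presence of a loop or a direct edge together with an auxiliary emission/regularity hypothesis — are precisely what is needed to guarantee that the relevant partition of $s^{-1}(u)$ (respectively $r^{-1}(v)$) has both blocks nonempty, so that the splitting is legal.

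First I would handle the base case in which there is already an edge $u\to v$. For part (i), I pick a single edge $e_0$ to play the role of the ``transferred'' edge: either a loop at $u$ (when $u$ supports one) or, in the second case, the chosen edge $u\to v$. I then outsplit at $u$ with $\mathcal{E}_1=\{e_0\}$ and $\mathcal{E}_2=s^{-1}(u)\setminus\{e_0\}$; the auxiliary hypothesis (loop at $u$, or $u$ emits at least two edges) ensures $\mathcal{E}_2\ne\emptyset$ so Move \OO applies. A direct computation of $\Asf_{E_O}$ then shows that after a suitable removal of the fresh vertex $u^1$ by Move \RR or by \CO (its outgoing edges form a single path towards $v$ or back to $u$), the resulting graph has adjacency matrix differing from $\Asf_E$ only in the $v$-column, and precisely by addition of the $u$-column of $\Bsf_E$. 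Part (ii) is dual: insplit at $v$ (requiring regularity of $v$) with $r^{-1}(v)=\mathcal{E}_1\sqcup\mathcal{E}_2$ chosen so that $\mathcal{E}_1$ is a singleton containing either the loop or an edge $u\to v$, then collapse the auxiliary copy $v^1$.

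For the general case where only a path $u=w_0\to w_1\to\cdots\to w_k=v$ is assumed, I would proceed by induction on the length $k$ of a shortest such path. The idea is to first use the base case along the edge $u\to w_1$ to replace $\Asf_E$ by $\Bsf_E E_{u,w_1}+I$, which introduces an edge $u\to w_1$ and, crucially, preserves all the structural hypotheses on $u$. Iterating, or equivalently contracting the interior vertices of the path by \CO, reduces to the direct-edge situation already treated. One has to check along the way that the hypotheses of (i) or (ii) (loop at $u$, or edge from $u$ to $v$ plus an auxiliary condition) continue to hold on the intermediate graphs; this is automatic because outsplits and collapses along a path preserve loops and emission counts at $u$ (respectively receiving structure at $v$).

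The main obstacle is the combinatorial bookkeeping in identifying $\Asf_{E_O}$ (resp.\ $\Asf_{E_I}$), together with the subsequent reductions, with the explicit matrix product $\Bsf_E E_{u,v}+I$ (resp.\ $E_{u,v}\Bsf_E+I$). This is delicate because the edges into $u$ (resp.\ out of $v$) get duplicated by the splitting, so the naive computation produces extra entries that must be cancelled by \RR or \CO; this is exactly where the hypothesis that $u$ supports a loop or emits at least two edges (dually, that $v$ is regular and supports a loop or receives the edge $u\to v$) becomes essential, as it enforces that the auxiliary split-off vertex is regular and eligible for collapse. Since all four ingredients (\OO, \II, \RR, \CO) preserve move equivalence by the moves already established and by Theorem~5.2 of \cite{MR3082546}, the conclusion follows.
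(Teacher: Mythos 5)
First, a point of comparison: the paper does not prove this proposition at all --- it is explicitly collected from \cite{arXiv:1602.03709v2} ``for the convenience of the reader'', so the only in-paper ``proof'' is the citation. Your overall mechanism (realize the column addition by an outsplit at $u$ isolating a single edge into a fresh vertex which is then removed by \CO, and dually an insplit at $v$ for the row addition) is indeed how the edge case is handled in \cite{arXiv:1602.03709v2} and in \cite[Section~7]{MR3082546}, and your treatment of the sub-case where an actual edge $e\colon u\to v$ exists is essentially right: with $\mathcal{E}_1=\{e\}$ the vertex $u^1$ is regular, emits exactly one edge to $v^1$, supports no loop, and collapsing it yields precisely $\Bsf_EE_{u,v}+I$; the auxiliary hypotheses guarantee the complementary partition class is nonempty. (You should still isolate the degenerate case of (ii) where $r^{-1}(v)=\{e\}$, since then no nontrivial partition of $r^{-1}(v)$ exists and Move \II cannot split $v$.)

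The genuine gap is the ``loop'' alternative, which is the entire point of stating the proposition for a path rather than an edge. Your choice of $e_0$ as the loop $\ell$ at $u$ does not work: in the outsplit with $\mathcal{E}_1=\{\ell\}$, the copy $\ell^1$ has range $u^1$ (ranges of edges into $u$ are distributed over all copies) and source $u^1$ (since $\ell\in\mathcal{E}_1$), so $u^1$ supports a loop and is eligible for neither Move \RR nor Move \CO; the identical problem occurs for the insplit at $v$ in (ii) when $\mathcal{E}_1$ is a loop at $v$. Second, your reduction from a path to an edge is asserted rather than proved. ``Contracting the interior vertices of the path by \CO'' is not legal in general --- those vertices may support loops, be singular, or emit several edges, and collapsing them alters many other entries of the adjacency matrix, so afterwards you are no longer comparing $\Asf_E$ with $\Bsf_EE_{u,v}+I$ but two different matrices. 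Likewise, the claim that applying the base case along $u\to w_1$ ``introduces an edge $u\to v$'' and ``preserves all the structural hypotheses'' is unsubstantiated: adding column $u$ into column $w_1$ only creates new edges into $w_1$ from predecessors of $u$ (and, if $u$ carries a single loop, does not even change the $(u,w_1)$ entry), so it brings you no closer to an edge from $u$ to $v$, and the naive commutator factorization $E_{u,v}=E_{u,w_1}E_{w_1,v}E_{u,w_1}^{-1}E_{w_1,v}^{-1}$ stalls because the intermediate addition of column $w_1$ into column $v$ need not be legal ($w_1$ need not support a loop or emit two edges). Handling the path case is the hard content of the proposition, and it is exactly what your sketch omits.
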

\begin{remark} \label{rmk:columnAdd}
As in \cite{arXiv:1602.03709v2}, we can use the above proposition
backwards to subtract columns or rows in $\Bsf_E$ as long as the
addition that undoes the subtraction is legal.
\end{remark}

Since legal row and column additions preserve $\Meq$, the resulting graph $C^*$-algebras will be stably isomorphic. The column addition in \ref{prop:matrix-moves:I} above rarely preserves the actual isomorphism class, but under  modest additional assumptions, the row addition in \ref{prop:matrix-moves:II} does.

\begin{proposition}\label{p:isoaddingrows}
If condition \ref{prop:matrix-moves:II} in Proposition \ref{prop:matrix-moves} is met with $v$ regular, supporting a loop {and} an edge from $u$ to $v$, then $C^*(E)\cong C^*(\Esf_A)$, where 
$A=E_{u,v}\Bsf_E  + I$. 
\end{proposition}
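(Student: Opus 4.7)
The plan is to construct the isomorphism $C^*(E)\cong C^*(\Esf_A)$ directly via the universal property of graph \cas, by exhibiting mutually inverse $*$-homomorphisms built from two explicit Cuntz-Krieger families. Since $A=E_{u,v}\Bsf_E+I$ only alters the row of $u$, the graph $\Esf_A$ has the same vertex set as $E$ and the same outgoing edges at every vertex other than $u$; the edges emanating from $u$ in $\Esf_A$ consist of all the edges from $u$ in $E$ together with one new edge $e_g$ from $u$ to $r(g)$ for every outgoing edge $g$ of $v$ in $E$ except a single chosen loop $f_0$ at $v$ (such a loop exists by hypothesis). I also fix an edge $e$ from $u$ to $v$ in $E$, which exists by hypothesis.

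First I would build a Cuntz-Krieger $\Esf_A$-family inside $C^*(E)$. Writing $\{p_w,s_\alpha\}$ for the canonical generators of $C^*(E)$, take $P_w:=p_w$ and $T_\alpha:=s_\alpha$ for every edge $\alpha$ of $\Esf_A$ that is inherited from $E$ and differs from $e$; replace the distinguished $e$ by $T_e:=s_es_{f_0}$; and set $T_{e_g}:=s_es_g$ for each new edge $e_g$. Orthogonality and the source/range relations for the $T$'s follow routinely from $s_e^*s_e=p_v$ and $s_g^*s_{g'}=\delta_{g,g'}p_{r(g)}$. The only delicate check is the Cuntz-Krieger sum relation at $u$, which collapses via the regularity of $v$:
\[
T_eT_e^*+\sum_{g\ne f_0} T_{e_g}T_{e_g}^* \;=\; s_e\Bigl(\sum_{g\in s^{-1}_E(v)} s_gs_g^*\Bigr) s_e^* \;=\; s_ep_vs_e^* \;=\; s_es_e^*,
\]
so the new partial isometries precisely fill in for the $s_es_e^*$-term that was dropped. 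Universality provides a $*$-homomorphism $\phi\colon C^*(\Esf_A)\to C^*(E)$.

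Next I would build the inverse $\psi\colon C^*(E)\to C^*(\Esf_A)$ symmetrically. Letting $\{q_w,t_\alpha\}$ denote the canonical generators of $C^*(\Esf_A)$, set $\psi(p_w):=q_w$, $\psi(s_\alpha):=t_\alpha$ for every $\alpha\in E^1$ with $\alpha\ne e$, and
\[
\psi(s_e) \;:=\; t_et_{f_0}^* \;+\; \sum_{g\in s^{-1}_E(v)\setminus\{f_0\}} t_{e_g}t_g^*,
\]
verifying $\psi(s_e)^*\psi(s_e)=q_v$ from orthogonality of the distinct edges from $u$ in $\Esf_A$ together with the unchanged Cuntz-Krieger sum relation at $v$. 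Finally I would check $\phi\circ\psi=\id$ and $\psi\circ\phi=\id$ on generators: the nontrivial identities are $\phi(\psi(s_e))=s_ep_v=s_e$ and $\psi(\phi(t_e))=\psi(s_e)t_{f_0}=t_eq_v=t_e$, together with the analogous identity for the $t_{e_g}$'s, all cross-terms vanishing by orthogonality. The main obstacle is the combinatorial bookkeeping: arranging labels so that the correspondence between edges of $\Esf_A$ and the chosen ``building blocks'' in $C^*(E)$ renders both Cuntz-Krieger families simultaneously well-defined and mutually inverse. Once this bijection is in place, the algebra of partial isometries at the regular vertex $v$ does the rest.
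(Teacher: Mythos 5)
Your proof is correct. The routine verifications you defer (orthogonality, the range projections, the sum relation at $u$, and the mutual-inverse identities on generators) all go through exactly as you indicate; the only points worth making explicit are that $v$ remains regular in $\Esf_A$ with unchanged outgoing edges (which is what makes $\psi(s_e)^*\psi(s_e)=q_v$ work) and that when $u$ is an infinite emitter there is simply no summation relation to check at $u$ on either side.

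Your route is genuinely different in presentation from the paper's. The paper does not write down Cuntz--Krieger families directly: it performs an insplitting (Move \II) at $v$ using the partition $r_E^{-1}(v)=(r_E^{-1}(v)\setminus\{f\})\sqcup\{f\}$, invokes a cited result to identify $C^*(E)$ with the full corner $p_VC^*(E_I)p_V$ where $V=E_I^0\setminus\{v^2\}$, and then collapses the loop-free vertex $v^2$ to land on $\Esf_A$, realized as a second isomorphism onto the same corner. Your $*$-homomorphisms are precisely the composite of those two identifications written out explicitly --- your formulas $T_{e_g}=s_es_g$ are the collapse formulas $t_{[ee']}=s_e^Es_{e'}^E$ appearing in the paper's construction of $\Psi_2$ --- but you obtain them self-containedly from the universal property, without appealing to the insplitting isomorphism or to Move \CO. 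What the paper's approach buys is brevity and a conceptual link to the move calculus used throughout; what yours buys is a hands-on, citation-free argument in which the single nontrivial identity
\[
T_eT_e^*+\sum_{g\ne f_0}T_{e_g}T_{e_g}^*=s_ep_vs_e^*=s_es_e^*
\]
makes transparent exactly where the regularity of $v$ and the existence of the loop and of the edge $u\to v$ are used. Note also that your labelling of $\Esf_A$ (keep the old edge $e\colon u\to v$, omit the new edge attached to the chosen loop $f_0$) differs from the paper's (delete the old edge $f$, keep all new edges), but the two describe the same multiset of edges since $r(f_0)=v$, so this is harmless.
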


\begin{proof} 
Let $F=\Esf_A$ and denote the given  edge from $u$ to $v$ in $E$ by $f$.  Then $F$ is formed by removing $f$ but adding for each $e \in s_{E}^{-1} (v)$ an edge $\overline{e}$ with $s_{F} ( \overline{e} ) = u$ and $r_{F} ( \overline{e} ) = r_{E} ( e )$.  Moreover, since $v$ supports a loop, we have that $r_{E}^{-1}( v ) \setminus \{ f \} \neq \emptyset$.  Set $\mathcal{E}_{1} = r_{E}^{-1} (v) \setminus \{ f  \}$ and $\mathcal{E}_{2} = \{ f \}$.  Using this partition, we form $E_{{I}}$, which replaces $v$ with $v^{1}$ and $v^{2}$.  The vertex $v^{1}$ receives the edges of $v$ except $f$ and also receives one edge from $v^{2}$ for each loop based at $v$.  The vertex $v^{2}$ only receives the edge $f$.  Both vertices emit copies of the edges $v$ emitted and do so in such a way that there is no loop based at $v^{2}$.  By \cite[Proposition~3.6]{MR3045151}, there exists a \stariso $\ftn{\Psi_{1}}{ C^{*} (E) }{ p_{V} C^{*} ( E_{{I}} ) p_{V} }$ where $V =  E_I^{0}\backslash\{v^2\}$. 

Since $v^{2}$ does not support a loop, we may collapse this vertex, yielding $F$. 
Set $q_{w} = p_{w}^{E}$ for all  $w \in F^{0}$, $t_{e} = s_{e}^{E}$ for all $e\in E_I^1\setminus (r^{-1}_{E_I}(v^2)\cup s^{-1}_{E_I}(v^2))$ and $t_{[ee']}=s_e^Es_{e'}^E$ for $e\in r^{-1}_{E_I}(v^2)$ and $e'\in s^{-1}_{E_I}(v^2)$.
One can easily check that $\Psi_2 ( p_{v}^{F} ) = q_{v}$ and $\Psi_2 ( s_{e}^{F} ) = t_{e}$ provides a \stariso $\ftn{ \Psi_{2} }{ C^{*} (F) }{ p_{V} C^{*} ( E_{{I}} ) p_{V} }$.  Hence, $\ftn{ \Phi = \Psi_{2}^{-1} \circ \Psi_{1} }{ C^{*} (E) }{  C^{*} ( F ) }$ is a \stariso. \end{proof}

\section{The gauge invariant prime ideal space}\label{gipis}

We now provide definitions and fundamental results concerning the  gauge invariant prime ideal spaces of graph \cas. Although this is a very natural thing to do when we have the graph given, we are not aware of any place in the literature where this has been done only using the graph \ca and not the underlying graph. For the benefit of further applications elsewhere, we carry out the analysis in full generality.

\subsection{Structure of graph \texorpdfstring{$C^*$}{C*}-algebras}
 It is important for us to view the graph \cas as $X$-algebras over a topological space $X$ that --- in general --- is different from the primitive ideal space. This is due to the fact that when there exist ideals that are not gauge invariant, then there are infinitely many ideals. 
The space we choose to work with corresponds to the distinguished ideals being exactly the gauge invariant ideals. We show a \ca{ic} characterization of the gauge invariant ideals, and describe the space $X=\Prime_\gamma(C^*(E))$ in this subsection. 

\begin{definition}
Let $E=(E^0,E^1,r,s)$ be a graph. 
A subset $H\subseteq E^0$ is called \emph{hereditary} if whenever $v,w\in E^0$ with
$v\in H$ and $v\geq w$, then $w\in H$. 
A subset $S\subseteq E^0$ is called \emph{saturated} if whenever $v\in E_{\mathrm{reg}}^0$ with $r(s^{-1}(v))\subseteq S$, then $v\in S$. 
For any saturated hereditary subset $H$, the \emph{breaking vertices} corresponding to $H$ are the elements of the set
$$B_H :=\setof{v\in E^0}{|s^{-1}(v)|=\infty\text{ and } 0<|s^{-1}(v)\cap r^{-1}(E^0\setminus H)|<\infty }.$$

It is clear that $\emptyset$ and $E^0$ are both saturated and hereditary subsets.  The intersection of any family of hereditary subsets is again hereditary. 
Thus, for every subset $S\subseteq E^0$, there exists a smallest hereditary subset of $E^0$ containing $S$ --- this set is called the hereditary subset generated by $S$ and is denoted $H(S)$. 
The intersection of any family of saturated subsets is again saturated. 
Thus, for every subset $S\subseteq E^0$, there is a smallest saturated subset of $E^0$ containing $S$ --- this set is called the saturation of $S$ and is denoted $\overline{S}$.
The saturation of a hereditary set is again hereditary.
It is also clear that the union of any family of hereditary sets is again hereditary. 
This makes the set of saturated hereditary subsets of $E^0$ into a complete lattice. 

An \emph{admissible pair} $(H,S)$ consists of a saturated hereditary subset $H\subseteq E^0$ and a subset $S\subseteq B_H$. 
We order the collection of admissible pairs by defining $(H,S)\leq (H',S')$ if and only if $H\subseteq H'$ and $S\subseteq H'\cup S'$. 
This makes the collection of admissible pairs into a lattice. 
\end{definition}

\begin{fact}\label{fact:structure-1}
Let $E=(E^0,E^1,r,s)$ be a graph. 
For any admissible pair $(H,S)$, we let $\mathfrak{J}_{(H,S)}$ denote the ideal generated by
$$\setof{p_v}{v\in H}\cup\setof{p_{v_0}^{H}}{v_0\in S},$$
where $p_{v_0}^H$ is the \emph{gap projection}
$$p_{v_0}^H=p_{v_0}-\sum_{\substack{s(e)=v_0 \\ r(e)\not\in H}}s_es_e^*.$$
If $B_H=\emptyset$, for a saturated hereditary subset $H\subseteq E^0$, then we write $\mathfrak{J}_H$ for $\mathfrak{J}_{(H,\emptyset)}$. 
The map $(H,S)\mapsto \mathfrak{J}_{(H,S)}$ is a lattice isomorphism between the lattice of admissible pairs and the lattice of gauge invariant ideals of $C^*(E)$ (\cf\ \cite[Theorem~3.6]{MR1988256}).
\end{fact}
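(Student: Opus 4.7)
The plan is to establish the bijection between admissible pairs and gauge invariant ideals, and then verify that it is a lattice map, following the template used in \cite{MR1988256}. Since the statement is phrased as a Fact with a cited reference, I would present the structure of the argument and highlight where the subtleties with breaking vertices enter.

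First, I would show that each $\mathfrak{J}_{(H,S)}$ is indeed gauge invariant. The generators $p_v$ for $v\in H$ and the gap projections $p_{v_0}^H$ for $v_0\in S$ are fixed by $\gamma_z$ for every $z\in\mathbb{T}$, since they are polynomials of degree zero in the generators $s_e,s_e^*$ (each term contains equally many $s_e$ and $s_e^*$). Hence $\gamma_z(\mathfrak{J}_{(H,S)})\subseteq \mathfrak{J}_{(H,S)}$. This shows the map lands in $\mathbb{I}_\gamma(C^*(E))$.

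Next, I would prove injectivity by giving a formula recovering $(H,S)$ from an ideal $\mathfrak{J}$. Namely, set
\[
H(\mathfrak{J})=\setof{v\in E^0}{p_v\in\mathfrak{J}},\qquad S(\mathfrak{J})=\setof{v_0\in B_{H(\mathfrak{J})}}{p_{v_0}^{H(\mathfrak{J})}\in\mathfrak{J}}.
\]
Using the Cuntz-Krieger relations, one checks directly that $H(\mathfrak{J})$ is saturated and hereditary: hereditariness uses the relation $s_e^*s_e=p_{r(e)}$, so that $p_v\in\mathfrak{J}$ and $v\geq w$ forces $p_w\in\mathfrak{J}$; saturation uses the relation $p_v=\sum_{e\in s^{-1}(v)}s_es_e^*$ at regular vertices. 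Then one verifies $H(\mathfrak{J}_{(H,S)})=H$ and $S(\mathfrak{J}_{(H,S)})=S$; the nontrivial point is that no extra vertex projection nor extra gap projection is generated by the prescribed ones, which is done by writing $\mathfrak{J}_{(H,S)}$ as the closure of explicit finite sums of elements $s_\mu p s_\nu^*$ supported on $H\cup S$ (see \cite[Lemma~1.6]{MR1988256} type arguments).

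Surjectivity is the heart of the matter and relies on a gauge-invariant uniqueness principle. Given a gauge invariant ideal $\mathfrak{I}$, form $H=H(\mathfrak{I})$ and $S=S(\mathfrak{I})$ as above; then $(H,S)$ is an admissible pair and $\mathfrak{J}_{(H,S)}\subseteq\mathfrak{I}$. To obtain equality, consider the quotient $C^*(E)/\mathfrak{J}_{(H,S)}$, which is known to be isomorphic to the graph \ca of a modified graph $E/(H,S)$ with the vertex projections and gap projections not killed by $\mathfrak{J}_{(H,S)}$ being sent to nonzero projections. The induced gauge action on this quotient has the property that the image of $\mathfrak{I}$ is a gauge invariant ideal containing no nonzero vertex or gap projection; the gauge-invariant uniqueness theorem for graph \cas then forces it to be zero, so $\mathfrak{I}=\mathfrak{J}_{(H,S)}$. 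This step is the main obstacle, since the bookkeeping of breaking vertices and the precise identification of the quotient graph requires care; in the row-finite case the gap projection issue disappears entirely.

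Finally, for the lattice structure, one checks that the given order on admissible pairs matches inclusion of ideals: $\mathfrak{J}_{(H,S)}\subseteq \mathfrak{J}_{(H',S')}$ iff every generator $p_v$, $v\in H$, and $p_{v_0}^H$, $v_0\in S$, lies in $\mathfrak{J}_{(H',S')}$, iff $H\subseteq H'$ and $S\subseteq H'\cup S'$ (the second condition accounts for the fact that a breaking vertex $v_0\in S$ may cease to be a breaking vertex relative to $H'$ precisely when $v_0\in H'$). Because the bijection preserves the order in both directions, it automatically preserves meets and joins, yielding the asserted lattice isomorphism.
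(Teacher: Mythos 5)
The paper offers no proof of this statement: it is recorded as a Fact and deferred entirely to \cite[Theorem~3.6]{MR1988256}, so there is no in-paper argument to compare against. Your sketch correctly reproduces the standard proof from that reference --- gauge invariance of the generators, recovery of $(H,S)$ from the ideal for injectivity, the quotient-graph plus gauge-invariant uniqueness argument for surjectivity, and the observation that an order isomorphism of lattices preserves meets and joins --- with the only blemish being the parenthetical claim that $v_0\in S$ ceases to be a breaking vertex for $H'$ \emph{precisely} when $v_0\in H'$ (it can also happen when all of the finitely many edges from $v_0$ out of $H$ land in $H'\setminus H$, though the stated order equivalence still holds in that case).
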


\begin{lemma}\label{lem:structure-1}
Let $E=(E^0,E^1,r_E,s_E)$ and $F=(F^0,F^1,r_F,s_F)$ be graphs and let $\mathfrak{I}$ be an ideal of $C^* (E)$.  Then $\mathfrak{I}$ is gauge-invariant if and only if $\mathfrak{I}$ is generated by projections.  Consequently, every \stariso from $C^*(E)$ to $C^*(F)$ will send gauge invariant ideals to gauge invariant ideals and every \stariso from $C^*(E)\otimes\K$ to $C^*(F)\otimes\K$ will send gauge invariant ideals to gauge invariant ideals under the identification of the ideal lattice of $C^*(E)$ and $C^*(F)$ with the ideal lattice of $C^*(E)\otimes\K$ and $C^*(F)\otimes\K$, respectively.
\end{lemma}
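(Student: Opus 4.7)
The plan is to prove both directions of the main equivalence separately, and then derive the consequences. The forward direction is immediate from Fact~\ref{fact:structure-1}: a gauge invariant ideal has the form $\mathfrak{J}_{(H,S)}$, which by its very definition is generated by the vertex projections $\{p_v:v\in H\}$ together with the gap projections $\{p_{v_0}^H:v_0\in S\}$, all of which are projections.

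For the reverse direction, given any ideal $\mathfrak{I}$ of $C^*(E)$, let $\mathfrak{I}_\gamma$ denote the largest gauge invariant ideal of $C^*(E)$ contained in $\mathfrak{I}$; this exists since $\mathbb{I}_\gamma(C^*(E))$ is a complete sublattice of $\mathbb{I}(C^*(E))$. The key step is to establish that every projection of $\mathfrak{I}$ already lies in $\mathfrak{I}_\gamma$. Granting this, if $\mathfrak{I}$ is generated by projections then all of its generators lie in $\mathfrak{I}_\gamma$, so $\mathfrak{I}\subseteq\mathfrak{I}_\gamma$ and hence $\mathfrak{I}=\mathfrak{I}_\gamma$ is gauge invariant.

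I expect this projection claim to be the main obstacle. My approach is to pass to the quotient $C^*(E)/\mathfrak{I}_\gamma$, which is again a graph $C^*$-algebra by Fact~\ref{fact:structure-1}, and invoke the structural description of its non-gauge-invariant ideals: an ideal with trivial gauge invariant sub-ideal is supported on cycles without exits, and locally takes the form $C_0(U)$ for proper open subsets $U$ of copies of $\mathbb{T}$ (extending the gauge-invariant picture in \cite{MR1988256} to non-gauge-invariant ideals). Such $C_0(U)$ have no nonzero projections: any nonzero projection would be the indicator of a non-empty compact clopen subset of $U$, while the connected components of $U$ are non-compact open arcs. Hence every projection of $\mathfrak{I}$ must vanish modulo $\mathfrak{I}_\gamma$, as required.

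Finally, for the consequences: any \stariso carries projections to projections, and so preserves the property of being generated by projections; combined with the main equivalence, it therefore sends gauge invariant ideals to gauge invariant ideals. For the stabilized version, the natural gauge action on $C^*(E)\otimes\K$ is $\gamma\otimes\mathrm{id}$, so under the canonical identification of the ideal lattices, gauge invariance of $\mathfrak{I}\subseteq C^*(E)$ is equivalent to gauge invariance of $\mathfrak{I}\otimes\K\subseteq C^*(E)\otimes\K$. The same argument as above extends verbatim to $C^*(E)\otimes\K$, since the non-gauge-invariant quotient becomes $C_0(U)\otimes\K$, which still contains no nonzero projections: a nonzero projection there would be a norm-continuous, vanishing-at-infinity, locally constant-rank projection-valued function on $U$, contradicting the non-compactness of the arcs of $U$.
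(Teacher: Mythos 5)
Your forward direction (gauge invariant $\Rightarrow$ generated by projections, via Fact~\ref{fact:structure-1}) and your derivation of the two consequences from the main equivalence are correct and agree with the paper. The problem is the reverse direction, and it sits exactly where you predicted: the claim that every projection of $\mathfrak{I}$ already lies in the largest gauge invariant sub-ideal $\mathfrak{I}_\gamma$. You justify this by asserting that an ideal with trivial gauge invariant part is supported on the cycles without exits and locally has the form $C_0(U)$ for $U$ a proper open subset of $\mathbb{T}$, attributing this to an ``extension'' of \cite{MR1988256}. That reference describes only the \emph{gauge invariant} ideals, and the extension you invoke is precisely a description of the non-gauge-invariant part of the ideal lattice of $C^*(E)$ for an arbitrary countable graph (infinite emitters and breaking vertices allowed). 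No such description is proved or available in the cited source; even the weaker statement that the \emph{primitive} ideal space has this structure required the full analysis of \cite{MR2023453}, and passing from primitive ideals to arbitrary ideals with trivial gauge invariant part needs a further argument you do not give. The assertion is in fact true, but as written this step is the entire content of the lemma and it is asserted rather than proved.

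The paper closes this gap by a different device: by \cite[Theorem~3.4 and Corollary~3.5]{MR3310950}, every projection $p$ in $C^*(E)$ is Murray--von Neumann equivalent in $C^*(E)\otimes\K$ to a finite sum of vertex projections and gap projections; since Murray--von Neumann equivalent projections generate the same closed two-sided ideal, $\overline{C^*(E)\,p\,C^*(E)}$ is generated by vertex and gap projections and is therefore gauge invariant, whence any ideal generated by projections is gauge invariant. (If you prefer to avoid that citation, a soft argument also works: $z\mapsto\gamma_z(p)$ is norm continuous, nearby projections are unitarily equivalent in the unitization and hence generate the same ideal, so $\{z\in\mathbb{T}:\gamma_z(\langle p\rangle)=\langle p\rangle\}$ is an open subgroup of the connected group $\mathbb{T}$ and must be all of $\mathbb{T}$.) Either route makes your detour through the structure of non-gauge-invariant ideals unnecessary; without one of them, your proof of the reverse implication is incomplete.
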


\begin{proof}
Suppose $\mathfrak{I}$ is a gauge-invariant ideal.  Then by Fact~\ref{fact:structure-1}, $\mathfrak{I}$ is generated by vertex projections and gap projections.  Suppose $\mathfrak{I}$ is generated by projections $S = \{ p_1, p_2, \dots \}$.  By \cite[Theorem~3.4 and Corollary~3.5]{MR3310950}, each $p_i$ is Murray-von~Neumann equivalent to sums of vertex projections and gap projections in $C^*(E)$, where the Murray-von~Neumann equivalence and sums are in $C^*( E) \otimes \K$.  
But this implies that $\overline{ C^* (E) p_i C^* (E) }$ is generated by vertex projections and gap projections.  
Hence, $\mathfrak{I} = \overline{\operatorname{span} C^* (E) S C^*(E) }$ is generated by vertex projections and gap projections.  Since vertex projections and gap projections are fixed by the gauge action, we have that $\mathfrak{I}$ is a gauge-invariant ideal.

Suppose $\Phi \colon C^* (E) \rightarrow C^* (F)$ is a \stariso.  Let $\mathfrak{I}$ be a gauge-invariant ideal of $C^* (E)$.  Then from the first part of the lemma, we have that $\mathfrak{I}$ is generated by projections.  Since $\Phi$ is a \stariso, we have that $\Phi ( \mathfrak{I} )$ is also generated by projections.  Thus, $\Phi ( \mathfrak{I} )$ is a gauge-invariant ideal.  

For a \ca \A, we say an ideal in $\A \otimes \K$ is generated by projections in \A if it is generated by projections in $\mathfrak{A} \otimes e_{11}$.  
Suppose $\Psi \colon C^* (E) \otimes \K \rightarrow C^* (F) \otimes \K$ is a \stariso.  
Let $\mathfrak{I}$ be a gauge-invariant ideal of $C^* (E) \otimes \K$, \ie, $\mathfrak{I} = \mathfrak{J}_{ (H,S) } \otimes \K$.  
So, in particular, $\mathfrak{I}$ is generated by projections.  
Since $\Psi$ is a \stariso, $\Psi ( \mathfrak{I} )$ is generated by projections.  
By \cite[Theorem~3.4 and Corollary~3.5]{MR3310950} and using a similar argument as in the first paragraph, we get that $\Psi ( \mathfrak{I} )$ is generated by vertex projections and gap projections in $C^* (F)$.  
Hence, $\Psi ( \mathfrak{I} )$ is gauge-invariant.
\end{proof}

\begin{definition}
Let $E = (E^0 , E^1 , r , s )$ be a graph. 
Let $\Prime_\gamma(C^*(E))$ denote the set of all proper ideals that are prime within the set of proper gauge invariant ideals, \ie, $\mathfrak{p}\in\Prime_\gamma(C^*(E))$ if and only if $\mathfrak{p}$ is a proper gauge invariant ideal of $C^*(E)$ and
$$\mathfrak{I}_1\mathfrak{I}_2\subseteq\mathfrak{p}
\Rightarrow\mathfrak{I}_1\subseteq\mathfrak{p}\vee\mathfrak{I}_2\subseteq\mathfrak{p},$$
for all (proper) gauge invariant ideals $\mathfrak{I}_1,\mathfrak{I}_2$ of $C^*(E)$. 

Recall that for an ideal $\mathfrak{I}$, we let $\operatorname{hull}(\mathfrak{I})$ denote the set of primitive ideals containing $\mathfrak{I}$, \ie, $\setof{\mathfrak{p} \in\Prim(C^*(E))}{\mathfrak{p}\supseteq\mathfrak{I}}$. 
Similarly, for every ideal $\mathfrak{I}$, we let $\operatorname{hull}_\gamma(\mathfrak{I})$ denote the set $\setof{\mathfrak{p} \in\Prime_\gamma(C^*(E))}{\mathfrak{p}\supseteq\mathfrak{I}}$. 
We equip $\Prime_\gamma(C^*(E))$ with a topology similar to the hull-kernel topology for primitive ideals, \ie, the closure of a subset $S\subseteq\Prime_\gamma(C^*(E))$ is 
$$\operatorname{hull}_\gamma(\cap S)=\setof{\mathfrak{p}\in\Prime_\gamma(C^*(E))}{\mathfrak{p}\supseteq\cap S}.$$
To check that this closure operation defines a unique topology, we need only to check that it satisfies the four Kuratowski closure axioms --- but the first two paragraphs of \cite[5.4.6~Theorem]{MR1074574} show this. 
With an argument similar to \cite[5.4.7~Theorem]{MR1074574}, it also follows that the topology is $T_0$. 
\end{definition}

When $\fct{\Phi}{C^*(E)}{C^*(F)}$ is a \stariso, we get by Lemma \ref{lem:structure-1} an induced homeomorphism $\fct{\Phi_\sharp}{\Prime_\gamma(C^*(E))}{\Prime_\gamma(C^*(F))}$.

It is an elementary fact, that every primitive ideal of a \ca is a
(closed) prime ideal (\eg\ \cite[5.4.5~Theorem]{MR1074574}). For a
separable \ca, the converse is true, which can be seen by showing that
the primitive ideal space of a separable \ca is a Baire space
(\eg\ \cite[II.6.5.15~Corollary]{MR2188261}), but as shown by Weaver in \cite{MR2003352}
 the concepts differ for nonseparable  $C^*$-algebras. In fact, there are counterexamples even for nonseparable graph $C^*$-algebras
(see \cite{MR3426227}), but since we only consider countable graphs, this will not be an issue here.

\begin{lemma}\label{lem:prime-ideals-1}
Let $E = (E^0 , E^1 , r , s )$ be a graph.
Every primitive gauge invariant ideal of $C^*(E)$ is in $\Prime_\gamma(C^*(E))$. 
Every primitive ideal of $C^*(E)$ that is not gauge invariant has a largest gauge invariant ideal as a subset, and this gauge invariant ideal is in $\Prime_\gamma(C^*(E))$. 

If $\mathfrak{I}$ is a proper gauge invariant ideal of $C^*(E)$, then 
$$\mathfrak{I}=\cap\setof{\mathfrak{p}\in\Prime_\gamma(C^*(E))}{\mathfrak{p}\supseteq \mathfrak{I}}=\cap\operatorname{hull}_\gamma(\mathfrak{I}).$$
\end{lemma}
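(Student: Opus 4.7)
My plan is to prove the three assertions in order, with the second being the key observation that drives the third.

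For the first claim, I would invoke the standard fact that every primitive ideal is prime: if $\mathfrak{p}$ is primitive and $\mathfrak{I}_1 \mathfrak{I}_2 \subseteq \mathfrak{p}$ with $\mathfrak{I}_1, \mathfrak{I}_2$ \emph{any} ideals, then one of the $\mathfrak{I}_i$ is contained in $\mathfrak{p}$. Restricting to gauge invariant $\mathfrak{I}_1, \mathfrak{I}_2$ gives the definition of $\Prime_\gamma(C^*(E))$, so a primitive gauge invariant ideal automatically lies in $\Prime_\gamma(C^*(E))$.

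For the second claim, given a primitive ideal $\mathfrak{p}$ that is not gauge invariant, I would set
\[
\mathfrak{p}_0 := \bigcap_{z \in \mathbb{T}} \gamma_z(\mathfrak{p}).
\]
This is an ideal (each $\gamma_z(\mathfrak{p})$ is an ideal as $\gamma_z \in \Aut(C^*(E))$), it is gauge invariant by construction, it is contained in $\mathfrak{p}$ (take $z=1$), and hence it is proper. To see it is the \emph{largest} gauge invariant ideal contained in $\mathfrak{p}$: any gauge invariant ideal $\mathfrak{J} \subseteq \mathfrak{p}$ satisfies $\mathfrak{J} = \gamma_z(\mathfrak{J}) \subseteq \gamma_z(\mathfrak{p})$ for every $z$, so $\mathfrak{J} \subseteq \mathfrak{p}_0$. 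Finally, to show $\mathfrak{p}_0 \in \Prime_\gamma(C^*(E))$, suppose $\mathfrak{I}_1 \mathfrak{I}_2 \subseteq \mathfrak{p}_0$ with $\mathfrak{I}_1, \mathfrak{I}_2$ gauge invariant. Then $\mathfrak{I}_1 \mathfrak{I}_2 \subseteq \mathfrak{p}$, and primeness of $\mathfrak{p}$ gives $\mathfrak{I}_i \subseteq \mathfrak{p}$ for some $i$; by the maximality property of $\mathfrak{p}_0$ among gauge invariant subideals of $\mathfrak{p}$, we conclude $\mathfrak{I}_i \subseteq \mathfrak{p}_0$.

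For the third claim, one inclusion is immediate from the definition of $\operatorname{hull}_\gamma$. For the reverse inclusion, I would use the standard fact that every proper ideal equals the intersection of the primitive ideals containing it, giving
\[
\mathfrak{I} = \bigcap \operatorname{hull}(\mathfrak{I}).
\]
For each $\mathfrak{p} \in \operatorname{hull}(\mathfrak{I})$: if $\mathfrak{p}$ is gauge invariant, then $\mathfrak{p} \in \operatorname{hull}_\gamma(\mathfrak{I})$ by the first claim; if $\mathfrak{p}$ is not gauge invariant, then by the second claim its associated $\mathfrak{p}_0 \in \Prime_\gamma(C^*(E))$, and since $\mathfrak{I}$ is itself a gauge invariant ideal contained in $\mathfrak{p}$, the maximality property gives $\mathfrak{I} \subseteq \mathfrak{p}_0 \subseteq \mathfrak{p}$, hence $\mathfrak{p}_0 \in \operatorname{hull}_\gamma(\mathfrak{I})$. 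In either case, we can replace $\mathfrak{p}$ by an element of $\operatorname{hull}_\gamma(\mathfrak{I})$ that it contains, so $\bigcap \operatorname{hull}_\gamma(\mathfrak{I}) \subseteq \bigcap \operatorname{hull}(\mathfrak{I}) = \mathfrak{I}$.

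The only mild obstacle I anticipate is verifying that $\bigcap_z \gamma_z(\mathfrak{p})$ is truly the largest gauge invariant subideal (rather than some other candidate such as the largest gauge invariant ideal generated by projections in $\mathfrak{p}$); the short orbit argument above bypasses this cleanly. Everything else reduces to primeness of primitive ideals and the standard hull-kernel description of a proper ideal.
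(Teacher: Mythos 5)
Your proof is correct, but it takes a genuinely different route from the paper's on the middle claim. The paper proves the lemma by invoking the Hong--Szyma\'nski description of $\Prim(C^*(E))$ via the bijection with $\mathcal{M}_\gamma(E)\sqcup BV(E)\sqcup(\mathcal{M}_\tau(E)\times\mathbb{T})$, and identifies the largest gauge invariant ideal inside a non-gauge-invariant primitive ideal $\mathfrak{R}_{N,z}$ concretely as $\mathfrak{J}_{\Omega(N),\Omega(N)_\infty^{\mathrm{fin}}}$, citing their Lemma~2.6. You instead construct the largest gauge invariant subideal abstractly as $\mathfrak{p}_0=\bigcap_{z\in\mathbb{T}}\gamma_z(\mathfrak{p})$, which is a clean orbit argument valid for any $C^*$-algebra with a group action; the remaining steps (primeness of primitive ideals, maximality of $\mathfrak{p}_0$ forcing $\mathfrak{I}_i\subseteq\mathfrak{p}_0$, and swapping each $\mathfrak{p}\in\operatorname{hull}(\mathfrak{I})$ for the smaller $\mathfrak{p}_0\in\operatorname{hull}_\gamma(\mathfrak{I})$ in the hull-kernel intersection) coincide in structure with the paper's. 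What your approach buys is brevity and generality, with no reliance on the classification of primitive ideals of graph algebras. What the paper's approach buys is the explicit parametrization of the elements of $\Prime_\gamma(C^*(E))$ in terms of maximal tails and breaking vertices, which the paper leans on later: the proofs of Lemmas~\ref{lem:Xaction} and~\ref{lem:structure-d} explicitly refer back to ``the proof of Lemma~\ref{lem:prime-ideals-1}'' for that concrete description, so if you adopted your shorter argument you would need to supply that description separately.
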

\begin{proof}
First, note that all primitive ideals of $C^*(E)$ are described in \cite[Corollary~2.11]{MR2023453} --- we will use the terminology from there. As pointed out in \cite{MR3142035} there is a minor mistake in the description of the topology of the primitive ideal space in \cite{MR2023453}, but this has no consequences for this paper, since we are not using the description of the topology. 
So we have a bijection from $\mathcal{M}_\gamma(E)\sqcup BV(E)\sqcup(\mathcal{M}_\tau(E)\times\mathbb{T})$ to $\Prim(C^*(E))$ given by 
\begin{align*}
\mathcal{M}_\gamma(E)\ni M&\mapsto \mathfrak{J}_{\Omega(M),\Omega(M)_\infty^\mathrm{fin}}, \\
BV(E)\ni v&\mapsto \mathfrak{J}_{\Omega(v),\Omega(v)_\infty^\mathrm{fin}\setminus\{v\}}, \\
\mathcal{M}_\tau(E)\times\mathbb{T}\ni (N,z)&\mapsto \mathfrak{R}_{N,t},
\end{align*}
where the gauge invariant primitive ideals are exactly the ideals coming from $\mathcal{M}_\gamma(E)$ and $BV(E)$. 
Note that every gauge invariant primitive ideal of $C^*(E)$ is also prime in the set of ideals of $C^*(E)$. 
Thus every gauge invariant primitive ideal of $C^*(E)$ is in $\Prime_\gamma(C^*(E))$. 

Note that for $N\in\mathcal{M}_\tau(E)$ and $z\in\mathbb{T}$, the ideal $\mathfrak{J}_{\Omega(N),\Omega(N)_\infty^\mathrm{fin}}$ is the largest gauge invariant ideal contained in $R_{N,z}$ (\cf\ \cite[Lemma~2.6]{MR2023453}). 

Let $N\in\mathcal{M}_\tau(E)$ and assume that $\mathfrak{I}_1\mathfrak{I}_2\subseteq\mathfrak{J}_{\Omega(N),\Omega(N)_\infty^\mathrm{fin}}$ for some gauge invariant ideals $\mathfrak{I}_1,\mathfrak{I}_2$. 
Then $\mathfrak{I}_1\mathfrak{I}_2\subseteq\mathfrak{R}_{N,-1}$. 
Since $\mathfrak{R}_{N,-1}$ is a primitive ideal in $C^*(E)$, it is prime in the collection of all ideals of $C^*(E)$. Therefore either $\mathfrak{I}_1\subseteq\mathfrak{R}_{N,-1}$ or $\mathfrak{I}_2\subseteq\mathfrak{R}_{N,-1}$.
But since $\mathfrak{J}_{\Omega(N),\Omega(N)_\infty^\mathrm{fin}}$ is the largest gauge invariant ideal contained in $\mathfrak{R}_{N,-1}$, we have $\mathfrak{I}_1\subseteq\mathfrak{J}_{\Omega(N),\Omega(N)_\infty^\mathrm{fin}}$ or 
$\mathfrak{I}_2\subseteq\mathfrak{J}_{\Omega(N),\Omega(N)_\infty^\mathrm{fin}}$.
This shows that also $\mathfrak{J}_{\Omega(N),\Omega(N)_\infty^\mathrm{fin}}\in\Prime_\gamma(C^*(E))$ when $N\in\mathcal{M}_\tau(E)$.

Let $\mathfrak{I}$ be a proper gauge invariant ideal of $C^*(E)$. 
Then $\mathfrak{I}$ is the intersection of all the primitive ideals containing it. 
The only primitive ideals that are not in $\Prime_\gamma(C^*(E))$ are the ideals $\mathfrak{R}_{N,z}$ for $N\in\mathcal{M}_\tau(E)$ and $z\in\mathbb{T}$ --- but if $\mathfrak{I}\subseteq\mathfrak{R}_{N,z}$ then we can replace it in the intersection by the ideal $\mathfrak{J}_{\Omega(N),\Omega(N)_\infty^\mathrm{fin}}\in\Prime_\gamma(C^*(E))$. 
So we have shown that 
\begin{align*}
\mathfrak{I}&=\bigcap\Big(\setof{\mathfrak{J}_{\Omega(M),\Omega(M)_\infty^{\mathrm{fin}}}}{\mathfrak{J}_{\Omega(M),\Omega(M)_\infty^{\mathrm{fin}}}\supseteq \mathfrak{I},M\in\mathcal{M}_\gamma(E)\cup\mathcal{M}_\tau(E)} \\
&\qquad\qquad\cup\setof{\mathfrak{J}_{\Omega(v),\Omega(v)_\infty^{\mathrm{fin}}\setminus\{v\}}}{\mathfrak{J}_{\Omega(v),\Omega(v)_\infty^{\mathrm{fin}}\setminus\{v\}}\supseteq \mathfrak{I},v\in BV(E)}\Big) \\
&= \bigcap_{\substack{\mathfrak{p}\in\Prim_\gamma(C^*(E)) \\ \mathfrak{p}\supseteq\mathfrak{I}}}\mathfrak{p}
=\cap\operatorname{hull}_\gamma(\mathfrak{I}),
\end{align*}
since the second intersection contains all the sets from the first intersection. 
\end{proof}

\begin{lemma}\label{lem:orderreversingprime}
The map 
$$\mathfrak{I}\mapsto\operatorname{hull}_\gamma(\mathfrak{I})
=\setof{\mathfrak{p}\in\Prime_\gamma(C^*(E))}{\mathfrak{p}\supseteq\mathfrak{I}}$$ 
is an order-reversing $1-1$ correspondence between  the gauge invariant ideals of $C^*(E)$ and the closed subsets of $\Prime_\gamma(C^*(E))$. 
Its inverse map is $S\mapsto \cap S$.
\end{lemma}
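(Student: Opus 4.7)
The plan is to leverage the key identity $\mathfrak{I} = \cap \operatorname{hull}_\gamma(\mathfrak{I})$ for every proper gauge invariant ideal, already established in Lemma \ref{lem:prime-ideals-1}, together with the definition of the topology on $\Prime_\gamma(C^*(E))$ (closed sets are those of the form $\operatorname{hull}_\gamma(\cap S)$). Granted these, the assertion reduces to a routine verification that the two maps are mutually inverse.

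First I would check that $\mathfrak{I}\mapsto\operatorname{hull}_\gamma(\mathfrak{I})$ actually lands in the closed subsets. For the improper ideal $C^*(E)$, $\operatorname{hull}_\gamma(C^*(E))=\emptyset$, which is closed (corresponding to $S=\emptyset$, with the convention $\cap\emptyset=C^*(E)$). For a proper gauge invariant $\mathfrak{I}$, computing the closure
\[
\overline{\operatorname{hull}_\gamma(\mathfrak{I})}
=\operatorname{hull}_\gamma\!\bigl(\cap\operatorname{hull}_\gamma(\mathfrak{I})\bigr)
=\operatorname{hull}_\gamma(\mathfrak{I})
\]
using Lemma \ref{lem:prime-ideals-1} shows that $\operatorname{hull}_\gamma(\mathfrak{I})$ equals its own closure. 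In the other direction, for any $S\subseteq\Prime_\gamma(C^*(E))$, the intersection $\cap S$ is a gauge invariant ideal, since each element of $\Prime_\gamma(C^*(E))$ is gauge invariant and the lattice of gauge invariant ideals is closed under arbitrary intersections.

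Next I would verify that the two maps are mutually inverse. Starting from a gauge invariant ideal $\mathfrak{I}$, we recover $\cap\operatorname{hull}_\gamma(\mathfrak{I})=\mathfrak{I}$ directly from Lemma \ref{lem:prime-ideals-1} when $\mathfrak{I}$ is proper, and both sides equal $C^*(E)$ when $\mathfrak{I}$ is improper. Starting from a closed subset $C$, the definition of closed gives $C=\overline{C}=\operatorname{hull}_\gamma(\cap C)$, so the round trip returns $C$. Order-reversing is immediate: $\mathfrak{I}_1\subseteq\mathfrak{I}_2$ forces every prime containing $\mathfrak{I}_2$ to contain $\mathfrak{I}_1$, hence $\operatorname{hull}_\gamma(\mathfrak{I}_2)\subseteq\operatorname{hull}_\gamma(\mathfrak{I}_1)$, and conversely $S_1\subseteq S_2$ plainly gives $\cap S_2\subseteq\cap S_1$.

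The only nontrivial step is the identity $\mathfrak{I}=\cap\operatorname{hull}_\gamma(\mathfrak{I})$, and this has already been done in Lemma \ref{lem:prime-ideals-1} by appealing to the classification of primitive ideals from \cite{MR2023453} and replacing each non-gauge-invariant primitive $\mathfrak{R}_{N,z}$ above $\mathfrak{I}$ by its largest gauge invariant sub-ideal $\mathfrak{J}_{\Omega(N),\Omega(N)_\infty^{\mathrm{fin}}}\in\Prime_\gamma(C^*(E))$. With that in hand, the present lemma is purely formal, so I do not anticipate any real obstacle beyond keeping track of the improper ideal and the empty set as boundary cases.
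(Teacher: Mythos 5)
Your proof is correct and follows essentially the same route as the paper, which simply cites the standard hull--kernel argument (Murphy's Theorem 5.4.7) and relies, exactly as you do, on the identity $\mathfrak{I}=\cap\operatorname{hull}_\gamma(\mathfrak{I})$ from Lemma~\ref{lem:prime-ideals-1} together with the definition of the closure operation. Your explicit handling of the boundary cases (the improper ideal and the empty set, with the convention $\cap\emptyset=C^*(E)$) is a welcome elaboration of what the paper leaves implicit.
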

\begin{proof}
This proof follows the lines of the proof of \cite[5.4.7~Theorem]{MR1074574}.
\end{proof}

The following lemma tells us exactly how we may consider a graph \ca as an algebra over $\Prime_\gamma(C^*(E))$ such that the distinguished ideals are exactly the gauge invariant ideals.

\begin{lemma}\label{lem:Xaction}
Let $E$ be a graph.
Consider the map $\zeta$ from $\Prim(C^{*}(E))$ to $\Prime_\gamma(C^{*}(E))$ sending each primitive ideal to the largest element of $\Prime_\gamma(C^{*}(E))$ that it contains. 
This map is continuous and surjective, and it makes $C^{*}(E)$ into a $\Prime_\gamma(C^{*}(E))$-algebra in a canonical way. Moreover,
\begin{equation}\label{eq:lem:Xaction}
\zeta^{-1}(\operatorname{hull}_\gamma(\mathfrak{I}))
=\operatorname{hull}(\mathfrak{I}),
\end{equation}
for every gauge invariant ideal $\mathfrak{I}$ of $C^*(E)$, so the distinguished ideals under the action are exactly the gauge invariant ideals. 
\end{lemma}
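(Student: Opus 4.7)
The plan is to establish the lemma in four logical stages, all of which rest on the detailed description of the primitive ideal space of $C^*(E)$ already invoked in Lemma~\ref{lem:prime-ideals-1}. First, I will verify that $\zeta$ is well-defined. The primitive ideals of $C^*(E)$ split into three families: those coming from $\mathcal{M}_\gamma(E)$, those coming from breaking vertices $BV(E)$, and those of the form $\mathfrak{R}_{N,z}$ with $N\in\mathcal{M}_\tau(E)$. The first two families are already gauge invariant (so $\zeta(\mathfrak{p})=\mathfrak{p}$ works), while for the third family, Lemma~\ref{lem:prime-ideals-1} identifies a unique largest gauge invariant subideal $\mathfrak{J}_{\Omega(N),\Omega(N)_\infty^{\mathrm{fin}}}$, which moreover lies in $\Prime_\gamma(C^*(E))$. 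Surjectivity then follows automatically: every element of $\Prime_\gamma(C^*(E))$ is either a gauge invariant primitive ideal (and hence its own $\zeta$-image), or is of the form $\mathfrak{J}_{\Omega(N),\Omega(N)_\infty^{\mathrm{fin}}}$ (and hence the $\zeta$-image of $\mathfrak{R}_{N,-1}$, say).

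Next, I will prove the key equation $\zeta^{-1}(\operatorname{hull}_\gamma(\mathfrak{I}))=\operatorname{hull}(\mathfrak{I})$ for a gauge invariant ideal $\mathfrak{I}$. If $\mathfrak{p}\in\operatorname{hull}(\mathfrak{I})$, then $\mathfrak{I}\subseteq\mathfrak{p}$ and $\mathfrak{I}$ is a gauge invariant ideal contained in $\mathfrak{p}$, so by maximality $\mathfrak{I}\subseteq\zeta(\mathfrak{p})$, giving $\zeta(\mathfrak{p})\in\operatorname{hull}_\gamma(\mathfrak{I})$. Conversely, if $\zeta(\mathfrak{p})\in\operatorname{hull}_\gamma(\mathfrak{I})$ then $\mathfrak{I}\subseteq\zeta(\mathfrak{p})\subseteq\mathfrak{p}$, so $\mathfrak{p}\in\operatorname{hull}(\mathfrak{I})$. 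This is the technical heart of the lemma, but is essentially immediate once one takes the definition of $\zeta$ seriously.

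Continuity of $\zeta$ then drops out for free: every closed set in $\Prime_\gamma(C^*(E))$ has, by definition of its topology, the form $\operatorname{hull}_\gamma(\mathfrak{J})$ for some gauge invariant ideal $\mathfrak{J}$ (indeed, the intersection of a closed set is such an ideal), and the displayed equation shows that its $\zeta$-preimage is $\operatorname{hull}(\mathfrak{J})$, which is closed in $\Prim(C^*(E))$ in the hull-kernel topology. Hence $\zeta$ is continuous, and so the pair $(C^*(E),\zeta)$ is a $C^*$-algebra over $\Prime_\gamma(C^*(E))$ in the sense of the definition given earlier.

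Finally, to identify the distinguished ideals, I will chase the definition of $\mathfrak{A}(U)$ through the construction. For $U\in\mathbb{O}(\Prime_\gamma(C^*(E)))$, one has
\[
C^*(E)(U)=\bigcap\{\mathfrak{p}\in\Prim(C^*(E))\mid \zeta(\mathfrak{p})\notin U\}=\bigcap\zeta^{-1}(\operatorname{hull}_\gamma(\cap S)),
\]
where $S=\Prime_\gamma(C^*(E))\setminus U$. Combining the displayed equation with Lemma~\ref{lem:orderreversingprime}, which identifies gauge invariant ideals with closed subsets of $\Prime_\gamma(C^*(E))$ via $\mathfrak{I}\mapsto\operatorname{hull}_\gamma(\mathfrak{I})$, shows that $C^*(E)(U)=\cap S$, a gauge invariant ideal; conversely every gauge invariant ideal arises this way. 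The only step that requires any care is checking the well-definedness of $\zeta$ when several primitive ideals lie above the same gauge invariant prime, but this is already baked into Lemma~\ref{lem:prime-ideals-1}, so no real obstacle is expected.
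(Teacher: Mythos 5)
Your treatment of well-definedness, the displayed equation \eqref{eq:lem:Xaction}, continuity, and the identification of the distinguished ideals is sound and follows essentially the same route as the paper. The problem is surjectivity, which you dismiss as ``automatic.'' Your argument rests on the claim that every element of $\Prime_\gamma(C^{*}(E))$ is either a gauge invariant primitive ideal or of the form $\mathfrak{J}_{\Omega(N),\Omega(N)_\infty^{\mathrm{fin}}}$ for some $N\in\mathcal{M}_\tau(E)$. But Lemma~\ref{lem:prime-ideals-1} only gives the reverse inclusions: it shows that ideals of those two types lie in $\Prime_\gamma(C^{*}(E))$, not that they exhaust it. A priori there could be a proper gauge invariant ideal that is prime within the lattice of proper gauge invariant ideals without being the largest gauge invariant ideal contained in any primitive ideal, and ruling this out is the actual content of surjectivity.

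This is exactly where the paper spends most of its effort. Starting from an arbitrary $\mathfrak{I}=\mathfrak{J}_{H_{\mathfrak I},B_{\mathfrak I}}\in\Prime_\gamma(C^{*}(E))$, it shows (i) that $E^0\setminus H_{\mathfrak I}$ is a maximal tail, (ii) using primeness together with the intersection formula of \cite[Proposition~3.9]{MR1988256} that $|B_{H_{\mathfrak I}}\setminus B_{\mathfrak I}|\leq 1$, and (iii) that if a single breaking vertex $v$ is omitted then $v$ must support a cycle (so $v\in BV(E)$) and $\Omega(v)=H_{\mathfrak I}$; only then does $\mathfrak{I}$ appear on the list of ideals produced in the proof of Lemma~\ref{lem:prime-ideals-1}. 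None of this appears in your proposal, and it does not follow from anything you cite, so you need to supply this argument (or an equivalent one) before the surjectivity claim is justified. The rest of your outline can stand as written once that gap is filled.
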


\begin{proof}
The validity of the definition of the map $\zeta$ follows from Lemma~\ref{lem:prime-ideals-1}. 
First we show \eqref{eq:lem:Xaction}. Then continuity follows since every closed set of $\Prime_\gamma(C^*(E))$ is of the form $\operatorname{hull}_\gamma(\mathfrak{I})$. So let $\mathfrak{I}$ be a gauge invariant ideal.

Let $\mathfrak{p}\in \zeta^{-1}(\operatorname{hull}_\gamma(\mathfrak{I}))$. 
Then $\zeta(\mathfrak{p})\supseteq\mathfrak{I}$. 
Since, by definition, $\mathfrak{p}\supseteq\zeta(\mathfrak{p})$, it is clear that  $\mathfrak{p}\supseteq\mathfrak{I}$. Therefore $\mathfrak{p}\in \operatorname{hull}(\mathfrak{I})$. 

Now let $\mathfrak{p}\in \operatorname{hull}(\mathfrak{I})$. 
Then $\mathfrak{p}\supseteq\mathfrak{I}$. 
If $\mathfrak{p}$ is gauge invariant, then $\zeta(\mathfrak{p})=\mathfrak{p}\supseteq\mathfrak{I}$, so $\mathfrak{p}\in \zeta^{-1}(\operatorname{hull}_\gamma(\mathfrak{I}))$.
If, on the other hand, $\mathfrak{p}$ is not gauge invariant, then $\zeta(\mathfrak{p})$ is the largest gauge invariant ideal contained in $\mathfrak{p}$, \cf~Lemma~\ref{lem:prime-ideals-1}. 
Thus $\mathfrak{p}\supseteq\zeta(\mathfrak{p})\supseteq\mathfrak{I}$, so also in this case $\mathfrak{p}\in \zeta^{-1}(\operatorname{hull}_\gamma(\mathfrak{I}))$. 

Now we want to show surjectivity of the map. For this we use the notation of \cite{MR1988256} and \cite{MR2023453} and the content of the proof of Lemma~\ref{lem:prime-ideals-1}.
Recall that every gauge invariant ideal $\mathfrak{I}$ of $C^*(E)$ is of the form $\mathfrak{I}=\mathfrak{J}_{H,B}$ for some saturated hereditary subset $H \subseteq E^0$ and some subset $B\subseteq B_H=H_\infty^\textrm{fin}$ --- in fact, if $H_\mathfrak{I}=\setof{v\in E^0}{p_v\in \mathfrak{I}}$ and $B_\mathfrak{I} = \setof{v\in B_{H_\mathfrak{I}}}{p_v^{H_\mathfrak{I}}\in\mathfrak{I} }$, then $\mathfrak{I}=\mathfrak{J}_{H_\mathfrak{I},B_\mathfrak{I}}$. 
Note that if $(H,S_1)$ and $(H,S_2)$ are admissible pairs, then $(H,S_1)\wedge(H,S_2)$ is $(H,S_1\cap S_2)$. 

Now assume that $\mathfrak{I}\in\Prime_\gamma(C^*(E))$, so $\mathfrak{I}=\mathfrak{J}_{H_\mathfrak{I},B_\mathfrak{I}}$.  
Since $\mathfrak{I}$ is a proper ideal, $H_\mathfrak{I}\neq E^0$, so $M=E^0\setminus H_\mathfrak{I}$ is nonempty. 
The proof of \cite[Lemma~4.1]{MR1988256} shows that $M$ is a maximal tail. 
Note that $\Omega(M)=E^0\setminus M=H_\mathfrak{I}$. 

We want to show that $|B_{H_\mathfrak{I}}\setminus B_\mathfrak{I}|\leq 1$. 
So assume that $v_1,v_2\in B_{H_\mathfrak{I}}\setminus B_\mathfrak{I}$ with $v_1\neq v_2$. 
It follows from \cite[Proposition~3.9]{MR1988256} that
$$\mathfrak{J}_{H_\mathfrak{I},B_\mathfrak{I}\cup\{v_1\}}\cap \mathfrak{J}_{H_\mathfrak{I},B_\mathfrak{I}\cup\{v_2\}}
=\mathfrak{J}_{H_\mathfrak{I},B_\mathfrak{I}}=\mathfrak{I}.$$
But $\mathfrak{J}_{H_\mathfrak{I},B_\mathfrak{I}\cup\{v_i\}} \not\subseteq\mathfrak{J}_{H_\mathfrak{I},B_\mathfrak{I}}=\mathfrak{I}$, for $i=1,2$, which contradicts that $\mathfrak{I}$ is prime within the proper gauge invariant ideals of $C^*(E)$. Hence $|B_{H_\mathfrak{I}}\setminus B_\mathfrak{I}|\leq 1$. 

Now assume that $B_{H_\mathfrak{I}}\setminus B_\mathfrak{I}=\{v\}$. 
We want to show that $v\in BV(E)$, \ie, we need to show that $v$ supports a cycle. 
So assume that $v$ does not support a cycle. Since $v$ is an infinite emitter, $H_2=\overline{H(v)\setminus\{v\}}$ is a saturated hereditary subset not containing $v$. 
Note that $v\not\in B_{H_2}$. 
From \cite[Proposition~3.9]{MR1988256}, it follows that
$$\mathfrak{J}_{H_\mathfrak{I},B_{H_\mathfrak{I}}}
\cap\mathfrak{J}_{H_2,B_{H_2}}
\subseteq\mathfrak{J}_{H_\mathfrak{I},B_\mathfrak{I}}=\mathfrak{I}.$$
But $\mathfrak{J}_{H_\mathfrak{I},B_{H_\mathfrak{I}}}\not\subseteq\mathfrak{I}$ and $\mathfrak{J}_{H_2,B_{H_2}}\not\subseteq\mathfrak{I}$, which contradicts that $\mathfrak{I}$ is prime within the proper gauge invariant ideals of $C^*(E)$. Hence $v\in BV(E)$. 
Now we also want to show that $\Omega(v)=H_\mathfrak{I}$. 
From the definition, it is clear that $\Omega(v)\supseteq H_\mathfrak{I}$. 
From \cite[Proposition~3.9]{MR1988256}, it follows that
$$\mathfrak{J}_{\Omega(v),B_{\Omega(v)}\setminus \{v\}}
\cap\mathfrak{J}_{H_\mathfrak{I},B_{H_\mathfrak{I}}}
\subseteq\mathfrak{J}_{H_\mathfrak{I},B_\mathfrak{I}}=\mathfrak{I}.$$
Since $\mathfrak{J}_{H_\mathfrak{I},B_{H_\mathfrak{I}}}\not \subseteq \mathfrak{I}$ and $\mathfrak{I}$ is prime within the proper gauge invariant ideals of $C^*(E)$, it follows that $\mathfrak{J}_{\Omega(v),B_{\Omega(v)}\setminus \{v\}}\subseteq\mathfrak{I}$. 
Therefore $\Omega(v)\subseteq H_\mathfrak{I}$. 

Now it follows from the proof of Lemma~\ref{lem:prime-ideals-1}, that $\zeta$ is surjective.
\end{proof}

\begin{remark}
Assume that $E$ is a graph with finitely many vertices. Then $E$ satisfies Condition~(K) if and only if $C^*(E)$ has finitely many ideals, and in this case
$\Prim(C^*(E))=\Prime_\gamma(C^*(E))$. 
\end{remark}

 \subsection{The component poset}
For our purposes, it will be essential to work with block matrices in a way that resembles the ideal structure and the filtered $K$-theory of the graph \cas. 
To do this, we need to put the graph in a certain form and to order the vertices in a certain way such that the adjacency matrix has a certain nice block form. 
It is also essential to our work, that the topological space $\Prime_\gamma(C^*(E))$ is built into this construction. 
For the benefit of possible applications to other settings, we will allow infinite emitters, but it is essential for the exposition that we allow only \emph{finitely many vertices}. 

As we shall see, it will be necessary to modify the given graph up to move equivalence to deal with certain complication introduced by transitional and breaking vertices. This will not change the $C^*$-algebras in question up to stable isomorphism, and is hence unproblematic for the work in this paper. But to pave the way for classification of the graph $C^*$-algebras themselves, we keep track of the isomorphism class as far as possible.

\begin{definition}\label{def:structure-a}
Let $E=(E^0,E^1,r,s)$ be a graph with finitely many vertices. 
We say that a nonempty subset $S$ of $E^{0}$ is \emph{strongly connected} if for any two vertices $v,w\in S$ there exists a nonempty path from $v$ to $w$. 
In particular every vertex in a strongly connected set has to be the base of a cycle. 
The maximal strongly connected subsets of $E^{0}$ are all disjoint, and these are called the strongly connected components of $E$. 
We let $\Gamma_E$ denote the set of all strongly connected components together with all singletons consisting of singular vertices that are not the base point of a cycle. 
The sets in $\Gamma_E$ are all disjoint. 
We call the sets in $\Gamma_E$ the \emph{components of the graph} $E$ and the vertices in $E^{0}\setminus\cup\Gamma_E$ the \emph{transition states} of $E$ --- the transition states are by definition all the regular vertices that are not the base point of a cycle.  Note that with this terminology, all regular sources are also transition states.  A strongly connected component is called a \emph{cyclic component} if one of its vertices (and thus all of its vertices) has exactly one return path.

We define a relation $\geq$ on $\Gamma_E$ by saying that $\gamma_1\geq\gamma_2$ if there exist vertices $v_1\in\gamma_1$ and $v_2\in\gamma_2$ such that $v_1\geq v_2$. 
By definition this is the same as for all vertices $v_1\in\gamma_1$ and all vertices  $v_2\in\gamma_2$ we have that $v_1\geq v_2$. 
Thus it is clear that $\geq$ is a partial order. 

We say that a subset $\sigma\subseteq\Gamma_E$ is hereditary if whenever $\gamma_1,\gamma_2\in \Gamma_E$ with $\gamma_1\in \sigma$ and $\gamma_1\geq\gamma_2$, then $\gamma_2\in \sigma$. 
We equip $\Gamma_E$ with the topology that has the hereditary subsets as open sets --- this makes $\Gamma_E$ into a $T_0$-space. 
For every subset $\sigma\subseteq\Gamma_E$, we let $\eta(\sigma)$ denote the smallest hereditary subset of $\Gamma_E$ containing $\sigma$, \ie, the set $\setof{\gamma\in\Gamma_E}{\exists \gamma'\in\sigma\colon\gamma'\geq\gamma}$. 
\end{definition}

We recall the definition of an Alexandrov space and some of their properties.

\begin{definition}\label{def:structure-b}
A topological space is called an \emph{Alexandrov space} if arbitrary intersections of open subsets are again open. If we have a topological space $X$, then we can define a preorder on $X$ by $x\geq y$ if and only if $x$ is in the closure of $\{y\}$ --- this preorder is called the \emph{specialization preorder}. In the opposite direction, for a preordered set $(X,\geq)$ we can let the sets $F\subseteq X$ satisfying $x\geq y\wedge y\in F\Rightarrow x\in F$ be the closed sets. This topology is the finest topology satisfying that $x\geq y$ if and only if $x$ is in the closure of $\{y\}$. It is also clear that this is an Alexandrov topology.

If an Alexandrov space is given, and we take its specialization preorder, then the Alexandrov topology is uniquely determined from the specialization preorder by the above construction. Thus there is a one-to-one correspondence between Alexandrov topologies and preorders on a space. A map between two Alexandrov spaces is continuous if and only if it is an order preserving map with respect to the specialization preorders. 
\end{definition}

Note that often the specialization preorder is written as the opposite order compared to above. Both conventions are used in the literature, while the convention used here is chosen since it fits better with our setup, as we will see now. 

\begin{remark}\label{rem:structure-a}
We will mainly consider the topological spaces $\Prime_\gamma(C^*(E))$ and $\Gamma_E$ for graphs with finitely many vertices. Assume that $E$ is a graph with finitely many vertices. Although $\Prim(C^*(E))$ often will be infinite (in the case of a cyclic component), the sets $\Prime_\gamma(C^*(E))$ and $\Gamma_E$ are finite. Thus it is clear that arbitrary intersections of open subsets are again open. 
Thus $\Prime_\gamma(C^{*}(E))$ is an Alexandrov space. We see immediately from the definition that $\mathfrak{p}_1$ is in the closure of $\{\mathfrak{p}_2\}$ if and only if $\mathfrak{p}_1\supseteq\mathfrak{p}_2$. 
So the specialization preorder $\geq$ is set containment. 
Similarly, $\Gamma_E$ is an Alexandrov space and its specialization preorder is exactly the order $\geq$. 
\end{remark}

\begin{lemma}\label{lem:structure-a}
Let $E=(E^0,E^1,r,s)$ be a graph with finitely many vertices. 
Let $\eta\subseteq\Gamma_E$ be a hereditary subset. Assume that $v\in E^0_\mathrm{reg}$ and that there is no path from $v$ to any of the components in $\Gamma_E\setminus\eta$. 
Then $v\in\overline{H(\cup\eta)}$. 
\end{lemma}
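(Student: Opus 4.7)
The plan is to distinguish two cases according to whether $v$ lies in some component of $\Gamma_E$. If $v \in \gamma$ for some $\gamma \in \Gamma_E$, then $\gamma$ must be a (non-singleton) strongly connected component since $v$ is regular, while the singleton components of $\Gamma_E$ consist of singular vertices. As $v \geq v$, the component $\gamma$ is reachable from $v$, so by the hypothesis $\gamma \in \eta$; hence $v \in \cup\eta \subseteq H(\cup\eta) \subseteq \overline{H(\cup\eta)}$, and we are done. The substantive case is when $v$ is a transition state.

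In that case, set
\[
W := \setof{w \in E^{0}_{\mathrm{reg}} \setminus \cup\Gamma_E}{\text{there is no path in } E \text{ from } w \text{ to any component in } \Gamma_E \setminus \eta}.
\]
Then $v \in W$, and I will show $W \subseteq \overline{H(\cup\eta)}$. The key dichotomy is that for every $w \in W$, each successor $w' \in r(s^{-1}(w))$ lies either in $W$ or in $H(\cup\eta)$. Indeed, if $w'$ lies in some $\gamma' \in \Gamma_E$ (including the case where $\{w'\}$ is the singleton component formed by a singular vertex not on a cycle), then $\gamma'$ is reachable from $w$ and hence lies in $\eta$, so $w' \in \cup\eta \subseteq H(\cup\eta)$. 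Otherwise $w'$ is a (necessarily regular) transition state, and since any path from $w'$ to $\Gamma_E \setminus \eta$ would extend to one from $w$, we have $w' \in W$.

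Since transition states by definition do not lie on any cycle, the subgraph induced on $W$ is a finite acyclic directed graph, so for each $w \in W$ the length $d(w)$ of a longest directed path within $W$ starting at $w$ is a well-defined nonnegative integer. I will prove $w \in \overline{H(\cup\eta)}$ by induction on $d(w)$. When $d(w) = 0$, no successor of $w$ lies in $W$, so by the key dichotomy every successor lies in $H(\cup\eta) \subseteq \overline{H(\cup\eta)}$; when $d(w) > 0$, the successors in $W$ have strictly smaller $d$-value and therefore lie in $\overline{H(\cup\eta)}$ by induction, while those not in $W$ lie in $H(\cup\eta) \subseteq \overline{H(\cup\eta)}$. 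In either case every successor of the regular vertex $w$ lies in the saturated set $\overline{H(\cup\eta)}$, forcing $w \in \overline{H(\cup\eta)}$. Applying this to $w = v$ gives the conclusion. The only subtlety that I anticipate is careful bookkeeping: one must remember that infinite emitters sitting on cycles belong to their strongly connected components rather than to the pool of transition states, so that the dichotomy above is genuinely exhaustive; once this is in place, the induction is entirely routine.
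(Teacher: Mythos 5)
Your proof is correct and takes essentially the same route as the paper's: reduce to the case where $v$ is a transition state and then run an induction on path lengths, exploiting the finiteness and acyclicity of the set of transition states together with the closure property that defines saturation. The only cosmetic difference is that you induct on the longest path inside your set $W$, whereas the paper tracks how the longest path from $v$ to the iterates $\Sigma_k(H(\cup\eta))$ of the saturation decreases; these are the same induction in different bookkeeping.
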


\begin{proof}
There has to be a path from $v$ to some component --- thus a component in $\eta$. 
If $v$ supports a cycle, clearly $v\in\cup\eta\subseteq\overline{H(\cup\eta)}$. Let $H_0=H(\cup\eta)$. 
Using the description in \cite[Remark~3.1]{MR1988256}, we get a non-decreasing sequence of hereditary sets $\Sigma_0(H_0)=H_0$, $\Sigma_1(H_0)$, $\Sigma_2(H_0)$, \ldots. 
If $v\not\in \Sigma_k(H_0)$, then the length of the longest path from $v$ to $\Sigma_k(H_0)$ is one less than the length of the longest path from $v$ to $\Sigma_{k-1}(H_0)$. Thus eventually $v\in\Sigma_k(H_0)$ for some $k$, \ie, $v\in\overline{H_0}$. 
\end{proof}

\begin{lemma}\label{lem:structure-b}
Let $E=(E^0,E^1,r,s)$ be a graph with finitely many vertices. 
Then the map $\eta\mapsto\overline{H(\cup\eta)}$ from the set of hereditary subsets of $\Gamma_E$ to the set of saturated hereditary subsets of $E^0$ is a bijective order isomorphism (with respect to the order coming from set containment). 
In fact, $\cup\eta=(\cup\Gamma_E)\cap\overline{H(\cup\eta)}$. 
Moreover, for any saturated hereditary subset $H\subseteq E^0$, 
the set $(\cup\Gamma_E)\cap H$ is a (disjoint) union of all components that intersect $H$ nontrivially; and if we let $\eta\subseteq\Gamma_E$ be the set of these components, then $\eta$ is hereditary and 
$\overline{H(\cup\eta)}=H$.
\end{lemma}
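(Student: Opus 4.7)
The plan is to define the inverse map $\psi \colon H \mapsto \setof{\gamma\in\Gamma_E}{\gamma\cap H \neq \emptyset}$ to the given map $\phi\colon \eta \mapsto \overline{H(\cup\eta)}$, and to verify that both compositions are the identity. Monotonicity of both maps is immediate. The crux is the following dichotomy: because any two vertices $v,w$ in the same $\gamma \in \Gamma_E$ satisfy $v \geq w$ (trivially if $\gamma$ is a singleton), every hereditary subset $K \subseteq E^0$ contains either all of $\gamma$ or none of $\gamma$. This gives the disjoint union claim in the ``moreover'' part, and shows $\psi(H)$ is hereditary in $\Gamma_E$: if $\gamma_1 \in \psi(H)$ then $\gamma_1 \subseteq H$ by the dichotomy, so for any $\gamma_1 \geq \gamma_2$ heredity of $H$ yields $\gamma_2 \subseteq H$, hence $\gamma_2 \in \psi(H)$.

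For $\phi\circ\psi=\id$, the inclusion $\overline{H(\cup\psi(H))}\subseteq H$ follows from the dichotomy (which gives $\cup\psi(H)\subseteq H$), heredity, and saturation of $H$. For the reverse, fix $v\in H$. If $v$ belongs to some $\gamma\in\Gamma_E$, then $\gamma\in\psi(H)$ by the dichotomy and thus $v\in\cup\psi(H)\subseteq\overline{H(\cup\psi(H))}$. Otherwise $v$ is a transition state: any path from $v$ to a component $\gamma'\in\Gamma_E\setminus\psi(H)$ would terminate at a vertex forced into $H$ by heredity, contradicting $\gamma'\notin\psi(H)$, so no such path exists and Lemma~\ref{lem:structure-a} places $v$ in $\overline{H(\cup\psi(H))}$.

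The remaining content is the identity $\cup\eta=(\cup\Gamma_E)\cap\overline{H(\cup\eta)}$, from which $\psi\circ\phi=\id$ is immediate using the disjointness of distinct components. The inclusion $\subseteq$ is clear. For the reverse, suppose $v\in\gamma\cap\overline{H(\cup\eta)}$ for some $\gamma\in\Gamma_E$. If $\gamma=\{v\}$ is a singleton with $v$ singular, then $v$ cannot be added by any saturation step, so $v\in H(\cup\eta)$. Otherwise $\gamma$ is a strongly connected component, and I write $\Sigma_0=H(\cup\eta)$ and $\Sigma_{k+1}=\Sigma_k\cup\setof{w\in E^0_{\mathrm{reg}}}{r(s^{-1}(w))\subseteq\Sigma_k}$, so that $\overline{H(\cup\eta)}=\bigcup_k\Sigma_k$. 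Induction on $k$ then shows that $\gamma\cap H(\cup\eta)=\emptyset$ forces $\gamma\cap\Sigma_k=\emptyset$ for all $k$: any $w\in\gamma\cap\Sigma_{k+1}\setminus\Sigma_k$ would be regular with $r(s^{-1}(w))\subseteq\Sigma_k$, but $w$ sits on a cycle entirely inside $\gamma$ and hence emits an edge into $\gamma$, contradicting $\gamma\cap\Sigma_k=\emptyset$. Consequently $\gamma\cap H(\cup\eta)\neq\emptyset$; picking $w$ in this intersection, the definition of $H(\cup\eta)$ provides $u\in\cup\eta$ with $u\geq w$, and the component $\gamma_0$ containing $u$ satisfies $\gamma_0\in\eta$ and $\gamma_0\geq\gamma$, so heredity of $\eta$ gives $\gamma\in\eta$ and $v\in\cup\eta$.

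The main obstacle is the saturation argument in the last paragraph: one must rule out saturation ``creeping into'' a previously untouched strongly connected component, which is prevented exactly by the fact that every regular vertex of such a component emits an edge back into it.
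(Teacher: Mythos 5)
Your proof is correct and follows essentially the same route as the paper's: the same two key ingredients appear (Lemma~\ref{lem:structure-a} to handle transition states, and the step-by-step saturation sequence $\Sigma_k$ to show that saturation cannot absorb a vertex of an untouched strongly connected component, since such a vertex always emits an edge back into its own component), with the only cosmetic difference being that you package the argument as the verification of an explicit two-sided inverse $H\mapsto\setof{\gamma}{\gamma\cap H\neq\emptyset}$ rather than proving injectivity and surjectivity separately. No gaps.
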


\begin{proof}
Assume that $\eta\subseteq\Gamma_E$ is a hereditary subset. 
Clearly, $\cup\eta\subseteq(\cup\Gamma_E)\cap\overline{H(\cup\eta)}$. 
Let $v\in(\cup\Gamma_E)\cap\overline{H(\cup\eta)}$. 
Suppose that $v\in\gamma_1\in\Gamma_E$ but $\gamma_1\not\in\eta$. 
Then $v\not\in H(\cup\eta)$. 
Let $H_0=H(\cup\eta)$. 
Using the description in \cite[Remark~3.1]{MR1988256}, we get a non-decreasing sequence of hereditary sets $\Sigma_0(H_0)=H_0$, $\Sigma_1(H_0)$, $\Sigma_2(H_0)$, \ldots, such that $v\in\Sigma_k(H_0)\setminus\Sigma_{k-1}(H_0)$, for some $k=1,2,3,\ldots$. 
This means that $v\in E_\mathrm{reg}^0$ and $r(s^{-1}(v))\subseteq\Sigma_{k-1}(H_0)$. 
Thus, clearly $v$ cannot support a loop. But $v$ cannot either support a cycle, since $\Sigma_{k-1}(H_0)$ is hereditary and all edges that $v$ emit go into $\Sigma_{k-1}(H_0)$.
So we get a contradiction, and therefore $v\in\cup\eta$. 

So now it is clear that we have an injective map $\eta\mapsto\overline{H(\cup\eta)}$ from the set of hereditary subsets of $\Gamma_E$ to the set of saturated hereditary subsets of $E^0$. It is also clear that it is order preserving. 

Now let there be given a saturated hereditary subset $H\subseteq E^0$. 
For each $v\in(\cup\Gamma_E)\cap H$, all $v'$ that belong to the same component as $v$ are elements of $(\cup\Gamma_E)\cap H$. 
So let $\eta\subseteq\Gamma_E$ be the (uniquely determined) set such that 
$\cup\eta=(\cup\Gamma_E)\cap H$. 
Since $\cup\eta\subseteq H$, it is clear that $H(\cup\eta)\subseteq H$. 
Let $H_0=\overline{H(\cup\eta)}\subseteq H$. 
Suppose $v\in H\setminus H_0$. 
Then $v$ needs to be a transition state, so $v\in E_\mathrm{reg}^0$ and $v$ does not support a cycle. Consequently, it has to have a path to at least one component, but it cannot have any path to a component not in $\eta$. Lemma~\ref{lem:structure-a} now implies that $v\in H_0$, which is a contradiction. 
Consequently, $H_0=\overline{H(\cup\eta)}= H$, and therefore the map is surjective. 
\end{proof}

As an immediate consequence we get the following corollary. 

\begin{corollary}\label{cor:structure-a}
Let $E=(E^0,E^1,r,s)$ be a graph with finitely many vertices, and assume that $E$ does not have any transition state. 
Then every hereditary subset of $E^0$ is saturated and $\eta\mapsto \cup \eta$ is a lattice isomorphism between the hereditary subsets of $\Gamma_E$ and the saturated hereditary subsets of $E^0$. 
\end{corollary}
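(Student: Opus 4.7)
The plan is to deduce everything from Lemma~\ref{lem:structure-b} after observing two simplifications that the no-transition-state hypothesis produces: first, $E^0=\bigcup\Gamma_E$; second, hereditary subsets of $E^0$ are automatically saturated, so the closure/saturation operation drops out of the Lemma~\ref{lem:structure-b} bijection.

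First, I would unpack the definition of $\Gamma_E$: every vertex is either in a strongly connected component, or is a singular vertex not based on a cycle (a singleton of $\Gamma_E$), or is a regular vertex not based on a cycle (a transition state). The hypothesis removes the third possibility, so $E^0=\bigcup\Gamma_E$. Next, suppose $H\subseteq E^0$ is hereditary and $v\in E^0_{\mathrm{reg}}$ satisfies $r(s^{-1}(v))\subseteq H$. Since $v$ is regular and not a transition state, $v$ supports a cycle, hence lies in a strongly connected component $\gamma$; pick an edge $e$ out of $v$ whose range $w$ lies on a return path from $v$. Then $w\in H$, and since $w\geq v$ inside $\gamma$, heredity gives $v\in H$. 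Thus $H$ is saturated.

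Next I would verify that for any hereditary $\eta\subseteq\Gamma_E$ the union $\bigcup\eta$ is hereditary in $E^0$: if $v\in\gamma\in\eta$ and $v\geq w$, then $w$ is not a transition state, so $w\in\gamma'\in\Gamma_E$ and $\gamma\geq\gamma'$, whence $\gamma'\in\eta$ and $w\in\bigcup\eta$. Because $\bigcup\eta$ is already hereditary and, by the previous paragraph, automatically saturated, one has $\overline{H(\bigcup\eta)}=\bigcup\eta$. So the map of Lemma~\ref{lem:structure-b} takes the form $\eta\mapsto\bigcup\eta$ in this setting. Conversely, given a (saturated) hereditary $H\subseteq E^0$, Lemma~\ref{lem:structure-b} produces a hereditary $\eta\subseteq\Gamma_E$ with $(\bigcup\Gamma_E)\cap H=\bigcup\eta$; but $E^0=\bigcup\Gamma_E$, so $H=\bigcup\eta$. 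This makes $\eta\mapsto\bigcup\eta$ a bijection.

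Finally, the map is clearly order-preserving, and its inverse (send $H$ to the set of components it contains) is order-preserving as well; since both posets are complete lattices, the order isomorphism is automatically a lattice isomorphism. There is no real obstacle here — the only point that takes a moment is checking automatic saturation of hereditary subsets, and this is where the no-transition-state assumption does its work.
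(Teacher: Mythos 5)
Your proof is correct and fills in exactly the two observations (no transition states forces $E^0=\bigcup\Gamma_E$, and hereditary subsets are then automatically saturated) that the paper implicitly relies on when it declares the corollary an immediate consequence of Lemma~\ref{lem:structure-b}. This is the same route the paper intends, just written out in full.
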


The following is also clear.

\begin{lemma}\label{lem:structure-c}
Let $E=(E^0,E^1,r,s)$ be a graph with finitely many vertices. 
If every infinite emitter emits infinitely many edges to any vertex it emits any edge to, then $B_H=\emptyset$ for every saturated hereditary subset $H \subseteq E^0$. 
\end{lemma}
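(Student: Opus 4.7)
The plan is to argue by contradiction: assume some $v\in B_H$, and use the assumption to contradict the finiteness requirement in the definition of $B_H$. Concretely, suppose $v\in B_H$ for some saturated hereditary $H\subseteq E^0$. By definition this means $|s^{-1}(v)|=\infty$ and
$$0<|s^{-1}(v)\cap r^{-1}(E^0\setminus H)|<\infty.$$

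First I would introduce the set $T=\setof{w\in E^0}{s^{-1}(v)\cap r^{-1}(w)\neq\emptyset}$, i.e.\ the vertices to which $v$ emits at least one edge. Since $E^0$ is finite, so is $T$. The hypothesis that every infinite emitter emits infinitely many edges to any vertex it emits any edge to then says that for each $w\in T$, $|s^{-1}(v)\cap r^{-1}(w)|=\infty$.

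Next I would rewrite the edges from $v$ ending outside $H$ as a disjoint union indexed by their range:
$$s^{-1}(v)\cap r^{-1}(E^0\setminus H)=\bigsqcup_{w\in T\cap(E^0\setminus H)}\bigl(s^{-1}(v)\cap r^{-1}(w)\bigr).$$
From $0<|s^{-1}(v)\cap r^{-1}(E^0\setminus H)|$ one gets that $T\cap (E^0\setminus H)$ is nonempty; picking any $w$ in this intersection, the corresponding summand on the right is infinite, forcing the left-hand side to be infinite. This contradicts the finiteness half of the condition $v\in B_H$, hence $B_H=\emptyset$.

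No step is genuinely an obstacle: the argument is essentially bookkeeping once the edges from $v$ are partitioned according to their range vertex. The only care needed is to note that the finiteness of $E^0$ (given by the hypothesis $|E^0|<\infty$) makes $T$ finite, although for this particular contradiction finiteness of $T$ is not even required — a single range vertex in $T\cap(E^0\setminus H)$ already suffices.
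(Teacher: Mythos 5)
Your argument is correct: once a single vertex $w\notin H$ receives an edge from the infinite emitter $v$, the hypothesis forces infinitely many such edges, contradicting the finiteness clause in the definition of $B_H$. The paper states this lemma without proof (``The following is also clear''), and your verification is exactly the intended one-line observation, including the correct remark that finiteness of $E^0$ is not actually needed.
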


\begin{lemma}\label{lem:structure-d}
Let $E=(E^0,E^1,r,s)$ be a graph with finitely many vertices, and assume that every infinite emitter emits infinitely many edges to any vertex it emits any edge to. 

Define a map \fct{\wastheta_E}{\Gamma_E}{\Prime_\gamma(C^*(E))} as follows. For each $\gamma_0\in\Gamma_E$, let $\wastheta_E(\gamma_0)$ denote the ideal 
$$\mathfrak{J}_{\overline{H(\cup\eta_{\gamma_0}})},$$
where 
$$\eta_{\gamma_0}=\Gamma_E\setminus\setof{\gamma\in\Gamma_E}{\gamma\geq\gamma_0}.$$
This is in fact an element of $\Prime_\gamma(C^*(E))$ and this makes $\wastheta_E$ into a bijection. 
Moreover, $\gamma_1\geq\gamma_2$ if and only if $\wastheta_E(\gamma_1)\supseteq \wastheta_E(\gamma_2)$. Consequently, $\wastheta_E$ is a homeomorphism. 
\end{lemma}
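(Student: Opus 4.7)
The strategy is to translate the problem entirely into combinatorics on the finite poset $\Gamma_E$, using the correspondence between hereditary subsets of $\Gamma_E$, saturated hereditary subsets of $E^0$, and gauge invariant ideals of $C^*(E)$ established in the previous lemmas. The key observation is that, under the hypothesis, the lattice of gauge invariant ideals is isomorphic to the lattice of hereditary subsets of $\Gamma_E$, so we only need to identify which hereditary subsets correspond to prime gauge invariant ideals.

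First, I would check the map is well-defined: the set $\eta_{\gamma_0}$ is hereditary in $\Gamma_E$ (if $\gamma_1 \in \eta_{\gamma_0}$ and $\gamma_1 \geq \gamma_2$, then $\gamma_2 \geq \gamma_0$ would force $\gamma_1 \geq \gamma_0$, contradicting $\gamma_1 \in \eta_{\gamma_0}$), so by Lemma~\ref{lem:structure-b}, $H_{\gamma_0} := \overline{H(\cup\eta_{\gamma_0})}$ is a saturated hereditary subset of $E^0$, and by Lemma~\ref{lem:structure-c}, $B_{H_{\gamma_0}} = \emptyset$, making $\mathfrak{J}_{H_{\gamma_0}}$ unambiguous. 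Note $H_{\gamma_0} \neq E^0$ since $\gamma_0 \notin \eta_{\gamma_0}$ implies $\gamma_0 \cap H_{\gamma_0} = \emptyset$ by Lemma~\ref{lem:structure-b}, so the ideal is proper.

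Next I would verify primality by working in the hereditary-subset lattice. Under the hypothesis, Fact~\ref{fact:structure-1}, Lemma~\ref{lem:structure-b} and Lemma~\ref{lem:structure-c} combine to give an order isomorphism between hereditary subsets of $\Gamma_E$ and gauge invariant ideals; since for closed ideals $\mathfrak{I}_1 \cap \mathfrak{I}_2 = \overline{\mathfrak{I}_1 \mathfrak{I}_2}$ and intersection corresponds to intersection in the lattice, $\mathfrak{J}_{H_{\gamma_0}} \in \Prime_\gamma(C^*(E))$ is equivalent to: whenever $\eta_1 \cap \eta_2 \subseteq \eta_{\gamma_0}$ for hereditary $\eta_1, \eta_2 \subseteq \Gamma_E$, either $\eta_1 \subseteq \eta_{\gamma_0}$ or $\eta_2 \subseteq \eta_{\gamma_0}$. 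If $\gamma_0 \in \eta_1$ and $\gamma_0 \in \eta_2$, then $\gamma_0 \in \eta_1 \cap \eta_2 \subseteq \eta_{\gamma_0}$, a contradiction; so WLOG $\gamma_0 \notin \eta_1$, and by heredity of $\eta_1$ no $\gamma \geq \gamma_0$ can be in $\eta_1$, giving $\eta_1 \subseteq \eta_{\gamma_0}$.

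For bijectivity, injectivity is immediate: $\gamma_0 \notin \eta_{\gamma_0}$ while $\gamma_0 \in \eta_{\gamma_1}$ whenever $\gamma_0 \neq \gamma_1$ and $\gamma_0 \not\geq \gamma_1$, and by symmetry distinct $\gamma_0, \gamma_1$ give distinct $\eta$'s. For surjectivity, given $\mathfrak{p} \in \Prime_\gamma(C^*(E))$, the correspondence gives a prime hereditary $\eta \subsetneq \Gamma_E$; its complement $\sigma = \Gamma_E \setminus \eta$ is a nonempty upward-closed subset of the finite poset $\Gamma_E$, and primality of $\eta$ translates to: $\sigma \subseteq \sigma_1 \cup \sigma_2$ (for upward-closed $\sigma_i$) forces $\sigma \subseteq \sigma_1$ or $\sigma \subseteq \sigma_2$. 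In a finite poset this is equivalent to $\sigma$ having a unique minimal element $\gamma_0$, in which case $\sigma = \{\gamma : \gamma \geq \gamma_0\}$ and $\eta = \eta_{\gamma_0}$, so $\mathfrak{p} = \upsilon_E(\gamma_0)$.

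Finally, for the order/topology statement, $\upsilon_E(\gamma_1) \supseteq \upsilon_E(\gamma_2)$ translates via Lemma~\ref{lem:structure-b} to $\eta_{\gamma_1} \supseteq \eta_{\gamma_2}$, which is equivalent to $\{\gamma : \gamma \geq \gamma_1\} \subseteq \{\gamma : \gamma \geq \gamma_2\}$, i.e.\ $\gamma_1 \geq \gamma_2$. Since both $\Gamma_E$ and $\Prime_\gamma(C^*(E))$ are Alexandrov spaces (Remark~\ref{rem:structure-a}) whose topologies are determined by the specialization preorder (which is $\geq$ in both cases), an order-isomorphism is automatically a homeomorphism (Definition~\ref{def:structure-b}). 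The only subtle point is keeping track of the correct direction of the order under the various correspondences, but once the $\Gamma_E$-lattice reformulation is in place everything is clean; I would expect the primality-translation step to be the main point that needs care.
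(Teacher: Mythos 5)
Your proof is correct, and it takes a genuinely different route from the paper's. The paper establishes membership in $\Prime_\gamma(C^*(E))$ by invoking the description of the prime gauge invariant ideals as $\mathfrak{J}_{E^0\setminus M}$ for maximal tails $M$ (imported from Hong--Szyma\'nski via the proof of Lemma~\ref{lem:prime-ideals-1}) and then verifying condition (MT3) --- the existence of a least element of $E^0\setminus H_0$ --- by a case analysis on the vertices of $E$, treating components and transition states separately with Lemma~\ref{lem:structure-a}; surjectivity is likewise argued at the level of vertices. You instead transport the entire problem into the finite lattice of hereditary subsets of $\Gamma_E$ via Fact~\ref{fact:structure-1} and Lemmas~\ref{lem:structure-b} and~\ref{lem:structure-c}, observe that $\mathfrak{I}_1\mathfrak{I}_2\subseteq\mathfrak{p}$ is equivalent to $\mathfrak{I}_1\cap\mathfrak{I}_2\subseteq\mathfrak{p}$ for closed ideals, and then characterize the meet-prime hereditary subsets of $\Gamma_E$ as exactly those whose complement is a principal up-set --- a purely poset-combinatorial statement. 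This buys you a proof in which transition states and maximal tails never appear, at the cost of leaning on the preceding correspondences being genuine lattice isomorphisms; that is harmless, since a bijective order isomorphism of lattices automatically preserves meets, and the meet of gauge invariant ideals is intersection. Both arguments ultimately bottom out in the same fact that the complement of the relevant set has a unique minimal (respectively least) element, but yours isolates it more cleanly. The only step you should spell out a little more is the surjectivity claim that a nonempty up-closed subset of a finite poset with two or more minimal elements decomposes as a union of two up-closed subsets neither of which contains it, which is what contradicts primality; with antisymmetry of $\geq$ on $\Gamma_E$ (noted in Definition~\ref{def:structure-a}) and finiteness this is immediate.
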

\begin{proof}
From \cite{MR2023453} and the proof of Lemma~\ref{lem:prime-ideals-1}, it is clear that the ideals in $\Prime_\gamma(C^{*}(E))$ are exactly the ideals $\mathfrak{J}_{E^{0}\setminus M}$, where $M\neq\emptyset$ is a maximal tail. 
Assume that $M\neq\emptyset$ and let $H=E^0\setminus M$. 
That $M$ is a maximal tail means that $M$ satisfies the conditions (MT1), (MT2) and (MT3) in \cite{MR2023453}. 
Condition (MT1) is equivalent to $H$ being hereditary, while (MT2) is equivalent to $H$ being saturated. 
Since $E^0$ is assumed to be finite, (MT3) is equivalent to the existence of $w\in M$ such that $v\geq w$ for all $v\in M$, \ie, $M$ has a least element (we will use this terminology although this is only a preorder and not a partial order in general). 

Let $\gamma_0\in\Gamma_E$, and let 
$$\eta_{\gamma_0}=\Gamma_E\setminus\setof{\gamma\in\Gamma_E}{\gamma\geq\gamma_0}.$$
It is clear that $\eta_{\gamma_0}$ is hereditary. Clearly, by the definition above $\wastheta_E(\gamma_0)$ defines an ideal. Let 
$$H_0=\overline{H(\cup\eta_{\gamma_0})}.$$
We want to show that $E^0\setminus H_0$ is a maximal tail. The only thing we need to show is that it has a least element. 
Choose $v_0\in\gamma_0$, and let $v\in E^0\setminus H_0$ be given. 
Assume that $v\not\geq v_0$. 
If $v\in\cup\Gamma_E$, then $v\in \cup\eta_{\gamma_0}$ and thus $v\in H_0$ (which is a contradiction). 
Therefore, we would need to have that $v$ is a transition state --- so $v\in E_\mathrm{reg}^0$ and $v$ does not support a cycle. 
There exists a path to some component in $\Gamma_E$, and, clearly, no such component can be in $\Gamma_E\setminus\eta_{\gamma_0}=\setof{\gamma\in\Gamma_E}{\gamma\geq\gamma_0}$. 
From Lemma~\ref{lem:structure-a} it follows that $v\in\overline{H(\cup\eta_{\gamma_0})}=H_0$, which is a contradiction as well. Therefore $E^0\setminus H_0$ is a maximal tail, and $\wastheta_E(\gamma_0)$ is an element of $\Prime_\gamma(C^*(E))$. 

From Fact~\ref{fact:structure-1} and Lemma~\ref{lem:structure-a} it follows that $\wastheta_E$ is injective. 
Given an element of $\Prime_\gamma(C^*(E))$, then it has to be of the form $\mathfrak{J}_{H_0}$ for some saturated hereditary subset $H_0\subsetneq E^0$ with $E^0\setminus H_0$ having a least element $v_0$. 
First note that $v_0$ cannot be a transition state, so $v_0\in\gamma_0$ for some $\gamma_0\in\Gamma_E$. 
Let $\eta\subseteq\Gamma_E$ be such that $\cup\eta=(\cup\Gamma_E)\cap H_0$. 
Clearly $\gamma_0\not\in\eta$. 
Let $v\in\gamma\in\Gamma_E\setminus\eta$. 
Then $v\geq v_0$, since $v\in\gamma\subseteq E^{0}\setminus H_0$. 
Consequently, $\gamma\geq\gamma_0$. 
On the other hand, assume that $\gamma\geq\gamma_0$ and let $v\in\gamma$. 
Then $v\geq v_0$, so $v\in E^{0}\setminus H_0$. 
Consequently, $\Gamma_E\setminus\eta= \setof{\gamma\in\Gamma_E}{\gamma\geq\gamma_0}$. 
Thus the map $\wastheta_E$ is surjective. 

That $\gamma_1\geq\gamma_2$ implies $\wastheta_E(\gamma_1)\supseteq \wastheta_E(\gamma_2)$ is clear from the definition. 
That $\wastheta_E(\gamma_1)\supseteq \wastheta_E(\gamma_2)$ implies $\gamma_1\geq\gamma_2$ is clear from the definition and Lemma~\ref{lem:structure-b}. 
\end{proof}

\begin{lemma}\label{lem:structure-2}
Let $E=(E^0,E^1,r,s)$ be a graph with finitely many vertices. 
\begin{enumerate}[(i)]
\item\label{lem:structure-2-2}
If every transition state has exactly one edge going out, then 
$H_1\cup H_2=\overline{H_1\cup H_2}$ for all saturated hereditary subsets $H_1,H_2\subseteq E^0$. 
\item\label{lem:structure-2-3}
If $\gamma\in\Gamma_E$, then $\overline{H(\gamma)\setminus\gamma}$ is the largest proper saturated hereditary subset of $\overline{H(\gamma)}$. 
\item\label{lem:structure-2-4}
If every transition state has exactly one edge going out, then the collection 
$\overline{H(\gamma)}\setminus\overline{H(\gamma)\setminus\gamma}$, $\gamma\in\Gamma_E$ is a partition of $E^0$. 
\item\label{lem:structure-2-6}
There exists a graph $F$ with finitely many vertices such that every infinite emitter emits infinitely many edges to any vertex it emits any edge to, every transition state has exactly one edge going out, $E\Meq F$, and $C^*(E)\cong C^*(F)$, 
\item\label{lem:structure-2-7}
If every infinite emitter in $E$ emits infinitely many edges to any vertex it emits any edge to and every transition state has exactly one edge going out, then there exists a graph $F$ with finitely many vertices, such that every infinite emitter emits infinitely many edges to any vertex it emits any edge to, $F$ has no transition states $F^0=\cup\Gamma_E\subseteq E^0$, $\Gamma_E=\Gamma_{F}$ and they carry the same order $\geq$, 
\begin{equation}\label{eq:structure-2-7-eq1}
s_E^{-1}(\cup\Gamma_E)\cap r_E^{-1}(\cup\Gamma_E)\subseteq s_F^{-1}(\cup\Gamma_{F})\cap r_F^{-1}(\cup\Gamma_{F})
\end{equation}
and there exists an injective \starhomo from $C^*(E)$ to $C^*(F)\otimes\K$ such that the image of each ideal $\mathfrak{J}_{\overline{H(S)}}$ is a full corner in $\mathfrak{J}_{H(S)}\otimes\K$ for every hereditary subset $S\subseteq \Gamma_E$. 
\item\label{lem:structure-2-8}
In the setting of part \ref{lem:structure-2-7}, we can get all cyclic components of $F$ to be singletons at the cost of \eqref{eq:structure-2-7-eq1} not necessarily holding anymore and only having a canonical identification of $\Gamma_E$ with $\Gamma_F$. 
\end{enumerate}
\end{lemma}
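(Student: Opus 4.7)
My plan is to dispatch the six parts largely by combinatorial arguments on the graph $E$, using induction over saturation closures for the ideal-theoretic parts \ref{lem:structure-2-2}--\ref{lem:structure-2-4} and explicit constructions with moves \OO, \RR, \CO for the existence statements \ref{lem:structure-2-6}--\ref{lem:structure-2-8}.

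For \ref{lem:structure-2-2} the only case requiring checking is a regular vertex $v$ whose outgoing edges all land in $H_1\cup H_2$. If $v$ is a transition state its single outgoing edge lands in either $H_1$ or $H_2$, which then absorbs $v$ by saturation. If $v$ lies in some component $\gamma\in\Gamma_E$, then $\gamma$ meets $H_1\cup H_2$, and by Lemma~\ref{lem:structure-b} together with strong connectedness the whole of $\gamma$ (hence $v$) lies in $H_1$ or $H_2$. For \ref{lem:structure-2-3} I would first show $\gamma\cap\overline{H(\gamma)\setminus\gamma}=\emptyset$ by induction on the saturation levels $\Sigma_k$ from \cite[Remark~3.1]{MR1988256}: each $v\in\gamma$ either supports a cycle (hence has an outgoing edge back into $\gamma$, blocking saturation) or is singular (not added by saturation). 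Then for any proper saturated hereditary $H\subsetneq\overline{H(\gamma)}$, strong connectedness and saturation force $H\cap\gamma=\emptyset$; a second induction on saturation level, using the hereditariness of $H$ to see that for each newly added vertex $v$ the set $r(s^{-1}(v))$ lies in $H\cap\Sigma_{k-1}(H(\gamma))$ and so avoids $\gamma$, shows $H\subseteq\overline{H(\gamma)\setminus\gamma}$.

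For \ref{lem:structure-2-4} I exploit unique paths: a vertex $v\in\gamma$ sits in exactly $\overline{H(\gamma)}\setminus\overline{H(\gamma)\setminus\gamma}$ by \ref{lem:structure-2-3}, and for any other $\gamma'\in\Gamma_E$ with $v\in\overline{H(\gamma')}$ one must have $\gamma'>\gamma$, whence $\gamma\subseteq H(\gamma')\setminus\gamma'\subseteq\overline{H(\gamma')\setminus\gamma'}$, giving uniqueness. If $v$ is a transition state, the single-outgoing-edge hypothesis forces $v$ onto a unique deterministic forward path which hits a unique component $\gamma_v$, and the saturation argument of \ref{lem:structure-2-3} places $v$ in $\overline{H(\gamma_v)}\setminus\overline{H(\gamma_v)\setminus\gamma_v}$. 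For \ref{lem:structure-2-6} I apply Move \OO repeatedly: first, whenever an infinite emitter $v$ emits only finitely many (but at least one) edges to some $w$, outsplit $v$ by taking $\mathcal{E}_1$ to be those finitely many edges and $\mathcal{E}_2$ the rest; this is a finite process since there are only finitely many vertices. Second, outsplit each transition state with more than one outgoing edge into singletons, which refines outgoing structure without reintroducing any finite bundle from an infinite emitter. Proposition~\ref{prop:moveOimpliesisomorphism} gives $C^*(E)\cong C^*(F)$, and $E\Meq F$ by definition.

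For \ref{lem:structure-2-7} I iteratively apply the collapse move \CO to each transition state; by \cite[Theorem~5.2]{MR3082546} this is a composition of Moves \OO and \RR and so preserves move equivalence. After all collapses the vertex set is exactly $\cup\Gamma_E$, components and their order are preserved by construction, and since a collapsed edge $[ef]$ has multiplicity equal to the product of the multiplicities of $e$ and $f$, one checks that the infinite-emitter condition is maintained and that \eqref{eq:structure-2-7-eq1} holds. For the injective $*$-homomorphism I use that $C^*(F)$ arises as a full corner $p\,C^*(E)p$ for $p=\sum_{w\in F^0}p_w^E$, giving $C^*(E)\otimes\K\cong C^*(F)\otimes\K$; composing $a\mapsto a\otimes e_{11}$ with this isomorphism yields the embedding, and the bijection between saturated hereditary subsets under collapse identifies $\mathfrak{J}_{\overline{H(S)}}$ with $\mathfrak{J}_{H(S)}$, so its image is a full corner in $\mathfrak{J}_{H(S)}\otimes\K$. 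Finally, \ref{lem:structure-2-8} follows by applying Move \RR to each cyclic component with more than one vertex until only a single vertex with a loop remains; this may disturb \eqref{eq:structure-2-7-eq1} and merge edges between components, but preserves the bijection between $\Gamma_E$ and $\Gamma_F$.

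The main obstacle I anticipate is \ref{lem:structure-2-7}: the bookkeeping required to confirm that iterated collapse preserves both the infinite-emitter hypothesis and the precise correspondence of gauge invariant ideals, and that the corner is full in the correct ideal (not merely in $C^*(F)\otimes\K$), needs careful tracking through the inductive collapse process.
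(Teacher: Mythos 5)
Your overall route coincides with the paper's own proof: parts \ref{lem:structure-2-2}--\ref{lem:structure-2-4} by direct saturation arguments, \ref{lem:structure-2-6} by outsplitting, \ref{lem:structure-2-7} by collapsing transition states and realizing the collapsed algebra as a full corner, \ref{lem:structure-2-8} by further collapsing cyclic components. The one place you genuinely diverge is the ``largest'' claim in \ref{lem:structure-2-3}: you run an induction on the saturation filtration $\Sigma_k(H(\gamma))$ to push an arbitrary proper saturated hereditary $H\subseteq\overline{H(\gamma)}$ with $H\cap\gamma=\emptyset$ into $\overline{H(\gamma)\setminus\gamma}$, whereas the paper instead shows $\overline{H(v)}=\overline{H(\gamma)}$ for every $v\in\overline{H(\gamma)}\setminus\overline{H(\gamma)\setminus\gamma}$. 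Both work, and yours is arguably a little cleaner. (In \ref{lem:structure-2-4} you should still say a word about why a transition state lies in only \emph{one} member of the collection, not merely in at least one; the paper spends half of that argument on exactly this.)

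Two steps would fail as literally written. First, in \ref{lem:structure-2-6}, outsplitting a transition state $w$ into singletons duplicates every edge \emph{into} $w$, so an upstream transition state that previously had exactly one outgoing edge (pointing at $w$) acquires several; your claim that the splitting ``refines outgoing structure'' without reintroducing problems is therefore false as stated. The process does terminate, but only with an ordering argument --- the paper processes transition states by induction on the length of the shortest path to a component and checks that vertices already fixed never emit into a vertex being split later. You need some such bookkeeping. Second, in \ref{lem:structure-2-8} you propose Move \RR\ to shrink a cyclic component to a single looped vertex, but \RR\ requires both $s(r^{-1}(w))$ and $s^{-1}(w)$ to be singletons, and a vertex of a cyclic component will typically receive edges from predecessor components and may emit exits, so these hypotheses fail in general. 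The correct tool is the collapse move \CO\ applied to the non-loop vertices of the cycle, exactly as you already use it in \ref{lem:structure-2-7}; note that the standing hypothesis on infinite emitters forces every vertex of a cyclic component to be regular, so \CO\ is always available there.
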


\begin{proof}

\ref{lem:structure-2-2}: 
Let $H_1$ and $H_2$ be saturated hereditary subsets of $E^0$.  Since $H_1 \cup H_2$ is hereditary, it is enough to show that $H_1 \cup H_2$ is saturated.  Let $x \in E^0$ be a regular vertex such that $r( s^{-1} (x) ) \subseteq H_1 \cup H_2$.  Suppose $x$ is a transitional vertex.  Then by assumption $s^{-1}(x) = \{e\}$.  Therefore, $r( s^{-1}(x) ) = \{ r( e) \} \subseteq H_1$ or $r( s^{-1}(x) ) = \{ r(e) \}  \subseteq H_2$.  Since $H_1$ and $H_2$ are saturated, we have that $x \in H_1$ or $x \in H_2$ which implies that $x \in H_1 \cup H_2$.  Suppose $x \in \gamma$ for some $\gamma \in \Gamma_E$.  Then there exists a path $\mu = \mu_1 \cdots \mu_n$ such that $s( \mu_1 ) = r( \mu_n ) = x$ (we are using the fact that $x$ is a regular vertex).  Since $r( s^{-1} (x) ) \subseteq H_1 \cup H_2$ and $\mu_1 \in s^{-1}(x )$, we have that $r( \mu_1 ) \in H_1 \cup H_2$.  Since $H_1 \cup H_2$ is hereditary, $x = r( \mu_n ) \in H_1 \cup H_2$.  

In both cases, we have shown that $x \in H_1 \cup H_2$.  Therefore, $H_1 \cup H_2$ is saturated.

\ref{lem:structure-2-3}: 
Let $H=\overline{H(\gamma)\setminus\gamma}$. 
Clearly $H$ is saturated and hereditary, and $H\subseteq\overline{H(\gamma)}$. 
We want to show that $H$ is a proper subset of $\overline{H(\gamma)}$ and $\gamma\cap H=\emptyset$. 
So assume first that $\gamma\cap H\neq\emptyset$. 
Then $H\setminus\gamma$ is not saturated. 
Thus there exists a $v\in E_{\mathrm{reg}}^{0}$ such that $r(s^{-1}(v))\subseteq H\setminus \gamma$ and $v\not\in H\setminus\gamma$. 
Since $H\setminus\gamma\subseteq H$ and $H$ is saturated, $v\in H$. 
Thus $v\in\gamma$. 
Since $v\in\gamma\in\Gamma_E$, we have that $v$ supports a cycle within $\gamma$ or $v$ is singular --- both being contradictions. 
Consequently, we have that $\gamma\cap H=\emptyset$. 
Now it is clear that $H$ is a proper subset of $\overline{H(\gamma)}$. 

Now we want to show that $H$ is the largest proper saturated hereditary subset of $\overline{H(\gamma)}$. 
It is enough to show that for all $v\in \overline{H(\gamma)}\setminus H$, we have that $\overline{H(v)}=\overline{H(\gamma)}$. 
So let $v\in \overline{H(\gamma)}\setminus H$ be given. Clearly $\overline{H(v)}\subseteq\overline{H(\gamma)}$. 
If $v\in\gamma$, then $H(v)=H(\gamma)$, so $\overline{H(v)}=\overline{H(\gamma)}$.  Suppose $v \notin \gamma$.  
Since $H(\gamma)\setminus \gamma\subseteq H$, we have that $v\not\in H(\gamma)\setminus \gamma$. 
So now assume that $v\in \overline{H(\gamma)}\setminus H$. 
Note that $\overline{H(\gamma)}\setminus\gamma$ is saturated, and thus $H\subseteq \overline{H(\gamma)}\setminus\gamma$. 
Then $\overline{H(\gamma)}\setminus\{v\}$ cannot be saturated, so $v\in E_\mathrm{reg}^{0}$ and $r(s^{-1}(v))\subseteq \overline{H(\gamma)}\setminus\{v\}$.
By assumption, we must have that $r(s^{-1}(v))\not\subseteq H$.
Using the description of the saturation from \cite[Remark~3.1]{MR1988256}, it follows that there exists a $v_0\in\gamma$ such that $v\geq v_0$. Thus $H(v)\supseteq H(\gamma)$ and $\overline{H(v)}\supseteq\overline{H(\gamma)}$ follows. 

\ref{lem:structure-2-4}: 
It follows from \ref{lem:structure-2-3} that $\gamma\subseteq\overline{H(\gamma)}\setminus \overline{H(\gamma)\setminus \gamma}$ for each $\gamma\in\Gamma_E$. The transition states are the regular vertices not supporting a cycle. Since we only have finitely many vertices, every transition state will have a path to a component (the sinks are also components). Moreover, since every transition state has exactly one outgoing edge, each transition state has a unique shortest path to a component through transition states. If we have a transition state $v$ and the first component every path from $v$ reaches is $\gamma$, then it follows from \cite[Remark~3.1]{MR1988256} that $v\in\overline{H(\gamma)}$. From the proof of \ref{lem:structure-2-3}, we have that $\gamma\cap \overline{H(\gamma)\setminus\gamma}=\emptyset$, so $v\not\in \overline{H(\gamma)\setminus\gamma}$. Thus we have shown that every vertex belongs to at least one of the sets $\overline{H(\gamma)}\setminus \overline{H(\gamma)\setminus \gamma}$, $\gamma\in\Gamma_E$. Let $\gamma,\gamma'\in\Gamma_E$. If $\gamma\geq\gamma'$ and  $\gamma \neq \gamma'$, then $\gamma' \subseteq H(\gamma)$, and therefore $\gamma'\cap(\overline{H(\gamma)}\setminus \overline{H(\gamma)\setminus \gamma})=\emptyset$. If $\gamma\not\geq\gamma'$, then $\overline{H(\gamma)}\setminus \gamma'$ is a saturated set that contains $H(\gamma)$, and, consequently, $\gamma'\cap \overline{H(\gamma)}=\emptyset$. Therefore, the vertices of the components belong to a unique set in the collection $\overline{H(\gamma)}\setminus \overline{H(\gamma)\setminus \gamma}$, $\gamma\in\Gamma_E$. Now let $v$ be a transition state and let $\gamma$ be the first component every path from $v$ reaches. Assume that $v\in\overline{H(\gamma')}\setminus \overline{H(\gamma')\setminus \gamma'}$ for a $\gamma'\in\Gamma_E$ with $\gamma'\neq\gamma$. If $\gamma'\geq\gamma$, then $\gamma\subseteq H(\gamma')\setminus \gamma'$ and therefore $v\in\overline{H(\gamma')\setminus \gamma'}$.  So this is a contradiction since $v\in\overline{H(\gamma')}\setminus \overline{H(\gamma')\setminus \gamma'}$.  If $\gamma'\not\geq\gamma$, then we have seen that $\gamma\cap \overline{H(\gamma')}=\emptyset$ while $v\in \overline{H(\gamma')}$ implies that $\gamma \subseteq \overline{H(\gamma')}$. So this is a contradiction. Thus we have shown that each transition state belongs to a unique set in the collection $\overline{H(\gamma)}\setminus \overline{H(\gamma)\setminus \gamma}$, $\gamma\in\Gamma_E$ --- namely, the first component every path from it reaches.

\ref{lem:structure-2-6}: 
First we show how to modify $E$ to get a graph with the property that if $v$ is an infinite emitter, then $v$ emits infinitely many edges to any vertex it emits any edge to. 
Let $v \in E^0$ be an infinite emitter. 
If there exists a vertex $u \in E^0$ such that $v$ emits only finitely many edges to $u$, we partition $s^{-1}(v)$ into two sets, $\mathcal{E}_1 = \setof{ e \in s^{-1}(v)}{ |s^{-1}(v) \cap r^{-1}(r(e))| < \infty }$ and $\mathcal{E}_2 = \setof{ e \in s^{-1}(v)}{ |s^{-1}(v) \cap r^{-1}(r(e))| = \infty }$, \ie, $\mathcal{E}_1$ consists of the edges out of $v$ that only have finitely many parallel edges. 
Note that since $E^0$ is finite, $\mathcal{E}_1$ is a finite set. 
Hence we can perform Move \OO according to this partition, resulting in a graph $E'$ that is move equivalent to $E$. 
Call the vertices $v$ got split into $v_1$ and $v_2$. 
In $E'$, $v_2$ is an infinite emitter with the property that it emits infinitely many edges to any vertex it emits any edge to, and any infinite emitter in $E$ that already had that property keeps it. 
On the other hand $v_1$ is a finite emitter. 
Since $E^0$ is finite, we can do the above process a finite number of times, ending with a graph $G$ that is move equivalent to $E$, and with the property that if $v$ is an infinite emitter, then $v$ emits infinitely many edges to any vertex it emits any edge to. 

Let $n\in\N$ and let $v \in G^0$ be a transition state of $G$, \ie, a regular vertex that is not the base point of a cycle. 
Assume that $|s^{-1}(v)|\geq 2$, and that the shortest path from $v$ to a component of $G$ is $n$. 
Since $v$ is regular, we can partition $s^{-1}(v)$ into finitely many disjoint singletons $\mathcal{E}_1'$,$\mathcal{E}_2',\ldots,\mathcal{E}_{|s^{-1}(v)|}'$. 
Now we can perform Move \OO according to this partition, resulting in a graph $G'$ that is move equivalent to $G$ such that vertices that $v$ got split into are still transition states but each having exactly one outgoing edge, and the shortest path from each of them to a component is at least $n$.
A vertex in $G'$ is a transition state if and only if it is one of the vertices that $v$ got split into or it is a transition state of $G$. 
All transition states in $G$ that had exactly one outgoing edge and a path to a component of length $n$ or shorter will still have exactly one outgoing edge and a path of length at most $n$. 
Also, every infinite emitter in $G'$ emits infinitely many edges to any vertex it emits any edge to. 
We repeat this for all transition states emitting at least two edges and with the shortest path to a component having length $n$. 
By induction on $n$, we can get a graph $F$ with finitely many vertices such that every infinite emitter emits infinitely many edges to any vertex it emits any edge to, every transition state has exactly one edge going out. 

We got $F$ from $E$ by using Move~\OO a number of times. 
Therefore we clearly have that $E\Meq F$, and it follows from Proposition~\ref{prop:moveOimpliesisomorphism} that $C^*(E)\cong C^*(F)$.

\ref{lem:structure-2-7}: 
Let $F$ be the graph obtained by continuing to collapse all transitional vertices of $E$.  It is clear from the construction of $F$ that $F^0 = \cup \Gamma_E \subseteq E^0$, $\Gamma_E = \Gamma_F$, they carry the same order, and $s_E^{-1} ( \cup \Gamma_E) \cap r_E^{-1}( \cup\Gamma_E ) \subseteq s_F^{-1} ( \cup \Gamma_F) \cap r_F^{-1}( \cup\Gamma_F )$.  Now, there exists an injective \starhomo $\Phi_1 \colon C^* (F) \rightarrow C^* (E)$ such that $\Phi_1 ( C^* (F) ) = P C^* (E) P$, where $P$ is the sum of vertex projections of the vertices from $F$.  Since $\overline{H(F^0)} = E^0$, we have that $\Phi_1 ( C^* (F) )$ is a full corner of $C^*(E)$.  Therefore, $\Phi_1 ( \mathfrak{J}_{ H(S) } ) = P \mathfrak{J}_{ \overline{H(S)} } P$ for every hereditary subset $S \subseteq \Gamma_F$.  By \cite{MR0454645} there exists a partial isometry $v$ in $\mathcal{M}( C^* (E) \otimes \K )$ such that $v^*v = \Phi_1 ( 1_{C^*(F)} ) \otimes 1_{\mathbb{B} ( \ell^2 ) }$ and $vv^* = 1_{ \mathcal{M}( C^* (E) \otimes \K ) }$.  Set $\Phi_2 = \mathrm{Ad} (v) \circ (\Phi_1 \otimes \mathrm{id}_\K)$.  Hence, $\Phi_2 \colon C^* (F) \otimes \K \rightarrow C^* (E) \otimes \K$ is a \stariso such that $\Phi_2 ( \mathfrak{J}_{ H(S) } \otimes \K)$ is a full corner of $\mathfrak{J}_{ \overline{H(S)} } \otimes \K$ for every hereditary subset $S \subseteq \Gamma_F$.  

Set $\Psi =  \Phi_2^{-1} \circ \kappa$, where $\kappa$ is the embedding $C^* (E)$ to $C^* (E) \otimes \K$ given by $a \mapsto a \otimes e_{11}$. Therefore, $\Psi \colon C^* (E) \rightarrow C^* (F) \otimes \K$ is an injective \starhomo such that $\Psi ( \mathfrak{J}_{\overline{H(S)} } )$ is a full corner of $\mathfrak{J}_{H(S)}$ for every hereditary subset $S \subseteq \Gamma_F$.  Since $\Gamma_F = \Gamma_E$, $S$ is hereditary in $\Gamma_F$ if and only if $S$ is hereditary in $\Gamma_E$.  So, $\Psi \colon C^* (E) \rightarrow C^* (F) \otimes \K$ is an injective \starhomo such that $\Psi ( \mathfrak{J}_{\overline{H(S)} } )$ is a full corner of $\mathfrak{J}_{H(S)}$ for every hereditary subset $S \subseteq \Gamma_E$.

\ref{lem:structure-2-8}:  
In addition to the process in \ref{lem:structure-2-7} of collapsing all transitional vertices of $E$, we also collapse all regular vertices of $E$ that are base points of cyclic components (but not of a loop).  Using a similar argument as the proof of \ref{lem:structure-2-7}, we get the desired result.
\end{proof}

\begin{proposition}\label{prop:thetamap}
Let $E$ be a graph with finitely many vertices such that every infinite emitter emits infinitely many edges to any vertex it emits any edge to. 
In Lemma~\ref{lem:structure-d} we have defined a homeomorphism $\wastheta_E$ from $\Gamma_E$ to $\Prime_\gamma(C^*(E))$. 
This homeomorphism induces a lattice isomorphism from the open subsets of $\Gamma_E$ to the open subsets of $\Prime_\gamma(C^*(E))$. We also denote this map $\wastheta_E$. 

Let $\omega_E$ denote the map given by Lemma~\ref{lem:structure-b} and Fact~\ref{fact:structure-1}, \ie, 
$$\omega_E(\eta)=\mathfrak{J}_{\overline{H(\cup\eta)}}$$
for every hereditary subset $\eta$ of $\Gamma_E$, and let $\varepsilon_E$ denote the map from $\mathbb{O}(\Prime_\gamma(C^*(E)))$ to $\mathbb{I}_\gamma(C^*(E))$ given in Lemma~\ref{lem:orderreversingprime}, \ie, 
$$\varepsilon_E(O)=\cap(\Prime_\gamma(C^*(E))\setminus O)$$
for every open subset $O\subseteq\Prime_\gamma(C^*(E))$. 
Then we have a commuting diagram 
$$\xymatrix{\mathbb{O}(\Gamma_E)\ar[d]^{\wastheta_E}_\cong\ar[r]_-\cong^-{\omega_E} & 
\mathbb{I}_\gamma(C^*(E))\ar@{=}[d] \\ 
\mathbb{O}(\Prime_\gamma(C^*(E)))\ar[r]_-\cong^-{\varepsilon_E}
& \mathbb{I}_\gamma(C^*(E))}$$
of lattice isomorphisms. 
\end{proposition}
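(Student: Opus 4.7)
The plan is to verify that each of the three maps $\omega_E$, $\wastheta_E$, and $\varepsilon_E$ is a lattice isomorphism between the indicated lattices, and then to establish commutativity of the square by a direct calculation that reduces, through intersections of ideals, to the correspondence of Lemma~\ref{lem:structure-b} at the level of components.

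For the individual maps: $\omega_E$ factors as the bijection of Lemma~\ref{lem:structure-b} (from hereditary subsets of $\Gamma_E$ to saturated hereditary subsets of $E^{0}$) followed by the bijection from saturated hereditary subsets to $\mathbb{I}_\gamma(C^*(E))$ coming from Fact~\ref{fact:structure-1}; the standing hypothesis on infinite emitters, via Lemma~\ref{lem:structure-c}, guarantees $B_H = \emptyset$ for every saturated hereditary $H$, so every admissible pair has the form $(H,\emptyset)$ and this second bijection is a lattice isomorphism. The map $\wastheta_E$ is a homeomorphism by Lemma~\ref{lem:structure-d} and therefore induces a lattice isomorphism on open sets. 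Finally, $\varepsilon_E$ is the composition of complementation $O \mapsto \Prime_\gamma(C^*(E)) \setminus O$ with the inverse of $\operatorname{hull}_\gamma$ from Lemma~\ref{lem:orderreversingprime}, and complementation turns the order-reversing bijection of that lemma into an order-preserving bijection, so $\varepsilon_E$ is a lattice isomorphism.

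For commutativity, fix a hereditary $\eta \subseteq \Gamma_E$. Using bijectivity of $\wastheta_E$ and its definition from Lemma~\ref{lem:structure-d},
$$\varepsilon_E(\wastheta_E(\eta)) = \bigcap_{\gamma' \in \Gamma_E \setminus \eta} \wastheta_E(\gamma') = \bigcap_{\gamma' \notin \eta} \mathfrak{J}_{\overline{H(\cup\eta_{\gamma'})}}.$$
Because $B_H = \emptyset$ for every saturated hereditary $H$, the lattice isomorphism of Fact~\ref{fact:structure-1} (combined with the observation that the meet of the admissible pairs $(H_i, \emptyset)$ is $(\bigcap_i H_i, \emptyset)$) gives $\bigcap_i \mathfrak{J}_{H_i} = \mathfrak{J}_{\bigcap_i H_i}$, so it remains to prove the identity of saturated hereditary subsets
$$\bigcap_{\gamma' \notin \eta} \overline{H(\cup\eta_{\gamma'})} = \overline{H(\cup\eta)}.$$
By Lemma~\ref{lem:structure-b}, such a subset of $E^{0}$ is determined by its intersection with $\cup\Gamma_E$, so it suffices to check that both sides meet $\cup\Gamma_E$ in the same set. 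The right-hand side meets $\cup\Gamma_E$ in $\cup\eta$ by Lemma~\ref{lem:structure-b}, and for the left-hand side the same lemma yields $(\cup\Gamma_E) \cap \overline{H(\cup\eta_{\gamma'})} = \cup\eta_{\gamma'}$; a component $\gamma \in \Gamma_E$ lies in $\eta_{\gamma'}$ for every $\gamma' \notin \eta$---equivalently $\gamma \not\geq \gamma'$ for every $\gamma' \notin \eta$---precisely when $\gamma \in \eta$, by the hereditary property of $\eta$.

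The main obstacle is purely bookkeeping: no conceptual content is needed beyond the component-level correspondence of Lemma~\ref{lem:structure-b} and the vanishing of breaking vertices from Lemma~\ref{lem:structure-c}. The infinite-emitter hypothesis enters exactly through the latter, which is precisely what makes ideal intersections correspond to set intersections of saturated hereditary subsets.
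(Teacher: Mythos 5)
Your proof is correct, and it traverses the commuting square in the opposite direction from the paper's own argument. The paper computes $\varepsilon_E^{-1}\circ\omega_E(\eta)=\Prime_\gamma(C^*(E))\setminus\operatorname{hull}_\gamma(\mathfrak{J}_{\overline{H(\cup\eta)}})$ and identifies this set of prime ideals with $\wastheta_E(\eta)$ using only the containment criterion $\overline{H(\cup\eta_{\gamma_0})}\supseteq\overline{H(\cup\eta)}\Leftrightarrow\eta_{\gamma_0}\supseteq\eta$ from Lemma~\ref{lem:structure-b}; it never needs to form an intersection of ideals. You instead compute $\varepsilon_E\circ\wastheta_E(\eta)$ as $\bigcap_{\gamma'\notin\eta}\mathfrak{J}_{\overline{H(\cup\eta_{\gamma'})}}$, which forces you to invoke two extra ingredients the paper's route avoids: the vanishing of breaking vertices (Lemma~\ref{lem:structure-c}) so that Fact~\ref{fact:structure-1} converts intersections of ideals into intersections of saturated hereditary subsets, and the fact that a saturated hereditary subset is determined by its trace on $\cup\Gamma_E$. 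Both arguments bottom out in the same one-line combinatorial fact about hereditary subsets of $\Gamma_E$ (yours: $\gamma\in\eta_{\gamma'}$ for all $\gamma'\notin\eta$ iff $\gamma\in\eta$; the paper's: $\eta_{\gamma_0}\not\supseteq\eta$ iff $\gamma_0\in\eta$), so neither is deeper than the other; the paper's version is slightly leaner in its dependencies, while yours makes explicit the useful identity $\bigcap_{\gamma'\notin\eta}\overline{H(\cup\eta_{\gamma'})}=\overline{H(\cup\eta)}$ at the level of vertex sets. One small point of care that you handled correctly but implicitly: when $\eta=\Gamma_E$ the intersection is empty and must be read as the improper ideal $C^*(E)=\mathfrak{J}_{E^0}$ for the identity to hold.
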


\begin{proof}
The only new statement in the proposition is the commutativity of the diagram. Note that the inverse of the map $\varepsilon_E$ is also given in Lemma~\ref{lem:orderreversingprime}, and it is $\mathfrak{I}\mapsto\Gamma_E\setminus\operatorname{hull}_\gamma(\mathfrak{I})$. 
Let $\eta\subseteq\Gamma_E$ be a hereditary subset. Then  $$\varepsilon^{-1}_E\circ\omega_E(\eta)=\Prime_\gamma(C^*(E))\setminus\operatorname{hull}_\gamma(\mathfrak{J}_{\overline{H(\cup\eta)}}).$$
From the description of the elements in $\Prime_\gamma(C^*(E))$ in the proof of Lemma~\ref{lem:structure-d}, we see that this set is exactly the set
$$\setof{\mathfrak{J}_{\overline{H(\cup(\Gamma_E\setminus\setof{\gamma\in\Gamma_E}{\gamma\geq\gamma_0}))}}}{\gamma_0\in\Gamma_E, \overline{H(\cup(\Gamma_E\setminus\setof{\gamma\in\Gamma_E}{\gamma\geq\gamma_0}))}\not\supseteq \overline{H(\cup\eta)}}.$$
But this is exactly the image of 
$$\setof{\gamma_0\in\Gamma_E}{\Gamma_E\setminus\setof{\gamma\in\Gamma_E}{\gamma\geq\gamma_0}\not\supseteq \eta}$$
under the homeomorphism $\wastheta_E$. Since $\eta$ is hereditary, this set is exactly $\eta$. 
\end{proof}

\begin{example} We will now take a look at an example of how we
    get the space $\Prime_\gamma(C^{*}(E))$ from the space $\Gamma_E$
    for a graph $E$ with finitely many vertices (where all the
    infinite emitters emit infinitely many edges to any vertex they
    emit any edge to).  Let us say that the ordered set $\Gamma_E$
    consists of four points $\gamma_1$, $\gamma_2$, $\gamma_3$,
    $\gamma_4$ with the relations $\gamma_1\geq\gamma_3$,
    $\gamma_2\geq\gamma_3$, $\gamma_3\geq\gamma_4$ (and thus also
    $\gamma_1,\gamma_2\geq\gamma_4$), while $\gamma_1\not\geq\gamma_2$
    and $\gamma_2\not\geq\gamma_1$. This can be illustrated by the
    component graph as in Figure~\ref{figure:component-graph}.

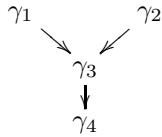
\begin{figure}[h]\begin{center}
$$\xymatrix@=0.3cm{\gamma_1\ar[dr]&&\gamma_2\ar[dl] \\ 
&\gamma_3\ar[d] \\ & \gamma_4}$$
\end{center}
\caption{The component graph $\Gamma_E$}
\label{figure:component-graph}
\end{figure}

For each $\gamma_i$, $i=1,2,3,4$, we consider the hereditary subset $\eta_i=\Gamma_E\setminus\setof{\gamma\in\Gamma_E}{\gamma\geq\gamma_i}$. 
These subsets are illustrated in Figure~\ref{figure:primeexample} by marking the elements of the subset in red. 

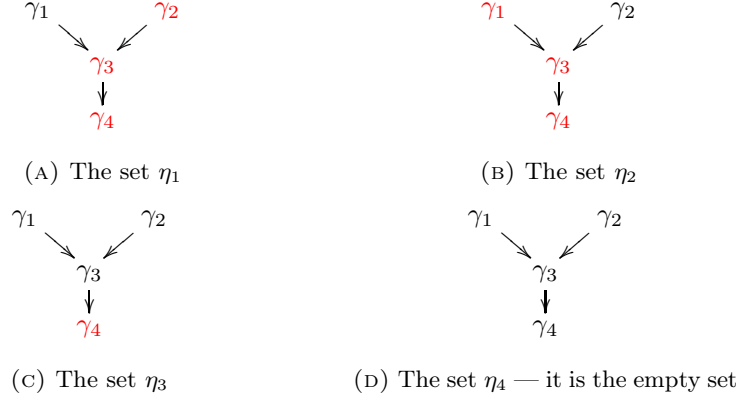
\begin{figure}[H]\quad\begin{subfigure}[b]{0.4\textwidth}
$$\xymatrix@=0.3cm{\gamma_1\ar[dr]&&{\color{red}\gamma_2\ar[dl]} \\ 
&{\color{red}\gamma_3}\ar[d] \\ & {\color{red}\gamma_4}}$$
        \caption{The set $\eta_1$}
        \label{fig:prime-1}
    \end{subfigure}\qquad
    ~           \begin{subfigure}[b]{0.4\textwidth}
$$\xymatrix@=0.3cm{{\color{red}\gamma_1}\ar[dr]&&\gamma_2\ar[dl] \\ 
&{\color{red}\gamma_3\ar[d]} \\ & {\color{red}\gamma_4}}$$
        \caption{The set $\eta_2$}
        \label{fig:prime-2}
    \end{subfigure}\qquad
    ~           \begin{subfigure}[b]{0.4\textwidth}
$$\xymatrix@=0.3cm{\gamma_1\ar[dr]&&\gamma_2\ar[dl] \\ 
&\gamma_3\ar[d] \\ & {\color{red}\gamma_4}}$$
        \caption{The set $\eta_3$}
        \label{fig:prime-3}
    \end{subfigure}\qquad
    ~           \begin{subfigure}[b]{0.4\textwidth}
$$\xymatrix@=0.3cm{\gamma_1\ar[dr]&&\gamma_2\ar[dl] \\ 
&\gamma_3\ar[d] \\ & \gamma_4}$$
        \caption{The set $\eta_4$ --- it is the empty set}
        \label{fig:prime-4}
    \end{subfigure}\quad
\caption{The components marked with red show the elements of $\eta_i$, for $i=1,2,3,4$}
\label{figure:primeexample}
\end{figure}
So the corresponding gauge invariant ideals $\upsilon_E(\gamma_i)=\omega_E(\eta_i)$ of $C^*(E)$ are $\mathfrak{J}_{\overline{H(\cup\eta_i)}}$, \ie,  $\mathfrak{J}_{\overline{H(\gamma_2\cup\gamma_3\cup\gamma_4)}}$, 
$\mathfrak{J}_{\overline{H(\gamma_1\cup\gamma_3\cup\gamma_4)}}$,
$\mathfrak{J}_{\overline{H(\gamma_4)}}$, 
$\mathfrak{J}_{\overline{H(\emptyset)}}=\{0\}$, respectively. 
The topology on $\Prime_\gamma(C^*(E))$ is given by the specialization preorder, so we can illustrate it as in Figure~\ref{figure:prime-space}, where an arrow (or path) from $x$ to $y$ indicates that $x$ is in the closure of $\{y\}$. 
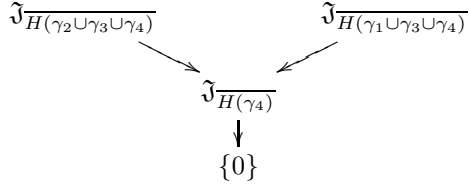
\begin{figure}[H]\begin{center}
$$\xymatrix@=0.4cm{\mathfrak{J}_{\overline{H(\gamma_2\cup\gamma_3\cup\gamma_4)}}
\ar[dr]&&\mathfrak{J}_{\overline{H(\gamma_1\cup\gamma_3\cup\gamma_4)}}\ar[dl] \\ 
&\mathfrak{J}_{\overline{H(\gamma_4)}}\ar[d] \\ & \{0\}}$$
\end{center}
\caption{An illustration of $\Prime_\gamma(C^*(E))$}
\label{figure:prime-space}
\end{figure}
\end{example}

\begin{example}\label{linearcase}
In the case that the ordered set $\Gamma_E$ is linearly ordered
\[
\gamma_1\geq \gamma_2\geq \cdots\geq \gamma_n
\]
(where all the infinite emitters emit infinitely many edges to any vertex they emit any edge to) we get hereditary subsets $\eta_i=\setof{\gamma}{\gamma_i >\gamma}$ and prime gauge invariant ideals
\[
\mathfrak p_i=\upsilon_E(\gamma_i)=\omega_E(\eta_i)=\mathfrak{J}_{\overline{H(\cup{\eta_i})}}=
\begin{cases}\mathfrak{J}_{\overline{H(\gamma_{i+1})}}&i<n\\
\{0\}&i=n.
\end{cases}
\]
Note that the $\mathfrak p_i$ decrease as $i$ increases. We denote the corresponding topological space by $X_n$ and note that it is the Alexandrov space of a linear order on a set of $n$ elements.
\end{example}

\subsection{Reduced filtered \texorpdfstring{$K$}{K}-theory}\label{sec:reducedKtheory}

Let $X$ be a topological space satisfying the $T_0$ separation
property and let \A be a \ca over $X$.  For open subsets $U_{1} ,
U_{2} , U_{3}$ of $X$ with $U_{1} \subseteq U_{2} \subseteq U_{3}$, let $Y_{1} = U_{2} \setminus U_{1}, Y_{2} = U_{3} \setminus U_{1},
Y_{3} = U_{3} \setminus U_{2}\in \mathbb{LC}(X)$.  Then the
diagram
\begin{equation*}
\xymatrix{
K_{0} ( \A ( Y_{1}  ) ) \ar[r]^{ \iota_{*} } & K_{0} ( \A ( Y_{2} ) ) \ar[r]^{ \pi_{*} } & K_{0} ( \A ( Y_{3} ) ) \ar[d]^{\partial_{*}} \\
K_{1} ( \A ( Y_{3}  ) ) \ar[u]^{ \partial_{*}} & K_{1} ( \A ( Y_{2} ) ) \ar[l]^{ \pi_{*} } & K_{1} ( \A ( Y_{1} ) ) \ar[l]^{\iota_{*}}
}
\end{equation*}
is an exact sequence. The collection of all such exact sequences is an invariant of the \cas over $X$ often referred to as the \emph{filtered $K$-theory}. We use here a refined notion:

\begin{definition}
Let $X$ be a finite topological space satisfying the $T_0$ separation property and let \A be a \ca over $X$.  
Note that all singletons of $X$ are locally closed. 

For each $x\in X$, we let $S_x$ denote the smallest open subset that contains $x$, and we let 
$R_x=S_x\setminus\{x\}$, which is an open subset. 
As mentioned above, we get a cyclic six term exact sequence in $K$-theory
\begin{equation}\label{eq:sixtermktheory}\vcenter{
\xymatrix{
K_0(\A(O))\ar[r] & K_0(\A(U))\ar[r] & K_0(\A(U\setminus O))\ar[d] \\
K_1(\A(U\setminus O))\ar[u] & K_1(\A(U))\ar[l] & K_1(\A(O))\ar[l] \\
}}
\end{equation}
whenever we have two open subsets $O\subseteq U\subseteq X$. 
It follows from \cite[Theorem~4.1]{MR2922394} that the map from $K_0$ to $K_1$ is the zero map whenever $\A(O)$ and $\A(U)$ are gauge invariant ideals of a graph \ca. 

Let 
\begin{align*}
I_0(\A)&=\setof{R_x}{x\in X,R_x\neq\emptyset}\cup\setof{S_x}{x\in X}\cup\setof{\{x\}}{x\in X},\\
I_1(\A)&=\setof{\{x\}}{x\in X},
\end{align*}
and let $\operatorname{Imm}(x)$ denote the set 
$$\setof{y\in X}{S_y\subsetneq S_x\wedge \not\exists z\in X\colon S_y\subsetneq S_z\subsetneq S_x}.$$
The \emph{reduced filtered $K$-theory} of \A, $\FKR(X;\A)$, consists of the families of groups 
$(K_0(\A(V)))_{V\in I_0(\A)}$ and 
$(K_1(\A(O)))_{O\in I_1(\A)}$ together with the maps in the sequences
\begin{equation}\label{eq:longtype}
K_1(\A(\{x\}))\to K_0(\A(R_x))\to  K_0(\A(S_x))\to  K_0(\A(\{x\}))
\end{equation}
originating from the sequence \eqref{eq:sixtermktheory}, for all $x\in X$ with $R_x\neq\emptyset$, and 
the maps in the sequences
\begin{equation}\label{eq:shorttype}
K_0(\A(S_y))\to  K_0(\A(R_x))
\end{equation}
originating from the sequence \eqref{eq:sixtermktheory}, for all pairs $(x,y)\in X$ with $y\in\operatorname{Imm}(x)$ and $\operatorname{Imm}(x)\setminus\{y\}\neq\emptyset$.

Let \B be a \ca over $X$. 
A \emph{homomorphism} from $\FKR(X;\A)$ to $\FKR(X;\B)$ consists of 
families of group homomorphisms
$$(\phi_{V,0}\colon K_0(\A(V))\rightarrow K_0(\B(V)))_{V\in I_0(\A)}$$
$$(\phi_{O,1}\colon K_1(\A(O))\rightarrow K_1(\B(O)))_{O\in I_1(\A)}$$ 
such that all the ladders coming from the above sequences commute.  A homomorphism is an \emph{isomorphism} exactly if the group homomorphisms in the family are group isomorphisms.

Analogously, we define the \emph{ordered reduced filtered $K$-theory} of \A, $\FKRplus(X;\A)$, just as $\FKR(X;\A)$ where we also consider the order on all the $K_0$-groups --- and for a homomorphism respectively an isomorphism, we demand that the group homomorphisms respectively the group isomorphisms between the $K_0$-groups are positive homomorphisms respectively order isomorphisms. 
Hereby we get --- in the obvious way --- two functors $\FKR(X;- )$ and $\FKRplus(X;- )$ that are defined on the category of \cas over $X$.
\end{definition}

\begin{remark}\label{howtocompute}
Let $E$ be a graph. 
Then $C^*(E)$ has a canonical structure as a $\Prime_\gamma(C^*(E))$-algebra.
So if $E$ has finitely many vertices --- or, more generally, if $\Prime_\gamma(C^*(E))$ is finite --- then we can consider the reduced filtered $K$-theory, $\FKR(\Prime_\gamma(C^*(E)),C^*(E))$. 
We use the results of \cite{MR2922394} to identify the $K$-groups and the homomorphisms in the cyclic six term sequences using the adjacency matrix of the graph. 
\end{remark}

\begin{remark}
Let \A be an $X$-algebra. 
Since $\mathfrak{I}\mapsto\mathfrak{I}\otimes\K$ is a lattice isomorphism between $\mathbb{I}(\A)$ and $\mathbb{I}(\A\otimes\K)$, the \ca $\A\otimes\K$ is an $X$-algebra in a canonical way, and the embedding $\kappa_\A$ given by $a\mapsto a\otimes e_{11}$ is an $X$-equivariant homomorphism from $\A$ to $\A\otimes\K$. 
Also, it is clear that  $\FKR(X;\kappa_\A)$ is an (order) isomorphism. 
Note also that the invariant $\FKR(X;-)$ has been considered in \cite{MR3177344,MR3349327}.
\end{remark}

\begin{remark}\label{lpacomments}
Appealing to \cite{MR2514392} instead of \cite{MR2023453} one
may define $\Prime_\gamma$ also for Leavitt path algebras over
$\C$, and establish most of the results of this section also in a
purely algebraic setting. Since \cite{MR2514392} discusses only
row-finite graphs and we here  insist that there are only
finitely many vertices, this applies only to finite graphs.
\end{remark}

\section{Specific preliminaries}
\label{sec:notation-for-proof}

In this section we introduce concepts and notation that are required for the remainder of the paper.

\subsection{Block matrices and equivalences}

\begin{notation}
For $m,n\in\N_0$, we let $\MZ[m\times n]$ denote the set of group homomorphisms from $\Z^n$ to $\Z^m$. When $m,n\geq 1$, we can equivalently view this as the $m\times n$ matrices over $\Z$, where composition of group homomorphisms corresponds to matrix multiplication --- the (zero) group homomorphisms for $m=0$ or $n=0$, we will also call empty matrices with zero rows or columns, respectively. 

For $m,n\in\N$, we let \Mplus denote the subset of $\MZ[m\times n]$, where all entries in the corresponding matrix are positive. For an $m\times n$ matrix, we will also write $B>0$ whenever $B\in\Mplus$.

For an $m\times n$ matrix $B$, where $m,n\in\N$, we let $B(i,j)$ denote the $(i,j)$'th entry of the corresponding matrix, \ie, the entry in the $i$'th row and $j$'th column.\end{notation}

\begin{definition} 
Let $m,n\in\N$. 
For an $m\times n$ matrix $B$ over \Z, we let $\gcd B$ be the greatest common divisor of the entries $B(i,j)$, for $i=1,\ldots,m$, $j=1,\ldots,n$, if $B$ is nonzero, and zero otherwise. 
\end{definition}

\begin{assumption} \label{ass:preorder}
Let $N\in\N$. 
For the rest of the paper, we let $\calP=\{1,2,\ldots,N\}$ denote a partially ordered set with order $\preceq$ satisfying
$$i\preceq j\Rightarrow i\leq j,$$
for all $i,j\in\calP$, where $\leq$ denotes the usual order on \N. 
We denote the corresponding irreflexive order by $\prec$.
\end{assumption}

\begin{definition}\label{def:blockmatrices}
Let $\mathbf{m}=(m_i)_{i=1}^{N},\mathbf{n}=(n_i)_{i=1}^{N}\in\N_{0}^N$ be \emph{multiindices}. 
We write $\mathbf{m}\leq\mathbf{n}$ if $m_i\leq n_i$ for all $i=1,2,\ldots,N$, 
and in that case, we let $\mathbf{n}-\mathbf{m}$ be $(n_i-m_i)_{i=1}^N$. 
We also let $\mathbf{m}+\mathbf{n}$ denote $(m_i+n_i)_{i=1}^N$ for any multiindices, and we let $|\mathbf{m}|=m_1+m_2+\cdots+m_N$. We denote the multiindex with $1$ on every entry by $\mathbf 1$. 

We let $\MZ$ denote the set of group homomorphisms from $\Z^{n_1}\oplus\Z^{n_2}\oplus\cdots\oplus\Z^{n_N}$ to $\Z^{m_1}\oplus\Z^{m_2}\oplus\cdots\oplus\Z^{m_N}$, and for such a homomorphism $B$, we let $B\{ i,j\}$ denote the component of $B$ from the $j$'th direct summand to the $i$'th direct summand. 
We also use the notation $B\{i\}$ for $B\{i,i\}$. 
Using composition of homomorphisms, we get in a natural way a category $\mathfrak{M}_N$ with objects $\N_0^N$ and with the morphisms from $\mathbf{n}$ to $\mathbf{m}$ being $\MZ$. 
Moreover, 
$$(BC)\{ i,j\}=\sum_{k=1}^N B \{ i,k\} C\{ k,j\},$$
whenever $B\in\MZ$ and $C\in\MZ[\mathbf{n}\times\mathbf{r}]$ for a multiindex $\mathbf{r}$. 

A morphism $B\in\MZ$ is said to be in $\MPZ$, if 
$$B\{i,j\}\neq 0\Longrightarrow i\preceq j,$$
for all $i,j\in\calP$. 
It is easy to verify, that this gives a subcategory $\mathfrak{M}_\calP$ with the same objects but $\MPZ$ as morphisms. 

Moreover, for a subset $s$ of \calP, we let --- with a slight misuse of notation --- $B\{s\}\in\mathfrak{M}_s((m_i)_{i\in s}\times (n_i)_{i\in s},\Z)$ denote the component of $B$ from $\bigoplus_{i\in s}\Z^{n_i}$ to $\bigoplus_{i\in s}\Z^{m_i}$.

We let $\MZ[\mathbf{n}]$ denote $\MZ[\mathbf{n}\times\mathbf{n}]$, and $\MPZ[\mathbf{n}]$ denote $\MPZ[\mathbf{n}\times\mathbf{n}]$.

For $\mathbf{n}$, we let $\GLPZ$ denote the automorphisms in $\MPZ[\mathbf{n}]$. 
Then $U\in\GLPZ$ if and only if $U\in\MPZ[\mathbf{n}]$ and $U\{ i\}$ is a group automorphism (meaning that the determinant as a matrix is $\pm 1$ whenever $n_i\neq 0$, for every $i\in\calP$). 

An automorphism $U\in\GLPZ$ is in $\SLPZ$ if the determinant of $U\{ i\}$ is $1$ for all $i\in\calP$ with $n_i\neq 0$. 
\end{definition}

\begin{remark}\label{rem:blockmatrices}
Let $\mathbf{m},\mathbf{n}\in\N_{0}^N$ be \emph{multiindices}. 
If $|\mathbf{m}|> 0$ and $|\mathbf{n}|> 0$, we can equivalently view the elements $B\in\MZ$ as block matrices
$$B = 
\begin{pmatrix}
B\{1,1\} & \dots & B\{1,N\} \\
\vdots &  & \vdots \\
B\{N,1\} & \dots & B\{N,N\}
\end{pmatrix}$$
where $B\{i,j\} \in \MZ[m_i\times n_j]$ with $B\{i,j\}$ the empty matrix if $m_{i} = 0$ or $n_{j} = 0$.

Note that from this point of view, the matrices in \MPZ are upper triangular matrices with a certain zero block structure dictated by the order on \calP, and the matrices in \GLPZ (respectively \SLPZ) are matrices in \MPZ with all nonempty diagonal blocks having determinant $\pm 1$ (respectively $1$). 

Note that if $B\in\MZ$ and $C\in\MZ[\mathbf{n}\times\mathbf{r}]$ for a multiindex $\mathbf{r}$, 
then the matrix product makes sense, and --- as matrices --- we have that 
\begin{equation}\label{eq:howtomul}
(BC)\{ i,j\}=\sum_{k\in\calP, n_k\neq 0} B \{ i,k\} C\{ k,j\},
\end{equation}
for all $i,j\in\calP$ with $m_i\neq 0$ and $r_j\neq 0$. 

We will therefore also allow ourselves to talk about matrices with no rows or no columns (by considering it as an element of $\MZ[m\times n]$ with $m = 0$ or $n = 0$); and then $B\{s\}$ for a subset $s$ of \calP as defined above is just the principal submatrix corresponding to indices in $s$ (remembering the block structure). 
\end{remark}

\begin{definition}\label{def:glpandslpeq}
Let $\mathbf{m}$ and $\mathbf{n}$ be multiindices. 
Two matrices $B$ and $B'$ in \MPZ are said to be \emph{\GLPE} (respectively \emph{\SLPE}) if there exist 
$U \in\GLPZ[\mathbf{m}]$ and $V \in\GLPZ$ (respectively $U \in\SLPZ[\mathbf{m}]$ and $V \in\SLPZ$) such that 
$$U B V = B'.$$
\end{definition}

\begin{definition}\label{def:iotar}
Let $\mathbf{r}=(r_i)_{i=1}^{N}\in \N_{0}^N$ be a multiindex. We now want to define a functor $\iota_{\mathbf{r}}$ from $\mathfrak{M}_N$
to $\mathfrak{M}_N$. 
For objects, we let $\iota_{\mathbf{r}}(\mathbf{n})=\mathbf{n}+\mathbf{r}$, for all multiindices $\mathbf{n}\in\N_0^N$. 
We define an embedding $\iota_{\mathbf{r}}$ from \MZ to $\MZ[(\mathbf{m}+\mathbf{r})\times(\mathbf{n}+\mathbf{r})]$, for all multiindices $\mathbf{m}=(m_i)_{i=1}^{N}$, $\mathbf{n}=(n_i)_{i=1}^{N}\in\N_{0}^N$,
as follows. 
The block $\iota_{\mathbf{r}}(B)\{i,j\}$ has $B\{i,j\}$ as upper left corner. 
Outside this corner this block is equal to the zero matrix if $i\neq j$. 
If $i=j$, then the lower right $r_i\times r_i$ corner of this (diagonal) block is the identity matrix and zero elsewhere (outside the upper left and lower right corner). 

It is easy to check that $\iota_{\mathbf{r}}$ gives a faithful functor from $\mathfrak{M}_N$ to $\mathfrak{M}_N$ that also induces a faithful functor from $\mathfrak{M}_\calP$ to $\mathfrak{M}_\calP$. 
\end{definition}

Note that this is a generalization of the definitions in \cite{MR1907894,MR1990568} (in the finite matrix case) to the cases with rectangular diagonal blocks or vacuous blocks. 

\begin{remark}\label{rem:iotar}
We see that \GLPZ and \SLPZ are groups for all multiindices $\mathbf{n}=(n_i)_{i=1}^{N}\in\N_{0}^N$. 
We also see that $\iota_{\mathbf{r}}$ is an injective homomorphisms from \GLPZ to $\GLPZ[\mathbf{n}+\mathbf{r}]$ and from \SLPZ to $\SLPZ[\mathbf{n}+\mathbf{r}]$ preserving the identity, for all multiindices $\mathbf{n}$, $\mathbf{r}\in\N_{0}^N$ (since it is a faithful functor). Moreover, $\iota_{\mathbf{r}'}\circ\iota_{\mathbf{r}}=\iota_{\mathbf{r}+\mathbf{r}'}$, for all multiindices $\mathbf{r}$, $\mathbf{r}'\in\N_{0}^N$, and $\iota_{\mathbf{r}}$ is the identity functor whenever $\mathbf{r}=(0,0,\ldots,0)$.
\end{remark}

\subsection{\texorpdfstring{$K$}{K}-web and induced isomorphisms}\label{UVinduce}
We define the $K$-web, $K(B)$, of a matrix $B \in \MPZ$ and describe how a \GLPEe	 $\ftn{(U,V)}{B}{B'}$ induces an isomorphism $\ftn{ \kappa_{(U,V)} }{ K(B) }{K(B')}$.

For an element $B\in\MZ[m\times n]$ (\ie, a group homomorphism $\ftn{B}{\Z^n}{\Z^m}$), we define as usual $\cok B$ to be the abelian group $\Z^m/B\Z^n$ and $\ker B$ to be the abelian group $\setof{x\in\Z^n}{Bx=0}$. Note that if $m=0$, then $\cok B=\{0\}$ and $\ker B=\Z^n$, and if $n=0$, then $\cok B=\Z^m$ and $\ker B=\{0\}$. 

For $m,n\in\N_0$, $B,B'\in\MZ[m\times n]$, $U\in\GLZ[m]$ and $V\in\GLZ$ with $UBV=B'$, it is now clear that this equivalence induces isomorphisms
$$
\xymatrix{\cok B \ar[rr]_{\xi_{(U,V)}}^{ [x] \mapsto [Ux] } & &\cok B'} \quad \text{and} \quad \xymatrix{\ker B \ar[rr]_{\delta_{(U,V)}}^{ [x] \mapsto [V^{-1}x] } & & \ker B'.}
$$

\begin{lemma}\label{lem: Kweb 2 components}
Consider $\calP=\calP_2 = \{1,2\}$ as a partially ordered set and let $B\in\MPZ$.
Then the following sequence 
\[
\xymatrix@C=40pt{
\cok B\{1\} \ar[r]^-{[v] \mapsto \left[ \begin{smallpmatrix} v \\ 0 \end{smallpmatrix} \right] } & 
\cok B \ar[r]^-{\left[ \begin{smallpmatrix} v \\ w \end{smallpmatrix} \right] \mapsto [w] } & 
\cok B\{2\} \ar[d]^0 \\ 
\ker B\{ 2 \} \ar[u]^-{ v \mapsto [ B\{1,2\}v ] } & 
\ker B \ar[l]^-{ w \mapsfrom \begin{smallpmatrix}v \\ w\end{smallpmatrix} } & 
\ker B\{ 1 \} \ar[l]^-{ \begin{smallpmatrix}v \\ 0\end{smallpmatrix}\mapsfrom v  } }
\]
is exact.

Moreover, if $B$ and $B'$ are elements of \MPZ and $\ftn{ (U,V) }{ B }{ B' }$ is a \GLPEe, then $(U,V)$ induces an isomorphism 
$$(\xi_{(U\{1\},V\{1\})},\xi_{(U,V)},\xi_{(U\{2\},V\{2\})},\delta_{(U\{1\},V\{1\})},\delta_{(U,V)},\delta_{(U\{2\},V\{2\})})$$
of (cyclic six-term) exact sequences. 
\end{lemma}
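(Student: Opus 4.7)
The plan is to derive both assertions by identifying the displayed sequence as the long exact sequence produced by the snake lemma applied to a suitable short exact sequence of chain complexes, and then checking compatibility with the action of $(U,V)$ block by block.

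For the exactness: note that elements of $\MPZ$ with $\calP=\{1,2\}$ are block upper triangular, so $B$ fits into the commutative diagram with exact rows
\begin{equation*}
\xymatrix@C=15pt{
0 \ar[r] & \Z^{n_1} \ar[r]^-{\left[\begin{smallmatrix}\id\\ 0\end{smallmatrix}\right]} \ar[d]_{B\{1\}} & \Z^{n_1}\oplus\Z^{n_2} \ar[r]^-{[0\ \id]} \ar[d]_B & \Z^{n_2} \ar[r] \ar[d]_{B\{2\}} & 0 \\
0 \ar[r] & \Z^{m_1} \ar[r] & \Z^{m_1}\oplus\Z^{m_2} \ar[r] & \Z^{m_2} \ar[r] & 0.
}
\end{equation*}
The snake lemma produces a six-term exact sequence, and the connecting homomorphism is computed in the usual way: lift $v\in\ker B\{2\}$ to $\bigl(\begin{smallmatrix}0 \\ v\end{smallmatrix}\bigr)$, apply $B$ to get $\bigl(\begin{smallmatrix}B\{1,2\}v\\ 0\end{smallmatrix}\bigr)$, and pull back to $B\{1,2\}v\in\Z^{m_1}$, which agrees with the formula in the displayed diagram. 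Cyclicity with a zero map from $\cok B\{2\}$ to $\ker B\{2\}$ then packages the six-term exact sequence in the claimed form.

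For the naturality claim, observe that both $U$ and $V$ have the same block upper triangular shape, so $U$ commutes with the inclusions $\Z^{m_1}\hookrightarrow\Z^{m_1}\oplus\Z^{m_2}$ and the projections onto the $\Z^{m_2}$ factor (when restricted to $\ker/\cok$ level), and similarly for $V^{-1}$, which is again upper triangular. This immediately makes five of the six squares commute by a straightforward block computation. The only square that requires work is the one involving the connecting homomorphism: for $v\in\ker B\{2\}$, one must check
\begin{equation*}
[U\{1\}B\{1,2\}v] \;=\; [B'\{1,2\}V\{2\}^{-1}v] \qquad \text{in }\cok B'\{1\}.
\end{equation*}
Expanding $B'=UBV$ block by block gives $B'\{1,2\}=U\{1\}B\{1\}V\{1,2\}+U\{1\}B\{1,2\}V\{2\}+U\{1,2\}B\{2\}V\{2\}$, and since $B\{2\}v=0$ the last term vanishes. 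The middle term cancels against the left-hand side, leaving the discrepancy $U\{1\}B\{1\}V\{1,2\}V\{2\}^{-1}v$, which lies in $U\{1\}B\{1\}\Z^{n_1}=B'\{1\}\Z^{n'_1}$ because $V\{1\}$ is invertible; hence the class in $\cok B'\{1\}$ is zero, as required.

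Finally, that the components $\xi_{(U\{i\},V\{i\})}$, $\xi_{(U,V)}$, $\delta_{(U\{i\},V\{i\})}$, $\delta_{(U,V)}$ are isomorphisms is immediate from the general fact recalled just before the lemma, since $U\{i\}$ and $V\{i\}$ lie in $\GLZ[m_i]$ and $\GLZ[n_i]$ respectively. The main (only nontrivial) obstacle is thus the connecting-map square, where one must exploit the relation $v\in\ker B\{2\}$ together with upper triangularity of $U$ and $V$ to absorb the cross terms in $B'$ into the image of $B'\{1\}$.
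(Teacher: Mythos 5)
Your proof is correct and follows essentially the same route as the paper: the paper likewise obtains the six-term sequence by applying the snake lemma to the short exact sequence $0\to\Z^{n_1}\to\Z^{n_1}\oplus\Z^{n_2}\to\Z^{n_2}\to 0$ with vertical maps $B\{1\}$, $B$, $B\{2\}$, and dismisses the naturality statement as a straightforward verification. Your explicit check of the connecting-map square — expanding $B'\{1,2\}$, killing the $U\{1,2\}B\{2\}V\{2\}$ term via $v\in\ker B\{2\}$, and absorbing $U\{1\}B\{1\}V\{1,2\}V\{2\}^{-1}v$ into $\im B'\{1\}=U\{1\}B\{1\}\Z^{n_1}$ — is exactly the verification the paper omits, and it is carried out correctly.
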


\begin{proof}
The first part of the lemma follows directly from the snake lemma applied to the diagram
$$\xymatrix{ 
0\ar[r] & \Z^{n_1}\ar[r]\ar[d]^{B\{1\}} & \Z^{n_1}\oplus\Z^{n_2}\ar[r]\ar[d]^{B} & \Z^{n_2}\ar[d]^{B\{2\} }\ar[r] & 0 \\ 
0\ar[r] & \Z^{m_1}\ar[r] & \Z^{m_1}\oplus\Z^{m_2}\ar[r] & \Z^{m_2}\ar[r] & 0
}$$
The second part of the proof is a straightforward verification.
\end{proof}

Completely analogous to \cite{MR1990568}, we make the following definitions.

\begin{definition}
A subset $c$ of $\calP$ is called \emph{convex} if $c$ is nonempty and for all $k \in \calP$, 
$$\text{$\{i,j\} \subseteq c$ and $i \preceq k \preceq j \ \implies \ k \in c$.}$$

A subset $d$ of $\calP$ is called a \emph{difference set} if $d$ is convex and there are convex sets $r$ and $s$ in $\calP$ with $r \subseteq s$ such that $d =s \setminus r$ and 
\[
\text{$i \in r$ and $j \in d \ \implies \ j \npreceq i$.}
\]
Whenever we have such sets $r$, $s$ and $d=s\setminus r$, we get a canonical functor from $\mathfrak{M}_\calP$ to $\mathfrak{M}_{\calP_2}$, where $\calP_2=\{1,2\}$ with the usual order if there exist $i\in r$ and $j\in d$ such that $i\preceq j$, and the trivial order otherwise. 
Thus such sets will also give a canonical (cyclic six-term) exact sequence as above. 
\end{definition}

\begin{definition}
Let $B \in \MPZ$. 
The \emph{(reduced) $K$-web} of $B$, $K(B)$, consists of a family of abelian groups together with families of group homomorphisms between these, as described below. 

For each $i \in \calP$, let $r_i = \setof{j\in\calP}{j \prec i}$ and $s_{i} = \setof{ j\in\calP }{ j \preceq i }$. 
Note that if $r_{i}$ in the above definition is nonempty, then $\{ i \} = s_{i} \setminus r_{i}$ is a difference set.  
We let $\mathrm{Imm}(i)$ denote the set of immediate predecessors of $i$ (we say that $j$ is an \emph{immediate predecessor of $i$} if $j \prec i$ and there is no $k$ such that $j \prec k \prec i$).  

For each $i \in \calP$ with $r_i\neq \emptyset$, we get an exact sequence from Lemma~\ref{lem: Kweb 2 components},
\begin{equation}\label{eq:exact-seq-Kweb}
\ker B\{i\}\rightarrow \cok B\{r_i\}\rightarrow \cok B\{s_i\}\rightarrow \cok B\{i\}.
\end{equation}
Moreover, for every pair $(i, j ) \in \calP \times \calP$ satisfying $j \in \mathrm{Imm}(i)$ and $\mathrm{Imm}(i) \setminus \{ j \} \neq \emptyset$ is $s_{j} \subsetneq r_{i}$; consequently we have a homomorphism 
\begin{equation}\label{eq: Kweb hom}
\cok B\{s_j\} \to \cok B\{r_i\} 
\end{equation}
originating from the exact sequence above (\cf\ Lemma~\ref{lem: Kweb 2 components} used on the division into the sets $r_i$, $s_j$ and $r_i\setminus s_j$). 

Set 
\begin{align*}
I_0^{\calP} &= 
\setof{ r_i }{ i \in \calP \text{ and }r_{i} \neq \emptyset } \cup \setof{ s_i }{ i \in \calP } \cup \setof{ \{ i \} }{ i \in \calP }, \\
I_1^{\calP} &= \setof{i\in\calP}{r_i\neq \emptyset}.
\end{align*}

The \emph{$K$-web of $B$}, denoted by $K(B)$, consists of the families $\left( \cok B\{c\} \right)_{ c \in I_0^{\calP}}$ and $\left( \ker B\{i\} \right)_{ i \in I_1^\calP}$ together with all the homomorphisms from the sequences \eqref{eq:exact-seq-Kweb} and \eqref{eq: Kweb hom}.  Let $B'$ be an element of \MPZ[\mathbf{m}'\times\mathbf{n}'].  By a \emph{$K$-web isomorphism}, $\ftn{\kappa}{ K(B) }{ K(B') }$, we mean families 
$$\left( \ftn{ \kappa_{c,0} }{ \cok B\{c\} }{ \cok B'\{c\} } \right)_{ c \in I_0^\calP}$$ 
and $$\left( \ftn{ \kappa_{i,1} }{ \ker B\{i\} }{ \ker B'\{i\} } \right)_{ i \in I_1^\calP }$$ of isomorphisms satisfying that the ladders coming from the sequences in $K(B)$ and $K(B')$ commute.

By Lemma~\ref{lem: Kweb 2 components}, any \GLPEe $\ftn{ (U,V) }{ B }{ B' }$ induces a $K$-web isomorphism from $B$ to $B'$.  We denote this induced isomorphism by $\kappa_{(U,V)}$. 
\end{definition}

\begin{remark}\label{diffKwebFK} We note the obvious likeness
    between the $K$-web and the reduced filtered
    $K$-theory. There are two fundamental differences: In $K(B)$ we
    never consider orders, and the groups $\ker B\{i\}$ are only
    appearing in $K(B)$ when $\{i\}\not=s_i$, whereas the
    corresponding $K_1$-group always appears in
    $\FKR(C^*(\Esf_{B+I}))$.\end{remark}

\begin{remark}\label{rem:iotarandidentifications}
It is clear that the $K$-webs $K(B)$ and $K(\iota_{\mathbf{r}}(B))$ are canonically isomorphic for all multiindices $\mathbf{m},\mathbf{n},\mathbf{r}\in\N_0^N$ and all $B\in\MPZ$. 

Note also that the $K$-webs $K(B)$ and $K(-B)$ are canonically isomorphic, and that $(U,V)$ is a \GLPEe (respectively \SLPEe) from $B$ to $B'$ if and only if $(U,V)$ is a \GLPEe (respectively \SLPEe) from $-B$ to $-B'$, and they will induce exactly the same $K$-web isomorphisms under the above identification. Note that this identification will change the generators of the cokernels and the kernels. 
In this way, we also get a canonical identification of the $K$-webs $K(B)$ and $K(-\iota_{\mathbf{r}}(-B))$ by embedding a vector by setting it to be zero on the new coordinates. This identification preserves the canonical generators of the cokernels and kernels, which will be of importance when we consider positivity.
\end{remark}

\begin{remark} \label{rem:twominusversusnominusiniotar}
The definitions above are completely analogous to the definitions in \cite{MR1990568}, and are the same in the case $m_{i} = n_{i} \neq 0$ for all $i\in\calP$. 
Note that the last homomorphism in \eqref{eq:exact-seq-Kweb} is really not needed, because commutativity with this map is automatic.

The reason we need to use $K(-\iota_{\mathbf{r}}(-B))$ rather than
$K(\iota_{\mathbf{r}}(B))$ (as in \cite{MR1907894,MR1990568}), is that
we let $B=\Bsf_E^\bullet$, where $\Bsf_E=\Asf_E-I$ rather than
$I-\Asf_E$ (as done in \cite{MR1907894,MR1990568}).  One of the
benefits with this approach is that it is somewhat more convenient to
work with positive matrices instead of negative matrices --- and Boyle
actually does this partly himself in his proof of the factorization
theorem, \cf\ \cite[Section~4]{MR1907894}). The reason that we do not
define $\iota_\mathbf{r}$ as extending by $-1$'s instead of $1$'s is
crucial. This would force us to have one definition of embeddings for
the \GLP and \SLP-matrices used for \GLPEe{s} and \SLPEe{s} and
another for the matrices arriving from the adjacency
matrices. Moreover, such a definition would not give a functor. Both
these problems would be very inconvenient for our work. Thus this is a
matter of choosing either to have the convenience of working with
positive matrices or to not need the two minuses in
$K(-\iota_{\mathbf{r}}(-B))$. We have chosen to use the former
convention.
\end{remark}

\subsection{Block structure for graphs}

\begin{definition}\label{def:circ}
Let $E=(E^0,E^1,r,s)$ be a graph. 
We write $\Bsf_E\in\MPZc$ if 
\begin{itemize}
\item 
\calP satisfies Assumption~\ref{ass:preorder}, and there is an isomorphism $\mytheta$ from \calP to $\Gamma_E$ such that $\mytheta$ and $\mytheta^{-1}$ are order reversing,\item
$E$ has finitely many vertices,
\item
every infinite emitter emits infinitely many edges to any vertex it emits any edge to,
\item
every transition state has exactly one edge going out,
\item
$\Bsf_E$ is an $\mathbf{n}\times\mathbf{n}$ block matrix where the vertices of the $i$'th  block correspond exactly to the set $\overline{H(\mytheta(i))}\setminus\overline{H(\mytheta(i))\setminus \mytheta(i)}$, and
\item
$\Bsf_E^\bullet\in\MPZ$.
\end{itemize}
We write $\Bsf_E\in\MPZcc$ if $\Bsf_E\in\MPZc$ and $E$ does not have any transition states, and we write $\Bsf_E\in\MPZccc$ if $\Bsf_E\in\MPZcc$ and $|\gamma|=1$, for every cyclic component $\gamma\in\Gamma_E$. 

According to Lemma~\ref{lem:structure-2}\ref{lem:structure-2-6}, \ref{lem:structure-2-7} and \ref{lem:structure-2-8}, for every graph $E$ with finitely many vertices, there exist graphs $E'$, $E''$ and $E'''$ such that $\Prime_\gamma(C^*(E))\cong\Prime_\gamma(C^*(E'))\cong\Prime_\gamma(C^*(E''))\cong \Prime_\gamma(C^*(E'''))$ in a canonical way and $\Bsf_{E'}\in\MPZc[\mathbf{m}'\times\mathbf{n}']$, $\Bsf_{E''}\in\MPZcc[\mathbf{m}''\times\mathbf{n}'']$, $\Bsf_{E'''}\in\MPZccc[\mathbf{m}'''\times\mathbf{n}''']$, $C^*(E)\cong C^*(E')$, $C^*(E)\otimes\K \cong C^*(E'')\otimes\K$ and $C^*(E)\otimes\K \cong C^*(E''')\otimes\K$ via equivariant isomorphisms. 

If we have $\Bsf_E\in\MPZc$ and $\Bsf_{E'}\in\MPZc[\mathbf{m}'\times\mathbf{n}']$, then we say that a \starhomo $\Phi$ from $C^*(E)$ to $C^*(E')$ (or from $C^*(E)\otimes \K$ to $C^*(E')\otimes \K$) is \calP-equivariant if $\Phi$ is $\Prime_\gamma(C^*(E))$-equivariant under the canonical identification $\Prime_\gamma(C^*(E))\cong\Gamma_E\cong\calP\cong\Gamma_{E'}\cong \Prime_\gamma(C^*(E'))$ coming from the block structure. 
\end{definition}

Note that the conditions above are not only conditions on the graph --- they are also conditions on the adjacency matrix and how we write it (indexed over $\{1,\ldots,|E^0|\}$). In addition to some assumptions about the graph, we choose a specific order of the vertices and index them over $\{1,\ldots,|E^0|\}$ and we have then implicitly chosen an isomorphism $\Gamma_E\cong\calP$, for some appropriate order on $\calP=\{1,2,\ldots,|\Gamma_E|\}$. 
In general, there might be many different such isomorphisms for the same partially ordered set $\calP$ that work depending on the order chosen of the vertices (if $\calP$ admits a nontrivial automorphism), and it might also be possible to choose an order reversing isomorphism $\Gamma_E\cong\calP'$, where $\calP'$ has a different order than $\calP$. 

\subsection{Reduced filtered \texorpdfstring{$K$}{K}-theory, \texorpdfstring{$K$}{K}-web and \texorpdfstring{\GLPEe}{GLP-equivalence}}
\label{sec:red-filtered-K-theory-K-web-GLP-and-SLP-equivalences}

Let $(\calP, \preceq )$ be a partially ordered set that satisfies Assumption~\ref{ass:preorder}. 
We let $\calP^\mathsf{T}$ denote the set $\calP$ with order defined by $i\preceq^\mathsf{T} j$ in $\calP^\mathsf{T}$ if and only if $N+1-j\preceq N+1-i$ in $\calP$, for $i=1,2,\ldots,N$. 
The partially ordered set $(\calP^\mathsf{T},\preceq^\mathsf{T})$ is really the set $\calP$ equipped with the opposite order, followed by a permutation to ensure that it satisfies 
Assumption~\ref{ass:preorder}.
For every multiindex $\mathbf{m}=(m_1,m_2,\ldots,m_N)$ we let $\mathbf{m}^\mathsf{T}=(m_N,\ldots,m_2,m_1)$ and we let $J_\mathbf{m}$ denote the $|\mathbf{m}|\times|\mathbf{m}|$ permutation matrix that reverses the order.

Now assume that we have a graph $E$ with finitely many vertices such that $\Bsf_E\in\MPZcc[\mathbf{m}_E \times \mathbf{n}_E]$. 
It is easy to see, that $\Bsf_{E}^{\bullet} \in \MPZ[\mathbf{m}_E\times \mathbf{n}_E]$ is equivalent to $J_{\mathbf{n}_E}\left(\Bsf_{E}^\bullet\right)^\mathsf{T}J_{\mathbf{m}_E}\in\mathfrak{M}_{\calP^\mathsf{T}}(\mathbf{n}_E^\mathsf{T}\times\mathbf{m}_E^\mathsf{T},\Z)$.
Now assume that we also have a graph $F$ with finitely many vertices such that $\Bsf_F\in\MPZcc[\mathbf{m}_F \times \mathbf{n}_F]$. 
For notational convenience, we let 
\begin{align*}
\Csf_E&=J_{\mathbf{n}_E}\left(\Bsf_{E}^\bullet\right)^\mathsf{T}J_{\mathbf{m}_E} \\ 
\Csf_F&=J_{\mathbf{n}_F}\left(\Bsf_{F}^\bullet\right)^\mathsf{T}J_{\mathbf{m}_F}.
\end{align*}
With the usual description of the $K$-theory and six term exact sequences for graph \cas (\cf\ \cite{MR2922394}), we see that a reduced filtered $K$-theory isomorphism from $\FKR(\calP;C^*(E))$ to $\FKR(\calP;C^*(F))$ corresponds exactly to a (reduced) $K$-web isomorphism from $K(\Csf_E)$ to 
$K(\Csf_F)$ together with an isomorphism from 
$\ker(\Csf_E\{i\})$ 
to $\ker (\Csf_F\{i\})$ for every $i\in(\calP^{\mathsf{T}})_{\min{}}$, where $\calP_{\min}=\setof{i\in\calP}{ j\preceq i\Rightarrow i=j}$ and $(\calP^\mathsf{T})_{\min}=\setof{i\in\calP}{ j\preceq^\mathsf{T} i\Rightarrow i=j}$. 

Positivity is easy to describe on the gauge simple subquotients. For
components with a vertex supporting at least two distinct return
paths, the positive cone is all of $K_0$ (since the corresponding
subquotient is a Kirchberg algebra in the UCT class).  For components
where each vertex supports exactly one return path, the positive cone
is generated by the class of the projections $p_v$, where $v$ are in
this component (the corresponding subquotient is stably isomorphic to
$C(S^1)$).  If such a cyclic component is a singleton, the ordered
$K_0$-group is $(\Z,\N_0)$ under the canonical identifications.  For
components consisting of a single singular vertex not supporting a
cycle, the positive cone is generated by the class of the projection
$p_v$, where $v$ is the vertex in the component (in this case the
subquotient is stably isomorphic to $\K$).  Under the canonical
identifications, the ordered $K_0$-group is also $(\Z,\N_0)$.  The
description of the $K_0$-groups for gauge nonsimple subquotients (or
just gauge nonsimple ideals), turns out to be more complicated in
general (when the \ca is not purely infinite) --- see
\cite[Theorem~2.2]{MR1962131} for a general description of the
order. As it turns out, only the order of the gauge simple
subquotients will play a role --- as a result of our classification
result, we see that the information stored in the order of the other
groups is redundant (see
  Remark~\ref{onlyorderongs}).

We see that a necessary condition for having an isomorphism between the reduced filtered $K$-theories is that $\mathbf{n}_E-\mathbf{m}_E=\mathbf{n}_F-\mathbf{m}_F$. So assume this holds, and choose $\mathbf{m},\mathbf{n}\in\N_0^N$ such that $\mathbf{m}_E,\mathbf{m}_F\leq\mathbf{m}$ and $\mathbf{n}_E,\mathbf{n}_F\leq\mathbf{n}$, and $\mathbf{n}-\mathbf{m}=\mathbf{n}_E-\mathbf{m}_E=\mathbf{n}_F-\mathbf{m}_F$. 
Let $\mathbf{r}_E=\mathbf{m}-\mathbf{m}_E=\mathbf{n}-\mathbf{n}_E$ and let $\mathbf{r}_F=\mathbf{m}-\mathbf{m}_F=\mathbf{n}-\mathbf{n}_F$. 
Then the $K$-webs of $K(-\iota_{\mathbf{r}_E^{\mathsf T}}(-\Csf_E))$ and $K(-\iota_{\mathbf{r}_F^{\mathsf T}}(-\Csf_F))$ are canonically isomorphic to $K(\Csf_E)$ and $K(\Csf_F)$, respectively, and 
$\ker(\Csf_E\{i\})$ and $\ker(\Csf_F\{i\})$ are canonically isomorphic to $\ker (-\iota_{\mathbf{r}_E^{\mathsf T}}(-\Csf_E)\{i\})$ and $\ker (-\iota_{\mathbf{r}_F^{\mathsf T}}(-\Csf_F)\{i\})$ for every $i\in(\calP^{\mathsf{T}})_{\min{}}$. 
We see that a necessary condition for having a positive isomorphism between the reduced filtered $K$-theories is that under the isomorphisms $\Gamma_E\cong\calP\cong\Gamma_{F}$ we have exactly the same strongly connected components, the same cyclic strongly connected components, the same sinks, and the same infinite emitters not supporting a cycle. 

It is clear that $(U,V)\mapsto((J_{\mathbf{n}}
VJ_{\mathbf{n}})^\mathsf{T},(J_{\mathbf{m}}
UJ_{\mathbf{m}})^\mathsf{T})$ gives a one-to-one correspondence
between \GLPEe{s} (respectively \SLPEe{s}) from
$-\iota_{\mathbf{r}_E}(-\Bsf_E^\bullet)$ to
$-\iota_{\mathbf{r}_F}(-\Bsf_F^\bullet)$ and
$\operatorname{GL}_{\calP^\mathsf{T}}$-equivalences (respectively
$\operatorname{SL}_{\calP^\mathsf{T}}$-equivalences) from
$-\iota_{\mathbf{r}_E^{\mathsf T}}(-\Csf_E)$ to $-\iota_{\mathbf{r}_F^{\mathsf T}}(-\Csf_F)$.

So every \GLPEe $(U,V)$ from $-\iota_{\mathbf{r}_E}(-\Bsf_E^\bullet)$ to $-\iota_{\mathbf{r}_F}(-\Bsf_F^\bullet)$ will determine a reduced filtered $K$-theory isomorphism from $\FKR(\calP;C^*(E))$ to $\FKR(\calP;C^*(F))$. 
We call this isomorphism $\FKR(U,V)$. 
In particular, if $\mathbf{m}=\mathbf{m}_E=\mathbf{m}_F$ and $\mathbf{n}=\mathbf{n}_E=\mathbf{n}_F$, then every \GLPEe $(U,V)$ from $\Bsf_E^\bullet$ to $\Bsf_F^\bullet$ will determine a reduced filtered $K$-theory isomorphism $\FKR(U,V)$ from $\FKR(\calP;C^*(E))$ to $\FKR(\calP;C^*(F))$. 
Note that $V^\mathsf{T}$ induces the isomorphisms between the $K_0$-groups while $(U^\mathsf{T})^{-1}$ induces the isomorphisms between the $K_1$-groups with the standard identification of the $K$-groups. 

Note that the hereditary subsets of vertices --- as usually defined for graphs, when we consider graph \cas{} --- correspond to subsets $S$ of $\calP$ satisfying that $i\preceq j$ implies that $j\in S$ whenever $i\in S$ (\cf\ the order reversing bijection between \calP and $\Gamma_E$ in Definition~\ref{def:circ}). 
This is due to that fact that we generally do not work with the transposed matrix in this paper, since we find it more convenient to work with the non-transposed matrix. 
Since we are identifying \calP with $\Gamma_E$ using an order reversing isomorphism, we will  avoid  using terms as minimal, maximal, less than or greater than. We have already introduced the term (immediate) predecessor for elements of \calP. We will define (immediate) successors in the analogous way. 
But we will use the term that $\gamma_1$ is a predecessor of $\gamma_2$ if and only if $\gamma_2$ is a successor of $\gamma_1$ if and only if $\gamma_1\geq\gamma_2$ and $\gamma_1\neq\gamma_2$. Immediate predecessor and immediate successor in $\Gamma_E$ is defined accordingly. This use of the language also fits better with our usual picture of the component set as a graph: if $\gamma_2$ is a successor of $\gamma_1$ this means that there is a path from component $\gamma_1$ to component $\gamma_2$.

\subsection{Temperatures and standard form}

Let $E$ be a graph with finitely many vertices. 
Then $\Prime_\gamma(C^*(E))$ is finite and the gauge simple subquotients are $C^*(E)(\{\mathfrak{p}\})$ for $\mathfrak{p}\in\Prime_\gamma(C^*(E))$. 
These are either simple AF algebras, simple purely infinite \cas or nonsimple. 
They are stably isomorphic to $C(S^1)$, when they are nonsimple. 

\begin{definition}
Let $E$ be a graph with finitely many vertices. 
Then we define the \emph{temperature} as the map $\fct{\tau_E}{\Prime_\gamma(C^*(E))}{\{-1,0,1\}}$ defined by
$$\tau_E(\mathfrak{p})
=\begin{cases}
-1,&\text{if }C^*(E)(\{\mathfrak{p}\})\text{ is a simple AF algebra},\\
0,&\text{if }C^*(E)(\{\mathfrak{p}\})\text{ is nonsimple},\\
1,&\text{if }C^*(E)(\{\mathfrak{p}\})\text{ is simple and purely infinite},
\end{cases}$$
where $\mathfrak{p}\in\Prime_\gamma(C^*(E))$. 
We call $(\Prime_\gamma(C^*(E)),\tau_E)$ the \emph{tempered (gauge invariant) prime ideal space}. 

Let $E$ and $F$ be graphs with finitely many vertices. 
Then an isomorphism $\fct{\Theta}{(\Prime_\gamma(C^*(E)),\tau_E)}{(\Prime_\gamma(C^*(F)),\tau_F)}$ is a homeomorphism \fctw{\Theta}{\Prime_\gamma(C^*(E))}{\Prime_\gamma(C^*(F))} satisfying that $\tau_F\circ\Theta=\tau_E$. 
We write $(\Prime_\gamma(C^*(E)),\tau_E)\cong(\Prime_\gamma(C^*(F)),\tau_F)$ when such an isomorphism exists.
\end{definition}

We note from the outset that $\FKRplus(\Prime_\gamma(C^*(E));C^*(E))$ contains the temperature.

\begin{lemma}\label{taufromK}
Let $E$ and $F$ be graphs with finitely many vertices, let $X=\Prime_\gamma(C^*(E))$ and assume that there is a homeomorphism \fctw{\Theta}{X}{\Prime_\gamma(C^*(F))}. View $C^*(E)$ and $C^*(F)$ as $X$-algebras in the canonical way and assume that there is an isomorphism from $\FKRplus(X;C^*(E))$ to $\FKRplus(X;C^*(F))$. Then $\tau_F\circ\Theta=\tau_E$, so $(\Prime_\gamma(C^*(E)),\tau_E)\cong (\Prime_\gamma(C^*(F)),\tau_F)$.

\end{lemma}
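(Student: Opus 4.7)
The plan is to observe that $\tau_E(x)$ is completely determined, for each $x \in X$, by the ordered group $K_0(C^*(E)(\{x\}))$ together with the group $K_1(C^*(E)(\{x\}))$. Since the singletons $\{x\}$ lie in both $I_0$ and $I_1$, an isomorphism of $\FKRplus$ restricts to an order isomorphism of the $K_0$'s of gauge simple subquotients and a group isomorphism of their $K_1$'s, with $\{x\}$ on the $E$-side matched to $\{\Theta(x)\}$ on the $F$-side. Thus it suffices to extract $\tau_E(x)$ from this local data.

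Recall from the paragraph preceding the definition of $\tau_E$ that the gauge simple subquotient $C^*(E)(\{x\})$ falls into exactly one of three mutually exclusive classes: simple AF (stably isomorphic to $\K$ in this finite-graph setting, so $K_0 \cong \Z$ with positive cone $\N_0$ and $K_1 = 0$); nonsimple but stably isomorphic to $C(S^1)$ (so $K_0 \cong \Z$ with positive cone $\N_0$ and $K_1 \cong \Z$); or simple purely infinite (a Kirchberg algebra in the UCT class, so $K_0^+ = K_0$). These three scenarios are mutually exclusive precisely because a purely infinite simple \ca has trivial order on $K_0$, while both $\K$ and $C(S^1)$ are stably finite with proper positive cone $\N_0$ on $K_0\cong\Z$. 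Consequently $\tau_E(x)$ can be read off as
\[
\tau_E(x) = \begin{cases} 1 & \text{if } K_0^+(C^*(E)(\{x\})) = K_0(C^*(E)(\{x\})), \\ -1 & \text{if the above fails and } K_1(C^*(E)(\{x\})) = 0, \\ 0 & \text{otherwise.} \end{cases}
\]

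Both the condition that the positive cone equals the whole group and the condition that $K_1 = 0$ are preserved by order isomorphisms of $K_0$ and group isomorphisms of $K_1$. Hence the given isomorphism of $\FKRplus$ forces $\tau_F(\Theta(x)) = \tau_E(x)$ for every $x \in X$, and the required isomorphism of tempered prime ideal spaces follows. There is no real obstacle here; the argument is just a matter of matching the trichotomy of subquotient types to easily-readable ordered $K$-theoretic features already present in $\FKRplus$.
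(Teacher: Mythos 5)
Your proof is correct and follows essentially the same route as the paper: the paper's own argument consists precisely of the trichotomy you state, reading off $\tau_E(\mathfrak{p})$ from whether $K_0=(K_0)_+$ and whether $K_1=0$ on the gauge simple subquotients. Your additional justification of why these conditions characterize the three classes of subquotients is accurate but left implicit in the paper.
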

\begin{proof}
We read off the temperatures from the ordered, reduced filtered $K$-theory as
\begin{eqnarray*}
\tau_E(\mathfrak{p})=-1&\Longleftrightarrow&K_0\not=(K_0)_+\wedge K_1=0,\\
\tau_E(\mathfrak{p})=0&\Longleftrightarrow&K_0\not=(K_0)_+\wedge K_1\not =0, \text{ and}\\
\tau_E(\mathfrak{p})=1&\Longleftrightarrow&K_0=(K_0)_+,
\end{eqnarray*}
where $K_*=K_*(C^*(E)(\{\mathfrak{p}\}))$.
\end{proof}

\begin{remark}\label{countremark}
In the case of graphs with finitely many vertices such that every infinite emitter emits infinitely many edges to any vertex it emits any edge to --- in particular for finite graphs --- we have a canonical homeomorphism \fctw{\wastheta_E}{\Gamma_E}{\Prime_\gamma(C^*(E))} (\cf\ Lemma~\ref{lem:structure-d}). Thus we can in this case equally well consider the space $(\Gamma_E,\tau_E\circ\wastheta_E)$ as the tempered gauge invariant prime ideal space. 
Note that if we for $\gamma\in\Gamma_E$ let 
$$\gamma^1=\setof{e\in E^1}{r(e),s(e)\in\gamma}\subseteq E^1,$$
then $(\tau_E\circ\wastheta_E)(\gamma)=\operatorname{sgn}(|\gamma^1|-|\gamma|)$ if we use the conventions $\operatorname{sgn}(0)=0$ and $\operatorname{sgn}(\infty)=1$.
\end{remark}

It follows that $(\Prime_\gamma(C^*(E)),\tau_E)\cong(\Prime_\gamma(C^*(F)),\tau_F)$ whenever $E\MCeq F$, because in this case $C^*(E)\otimes\K\cong C^*(F)\otimes\K$. It is not hard, but somewhat tedious, to check directly that the allowed moves will not change the signs of the numbers $|\gamma^1|-|\gamma|$ occurring.

\begin{definition}
Let $E$ be a graph.  
We say that $E$ satisfies \emph{Condition (H)} if for any regular vertex $v$ supporting a unique return path, either this path has no exit, or there is a vertex $w\not=v$ which is  singular or supports a unique return path so that there is a path from $v$ to $w$, and so that any path from $v$ to $w$ passes through vertices not supporting two distinct return paths (in particular, $w$ cannot support two distinct return paths).
\end{definition}

Under the assumption that the (finite) graphs satisfy Condition (H), we will prove that every stable isomorphism at the level of graph \cas may be realized by the moves defining $\MCeq$.  Note that among the graphs in Figure \ref{firstexx}, the two in (a) have Condition (H) whereas the remaining four do not. Also note that Condition (H) in a sense interpolates between  Condition (K) and the case when no vertex has more than one return path, and is met in both cases.

\begin{lemma}\label{charKH}
Let $E$ be a graph with finitely many vertices. 
Then the following holds.
\begin{enumerate}[(i)]
\item \label{charKH-1}
$E$ satisfies Condition~(K) if and only if $\tau_E(\mathfrak{p})\neq 0$ for every $\mathfrak{p}\in\Prime_\gamma(C^{*}(E))$.
\item \label{charKH-2}
$E$ has no vertices supporting two distinct return paths if and only if $\tau_E(\mathfrak{p})\leq 0$ for every $\mathfrak{p}\in\Prime_\gamma(C^{*}(E))$.
\item \label{charKH-3}
$E$ satisfies Condition (H) if and only if whenever $\tau_E(\mathfrak{p})=0$ then either $\operatorname{Imm}(\mathfrak{p})=\emptyset$ or there is a $\mathfrak{p}'\in\operatorname{Imm}(\mathfrak{p})$ with $\tau_E(\mathfrak{p}')\leq 0$. 
\end{enumerate}
If every infinite emitter emits infinitely many edges to any vertex it emits any edge to, then \ref{charKH-3} can be replaced by
\begin{enumerate}[(i')]\addtocounter{enumi}{2}
\item\label{charKH-4}
 $E$ satisfies Condition~(H) if and only if whenever $\tau_E(\wastheta_E(\gamma))= 0$  then either $\gamma$ has no successor in $\Gamma_E$, or 
$\gamma$ has an immediate successor $\gamma'\in\Gamma_E$ with $\tau_E(\wastheta_E(\gamma'))\leq 0$.
\end{enumerate}
\end{lemma}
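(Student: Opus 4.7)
The strategy is to connect each graph-theoretic condition to the stated condition on the temperature by analyzing the gauge simple subquotients $C^*(E)(\{\mathfrak{p}\})$ through the strongly connected components and singular vertices of $E$. Using Lemma~\ref{lem:prime-ideals-1} together with the standard description of gauge simple subquotients of a graph \ca, one has: $\tau_E(\mathfrak{p}) = 1$ iff the underlying component is strongly connected and contains a vertex supporting two distinct return paths (the subquotient is a Kirchberg algebra in the UCT class); $\tau_E(\mathfrak{p}) = 0$ iff the underlying component is cyclic (the subquotient is stably isomorphic to $C(S^1)$); and $\tau_E(\mathfrak{p}) = -1$ iff the component is a singular-vertex singleton (the subquotient is a simple AF algebra, stably isomorphic to $\K$). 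Parts \ref{charKH-1} and \ref{charKH-2} then follow immediately: Condition~(K) amounts to the absence of vertices with exactly one return path, equivalently the absence of cyclic components, equivalently $\tau_E(\mathfrak{p}) \neq 0$ for all $\mathfrak{p}$; and the hypothesis in \ref{charKH-2} amounts to the absence of vertices supporting two distinct return paths, equivalently the absence of non-cyclic strongly connected components, equivalently $\tau_E(\mathfrak{p}) \leq 0$ for all $\mathfrak{p}$.

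For \ref{charKH-3}, I would exploit the structural fact that a cyclic component $\gamma$ has induced subgraph equal to a single simple directed cycle, since any additional internal edge would produce a second return path at some vertex of $\gamma$. Hence for a regular vertex $v \in \gamma$, the unique return path at $v$ has no exit iff no vertex of $\gamma$ emits edges outside $\gamma$, iff $\gamma$ is terminal in $\Gamma_E$, iff $\operatorname{Imm}(\wastheta_E(\gamma)) = \emptyset$. Assuming the return path has an exit, I would show that the second alternative of Condition~(H) is equivalent to the existence of an immediate successor $\gamma'$ of $\gamma$ in $\Gamma_E$ with $\tau_E(\wastheta_E(\gamma')) \leq 0$. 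In the forward direction, any $w \in \gamma'$ supplies a valid witness: $w$ is either singular or supports a unique return path by the definition of $\tau \leq 0$, and since $\gamma'$ is an immediate successor, every path from $v$ to $w$ passes only through vertices in $\gamma$, transitional vertices (which support no return paths at all), and $\gamma'$, none of which support two distinct return paths. Conversely, if every immediate successor of $\gamma$ has $\tau = 1$, then any path leaving $\gamma$ must enter a purely infinite component and thereby pass through a vertex supporting two distinct return paths, obstructing any valid choice of $w$.

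The equivalence of \ref{charKH-3} and \ref{charKH-4} under the finite-emission hypothesis follows immediately from Lemma~\ref{lem:structure-d}: the homeomorphism $\wastheta_E$ is order-reversing and intertwines temperatures, so immediate predecessors in $\Prime_\gamma(C^*(E))$ correspond bijectively to immediate successors in $\Gamma_E$, while the value of $\tau_E$ on each element is preserved. The main delicate point I expect is the careful converse analysis of the second alternative in \ref{charKH-3}, especially ruling out ``shortcut'' witnesses $w$ lying in the same cyclic component as $v$; the cleanest route is likely to reduce, using Lemma~\ref{lem:structure-2}, to a standard form in which every cyclic component is a singleton (as in the class $\MPZccc$), after checking that the relevant moves preserve both Condition~(H) and the tempered prime ideal space.
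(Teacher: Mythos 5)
Your proposal follows the paper's own proof very closely: the dictionary between the three temperature values and the three kinds of components (as in Remark~\ref{countremark}), the immediate dispatch of parts \ref{charKH-1} and \ref{charKH-2}, the choice of a witness $w$ in an immediate successor $\gamma'$ with $\tau_E(\wastheta_E(\gamma'))\leq 0$ for the forward implication of \ref{charKH-4}, the contrapositive for the reverse implication, and the passage from \ref{charKH-4} back to \ref{charKH-3} and from the special class of graphs to general ones via $\wastheta_E$ and outsplittings of infinite emitters as in Lemma~\ref{lem:structure-2}\ref{lem:structure-2-6}. On all of these points the two arguments agree.

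The one genuine problem sits exactly at the ``delicate point'' you flag, and your proposed remedy does not close it. You are right that the reverse implication must exclude a witness $w$ lying in the same cyclic component as $v$: if $\gamma$ is a cyclic component with $|\gamma|\geq 2$ and $v\in\gamma$, then any other $w\in\gamma$ supports a unique return path, and every path from $v$ to $w$ stays inside $\gamma$ (any vertex $x$ on such a path satisfies $v\geq x$ and $x\geq w\geq v$, hence $x\in\gamma$ by maximality of the component), so $w$ verifies Condition~(H) at $v$ no matter what the temperatures of the successors of $\gamma$ are. Concretely, expanding the loop at the top vertex of the first graph in Figure~\ref{firstexx}(b) into a two-cycle through a new vertex produces a graph satisfying Condition~(H) whose tempered ideal space still violates the right-hand side of \ref{charKH-4}. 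For the same reason the reduction you sketch cannot work: Condition~(H) is a property of the particular graph, and collapsing a two-vertex cyclic component destroys precisely the in-component witness just described, so the ``check that the relevant moves preserve Condition~(H)'' on which your reduction relies would fail. (The paper's proof does not engage with this case either; in the converse direction it asserts that any path from $v\in\gamma$ to a non-transitional $w$ must meet a vertex with two distinct return paths, which tacitly presumes $w\notin\gamma$, i.e.\ that $\gamma$ is a singleton.) To make the converse of \ref{charKH-3} airtight one must either restrict to graphs whose cyclic components are singletons or treat the case $|\gamma|\geq 2$ explicitly; as written, neither your argument nor the reduction you propose covers it.
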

\begin{proof}
We start by assuming that every infinite emitter in $E$ emits infinitely many edges to any vertex it emits any edge to, so that Remark \ref{countremark} applies. In this case,
 \ref{charKH-1} and \ref{charKH-2} are obvious, and  \ref{charKH-3} and  \ref{charKH-4} are equivalent. For \ref{charKH-4}, assume first that the condition on $\tau_E$ holds. To show (H), let $v$ support a unique return path with an exit and note that $v$ then lies in some $\gamma$ with $\tau_E(\wastheta_E(\gamma))=0$ where $\gamma$ has a successor in $\Gamma_E$. One such successor $\gamma'$ must be immediate with $\tau_E(\wastheta_E(\gamma'))\leq 0$, and we take $w\in\gamma'$. Then any path from $v$ to $w$ passes through only transitional vertices and vertices in $\gamma\cup\gamma'$, neither of which  supports multiple return paths. Finally, if $w$ is regular, then since it is not a transitional vertex, it must support a unique return path.

In the other direction, assume that the condition on $\tau_E$ fails
and choose $\gamma\in\Gamma_E$ with the property that  $\tau_E(\wastheta_E(\gamma))=0$, $\gamma$ has successors and that all such immediate successors $\gamma'$ have
$\tau_E(\wastheta_E(\gamma'))=1$. We conclude that any path from $v\in\gamma$ to any $w$ not
a transitional vertex must pass through a vertex supporting at least
two different return paths. It remains to check that $v$ cannot be a singular vertex.
But since $v\in\gamma$ and $\tau_E(\wastheta_E(\gamma))=0$, $v$ supports a unique return path. Thus $v$ emits finitely many edges to a vertex in $\gamma$; hence $v$ is regular.

For general graphs with finitely many vertices, we note that by Lemma \ref{lem:structure-2}\ref{lem:structure-2-6} (and its proof), 
we may replace $E$ by $E'$ with the property that every infinite emitter in $E'$ emits infinitely many edges to any vertex it emits any edge to, in the sense that $C^*(E)\cong C^*(E')$ and $E'$ is obtained from $E$ by a number of moves of type \OO. Since these operations preserve all the conditions on the graphs, the result follows.
\end{proof}

According to \cite{MR2069031}, the
conditions in \ref{charKH-1} above translate exactly to $C^*(E)$ being of real
rank zero. According to \cite{MR2001940}, the conditions in \ref{charKH-2} above
translate exactly to $C^*(E)$ being a type I/postliminal \ca.

\begin{notation}
Let $E$ be a graph with finitely many vertices and assume that $\Bsf_E\in\MPZc$. 
This induces a temperature $\mytau=\tau_E\circ\wastheta_E\circ\mytheta$ on $\calP$.
\end{notation}

It will be extremely convenient for us to know that the adjacency matrices for two graphs are aligned with all components having the same number of vertices. For this, we define:

\begin{definition}\label{standardform}
Let $E$ and $F$ be finite graphs. We say that $(\Bsf_E,\Bsf_F)$ is in \emph{standard form} if $\Bsf_E,\Bsf_F\in\MPZccc$ for some multiindices $\mathbf{m}$ and $\mathbf{n}$ and $\mathcal{T}_{\Bsf_E}=\mathcal{T}_{\Bsf_F}$. This means that the adjacency matrices have exactly the same sizes and block structures, and that the temperatures of the components match up. 
\end{definition}

\begin{definition}\label{def: positive matrices}
Define \MPplusZ to be the set of all $B \in \MPZ$ satisfying the following:
\begin{enumerate}[(i)]
\item \label{def: positive matrices-1}
If $i \prec j$ and $B \{ i, j \}$ is not the empty matrix, then $B \{ i , j \} > 0$.

\item \label{def: positive matrices-2}
If $m_{i} = 0$, then $n_{i} =1$.

\item \label{def: positive matrices-3}
If $m_{i} = 1$, then $n_{i} = 1$ and $B \{ i \} = 0$.

\item \label{def: positive matrices-4}
If $m_{i} > 1$, then $B \{ i \} > 0$, $n_{i},m_{i} \geq 3$, and the Smith normal form of $B\{i\}$ has at least two 1's (and thus the rank of $B \{ i \}$ is at least 2).  \end{enumerate}
\end{definition}

\begin{lemma}\label{taugivesstd}
Let $E$ and $F$ be two finite graphs. The following are equivalent
\begin{enumerate}[(1)]
\item	$(\Prime_\gamma(C^*(E)),\tau_E)\cong(\Prime_\gamma(C^*(F)),\tau_F)$.\label{taugivesstdI}
\item We can choose finite graphs $E'$ and $F'$ so that  $(\Bsf_{E'},\Bsf_{F'})$ is in standard form and so that   $E\Meq E'$ and $F\Meq F'$.\label{taugivesstdII}
 \end{enumerate}
In \ref{taugivesstdII}, we may further assume that $\Bsf_{E'}^\bullet,\Bsf_{F'}^\bullet\in\MPplusZ$.
 \end{lemma}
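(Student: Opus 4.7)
The plan is to handle the two implications separately. The direction $(\ref{taugivesstdII}) \Rightarrow (\ref{taugivesstdI})$ should be quick: move equivalence implies stable isomorphism of the graph $C^*$-algebras by Theorem~\ref{thm:moveimpliesstableisomorphism}, and by Lemma~\ref{lem:structure-1} any such stable isomorphism carries gauge invariant ideals to gauge invariant ideals, inducing a homeomorphism between the gauge invariant prime ideal spaces that preserves the isomorphism class of each gauge simple subquotient, hence (by the characterization used in the proof of Lemma~\ref{taufromK}) preserves the temperature. Since $(\Bsf_{E'},\Bsf_{F'})$ in standard form means $\mytau_{\Bsf_{E'}}=\mytau_{\Bsf_{F'}}$ on the common indexing poset $\calP\cong \Gamma_{E'}\cong \Gamma_{F'}$, composing these three isomorphisms yields the required tempered homeomorphism.

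For the nontrivial direction $(\ref{taugivesstdI}) \Rightarrow (\ref{taugivesstdII})$, I would first apply Lemma~\ref{lem:structure-2}\ref{lem:structure-2-6}, \ref{lem:structure-2-7}, \ref{lem:structure-2-8} to replace $E$ and $F$ by move-equivalent graphs $E_1$ and $F_1$ whose $\Bsf$-matrices lie in $\MPZccc$ for partial orders $\calP_E\cong \Gamma_{E_1}$ and $\calP_F\cong \Gamma_{F_1}$ respectively. Because a finite graph is finite, no infinite-emitter hypotheses are needed. Using the given isomorphism $\Theta\colon (\Prime_\gamma(C^*(E)),\tau_E)\to (\Prime_\gamma(C^*(F)),\tau_F)$, together with the canonical homeomorphism $\wastheta$ of Lemma~\ref{lem:structure-d}, I obtain a common labeling $\calP$ of the two component posets under which the temperature functions agree. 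This aligns the block indexing of $\Bsf_{E_1}$ and $\Bsf_{F_1}$, but the diagonal block sizes $m_i, n_i$ may still differ, and the matrices need not satisfy the conditions of Definition~\ref{def: positive matrices}.

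The next step is to massage the two matrices within $\Meq$ into $\MPplusZ$ and to a common multiindex $(\mathbf{m},\mathbf{n})$ while preserving the block structure dictated by $(\calP,\mytau)$. For an index $i$ with $\mytau(i)=-1$ (a singular vertex not supporting a cycle), the component is a singleton with $m_i=0$, $n_i=1$, so conditions \ref{def: positive matrices-2} and \ref{def: positive matrices-3} are automatic. For $\mytau(i)=0$ (a cyclic component), Lemma~\ref{lem:structure-2}\ref{lem:structure-2-8} collapses the component to a single looped vertex, giving $m_i=n_i=1$ and $B\{i\}=0$, again satisfying \ref{def: positive matrices-3}. For $\mytau(i)=+1$, the component supports two distinct return paths, so we may use Move~\OO iteratively to inflate the block to a prescribed size at least $3$, and we use the row/column additions afforded by Proposition~\ref{prop:matrix-moves} (and Remark~\ref{rmk:columnAdd}) to redistribute loops and edges so that $B\{i\}>0$ with Smith normal form containing at least two $1$'s; similarly, off-diagonal positivity of $B\{i,j\}$ whenever there is a path from component $i$ to component $j$ is obtained by legal column additions that copy edges across the relevant block pair. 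Performing these adjustments on $E_1$ and $F_1$ separately, and choosing common target block sizes (taking the maximum of the two sizes on each $+1$ block, say), yields $E'$ and $F'$ with $(\Bsf_{E'},\Bsf_{F'})$ in standard form and $\Bsf_{E'}^\bullet,\Bsf_{F'}^\bullet\in\MPplusZ$.

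The main obstacle is the simultaneous bookkeeping required for condition~\ref{def: positive matrices-4} of Definition~\ref{def: positive matrices}: inflating a $+1$-block via out-splittings changes the number of vertices, and the subsequent column additions used to restore positivity or to achieve strict positivity of off-diagonal blocks are subject to the path-existence hypothesis of Proposition~\ref{prop:matrix-moves}\ref{prop:matrix-moves:I}. The key point that makes the argument go through is that within a strongly connected component supporting two distinct return paths, every vertex is reachable from every other, so all intracomponent column additions are legal; combined with the freedom provided by Move~\OO to match block sizes independently on each component, one can execute the moves in the order (collapse transitionals $\to$ inflate $+1$-blocks $\to$ intracomponent additions $\to$ intercomponent additions) without ever violating an already-achieved condition.
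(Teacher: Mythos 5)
Your overall architecture coincides with the paper's: reduce to $\MPZccc$ via Lemma~\ref{lem:structure-2}, use the tempered homeomorphism to align the component posets, handle the temperature $-1$ and $0$ blocks as you do, and then enlarge and positivize the temperature-$1$ blocks by moves; the direction from standard form to isomorphic tempered ideal spaces is also handled the same way. However, one step fails as written. You claim that the row and column additions of Proposition~\ref{prop:matrix-moves} can be used to ``redistribute loops and edges so that $B\{i\}>0$ with Smith normal form containing at least two $1$'s.'' Legal row and column additions are $\GL$-equivalences of $\Bsf_E^\bullet$, and the Smith normal form of a diagonal block is invariant under them; they can never create a $1$ in the Smith form of $\Bsf_E\{i\}$ if there is none. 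Nor does inflating the block ``to a prescribed size at least $3$'' suffice by itself: a strongly connected component with two return paths at every vertex can already have a $3\times 3$ block whose Smith form is, say, $\operatorname{diag}(3,3,6)$. Condition~\ref{def: positive matrices-4} of Definition~\ref{def: positive matrices} must instead be secured by the \emph{inflation}: every enlargement of the block by one vertex that preserves the gauge simple subquotient up to stable isomorphism preserves $\cok(\Bsf_E\{i\}^{\mathsf T})$, hence appends exactly one extra invariant factor equal to $1$. This is precisely why the paper, after reaching size at least $3$, performs the reverse Move~\RR \emph{at most twice more} on each such block; your argument needs the analogous ``inflate by two beyond the size already present'' step and cannot delegate the Smith condition to the additions.

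A secondary overstatement: strong connectivity does not make ``all intracomponent column additions legal.'' Proposition~\ref{prop:matrix-moves}\ref{prop:matrix-moves:I} requires the source vertex $u$ to support a loop, or to emit at least two edges one of which goes directly to $v$; mere reachability of $v$ from $u$ is not enough, and a vertex of a Condition~(K) component may emit a single edge and support no loop. The paper first manufactures a looped vertex with Move~\CO and then propagates loops backwards through the component using Proposition~\ref{prop:matrix-moves}\ref{prop:matrix-moves:I} before carrying out the general intra- and intercomponent additions; some such preliminary step is needed in your argument as well. Your use of Move~\OO rather than reverse Move~\RR to grow the blocks is workable but requires choosing the partitions so that every split vertex retains an edge back into the component (otherwise the outsplit creates transition states and destroys the $\MPZccc$ form).
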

\begin{proof}
When $\Bsf_E\in\MPZccc$, we may read off the temperatures of $i\in\calP$ by the rules
\begin{eqnarray*}
\mytau(i)=-1&\Longleftrightarrow&m_i=0,\\
\mytau(i)=0&\Longleftrightarrow&m_i=1\text{ and }\Bsf_E\{i\}=0, \text{ and}\\
\mytau(i)=1&\Longleftrightarrow&\text{either }m_i=1\text{ and }\Bsf_E\{i\}>0\text{ or }m_i>1.
\end{eqnarray*}
Thus, \ref{taugivesstdII}$\Longrightarrow$\ref{taugivesstdI} follows from the move invariance of the temperature.

For the other direction, assume \ref{taugivesstdI}. It follows from Lemma~\ref{lem:structure-2} that we can assume that  $\Bsf_E\in\MPZccc[\mathbf n'\times\mathbf m']$ and
$\Bsf_F\in\MPZccc[\mathbf n''\times\mathbf m'']$ for appropriate
$\calP$, $\mathbf m'$, $\mathbf m''$, $\mathbf n'$ and $\mathbf
n''$ with $\wastheta_{F}\circ\mytheta[F]\circ\mytheta^{-1}\circ\wastheta_{E}^{-1}$ being the isomorphism given in  \ref{taugivesstdI} that intertwines the temperatures. By assumption,
$n_i'=n_i''=m_i'=m_i''=1$ when $\mytau(i)=\mytau[F](i)=0$, and  $n_i'=n_i''=1$,
$m_i'=m_i''=0$ when $\mytau(i)=\mytau[F](i)=-1$.

When $\mytau(i)=\mytau[F](i)=1$, we may perform Move \CO inside each of these components until we get  vertices $u_i^E$ and $u_i^F$ which support loops. 
Since the components are not cyclic, $u_i^E$ and $u_i^F$ emit at least one other edge than the loop to a vertex in the component, and we may perform \RR in reverse on them successively to increase the sizes of the block to arrive at $n_i'=m_i'=n_i''=m_i''\geq 3$. By doing this (at most) twice more we can ensure that the Smith normal form has at least two ones. After this process $u_i^E$ and $u_i^F$ still support a loop.

We will now show that we may get $\Bsf_E^\bullet\in\MPplusZ$. First we will arrange that all entries are positive in such diagonal blocks. We already have that $u_i^E$ supports a loop, and hence 
Proposition~\ref{prop:matrix-moves}\ref{prop:matrix-moves:I} applies to ensure that any vertex in the component which has an edge to $u_i^E$ also supports a loop. Continuing this way, we get that every vertex supports a loop, and we can use Proposition~\ref{prop:matrix-moves}\ref{prop:matrix-moves:II} to ensure that $u_i$ supports two loops. With this, it is easy to arrange that $\Bsf_E\{i\}>0$. Arguing similarly, we may also arrange that  $\Bsf_E\{i,j\}>0$ and $\Bsf_E\{k,i\}>0$
 for any $i\prec j$ or $k\prec i$. 
 
We continue this process for all $i\in\calP$ satisfying $\mytau(i)=\mytau[F](i)=1$.
 
 Any block $\Bsf_E\{j,k\}$ with $j\prec k$ which is not positive after this process must have $\mytau(j)=0$ and $\mytau(k)\leq 0$ and hence will be a $1\times 1$-matrix. Further, $k$ is not an immediate successor of $j$, so we have $j\prec i\prec k$ for some $i$ an immediate successor of $j$. Then  $\Bsf_E\{j,i\}>0$, and we may use Proposition~\ref{prop:matrix-moves}\ref{prop:matrix-moves:I} again to arrange that  $\Bsf_E\{j,k\}>0$. 
 
We argue similarly for $F$.
\end{proof}
 
 Note that in general there may be several (but finitely many) ways of  choosing $\calP$ and the isomorphisms from $\calP$ to $\Gamma_E$ and $\Gamma_F$ which give the standard forms.

\begin{remark}
Let $E$ be a finite graph. As is well known, we may efficiently describe a partially ordered set such as $\Gamma_E$ by the Hasse
diagram with vertices $\{1,\dots,N\}$ connecting $\gamma$ to $\gamma'$ when
$\gamma'$ is an immediate successor of $\gamma$. Thinking of $\tau_E\circ\wastheta_E$ as providing a coloring of the
vertices of the Hasse diagram thus gives an easy way of visualising
the situation. Noting that the color $-1$ can only occur at the vertices with no successors, we see that the smallest 
cases of (isomorphism classes of) colored Hasse diagrams \emph{not} meeting Condition (H) are the cases 
\begin{equation}\label{twop}
\twop{1}
\end{equation}
 when $|\Gamma_E|=2$ and 
\begin{center}
\threepin{2}\quad \threepin{3}\quad \threepout{5}\quad\threeplin{1}\\
 \threeplin{2}\quad\threeplinmo{2}\quad \threeplin{5}\quad \threeplin{3}
\end{center}
when $|\Gamma_E|=3$ (along with the three cases obtained by adding an unconnected vertex to the one in \eqref{twop}).
\end{remark}

 \section{Classifying move equivalence}\label{CC}

In this section we inspect one of the key results from \cite{MR2270572} to conclude that it holds even for those graphs which are finite with no sinks or
sources, essentially corresponding to the case of Cuntz-Krieger
algebras for matrices not necessarily satisfying the Condition (II)
introduced by Cuntz.

As in  \cite{MR2270572}, we will appeal to the theory of flow equivalence of shifts of finite type; since we work with graph \cas instead of Cuntz-Krieger algebras, we use the edge shifts, defined from a finite graph $E$ as 
\[
{\mathsf X}_E=\{(e_n)\in (E^1)^\Z\mid \forall n:r(e_n)=s(e_{n+1})\}.
\]
This will suffice for our purposes since we may
remove sources via the notion of canonical form, and may replace sinks
by loops as discussed below.

The formal starting point is the following lemma. We must allow for ${\mathsf X}_E=\emptyset$ in the case that no vertex of $E$ supports a return path, and will say that two such empty shift spaces are mutually flow equivalent, and not flow equivalent to any nonempty shift space.

\begin{lemma}\label{flowvsME}
Let $E$ and $F$ be finite graphs. When $E\Meq F$, then ${\mathsf X}_E$ is flow equivalent to 
${\mathsf X}_F$. If neither $E$ nor $F$ have any sinks, the two conditions are equivalent.
\end{lemma}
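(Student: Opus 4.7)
The plan is to prove the two implications of the lemma separately, handling the forward direction by checking each basic move and the reverse direction by appealing to the classical theory of flow equivalence of shifts of finite type.

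For the forward implication, since $\Meq$ is generated by graph isomorphism together with the moves \OO, \II, \RR, \SSS, it suffices to check that each of these preserves flow equivalence of the edge shift. Graph isomorphism is trivial. For move \SSS, a source $w$ cannot occur as $r(e_n)$ for any $n$ in a bi-infinite path since $r^{-1}(w)=\emptyset$, so no outgoing edge of $w$ appears in any element of ${\mathsf X}_E$; hence ${\mathsf X}_E={\mathsf X}_{E_S}$ as shift spaces. For moves \OO and \II, these are precisely the Williams out- and in-splittings; the standard construction provides an explicit sliding block code exhibiting a topological conjugacy between ${\mathsf X}_E$ and the split graph's edge shift, and conjugacy implies flow equivalence. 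For move \RR, the vertex $w$ with $s^{-1}(w)=\{f\}$, $r(f)\ne w$, $s(r^{-1}(w))=\{x\}$ is removed and each incoming edge is composed with $f$; this is the classical symbol amalgamation of Parry--Sullivan, which is known to preserve flow equivalence (alternatively, one can verify that \RR can be obtained as a combination of the moves already treated, as recorded in earlier work we are citing).

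For the reverse implication, assume $E$ and $F$ have no sinks and that ${\mathsf X}_E$ is flow equivalent to ${\mathsf X}_F$. First I would iteratively apply move \SSS to remove all regular sources from both graphs; since each graph is finite, this process terminates in finitely many steps with graphs $E'$ and $F'$ satisfying $E\Meq E'$ and $F\Meq F'$, in which every vertex lies on some bi-infinite path, so the edge shifts are essential and ${\mathsf X}_{E'}={\mathsf X}_E$, ${\mathsf X}_{F'}={\mathsf X}_F$. The theorem of Parry and Sullivan (together with Williams' analysis of conjugacy) states that flow equivalence of essential SFTs is generated by in-splits, out-splits, and symbol expansion/amalgamation at the level of the defining graphs. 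These generators correspond respectively to moves \II, \OO and \RR (invoked in either direction, which is allowed since $\Meq$ is generated by the moves as an equivalence relation). Hence $E'\Meq F'$, and concatenating with $E\Meq E'\Meq F'\Meq F$ gives $E\Meq F$.

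The main obstacle will be properly bridging our combinatorial framework — which must accommodate general finite graphs, including those with singular vertices which the classical theory handles only after removing them — and the classical Parry--Sullivan theorem, stated for edge shifts of finite irreducible/essential nonnegative integer matrices. The step of exhausting sources via \SSS is precisely what reconciles the two viewpoints; once done, each Parry--Sullivan generator matches one of our moves, and the case of empty edge shifts (when no vertex of $E'$ or $F'$ supports a return path, so both reduce to the empty graph or a union of sinks ruled out by hypothesis) is handled trivially.
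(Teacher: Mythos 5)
Your proposal is correct and follows essentially the same route as the paper: the forward direction is checked move by move (with \OO, \II, \RR being exactly the Parry--Sullivan/Williams operations and \SSS leaving the edge shift unchanged), and the reverse direction removes sources iteratively by \SSS and then invokes Parry--Sullivan to realize the flow equivalence by the moves \OO, \II, \RR. The paper's proof is just a terser version of the same argument.
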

\begin{proof}
Move \SSS does not affect the shift spaces, and the remaining moves
are precisely the ones allowed in \cite{MR0405385}. Any sink of $E$ or
$F$ will not affect the shift space, so it is not possible to infer in
the opposite direction in general, but if there are no sinks, we may
use Move \SSS to remove all sources and to remove the vertices that become sources (this process will terminate since $E$ and $F$ are finite graphs) to replace $E$ and $F$
with $E'\Meq E$ and $F'\Meq F$ so that neither $E'$ nor $F'$ have
sources. We have ${\mathsf X}_E={\mathsf X}_{E'}$ and ${\mathsf
  X}_F={\mathsf X}_{F'}$, so also ${\mathsf X}_{E'}$ and ${\mathsf
  X}_{F'}$ are flow equivalent. By \cite{MR0405385}, this flow
equivalence is induced by a finite number of the moves  \OO, \II and \RR.
\end{proof}

\subsection{Plugging  sinks}\label{plugging}

We now introduce a way to pass between the case where the
finite graph $E$ has no sinks and the case where the finite graph has
so many sinks that every cycle in $E$ has an exit. The first case is
preferable in the context of symbolic dynamics, whereas the second
case, as we shall see below in Section \ref{unplugging} is preferable
in the operator algebraic context, since it can be used to establish a
certain uniqueness theorem.

We start with the notion of \emph{plugging} sinks. Whenever a graph $E$ is given, $E_\curlywedge$ denotes the graph where a loop has been added to all sinks.

\begin{lemma}\label{CEpassestoplugged}
Let $E$ and $F$ be graphs with finitely many vertices. If $E\MCeq F$, then also $E_\curlywedge\MCeq F_\curlywedge$.
 If $E\Meq F$, then also $E_\curlywedge\Meq F_\curlywedge$.
\end{lemma}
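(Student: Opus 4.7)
The plan is to verify that each of the generating moves, together with graph isomorphism, is compatible with the operation $E\mapsto E_\curlywedge$. Since $\Meq$ and $\MCeq$ are generated by finite chains of such moves, the assertions will then follow by induction on the length of the chain, treating both statements uniformly (the Cuntz splice \CC{} is handled exactly like the other moves).

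The crucial observation is that each of the moves \OO, \II, \RR, \SSS, and \CC{} is performed at a vertex $w$ (or $v$) that is not a sink in $E$. For \OO{} this is explicit, while for \II, \RR, \SSS, and \CC{} the relevant vertex is required to be regular, and a regular vertex cannot be a sink. Consequently, no loop is added at $w$ when passing from $E$ to $E_\curlywedge$, so $w$ has the same sets $s^{-1}(w)$ and $r^{-1}(w)$ in $E$ and in $E_\curlywedge$. In particular $w$ satisfies the hypotheses of the move in $E_\curlywedge$ whenever it satisfies them in $E$, and any auxiliary data (such as a partition of $s^{-1}(w)$ or $r^{-1}(w)$) carries over unchanged.

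A second easy check is that none of these moves creates new sinks or destroys old ones. For \OO{} and \II, the vertex $w$ is replaced by copies $w^i$, each of which inherits the outgoing edges of $w$ (the outsplit partition is along $s^{-1}(w)$ with each piece nonempty), so every $w^i$ is again a non-sink. For \RR, the removed vertex was not a sink, while the replacement edges alter only the outgoing edges of $x$, which already emitted edges in $E$. For \SSS, removing a source only affects incoming edges of other vertices. For \CC, the two new vertices $u_1,u_2$ emit edges and are thus non-sinks, and no old vertex loses outgoing edges. Combining this with the previous observation, there is a canonical bijection between sinks of $E$ and sinks of the resulting graph $E'$ (identifying $v$ with $v^1$ in the splitting moves), so the loops added during plugging on the two sides correspond under the move. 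Thus, if $E'$ is obtained from $E$ by one of these moves, then performing the same move on $E_\curlywedge$ yields precisely $(E')_\curlywedge$.

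Finally, any graph isomorphism $E\to F$ extends uniquely to an isomorphism $E_\curlywedge\to F_\curlywedge$ by mapping added loops to added loops. Iterating these observations along a chain of moves witnessing $E\Meq F$ (respectively $E\MCeq F$) produces a parallel chain of moves witnessing $E_\curlywedge\Meq F_\curlywedge$ (respectively $E_\curlywedge\MCeq F_\curlywedge$), completing the proof. No serious obstacle is expected: the argument is a case-by-case bookkeeping exercise whose whole content is that the moves act at non-sinks and preserve the sink structure, so that plugging commutes with them.
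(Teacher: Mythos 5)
Your proof is correct and follows essentially the same route as the paper's: one checks move by move that plugging commutes with \OO, \II, \RR, \SSS, \CC and with graph isomorphism, using that each move is performed at a non-sink (so the added loops do not interfere with the move's hypotheses or data) and that no move creates or destroys sinks. The paper's own proof is simply a terser statement of exactly this case-check.
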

\begin{proof}
Considering plugging of sinks as a move, one checks that it commutes
with all of the moves defining $\MCeq$ and $\Meq$. This is obvious in the case of
\II\ and \CC\ which can never involve a sink. In the case of \OO, \SSS,
and \RR, one sees the claim by noting that sinks are involved only as
receivers of edges in such moves.
\end{proof}

\begin{lemma}\label{Ktheoryplug}
Let $E$ and $F$ be graphs with finitely many vertices and assume that 
\fctw{\Theta}{\Prime_\gamma(C^*(E))}{\Prime_\gamma(C^*(F))}
is a   homeomorphism. Let $X=\Prime_\gamma(C^*(E))$. Since $\Prime_\gamma(C^*(E))$ and $\Prime_\gamma(C^*(F))$ are canonically homeomorphic to $\Prime_\gamma(C^*(E_\curlywedge))$ and $\Prime_\gamma(C^*(F_\curlywedge))$, respectively, we may view $C^*(E)$, $C^*(F)$, $C^*(E_\curlywedge)$, and $C^*(F_\curlywedge)$ as $X$-algebras in the canonical way.  Then the following are equivalent. 
\begin{enumerate}[(1)]
\item $\FKRplus(X;C^*(E))$ and $\FKRplus(X;C^*(F))$ are isomorphic
\item $\FKRplus(X;C^*(E_\curlywedge))$ and $\FKRplus(X;C^*(F_\curlywedge))$ are isomorphic, and $\tau_E=\tau_F\circ\Theta$.
\end{enumerate}
\end{lemma}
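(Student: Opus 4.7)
The plan is to prove $(1) \Leftrightarrow (2)$ by establishing that plugging sinks induces a canonical, invertible transformation of the ordered reduced filtered $K$-theory which depends only on which $x \in X$ satisfy $\tau_E(x) = -1$. The underlying observation is that plugging a sink $v$ changes the gauge simple subquotient $C^*(E)(\{x_v\})$ from \C to $C(S^1)$, leaving $K_0$ and its order unchanged but promoting $K_1$ from $0$ to \Z. For any $x \in X$ with $\tau_E(x) \neq -1$ the subquotient is untouched, and the lattices of gauge invariant ideals of $C^*(E)$ and $C^*(E_\curlywedge)$ are canonically isomorphic (since $\Gamma_E = \Gamma_{E_\curlywedge}$ with identical order), so $C^*(E)(V)$ and $C^*(E_\curlywedge)(V)$ are naturally comparable for every $V \in I_0 \cup I_1$.

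For the direction $(1) \Rightarrow (2)$, Lemma~\ref{taufromK} immediately yields $\tau_F \circ \Theta = \tau_E$. To produce the $\FKRplus$ isomorphism of the plugged graphs, we invoke Lemma~\ref{lem:structure-2} to reduce to the case $\Bsf_E, \Bsf_F \in \MPZcc$, noting that these reductions commute with plugging since sinks correspond to singleton components which are preserved at every stage. A direct inspection then shows that $\Bsf_{E_\curlywedge}^\bullet$ is obtained from $\Bsf_E^\bullet$ by appending, for each sink $v$, a zero row indexed by $v$: in $E_\curlywedge$ the vertex $v$ becomes regular with the added loop as its sole outgoing edge, so its row now appears in $\Bsf_{E_\curlywedge}^\bullet$, and every entry equals $\Asf_{E_\curlywedge}(v,w) - \delta_{v,w} = \delta_{v,w} - \delta_{v,w} = 0$. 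Translating through the $K$-web correspondence of Section~\ref{sec:red-filtered-K-theory-K-web-GLP-and-SLP-equivalences}, appending a zero row to $\Bsf^\bullet$ becomes appending a zero column to the transposed matrix $\Csf$; this leaves every cokernel (and hence every $K_0$-group with its order) unchanged while adding a summand \Z to $\ker\Csf\{s\}$ precisely for those block-index subsets $s$ that contain the position of the sink. Crucially, which pieces acquire this new \Z-summand is determined entirely by the set $\tau_E^{-1}(-1)$. Since $\tau_E = \tau_F \circ \Theta$ identifies the sinks of $E$ with those of $F$ via $\Theta$, the hypothesized isomorphism $\FKRplus(X;C^*(E)) \cong \FKRplus(X;C^*(F))$ extends canonically to one $\FKRplus(X;C^*(E_\curlywedge)) \cong \FKRplus(X;C^*(F_\curlywedge))$.

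For the direction $(2) \Rightarrow (1)$, the hypothesis $\tau_E = \tau_F \circ \Theta$ is essential because after plugging, former sinks become singleton cyclic components of temperature $0$, indistinguishable from other temperature-$0$ components using $\tau_{E_\curlywedge}$ alone; Lemma~\ref{taufromK} applied to the plugged graphs is thus not sufficient to single them out. Knowing $\tau_E$ and $\tau_F$ does pinpoint the former sink positions consistently in both graphs, so we reverse the matrix construction---deleting the zero rows of $\Bsf_{E_\curlywedge}^\bullet$ at those indices---and transport the given isomorphism back down to $\FKRplus(X;C^*(E)) \cong \FKRplus(X;C^*(F))$.

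The main technical point will be to verify that the ``append a zero row'' operation at the matrix level tracks the $K$-theoretic effect of plugging not merely on the individual groups but also on the connecting maps appearing in the sequences \eqref{eq:longtype} and \eqref{eq:shorttype}, and on the order structure. This is simplified considerably by the fact that $R_{x_v} = \emptyset$ for any sink $v$, so neither a long nor a short sequence originates at $\{x_v\}$; the new $K_1$-data are injected at the sinks and propagate only through the connecting maps into sequences at strictly larger $x$, in a way that is detected already by $\tau_E$ alone.
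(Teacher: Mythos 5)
Your proof is correct and follows essentially the same route as the paper's: plugging changes only the $K_1$-groups at the sinks from $0$ to $\Z$, these groups occur in none of the sequences \eqref{eq:longtype} or \eqref{eq:shorttype} because $R_{x_v}=\emptyset$ at a sink, and the temperature hypothesis (together with the fact that $\Theta$ is an order isomorphism, which is what actually separates sinks from infinite emitters of temperature $-1$ when infinitely many edges are allowed) matches these positions up in $E$ and $F$; your detour through appending zero rows to $\Bsf_E^\bullet$ is a harmless elaboration of what the paper asserts directly at the level of subquotients. One caveat: the new $K_1$-summands do not ``propagate through the connecting maps into sequences at strictly larger $x$'' --- they appear in no recorded sequence whatsoever, which is precisely why the given isomorphisms extend and restrict with no compatibility left to check.
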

\begin{proof}
We note that the changes of $E$ and $F$ only affect the
$K_1$-groups. Since we are only recording the $K_1$-groups at sets
$\{x\}$ and the plugging takes place only at components which have no
successors, in fact no sequences \eqref{eq:longtype} or \eqref{eq:shorttype}
are affected. Thus all that happens is that some of the independent $K_1$-groups that
were originally $0$ are changed to $\Z$, and thus the given isomorphism of the $K$-theories of the original graph \cas readily extend to the plugged versions.  In the other direction, the temperature assumption is to ensure that the number of sinks and the number of cyclic components of $E$ and $F$ are equal.  Thus, an isomorphism of the $K$-theories of the plugged versions restricts to an isomorphism of the $K$-theories of the original graphs.  
\end{proof}

Note finally that when $E$ and $F$ are finite graphs with  $(\Bsf_E,\Bsf_F)$ in standard form, so is $(\Bsf_{E_\curlywedge},\Bsf_{F_\curlywedge})$.
In this situation, $\Bsf^\bullet_E$ is $\SLP$- or $\GLP$-equivalent to $\Bsf^\bullet_F$ precisely when the relation holds between $\Bsf_{E_\curlywedge}$ and $\Bsf_{F_\curlywedge}$. Indeed, if $U\Bsf^\bullet_E V=\Bsf^\bullet_F$ with $U\in\GLPZ[\mathbf m]$ and $V\in\GLPZ$, we get  $\widetilde{U}\in\GLPZ$ so that $\widetilde{U}\Bsf_{E_\curlywedge} V=\Bsf_{F_\curlywedge}$ by padding $U$ with rows and columns from the appropriately sized identity matrix where a plugging has taken place. Conversely, if $\widetilde U$ is given, $U$ is obtained by deleting the relevant rows and columns.
Since $U\in \SLPZ[\mathbf m]$ precisely when $\widetilde{U}\in \SLPZ$, our claim concerning $\SLP$-equivalence is justified.

 Further, when $\Bsf^\bullet_E,\Bsf^\bullet_F\in\MPplusZ$, we conclude that  $\Bsf^\bullet_{E_\curlywedge},\Bsf^\bullet_{F_\curlywedge}\in\MPplusZ[\mathbf n]$. We will use these observations repeatedly without mention below.

\subsection{Move equivalence versus $\SLP$- and $\GLP$-equivalence}

The results in this section are the key to everything that follows and all depend on the following proposition, which was  proved in \cite[Lemma~6.7 and Theorem~6.8]{MR2270572} under the added assumption that the graphs had Condition (K) and no sinks (\ie, were Cuntz-Krieger algebras with Condition (II)). But since we are working only at components which are neither single cycles nor sinks, the same proof applies.

\begin{proposition}\label{GunnarRULES}
Let $E$ and $F$ be finite graphs and assume that $(\Bsf_E,\Bsf_F)$ is in standard form with $\Bsf^\bullet_E,\Bsf^\bullet_F\in\MPplusZ$. If  
$U\in \GLPZ[\mathbf m]$ and $V\in \GLPZ$ are given with 
\[
U\Bsf^\bullet_{E}V=\Bsf^\bullet_{F}
\]
and $i\in\calP$ is given with $\mytau(i)=1$, then there exist $\mathbf r$, graphs $E'$ and $F'$ with $(\Bsf_{E'},\Bsf_{F'})$ in standard form with $\Bsf_{E'}^\bullet,\Bsf_{F'}^\bullet\in\MPplusZ[(\mathbf{m}+\mathbf{r})\times(\mathbf{n}+\mathbf{r})]$,  $U'\in \GLPZ[\mathbf m+\mathbf{r}]$ and $V'\in \GLPZ[\mathbf{n}+\mathbf{r}]$ 
so that
\begin{enumerate}[(i)]
\item \label{GunnarRULES-1}
$\mathbf r=(r_j)$ with $r_i\leq 3$ and $r_j=0$ for $j\not=i$,
\item \label{GunnarRULES-2}
$E\MCeq E'$, $F\MCeq F'$, 
\item \label{GunnarRULES-3}
$U\{j\}=U'\{j\}$, $V\{j\}=V'\{j\}$ for $j\not=i$,
\item \label{GunnarRULES-4}
$\det U'\{i\}=\det V'\{i\}=1$,
\end{enumerate}
and
\[
U'\Bsf^\bullet_{E'}V'=\Bsf^\bullet_{F'}.
\]
\end{proposition}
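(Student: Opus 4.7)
The plan is to adapt the argument of \cite[Lemma~6.7 and Theorem~6.8]{MR2270572}. In that work, which treats Cuntz-Krieger algebras satisfying Condition~(II), the hypothesis (II) is used only to guarantee that whichever component one wishes to modify contains a vertex supporting at least two distinct return paths. Since the adjustments called for in the present statement take place entirely within the single component $i$, and since the assumption $\mytau(i)=1$ supplies exactly this property at component $i$, global Condition~(II) is not needed. I therefore expect Restorff's proof to go through essentially verbatim, once localized to component $i$ and with Condition~(II) replaced by $\mytau(i)=1$.

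The mechanism at the heart of that argument is a matrix-level reformulation of the Cuntz splice. Applying Move \CC\ to $F$ at a suitable vertex in component $i$ (which exists because $\mytau(i)=1$) yields a graph $F''$ with $F\MCeq F''$ via Theorem~\ref{thm:cuntz-splice-implies-stable-isomorphism}. A direct calculation shows that $\Bsf^\bullet_{F''}$ is $\GLP$-equivalent to $\iota_{\mathbf{r}^{*}}(\Bsf^\bullet_F)$, where $\mathbf{r}^{*}$ has $2$ in position $i$ and $0$ elsewhere, via a pair $(X,Y)$ that is the identity on every block $j\neq i$ and whose $i$-th diagonal blocks satisfy $\det X\{i\}\cdot\det Y\{i\}=-1$; by choosing which side of the splice one performs the row/column manipulations on, the sign change is placed on either the $X$-side or the $Y$-side. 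The analogous construction with $E$ in place of $F$ produces the opposite effect on $V$.

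Given the starting \GLPEe $U\Bsf^\bullet_E V = \Bsf^\bullet_F$, I would correct the signs of $\det U\{i\}$ and $\det V\{i\}$ independently: if $\det U\{i\}=-1$, apply one Cuntz splice to $F$ at a vertex in component $i$, and if $\det V\{i\}=-1$, apply one Cuntz splice to $E$ at a vertex in component $i$. After embedding via $\iota$ and composing with $(U,V)$, the resulting pair $(U',V')$ agrees with $(U,V)$ on every block $j\neq i$ and has $\det U'\{i\}=\det V'\{i\}=1$. The overall enlargement is $r_i\leq 3$ because, when both signs need correcting, the two splices can be amalgamated so they share a common vertex in the enlarged $i$-th block, rather than contributing $2+2$ independent vertices.

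The main obstacle, as in \cite{MR2270572}, is the bookkeeping needed to verify that positivity is preserved (so that $\Bsf^\bullet_{E'},\Bsf^\bullet_{F'}\in\MPplusZ$) and that the off-diagonal blocks of $U'$ and $V'$ interact consistently with the $i$-th block under composition. Both issues are handled by supplementing the splice construction with legal row and column additions of the $\Bsf$-matrices as provided by Proposition~\ref{prop:matrix-moves}; since $\Bsf_E,\Bsf_F\in\MPplusZ$ and component $i$ is purely infinite with every $i$-block entry positive, these additions are freely available. All of the supplementary operations remain confined to component $i$, and so do not disturb the agreement $U'\{j\}=U\{j\}$, $V'\{j\}=V\{j\}$ for $j\neq i$ required by \ref{GunnarRULES-3}.
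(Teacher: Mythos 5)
Your core mechanism is exactly the paper's: realize small diagonal augmentations of the $\Bsf$-matrices inside component $i$ by moves, exploit the fact that the Cuntz splice contributes the block $\left(\begin{smallmatrix}0&1\\1&0\end{smallmatrix}\right)$, which is $\GL$-equivalent to $-I_2$ via a pair whose single determinant $-1$ may be placed on whichever side one likes, and then restore positivity by legal row and column additions from Proposition~\ref{prop:matrix-moves}. The only organizational difference is the case analysis: the paper appends one of three blocks of sizes $1$, $2$, $3$ (a single $-1$ flips both determinants; $\left(\begin{smallmatrix}0&1\\1&0\end{smallmatrix}\right)$ flips one; the $3\times3$ combination flips the other), giving $r_i\in\{1,2,3\}$, whereas you splice $E$ and/or $F$ separately and land at $r_i=2$ in every nontrivial case. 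Both organizations work.

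Two points need repair. First, your justification of $r_i\le 3$ by ``amalgamating the two splices so they share a common vertex'' addresses a non-problem: the two splices are performed on \emph{different} graphs, so when both determinants need correcting each of $E$ and $F$ simply grows by its own two spliced vertices, the block sizes already match, and $r_i=2$; there is nothing to amalgamate, and the sentence as written has no content. Second, and more substantively, whenever only one of the two graphs receives a splice you must still enlarge the other by two vertices in component $i$, and ``embedding via $\iota$'' does not accomplish this: $\iota_{\mathbf r}(\Bsf^\bullet_E)$ appends $+1$'s to the diagonal of the $\Bsf$-matrix, i.e.\ isolated double-loop vertices at the graph level, and is not the $\Bsf$-matrix of any graph move equivalent to $E$ with the same component structure. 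The object you need is $-\iota_{\mathbf r}(-\Bsf^\bullet_E)$, appending $-1$'s, which the paper realizes up to $\SLP$-equivalence by performing Move \RR\ \emph{in reverse} on a loop at a vertex of component $i$ (this is precisely why the paper works with $-\iota_{\mathbf r}(-\cdot)$ throughout; cf.\ Remark~\ref{rem:twominusversusnominusiniotar}). This inverse-\RR\ expansion is a genuine extra ingredient that cannot be produced by the row and column additions you invoke. Once it is inserted, your argument goes through and yields the proposition with $r_i\le 2$.
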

\begin{proofsk}
The key idea is to note that whenever $U_0BV_0=B'$, then with
\[
\widetilde{U_0}=
\begin{pmatrix}
U_0&\\
&\left(\begin{smallmatrix}-1\end{smallmatrix}\right)
\end{pmatrix}\qquad \widetilde{V_0}=
\begin{pmatrix}
V_0&\\
&\left(\begin{smallmatrix}-1\end{smallmatrix}\right)
\end{pmatrix}
\]
we have
\[
\widetilde{U_0}\begin{pmatrix}
B&\\
&\left(\begin{smallmatrix}-1\end{smallmatrix}\right)
\end{pmatrix}
\widetilde{V_0}
=
\begin{pmatrix}
B'&\\
&\left(\begin{smallmatrix}-1\end{smallmatrix}\right)
\end{pmatrix}
\]
and $\det\widetilde{U_0}=-\det U_0$ and $\det\widetilde{V_0}=-\det V_0$, and with 
\[
\overline{U_0}=
\begin{pmatrix}
U_0&\\
&\left(\begin{smallmatrix}0&1\\1&0
\end{smallmatrix}\right)\end{pmatrix}\qquad \overline{V_0}=
\begin{pmatrix}
V_0&\\
&\left(\begin{smallmatrix}-1&0\\0&-1
\end{smallmatrix}\right)
\end{pmatrix}
\]
we have
\[
\overline{U_0}\begin{pmatrix}
B&\\
&\left(\begin{smallmatrix}0&1\\1&0
\end{smallmatrix}\right)\end{pmatrix}\overline{V_0}
=
\begin{pmatrix}
B'&\\
&\left(\begin{smallmatrix}-1&0\\0&-1
\end{smallmatrix}\right)\end{pmatrix}
\]
and $\det\overline{U_0}=-\det U_0$ and $\det\overline{V_0}=\det V_0$. Thus we can adjust the signs of $U'\{i\}$ and $V'\{i\}$ as required in \ref{GunnarRULES-4}  at the cost of adding one of the matrices
\[
\begin{pmatrix}-1
\end{pmatrix},
\begin{pmatrix}-1&0\\0&-1
\end{pmatrix},
\begin{pmatrix}-1&0&0\\0&-1&0\\0&0&-1
\end{pmatrix},
\begin{pmatrix}0&1\\1&0
\end{pmatrix},
\begin{pmatrix}0&1&0\\1&0&0\\0&0&-1
\end{pmatrix}
\]
in the appropriate diagonal of the $\Bsf$-matrix, and the proposition is proved as soon as we have established that whenever, say, $E$ as in the statement is given, we can find $E'\MCeq E$ with 1,2, or 3 vertices more than $E$ in the component $\mytheta(i)$ so that $\Bsf^\bullet_{E'}$ is $\SLP$-equivalent to the relevant augmentation of $\Bsf^\bullet_E$. Note that we require that $\Bsf^\bullet_{E'}$ has positive entries wherever it can be nonzero, and that we must take care not to alter the diagonal blocks of $U$ and $V$ away from component $\mytheta(i)$.

To add a single $-1$, we perform Move \RR in reverse on one of the loops at the last vertex of $\mytheta(i)$ to get $\widetilde{E}$ with $\Bsf_{\widetilde{E}}\{i\}$ in the form
\[
\begin{pmatrix}
\Bsf_{{E}}\{i\}&\left(\begin{smallmatrix}0\\\vdots\\0\\1\end{smallmatrix}\right)\\
\left(\begin{smallmatrix}0&\cdots&0&1\end{smallmatrix}\right)&\left(\begin{smallmatrix}-1\end{smallmatrix}\right)
\end{pmatrix}
\]
This matrix is clearly $\operatorname{SL}$-equivalent to the desired one, and it is straightforward to obtain $E'$ which has only positive entries in $\Bsf_{{E'}}\{i\}$ by a number of row or column additions. We can also arrange to have positive entries in added rows and columns in each offdiagonal block $\{i,j\}$ or $\{k,i\}$ where $i\prec j$ or $k\prec i$. By Proposition \ref{prop:matrix-moves}, $E\Meq E'$. The $\SLP$-matrices implementing the necessary row or column additions will equal the identity at every diagonal block $\{j\}$ with $i\not=j$, so we will not change these blocks as required in \ref{GunnarRULES-3}.

Repeating this process, we can arrange move equivalences taking us from $E$ to $E'$ with $\Bsf^\bullet_{E'}\in\MPplusZ[(\mathbf{m}+k\mathbf{e}_i)\times(\mathbf{n}+k\mathbf{e}_i)]$ being \SLP-equivalent to $-\iota_{k\mathbf e_i}(-\Bsf_E)$ for any $k\in\N$, where $\mathbf e_i$ is the vector that is $1$ at index $i$ and 0 otherwise.	Thus all that remains is to note that if we perform Move \CC on the  last vertex of $\mytheta(i)$ to get $\overline{E}$ with $\Bsf_{\overline{E}}\{i\}$ in the form
\[
\begin{pmatrix}
\Bsf_{{E}}\{i\}&\left(\begin{smallmatrix}0&0\\\vdots&\vdots\\0&0\\1&0\end{smallmatrix}\right)\\
\left(\begin{smallmatrix}0&\cdots&0&1\\0&\cdots&0&0\end{smallmatrix}\right)&\left(\begin{smallmatrix}0&1\\1&0\end{smallmatrix}\right)
\end{pmatrix}
\]
we again obtain the desired matrix augmentation up to $\SLP$-equivalence, and may arrange for positive entries just as above.
\end{proofsk}

\begin{proposition}\label{fromBwithplug}
Let $E$ and $F$ be finite graphs and assume that $(\Bsf_E,\Bsf_F)$ is in standard form with $\Bsf^\bullet_E,\Bsf^\bullet_F\in\MPplusZ$.
Assume further that  $U\in \GLPZ[\mathbf m]$ and $V\in\GLPZ$ are given with 
\[
U\Bsf_{E}^\bullet V=\Bsf_{F}^\bullet.
\]
Then
\begin{enumerate}[(i)]
\item \label{fromBwithplug-1}
If $U\in \SLPZ[\mathbf m]$ and $V\in\SLPZ$, then  $E\Meq F$.
\item \label{fromBwithplug-2}
If $V\{i\}=1$ whenever $\mytau(i)\leq 0$ and $U\{i\}=1$ whenever $\mytau(i)=0$, then  $E\MCeq F$.
\end{enumerate}
\end{proposition}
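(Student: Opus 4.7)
The plan is first to reduce (ii) to (i), and then to attack (i) by plugging sinks, appealing to the flow equivalence theory of shifts of finite type, and unplugging.

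For the reduction of (ii) to (i), the idea is to apply Proposition~\ref{GunnarRULES} iteratively, once for every $i\in\calP$ with $\mytau(i)=1$. Each application replaces $E,F$ by Cuntz-move-equivalent graphs $E',F'$ still in standard form with matrices in $\MPplusZ[(\mathbf{m}+\mathbf{r})\times(\mathbf{n}+\mathbf{r})]$, and produces a new \GLPEe $(U',V')$ with $\det U'\{i\}=\det V'\{i\}=+1$. Property~\ref{GunnarRULES-3} of Proposition~\ref{GunnarRULES} guarantees that the diagonal blocks at indices $j\neq i$ are left untouched, so the hypotheses ``$V\{j\}=1$ for $\mytau(j)\leq 0$'' and ``$U\{j\}=1$ for $\mytau(j)=0$'' survive each iteration. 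After finitely many steps every diagonal block has determinant $+1$: at $\mytau(j)=1$ by Proposition~\ref{GunnarRULES}, at $\mytau(j)=0$ by hypothesis (blocks are literally the $1\times 1$ identity), and at $\mytau(j)=-1$ vacuously for $U$ (the block is empty) and by hypothesis for $V$. Thus we reach $(U'',V'')\in\SLPZ[\mathbf{m}+\mathbf{r}]\times\SLPZ[\mathbf{n}+\mathbf{r}]$ and can apply (i) to the modified graphs; concatenating yields $E\MCeq F$.

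To prove (i), plug the sinks to obtain finite graphs $E_\curlywedge,F_\curlywedge$ with no sinks. By the observation at the end of Section~\ref{plugging}, the \SLPEe extends to an \SLPEe $(\widetilde U,\widetilde V)$ between the (now square) matrices $\Bsf_{E_\curlywedge}$ and $\Bsf_{F_\curlywedge}$. Following closely the strategy of \cite{MR2270572}, the plan is to factor $\widetilde U$ and $\widetilde V$ as products of elementary matrices respecting the $\calP$-block structure, interpret each elementary operation as a row or column addition on the associated graph via Proposition~\ref{prop:matrix-moves}, and handle the ``purely diagonal'' operations within each strongly connected component using Franks' theorem for irreducible shifts (when the component is not a single cycle), or by direct inspection for cyclic components (where the diagonal block is $1\times 1$ with entry $0$ and the only \SL-matrix is the identity). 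This produces a sequence of moves witnessing $E_\curlywedge\Meq F_\curlywedge$; alternatively, one can argue indirectly by showing that $\mathsf X_{E_\curlywedge}$ is flow equivalent to $\mathsf X_{F_\curlywedge}$ and invoking Lemma~\ref{flowvsME}.

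The main obstacle I anticipate is the ``unplugging'' step: deducing $E\Meq F$ from $E_\curlywedge\Meq F_\curlywedge$. Lemma~\ref{CEpassestoplugged} only provides the forward direction; the reverse requires extra work, since a move sequence between the plugged graphs may freely involve the added loops (an outsplit could split them, a reduction could eliminate them) and thus need not descend to the unplugged graphs. The resolution will probably require a dedicated combinatorial argument showing that the sequence can be rearranged or rewritten so that each added loop remains isolated and unaffected, whereupon it can be deleted at the end. A secondary obstacle is the bookkeeping in the block-elementary decomposition of $\widetilde U,\widetilde V$: ensuring that each intermediate row or column addition is ``legal'' in the sense of Proposition~\ref{prop:matrix-moves} will require either passing through auxiliary graphs or verifying appropriate combinatorial conditions at each step, mirroring the work done in \cite{MR2270572} for the Cuntz--Krieger case with Condition~(II).
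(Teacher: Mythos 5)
Your reduction of (ii) to (i) via Proposition~\ref{GunnarRULES} is exactly the paper's argument, and your overall architecture for (i) --- plug the sinks, realize the \SLPEe between $\Bsf_{E_\curlywedge}$ and $\Bsf_{F_\curlywedge}$ by elementary positive equivalences in the style of \cite{MR2270572}, then return to the unplugged graphs --- is also the intended one. For the record, the positive-factorization step is not assembled by hand from Franks' theorem on the diagonal blocks together with a block-elementary decomposition of $\widetilde U,\widetilde V$; it is quoted wholesale from Boyle's factorization theorem \cite[Theorem~4.4]{MR1907894}, which already produces a chain of elementary equivalences $E_{u,v}$, each with a path from $u$ to $v$, never leaving \MPplusZ, so that Proposition~\ref{prop:matrix-moves} applies at every intermediate step. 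That disposes of your ``secondary obstacle'' at a stroke.

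The genuine gap is the unplugging step, which you correctly identify as the main obstacle but for which your sketched resolution is not the right one and would be very hard to carry out. You propose to take an arbitrary sequence of moves witnessing $E_\curlywedge\Meq F_\curlywedge$ and rearrange it so that each added loop stays isolated; nothing in the paper or in \cite{MR3082546} provides such a normal form for move sequences, and it is unclear that one exists. The paper sidesteps the problem entirely: it never converts a move equivalence of the plugged graphs back into one of the unplugged graphs. Instead it takes Boyle's explicit list of elementary operations carrying $\Bsf_{E_\curlywedge}$ to $\Bsf_{F_\curlywedge}$ and applies the \emph{same} list to $\Bsf_E$. Since a plugged sink has paths only to itself, the index $u$ in any $E_{u,v}$ on the list is never a plugged sink, hence is not a sink of the original graph; a column addition then still satisfies the hypotheses of Proposition~\ref{prop:matrix-moves} for the unplugged graphs, while a row addition whose added row belongs to a sink adds a zero row and may simply be omitted. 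The chain of legal operations therefore descends verbatim and yields $E\Meq F$ directly. Without this (or an equivalent) argument your proof of (i), and hence of (ii), is incomplete.
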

\begin{proof}
To prove \ref{fromBwithplug-1}, we pass to the plugged graphs and recall that $\Bsf_{F_\curlywedge}$ and $\Bsf_{E_\curlywedge}$ are also $\SLP$-equivalent. Since $E_\curlywedge$ and $F_\curlywedge$ have neither sinks nor
sources, we may appeal to \cite[Theorem 4.4]{MR1907894} which shows that
$\Bsf_{F_\curlywedge}$ can be obtained from $\Bsf_{E_\curlywedge}$
by a number of elementary row or column additions or subtractions,
never leaving matrices in $\MPplusZ$. In fact, Boyle produces a list of elementary equivalences $E_{u,v}$ as described in Proposition \ref{prop:matrix-moves} where there is a path from $u$ to $v$ throughout, and since we have arranged that any vertex in any graph along the way supports at least one loop, the proposition applies to yield \ref{fromBwithplug-1} when $E$ and $F$ have no
sinks.

When $E$ and $F$ do have sinks, we apply the same sequence of row and column operations to $\Bsf_E$. We
note that in any matrix addition or subtraction implemented by $E_{u,v}$, $u$ will not be one of the plugged sinks, as indeed these provide paths only to themselves. Hence $u$ will not be a sink in the original setup. In the case the matrix implements a column operation, the requirements in 
Proposition \ref{prop:matrix-moves} are still met, and thus such an operation remains implemented by moves in the original setup.  In the case the matrix implements a row operation, we observe that it has no effect, adding or subtracting a zero row from another row. It may hence be omitted, proving \ref{fromBwithplug-1}.

We prove \ref{fromBwithplug-2} by reducing to \ref{fromBwithplug-1} by Proposition~\ref{GunnarRULES}, changing any negative determinants of the given $U$ and $V$ at blocks $\{i\}$ starting from $\{1\}$ and working downwards. We then get finite graphs $E'$ and $F'$ such that $(\Bsf_{E'},\Bsf_{F'})$ is in standard form with
$\Bsf_{E'}^\bullet,\Bsf_{F'}^\bullet\in \MPplusZ[(\mathbf m+\mathbf r)\times (\mathbf n+\mathbf r)]$  where the multiindex $\mathbf{r}$ has the property that $r_j=0$ for $j$ with 
 $\mytau(j)\leq 0$ and $r_j\leq 3$ otherwise. We have that
$E\MCeq E'$, $F\MCeq F'$ and that for some  $U'\in\SLPZ[\mathbf m+\mathbf r]$ and $V'\in\SLPZ[\mathbf n+\mathbf r]$, we may arrange that
 $U'\Bsf_{E'}^\bullet V'=\Bsf_{F'}$. By \ref{fromBwithplug-1}, $E'\Meq F'$.
 \end{proof}

\begin{definition}
Let $E$ and $E'$ be graphs with finitely many vertices and assume that $\Bsf_E\in\MPZc$ and $\Bsf_{E'}\in\MPZc[\mathbf{m}'\times\mathbf{n}']$. 
We say that a \stariso from $C^*(E)$ to $C^*(E')$ (or from $C^*(E)\otimes\K$ to $C^*(E')\otimes\K$) respects the block structure, if the induced homeomorphism from $\Prime_\gamma(C^*(E))$ to $\Prime_\gamma(C^*(E'))$ commutes with the identification of $\calP$ with $\Prime_\gamma(C^*(E))$ and $\Prime_\gamma(C^*(E'))$, respectively. 

All of the elementary moves introduced in Section~\ref{sec:moves} induces a canonical stable isomorphism. We say that such an elementary move preserves the block structure if this induced stable isomorphism respects the block structure. We say that a move equivalence or a Cuntz move equivalence respects the block structure, if it is the composition of a series of elementary moves such that the composition of the induced stable isomorphisms respects the block structure. 

If the only automorphism of \calP is the trivial automorphism, then \starisos, move equivalences and Cuntz move equivalences, respectively, automatically respect the block structure --- we will in particular use that this is the case when \calP is linearly ordered.
\end{definition}

\begin{proposition}\label{toBwithplug} 
Let $E$ and $F$ be finite graphs and assume that $(\Bsf_E,\Bsf_F)$ is in standard form and has the additional property that   $\gcd(\Bsf_E\{i\})=1$ and $\gcd(\Bsf_F\{i\})=1$ at every $i$ with $\mytau(i)=\mytau[F](i)=1$. When $E_\curlywedge\MCeq F_\curlywedge$ respecting the block structure,
there exist  $U,V\in \GLPZ[\mathbf n]$  with $U\{i\}=V\{i\}=1$ whenever $\mytau(i)\leq 0$ so that $
U\Bsf_{E_\curlywedge}V=\Bsf_{F_\curlywedge}
$. When $E_\curlywedge\Meq F_\curlywedge$,
we may choose  $U,V\in \SLPZ[\mathbf n]$.
\end{proposition}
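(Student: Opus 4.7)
The strategy is to analyze the given equivalence move by move. Since $E$ and $F$ are finite, $E_\curlywedge$ and $F_\curlywedge$ have no sinks, and each elementary move \OO, \II, \RR, \SSS, \CC preserves sink-freeness; thus in any chain $E_\curlywedge = E_0 \to E_1 \to \cdots \to E_k = F_\curlywedge$ realizing the equivalence, every $\Bsf_{E_j}$ is a full square matrix equal to $\Bsf_{E_j}^\bullet$.

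For each elementary move, one would produce explicit matrices implementing a matrix equivalence at the $\Bsf$-level, augmented via $\iota_{\mathbf{r}}$ to account for vertex count changes. The moves \OO, \II, \RR, \SSS give $\SLP$-equivalences whose diagonal blocks equal the identity on all blocks with $\mytau \leq 0$; these are the block-setting analogues of classical state-splitting matrices, essentially as in \cite[Section~5]{MR2270572}. The Cuntz splice \CC, necessarily applied at a vertex in a $\mytau(i) = 1$ block, induces a $\GLP$-equivalence whose diagonal blocks are identity except at $\{i\}$, where the determinant is $-1$; the underlying matrix computation appears in \cite[Section~4]{arXiv:1602.03709v2}. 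Composing these yields matrices $\tilde U, \tilde V$ realizing a $\GLP$- (respectively $\SLP$-) equivalence between $\iota_{\mathbf{R}}(\Bsf_{E_\curlywedge})$ and $\iota_{\mathbf{R}'}(\Bsf_{F_\curlywedge})$ for some multiindices $\mathbf{R}, \mathbf{R}'$.

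The remaining step is to deaugment, stripping off the extra identity rows and columns to obtain $U, V \in \GLPZ[\mathbf{n}]$ (respectively $\SLPZ[\mathbf{n}]$) with $U \Bsf_{E_\curlywedge} V = \Bsf_{F_\curlywedge}$. This step uses the gcd $=1$ hypothesis at the $\mytau = 1$ blocks via the primitive matrix arguments of \cite[Section~4]{MR1907894}, which are the same techniques underpinning Proposition~\ref{GunnarRULES}. The required constraint $U\{i\} = V\{i\} = 1$ whenever $\mytau(i) \leq 0$ then follows automatically from block-structure preservation: such blocks are single-vertex components (cyclic components or plugged sinks, both carrying a self-loop), and no elementary move respecting the block structure can act nontrivially on such a block---\OO and \II would split it, while \RR, \SSS, and \CC cannot be applied at a vertex whose only cycle is a single loop. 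Since the diagonal block of a product of block upper triangular matrices is the product of the diagonal blocks, the identity condition on these blocks is preserved under composition.

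The main obstacle will be the Cuntz splice analysis: verifying that the splice's effect on $\Bsf$ can be written in the claimed $\GLP$ form with identity on every diagonal block other than $\{i\}$, correctly accounting for the two added vertices' interactions with offdiagonal blocks. A secondary technical difficulty is the deaugmentation, where the gcd hypothesis must be used carefully to match and cancel augmentations accumulated across the chain while preserving the $\GLP$- or $\SLP$-structure.
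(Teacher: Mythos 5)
Your overall shape (decompose the equivalence into elementary moves, produce a matrix equivalence for each, compose, then strip the augmentation) is the right instinct, but two steps as you describe them would fail. First, your argument that $U\{i\}=V\{i\}=1$ on blocks with $\mytau(i)\leq 0$ ``follows automatically from block-structure preservation'' is not valid: a chain of moves realizing $E_\curlywedge\MCeq F_\curlywedge$ passes through arbitrary intermediate graphs, which may out-split or in-split the loop vertex of a cyclic component, introduce transition states, and enlarge these components before collapsing them again. So it is simply false that no elementary move acts on such a block, and your ``diagonal block of the composite is the product of the diagonal blocks'' argument collapses because the factors do not share a common block structure (indeed the intermediate graphs need not lie in $\MPZccc$ at all). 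The paper's proof deals with this by collapsing each intermediate graph back to $\MPZccc$ form (checking this commutes with the Cuntz splices), and by invoking Boyle's theorem (\cite[Theorems~3.1 and~3.4]{MR1907894}) on each maximal $\Meq$-segment to get an $\SLP$-equivalence of augmented matrices whose augmentation is supported only on hot blocks; at a $1\times 1$ cold block the determinant-one condition then forces the factor to be the identity there, and this is what ultimately propagates to the final $(U,V)$.

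Second, the deaugmentation step is the crux and your proposal does not identify a mechanism that works. The augmented equivalences accumulated along the chain relate matrices of varying sizes and the augmentations do not literally cancel. The paper's route is through the $K$-web: the composite of all the augmented $\SLP$- and $\GLP$-equivalences induces a $K$-web isomorphism $\kappa$ between $K(\Csf_E)$ and $K(\Csf_F)$ (using the canonical identification of the $K$-web of $-\iota_{\mathbf r}(-B)$ with that of $B$), and then the Boyle--Huang realization theorem \cite[Theorem~4.5]{MR1990568} produces a single \emph{unaugmented} $\GLP$-equivalence $(U,V)\colon\Bsf_E\to\Bsf_F$ inducing $\kappa$ (with \cite[Proposition~4.1 and Corollary~4.9]{MR1990568} handling the $\SLP$ case). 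It is in the hypotheses of these realization results -- not in ``primitive matrix arguments'' from \cite{MR1907894} -- that the $\gcd(\Bsf_E\{i\})=1$ assumption at hot blocks is consumed, and it is because $(U,V)$ induces $\kappa$, which is the identity on the $1\times1$ cold blocks, that one finally concludes $U\{i\}=V\{i\}=1$ there. Without an appeal to a realization theorem of this kind, your proof has no way to pass from the stack of differently-sized augmented equivalences back down to $\mathbf n\times\mathbf n$.
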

\begin{proof}
We may assume without loss of generality that $E \MCeq F$ respecting the block structure and  $E$ and $F$ have no sinks.  Since $E \MCeq F$, we have a string of moves as follows: 
\[
\xymatrix{
E \ar[r]^-{\Meq} & E_1 \ar[r]^-{\CC} & E_2 \ar[l]\ar[r]^-{\Meq} & E_3 \ar[r]^-{\CC} &E_4\ar[l]\ar[r]& \cdots \ar[r]^-{\CC} & E_{2n}\ar[l] \ar[r]^-{\Meq} & F
}
\]  
where each move between $E_{2j-1}$ and $E_{2j}$ is either a Cuntz splice or its inverse.
Note that at each stage of the move equivalence, we may have
introduced transitional vertices and we may have increased the number
of vertices in the cyclic components.  So, we collapse these
transitional vertices and the cyclic components, to obtain a graph
$F_i$ with no transitional vertices such that $E_i \Meq F_i$ and $F_i
\in \MPZccc[ \mathbf{n}_i ]$.  Note that performing these moves
commutes with any Cuntz splice, since such a move cannot take place at
a cyclic component or at a transitional vertex.  Hence, we have a
commuting diagram
\[
\xymatrix{
E \ar[r]^-{\Meq} & E_1 \ar[r]^-{\CC} \ar[d]^\Meq & E_2\ar[l] \ar[r]^-{\Meq} \ar[d]^{\Meq} & E_3 \ar[r]^-{\CC} \ar[d]^{\Meq} &E_4\ar[l]\ar[r]\ar[d]^{\Meq}& \cdots \ar[r]^-{\CC} & E_{2n}\ar[l] \ar[d]^{\Meq} \ar[r]^-{\Meq} & F \\
& F_1 \ar[r]^-{\CC} & F_2\ar[l]  & F_3 \ar[r]^-{\CC}  &F_4\ar[l]& \ar[r]^-{\CC} &  F_{2n}\ar[l]  & },
\]  
where the compositions of the move equivalences all respect the block structure. 
Let $F_0=E$ and $F_{2n+1}=F$. 

Let $k\in\{0,1,\ldots,n\}$ be given. 
Since $F_{2k} \Meq F_{2k+1}$, we have that the shift spaces $\mathsf{X}_{F_{2k}}$ and $\mathsf{X}_{F_{2k+1}}$ are flow equivalent.  By \cite[Theorem~3.1 and Theorem~3.4]{MR1907894}, there exists an \SLPEe \fctw{(U_{2k}, V_{2k})}{-\iota_{\mathbf{r}_{2k}}(-\Bsf_{F_{2k}})}{-\iota_{\mathbf{r}_{2k}'}(-\Bsf_{F_{2k+1}})}, where $\mathbf{r}_{2k}=(r_{2k,l})_{l\in\calP}$ and $\mathbf{r}_{2k}'=(r_{2k,l}')_{l\in\calP}$ with $r_{2k,l}=r_{2k,l}'=0$ whenever $\mytau(l) \leq 0$. 

Let again $k\in\{0,1,\ldots,n\}$ be given. 
A computation based on Restorff's proof of Proposition~\ref{GunnarRULES} shows that there exists a \GLPEe \fctw{(U_{2k+1} , V_{2k+1} )}{-\iota_{\mathbf{r}_{2k+1}}( -\Bsf_{F_{2k+1} })}{-\iota_{\mathbf{r}_{2k+1}'}(-\Bsf_{F_{2k}} )} such that $U_{2k+1}\{i\} = V_{2k+1}\{i \} = 1$ for all $\mytau[F_{2j+1}](i) \leq 0$, where $\mathbf{r}_{2k+1}=(r_{2k+1,l})_{l\in\calP}$ and $\mathbf{r}_{2k+1}'=(r_{2k+1,l}')_{l\in\calP}$ with $r_{2k+1,l}=r_{2k+1,l}'=0$ whenever $\mytau(l) \leq 0$. 

By \cite[Theorem~3.10]{MR1990568},
the composition of these \SLP- and \GLP-equivalences induces a
$K$-web isomorphism $\kappa$.  Hence, by
\cite[Theorem~4.5]{MR1990568}, there exists a \GLPEe $(U,V) \colon
\Bsf_{E} \rightarrow \Bsf_{F}$ inducing $\kappa$ as in Lemma \ref{lem:  Kweb 2 components}.  Since the cyclic
components of $E$ and $F$ are $1\times 1$ blocks and $(U,V)$ induces
$\kappa$, we have that $U\{i\}=V\{i\}=1$  whenever $\mathcal{T}_{\Bsf_E}(i)\leq 0$.

We now prove the statement about move equivalence. As above, we may assume that $E$ and $F$ have no sinks and we get an \SLPEe \fctw{(U,V)}{-\iota_{\mathbf{r}}(-\Bsf_{E})}{-\iota_{\mathbf{r}}(-\Bsf_{F})}, where $\mathbf{r}=(r_l)_{l\in\calP}$ with $r_l=0$ whenever $\mathcal{T}_{\Bsf_E}(l)\leq 0$. Now it follows from \cite[Proposition~4.1 and Corollary~4.9]{MR1990568} that there exists an \SLPEe from $\Bsf_E$ to $\Bsf_F$. 
\end{proof}

\begin{theorem}\label{iffcharofCeq} 
Let $E$ and $F$ be finite graphs with $(\Bsf_E,\Bsf_F)$ in standard form with $\Bsf^\bullet_E,\Bsf^\bullet_F\in\MPplusZ$. Then the following are equivalent
\begin{enumerate}[(1)]
\item\label{iffcharofCeqI} $E\MCeq F$ respecting the block structure,
\item\label{iffcharofCeqNEW} $E_\curlywedge\MCeq F_\curlywedge$ respecting the block structure,
\item\label{iffcharofCeqII}  There exist $U,V\in \GLPZ$ with $U\{i\}=V\{i\}=1$ whenever $\mytau(i)\leq 0$  so that
$U\Bsf_{E_\curlywedge}V=\Bsf_{F_\curlywedge}$,
\item\label{iffcharofCeqIII}  There exist $U\in \GLPZ[\mathbf m]$ and $V\in \GLPZ$ with $V\{i\}=1$ whenever $\mytau(i)\leq 0$ and with $U\{i\}=1$ whenever $\mytau(i)=0$ so that 
 $
U\Bsf_{E}^\bullet V=\Bsf_{F}^\bullet
$.
\end{enumerate}
\end{theorem}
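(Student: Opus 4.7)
The plan is to establish the cycle \ref{iffcharofCeqI}$\Rightarrow$\ref{iffcharofCeqNEW}$\Rightarrow$\ref{iffcharofCeqII}$\Leftrightarrow$\ref{iffcharofCeqIII}$\Rightarrow$\ref{iffcharofCeqI}, at which point essentially all the substance has already been proved in the results leading up to the theorem; the work is mostly in checking that the diagonal conditions, block structures, and the standard-form/gcd hypotheses are lined up correctly.

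For \ref{iffcharofCeqI}$\Rightarrow$\ref{iffcharofCeqNEW} I will apply Lemma~\ref{CEpassestoplugged}. Since plugging affects only sinks, which identify canonically in $\Gamma_E$ and $\Gamma_{E_\curlywedge}$ under the same enumeration of $\calP$, the block-respecting property is preserved; the temperatures at those indices just shift from $-1$ to $0$. For \ref{iffcharofCeqNEW}$\Rightarrow$\ref{iffcharofCeqII} I will invoke Proposition~\ref{toBwithplug} applied to $E_\curlywedge$ and $F_\curlywedge$ (which have no sinks). The required gcd hypothesis needs a small check: if $\mytau(i)=1$, then by the assumption $\Bsf_E^\bullet\in\MPplusZ$ we are in case \ref{def: positive matrices-4} of Definition~\ref{def: positive matrices}, so the Smith normal form of $\Bsf_E\{i\}$ has at least two $1$'s, forcing $\gcd(\Bsf_E\{i\})=1$; since such a component is non-cyclic it contains no sinks, so $\Bsf_E\{i\}=\Bsf_{E_\curlywedge}\{i\}$, and the gcd condition transfers to the plugged graph (and similarly for $F$). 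The hypothesis that $(\Bsf_{E_\curlywedge},\Bsf_{F_\curlywedge})$ is in standard form was already noted in Section~\ref{plugging}.

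For \ref{iffcharofCeqII}$\Leftrightarrow$\ref{iffcharofCeqIII} I will carry out the padding/unpadding described in Section~\ref{plugging}. Given $U\in\GLPZ[\mathbf m]$ and $V\in\GLPZ$ implementing \ref{iffcharofCeqIII}, I pad $U$ with identity blocks at the sink indices (where $m_i=0$) to obtain $\widetilde U\in\GLPZ$ satisfying $\widetilde U\Bsf_{E_\curlywedge}V=\Bsf_{F_\curlywedge}$. The diagonal conditions line up: at a sink index $i$ of $E$ we have $\mytau(i)=-1$ in \ref{iffcharofCeqIII} (so no constraint is imposed on $U$), but after plugging $\mytau_{E_\curlywedge}(i)=0$, and the identity block we inserted automatically fulfils $\widetilde U\{i\}=1$; at indices with $\mytau(i)=0$ unchanged by plugging, the condition $U\{i\}=1$ passes directly to $\widetilde U\{i\}=1$; and the condition on $V$ is already what is required. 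The reverse direction just removes the sink rows and columns of the $\widetilde U$ from \ref{iffcharofCeqII}, which are forced to equal the identity by the diagonal condition $\widetilde U\{i\}=1$ at the (newly cyclic) sink indices, so the deletion is well-defined and restores \ref{iffcharofCeqIII}.

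Finally, \ref{iffcharofCeqIII}$\Rightarrow$\ref{iffcharofCeqI} is exactly Proposition~\ref{fromBwithplug}\ref{fromBwithplug-2}, closing the cycle. The main potential source of slip-ups here is not a conceptual obstacle but the bookkeeping at the sink blocks: one must keep track of the shift of temperature from $-1$ to $0$ under plugging and the fact that sink rows disappear entirely from $\Bsf^\bullet$ but reappear as cyclic singletons in $\Bsf_{E_\curlywedge}$, so that the distinct diagonal constraints in \ref{iffcharofCeqII} and \ref{iffcharofCeqIII} ultimately match.
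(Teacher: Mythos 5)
Your proposal is correct and follows exactly the same cycle of implications as the paper's own proof: Lemma~\ref{CEpassestoplugged} for \ref{iffcharofCeqI}$\Rightarrow$\ref{iffcharofCeqNEW}, Proposition~\ref{toBwithplug} for \ref{iffcharofCeqNEW}$\Rightarrow$\ref{iffcharofCeqII} (with the same observation that the Smith normal form condition in Definition~\ref{def: positive matrices} forces $\gcd=1$ at blocks with $\mytau(i)=1$), the padding/unpadding observations of Section~\ref{plugging} for \ref{iffcharofCeqII}$\Leftrightarrow$\ref{iffcharofCeqIII}, and Proposition~\ref{fromBwithplug}\ref{fromBwithplug-2} for \ref{iffcharofCeqIII}$\Rightarrow$\ref{iffcharofCeqI}. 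The additional bookkeeping you carry out at the sink blocks is accurate and simply makes explicit what the paper states tersely.
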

\begin{proof}
Lemma \ref{CEpassestoplugged} proves that \ref{iffcharofCeqI}$\Longrightarrow$\ref{iffcharofCeqNEW}.
Since the $\gcd$ is $1$ at any block with a $1$ in the Smith form, we may apply Proposition \ref{toBwithplug} to prove \ref{iffcharofCeqNEW}$\Longrightarrow$\ref{iffcharofCeqII}. We have noted that \ref{iffcharofCeqII}$\Longleftrightarrow$\ref{iffcharofCeqIII} holds in general, and \ref{iffcharofCeqIII}$\Longrightarrow$\ref{iffcharofCeqI} is the content of Proposition \ref{fromBwithplug}\ref{fromBwithplug-2}.
\end{proof}

\begin{theorem}\label{iffcharofMeq} 
Let $E$ and $F$ be finite graphs with $(\Bsf_E,\Bsf_F)$ in standard form with $\Bsf^\bullet_E,\Bsf^\bullet_F\in\MPplusZ$. Then the following are equivalent
\begin{enumerate}[(1)]
\item\label{iffcharofMeqI} $E\Meq F$ respecting the block structure,
\item\label{iffcharofMeqNEW} $E_\curlywedge\Meq F_\curlywedge$ respecting the block structure,
\item\label{iffcharofMeqII}  There exist $U,V\in \SLPZ$ so that $U\Bsf_{E_\curlywedge}V=\Bsf_{F_\curlywedge}$,
\item\label{iffcharofMeq|||}  There exist $U\in \SLPZ[\mathbf m]$ and $V\in \SLPZ$ so that
 $U\Bsf_{E}^\bullet V=\Bsf_{F}$.
\end{enumerate}
\end{theorem}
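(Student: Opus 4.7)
The proof will follow the same template as Theorem~\ref{iffcharofCeq}, now using the $\SLP$-versions of the preparatory results rather than the $\GLP$-versions. The plan is to establish the cycle \ref{iffcharofMeqI}$\Rightarrow$\ref{iffcharofMeqNEW}$\Rightarrow$\ref{iffcharofMeqII}$\Leftrightarrow$\ref{iffcharofMeq|||}$\Rightarrow$\ref{iffcharofMeqI}.

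For \ref{iffcharofMeqI}$\Rightarrow$\ref{iffcharofMeqNEW}, we invoke Lemma~\ref{CEpassestoplugged} directly. The implication \ref{iffcharofMeqNEW}$\Rightarrow$\ref{iffcharofMeqII} is the $\SLP$-part of Proposition~\ref{toBwithplug}, which already produces matrices $U,V\in\SLPZ$ (so that $U\Bsf_{E_\curlywedge}V=\Bsf_{F_\curlywedge}$) from a move equivalence at the plugged level, under the hypothesis that the diagonal blocks of $\Bsf_E$ and $\Bsf_F$ at hot components have $\gcd 1$; but since $\Bsf^\bullet_E,\Bsf^\bullet_F\in\MPplusZ$, these blocks contain $1$'s in their Smith normal form (by condition~\ref{def: positive matrices-4} of Definition~\ref{def: positive matrices}), so the hypothesis is automatically met.

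The equivalence \ref{iffcharofMeqII}$\Leftrightarrow$\ref{iffcharofMeq|||} is the observation already made in Subsection~\ref{plugging}: padding an $\SLP$-equivalence at the non-plugged level by identity rows and columns corresponding to the plugged sinks yields an $\SLP$-equivalence at the plugged level, and conversely the relevant rows and columns can be deleted. Crucially, since the padding uses only identity blocks of determinant $1$, the $\SLP$ condition is preserved in both directions.

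Finally, \ref{iffcharofMeq|||}$\Rightarrow$\ref{iffcharofMeqI} is precisely Proposition~\ref{fromBwithplug}\ref{fromBwithplug-1}, which produces a move equivalence $E\Meq F$ from an $\SLP$-equivalence of the reduced $\Bsf^\bullet$-matrices. The fact that this move equivalence respects the block structure is built into the statement of Proposition~\ref{fromBwithplug}, since the moves used (via Boyle's factorization through elementary matrices $E_{u,v}$ with $u\succeq v$ in $\calP$, implementable as row or column operations in the graph via Proposition~\ref{prop:matrix-moves}) never mix distinct components of $\calP$.

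I do not expect any step here to present genuine difficulty; everything is a packaging of results that have already been proved in the preceding subsections, the only mild care being to verify that (a) the $\gcd 1$ hypothesis needed to apply Proposition~\ref{toBwithplug} is a consequence of $\Bsf^\bullet_E,\Bsf^\bullet_F\in\MPplusZ$, and (b) the plug/unplug passage preserves the $\SLP$ condition and not merely the $\GLP$ condition, as discussed after Lemma~\ref{Ktheoryplug}.
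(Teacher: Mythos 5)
Your proposal is correct and follows essentially the same route as the paper: the paper proves this theorem by declaring it completely analogous to Theorem~\ref{iffcharofCeq}, i.e.\ the cycle \ref{iffcharofMeqI}$\Rightarrow$\ref{iffcharofMeqNEW}$\Rightarrow$\ref{iffcharofMeqII}$\Leftrightarrow$\ref{iffcharofMeq|||}$\Rightarrow$\ref{iffcharofMeqI} using Lemma~\ref{CEpassestoplugged}, the $\SLP$-part of Proposition~\ref{toBwithplug}, the padding observation from Subsection~\ref{plugging}, and Proposition~\ref{fromBwithplug}\ref{fromBwithplug-1}. Your two points of "mild care" (the $\gcd$ hypothesis following from the Smith normal form condition in $\MPplusZ$, and $\SLP$-preservation under plugging/unplugging) are exactly the checks the paper itself makes in the proof of Theorem~\ref{iffcharofCeq} and at the end of Subsection~\ref{plugging}.
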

\begin{proof}
The proof is completely analogous to the proof of Theorem~\ref{iffcharofCeq}, where we use Proposition \ref{fromBwithplug}\ref{fromBwithplug-1} in the place of Proposition \ref{fromBwithplug}\ref{fromBwithplug-2}.
\end{proof}

We warn the reader that the implication \ref{iffcharofCeqNEW}$\Longrightarrow$  \ref{iffcharofCeqI} in both results above are only true when the temperatures of $E$ and $F$ match up, as implicitly arranged by the condition of standard form.

\begin{example}\label{notalwayssamepre}
The pair of graphs $E$ and $F$ given in Figure \ref{firstexx}(b) are not Cuntz move equivalent. 
\end{example}
\begin{proof}
We see that the vertices $E$ and $F$ may be ordered with $(\Bsf_E,\Bsf_F)$  in standard form with
$\Bsf_E,\Bsf_F\in\MPZccc[\mathbf{1}]$ for $\calP=\{1,2,3\}$ ordered
linearly and with $\gcd$ of the blocks at $\{2\}$ equal to $1$. Appealing to Proposition \ref{toBwithplug}, we see that it
suffices to check, which is obviously true, that there is no solution to
\begin{equation}\label{nosol}
\begin{pmatrix}1&x&y\\0&s&z\\0&0&1
\end{pmatrix}
\begin{pmatrix}0&1&2\\0&1&1\\0&0&0
\end{pmatrix}
\begin{pmatrix}1&x'&y'\\0&s'&z'\\0&0&1
\end{pmatrix}
=
\begin{pmatrix}0&1&0\\0&1&1\\0&0&0
\end{pmatrix}
\end{equation}
with $s,s'\in\{-1,1\}$ and $x,x',y,y',z,z' \in \Z$.
\end{proof}

\section{Classifying \cas}\label{Ccas}

\subsection{A classification result}

\begin{theorem}\label{mainthm}
Let $E$ and $F$ be finite graphs and consider the statements 
\begin{enumerate}[(1)]
\item $E\MCeq F$,\label{MOV}
\item $C^*(E)\otimes\K\cong C^*(F)\otimes\K$,\label{MOR}
\item There exists a homeomorphism  \fctw{\Theta}{X=\Prime_\gamma(C^*(E))}{\Prime_\gamma(C^*(F))} so that when $C^*(E)$ and $C^*(F)$ are considered as $X$-algebras in the canonical way, then $\FKRplus(X;C^*(E))\cong \FKRplus(X;C^*(F))$.\label{FK}
\end{enumerate}
Then
\[
\text{\ref{MOV}}\Longrightarrow \text{\ref{MOR}}\Longrightarrow \text{\ref{FK}}
\]
and when $E$ and $F$ satisfy 
 Condition~(H), all
statements \ref{MOV}--\ref{FK} are equivalent. 
 \end{theorem}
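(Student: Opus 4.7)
The plan is as follows. First, the implication $(1)\Rightarrow(2)$ is immediate: move equivalence preserves stable isomorphism by Theorem~\ref{thm:moveimpliesstableisomorphism}, the Cuntz splice does so by Theorem~\ref{thm:cuntz-splice-implies-stable-isomorphism}, and $\MCeq$ is generated by these, so $E\MCeq F$ implies $C^*(E)\otimes\K\cong C^*(F)\otimes\K$. For $(2)\Rightarrow(3)$, any \stariso $\Phi\colon C^*(E)\otimes\K\to C^*(F)\otimes\K$ restricts to an isomorphism of the sublattices of gauge-invariant ideals by Lemma~\ref{lem:structure-1}, inducing a homeomorphism $\Theta\colon\Prime_\gamma(C^*(E))\to\Prime_\gamma(C^*(F))$; viewing both algebras as $X=\Prime_\gamma(C^*(E))$-algebras via $\Theta$, $\Phi$ becomes $X$-equivariant, and the induced isomorphism of $\FKRplus(X;-)$ is then functorial (positivity on $K_0$ is automatic for \starisos).

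The hard direction is $(3)\Rightarrow(1)$ under Condition~(H). My plan is to reduce to standard form and then realize the given $\FKRplus$-isomorphism by a \GLPEe of the adjacency matrices satisfying the sign constraints of Theorem~\ref{iffcharofCeq}\ref{iffcharofCeqIII}, so that that theorem delivers $\MCeq$. By Lemma~\ref{taufromK} the homeomorphism intertwines the temperatures, so Lemma~\ref{taugivesstd} allows me to replace $E$ and $F$ by $\Meq$-equivalent (in particular $\MCeq$-equivalent) finite graphs $E'$ and $F'$ with $(\Bsf_{E'},\Bsf_{F'})$ in standard form and $\Bsf_{E'}^\bullet,\Bsf_{F'}^\bullet\in\MPplusZ$. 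Since Condition~(H) is characterized by $\FKRplus$-data (Lemma~\ref{charKH}) and thus is invariant under stable isomorphism, $E'$ and $F'$ still satisfy Condition~(H).

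As described in Subsection~\ref{sec:red-filtered-K-theory-K-web-GLP-and-SLP-equivalences}, the $\FKRplus$-isomorphism corresponds to a $K$-web isomorphism $\kappa\colon K(\Csf_{E'})\to K(\Csf_{F'})$ together with extra $K_1$-isomorphisms at the indices minimal in $\calP^\mathsf{T}$. Applying Boyle's realization theorem (\cite[Theorem~4.5]{MR1990568}, extended as in Section~\ref{UVinduce} to cover rectangular and vacuous diagonal blocks), I obtain a \GLPEe $(U,V)\colon\Bsf_{E'}^\bullet\to\Bsf_{F'}^\bullet$ inducing $\kappa$. Positivity of $\kappa$ on the $K_0$-groups of the gauge-simple subquotients then forces $V\{i\}=1$ whenever $\mytau(i)\leq 0$: at such an $i$, the group $\cok\Csf_{E'}\{i\}\cong\Z$ carries the standard positive cone, so the $1\times 1$ block of $V$ realizing this order isomorphism must be $+1$.

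The main obstacle is arranging $U\{i\}=1$ at every cyclic $i$ (where $\mytau(i)=0$), which controls the sign of the induced $K_1$-automorphism on $\ker\Csf_{E'}\{i\}\cong\Z$ and is not constrained by positivity. This is where Condition~(H) enters. By Lemma~\ref{charKH}\ref{charKH-4}, every such $i$ either has no $\preceq$-successor, in which case the $i$'th row of $\Bsf_{F'}^\bullet$ is identically zero and the diagonal automorphism of $\Bsf_{F'}^\bullet$ with $U_0\{i\}=-1$ and identities elsewhere flips $U\{i\}$ while leaving every other sign condition undisturbed; or admits an immediate $\preceq$-successor $j$ with $\mytau(j)\leq 0$, in which case a compatible automorphism $(U_0,V_0)$ combining a diagonal sign flip at $\{i\}$ with off-diagonal corrections involving the rows and columns near $\{i\}$ and $\{j\}$ should absorb the flip while respecting the constraints $V_0\{k\}=1$ at all $\mytau(k)\leq 0$ and $U_0\{k\}=1$ at all other $\mytau(k)=0$. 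Constructing these modifying automorphisms in the second sub-case, using the positivity $\Bsf_{F'}^\bullet\in\MPplusZ$ and the immediate-successor structure, will be the primary technical step of the proof. Once all sign conditions are satisfied, Theorem~\ref{iffcharofCeq} yields $E'\MCeq F'$, and therefore $E\MCeq F$.
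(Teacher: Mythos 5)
Your treatment of \ref{MOV}$\Rightarrow$\ref{MOR}$\Rightarrow$\ref{FK} and the reduction to standard form via Lemmas \ref{taufromK} and \ref{taugivesstd} matches the paper, but there is a genuine gap at what you yourself flag as the primary technical step: handling a cyclic component $i$ (so $\mytau(i)=0$) that has an immediate successor $j$ with $\mytau(j)\leq 0$. The correcting self-equivalence $(U_0,V_0)$ of $\Bsf_{F'}$ that you propose to construct there does not exist. Indeed, in standard form $\Bsf_{F'}\{i\}=0$ and $\Bsf_{F'}\{j\}=0$ (both blocks are cyclic singletons or sinks), $\Bsf_{F'}\{i,j\}=y>0$, and since $j$ is an \emph{immediate} successor of $i$, for every $k\notin\{i,j\}$ either $U_0\{i,k\}=0$ or $\Bsf_{F'}\{k,j\}=0$. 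Hence $(U_0\Bsf_{F'})\{i,j\}=U_0\{i\}\,y=-y$, while $(\Bsf_{F'}V_0^{-1})\{i,j\}=y\,(V_0^{-1})\{j\}=y$ because you insist $V_0\{j\}=1$; this is incompatible with $U_0\Bsf_{F'}V_0=\Bsf_{F'}$. No amount of off-diagonal correction can absorb a sign flip at $\{i\}$, since the relevant off-diagonal contributions are multiplied by the zero blocks $\Bsf_{F'}\{i\}$ and $\Bsf_{F'}\{j\}$.

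The way out is that the same block computation, applied to the equivalence $(U,V)$ itself rather than to a hypothetical corrector, shows the problematic situation never arises: if $U\{i\}=-1$ then $(U\Bsf_{E_\curlywedge})\{i,j\}=-x<0$ whereas $(\Bsf_{F_\curlywedge}V^{-1})\{i,j\}=y>0$, contradicting $U\Bsf_{E_\curlywedge}=\Bsf_{F_\curlywedge}V^{-1}$. This rigidity argument --- not a correction --- is how the paper closes the case, and it is precisely where Condition~(H) enters (it supplies the immediate successor $j$ with $\mytau(j)\leq 0$ via Lemma \ref{charKH}); your first sub-case (no successors, flip the zero row of $U$) is fine as stated. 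A secondary point: the paper first passes to the plugged graphs $E_\curlywedge$, $F_\curlywedge$ using Lemma \ref{Ktheoryplug} before invoking the realization theorem of \cite{MR1990568}, so that all diagonal blocks are square and nonvacuous; applying that theorem directly to the possibly rectangular matrices $\Bsf_{E'}^\bullet$, $\Bsf_{F'}^\bullet$ as you propose requires a justification that Section \ref{UVinduce} (which only extends the \emph{definitions}) does not supply.
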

\begin{proof}
The invariance of moves required to prove
\ref{MOV}$\Longrightarrow$\ref{MOR} was established in \cite{MR3082546} and
\cite{arXiv:1602.03709v2}, \cf\ Theorems~\ref{thm:moveimpliesstableisomorphism} and~\ref{thm:cuntz-splice-implies-stable-isomorphism}.

For
\ref{MOR}$\Longrightarrow$\ref{FK} one needs only note, as we did in Lemma~\ref{lem:structure-1}, that any
isomorphism between $C^*(E)\otimes\K$ and $C^*(F)\otimes\K$ must
preserve the gauge invariant ideals even if the isomorphism is not
gauge invariant.

To prove that \ref{FK}$\Longrightarrow$\ref{MOV} under the additional
assumption of Condition (H), we first note that by Lemma
\ref{taufromK}, the tempered gauge prime ideals agree, and hence by
Lemma~\ref{taugivesstd} we may assume that $(\Bsf_E,\Bsf_F)$ is in standard
form, where we may even assume that
$\Bsf_E,\Bsf_F\in\MPplusZ$. Plugging sinks we get
$(\Bsf_{E_\curlywedge},\Bsf_{F_\curlywedge})$ which is also in standard form, having
isomorphic ordered reduced filtered $K$-theories by Lemma \ref{Ktheoryplug}. The
$K$-webs then also agree, and \cite{MR1990568} applies to provide $U,V\in\GLPZ$ with
$U\Bsf_{E_\curlywedge}V=\Bsf_{F_\curlywedge}$. Thus we only need to
arrange that $U$ and $V$ satisfy the conditions in Theorem~\ref{iffcharofCeq}\ref{iffcharofCeqII} to reach the desired conclusion.

In fact, since $V\{i\}$ implements an order isomorphism from
$(\Z,\N_0)$ to $(\Z,\N_0)$ at every $i$ with $\mytau(i)\leq 0$, it must
already be in the desired form. It is straightforward to check
that whenever $U\{i\}=-1$ at some $i$ with no successors, then since
both $\Bsf_{E_\curlywedge}$ and $\Bsf_{F_\curlywedge}$ have zero rows at $i$, the corresponding row of $U$ can be multiplied by $-1$ without affecting the relation that 
$U\Bsf_{E_\curlywedge}V=\Bsf_{F_\curlywedge}$. 

We claim that in the presence of Condition (H), the remaining blocks
$U\{i\}$ at $i$ with $\mytau(i)\leq 0$ must be of the desired
form. Indeed, choosing an immediate successor $j$ of $i$ with
$\mytau(j)\leq 0$ we assume for contradiction that $U\{i\}=-1$. Note
that $\Bsf_{E_\curlywedge}\{i,j\}=x$ and
$\Bsf_{F_\curlywedge}\{i,j\}=y$ with $x,y>0$ since there must be a
path between the two components, and such a path cannot pass through
any other component. Similarly, we get from the immediate successor
condition that for any $B,B'\in\MPZccc$ and any $k\not\in\{i,j\}$,
either $B\{i,k\}=0$ or $B'\{k,j\}=0$, so that
$(BB')\{i,j\}=B\{i\}B'\{i,j\}+B\{i,j\}B'\{j\}$ (\cf\ \eqref{eq:howtomul}). From this we infer that
$(U\Bsf_{E_\curlywedge})\{i,j\}=-x$ and
$(\Bsf_{F_\curlywedge}V^{-1})\{i,j\}=y$, a contradiction.
\end{proof}

\begin{corollary}\label{maincor}
Let $E$ and $F$ be finite graphs so that $C^*(E)$ and $C^*(F)$ are either of real rank zero or type I/postliminal. Then the
 statements \ref{MOV}--\ref{FK} of Theorem~\ref{mainthm} are equivalent.
  \end{corollary}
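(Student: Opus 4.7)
The plan is to deduce the corollary directly from Theorem \ref{mainthm} by showing that, in either of the two hypothesized cases, both graphs $E$ and $F$ automatically satisfy Condition~(H). Once this is established, the equivalence of \ref{MOV}--\ref{FK} is immediate from the main theorem.

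The first step is to invoke the translations recorded after Lemma \ref{charKH}. By \cite{MR2069031}, $C^*(E)$ has real rank zero if and only if $E$ satisfies Condition~(K), which by Lemma \ref{charKH}\ref{charKH-1} is equivalent to $\tau_E(\mathfrak{p})\neq 0$ for every $\mathfrak{p}\in\Prime_\gamma(C^*(E))$. Similarly, by \cite{MR2001940}, $C^*(E)$ is type I/postliminal if and only if no vertex of $E$ supports two distinct return paths, which by Lemma \ref{charKH}\ref{charKH-2} is equivalent to $\tau_E(\mathfrak{p})\leq 0$ for every $\mathfrak{p}$. The analogous statements hold for $F$.

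Next, in each of these two scenarios, the criterion of Lemma \ref{charKH}\ref{charKH-3} for Condition~(H) is met vacuously. In the real rank zero case, there are simply no primes $\mathfrak{p}$ with $\tau_E(\mathfrak{p})=0$, so the defining implication of Condition~(H) has empty hypothesis. In the type I/postliminal case, every prime $\mathfrak{p}'$ (hence in particular every immediate predecessor of any given $\mathfrak{p}$) satisfies $\tau_E(\mathfrak{p}')\leq 0$, so the conclusion of the implication is automatic.

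Thus under the hypothesis of the corollary, both $E$ and $F$ satisfy Condition~(H). The implications \ref{MOV}$\Longrightarrow$\ref{MOR}$\Longrightarrow$\ref{FK} hold unconditionally by Theorem \ref{mainthm}, and the reverse implication \ref{FK}$\Longrightarrow$\ref{MOV} follows from the Condition~(H) part of the same theorem. This completes the proof; there is no genuine obstacle, as the real content has already been carried by Theorem \ref{mainthm} together with the characterization of real rank zero and type I/postliminal graph \cas in terms of the temperature map.
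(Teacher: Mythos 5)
Your proof is correct and follows exactly the route the paper intends: the remarks surrounding Lemma \ref{charKH} identify real rank zero with Condition~(K) and type I/postliminal with the absence of vertices supporting two distinct return paths, and in either case the criterion of Lemma \ref{charKH}\ref{charKH-3} for Condition~(H) holds vacuously, so Theorem \ref{mainthm} applies. The paper leaves this as an immediate corollary (having already noted that Condition~(H) interpolates between the two cases), and your write-up just makes that observation explicit.
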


\begin{remark}\label{onlyorderongs}
Inspection of our proof shows that only the order on $K_0(C^*(E)(\{i\}))$  and $K_0(C^*(F)(\{i\}))$ is necessary to conclude that the \cas are stably isomorphic.
It is possible to define a full (ordered) filtered K-theory (see \cite{MR3177344,MR3349327}). Isomorphism of this invariant clearly implies isomorphism of the reduced invariant (both in the case with and without order). As a consequence of the results in \cite{MR1990568}, the opposite holds without order for the cases considered in this paper. From the results in Theorem \ref{mainthm}, it follows that it holds also in the case with order. Thus the full invariant contains the same information about equivalence classes and (stable) isomorphism classes as the reduced one. 
\end{remark}

\subsection{Unplugging sinks}\label{unplugging}

\renewcommand{\efvs}[1]{{#1}^0_{\mathrm{iso}}\backslash {#1}^0_{\mathrm{sing}}}
\newcommand{\efiso}[1]{{#1}^0_{\mathrm{iso}}}

For a graph $E$, let $E^0_\mathrm{iso}$ be the set of vertices of $E$ that are either sinks or on a 
vertex-simple cycle with no exits (the notation, \cf\ \cite{arXiv:1410.2308v1}, refers to the fact that such vertices give rise to isolated points in the associated path spaces).
Assume that $E$ is a graph with finitely many vertices with $\Bsf_E\in\MPZccc$. Then 
 every vertex $v \in \efvs{E}$
supports a unique loop $e_v$.  Let $E_\curlyvee$ be the graph obtained from $E$ by removing the edges
$e_v$ for all $v \in \efvs{E}$.   We note that in general
\[
(E_\curlywedge)_\curlyvee\not=
(E_\curlyvee)_\curlywedge\not=E.
\]

\begin{proposition}\label{prop: unplugging}
Let $E$ and $F$ be graphs with finitely many vertices so that $\Bsf_E\in\MPZccc[\mathbf{m}_E\times\mathbf{m}_E]$ and $\Bsf_F\in\MPZccc[\mathbf{m}_F\times\mathbf{m}_F]$.
  If there exists a \stariso $\Phi \colon C^* (E_\curlyvee
) \otimes \K \rightarrow C^* (F_\curlyvee ) \otimes \K$ such that
$\mytau[F]\circ\Phi_\sharp=\mytau$, then $C^* ( E ) \otimes \K \cong C^*
( F ) \otimes \K$.
\end{proposition}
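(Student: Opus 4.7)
The plan is to invoke the main classification to obtain a Cuntz move equivalence $E_\curlyvee\MCeq F_\curlyvee$ from $\Phi$, and then lift this to $E\MCeq F$ by ``selectively plugging'' only those sinks of $E_\curlyvee$ which originated from cyclic singletons of $E$, tracked through the intermediate graphs along the moves.

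Since $\Bsf_E,\Bsf_F\in\MPZccc$, every cyclic component of $E$ or $F$ is a singleton carrying exactly one loop, namely a vertex in $\efvs{E}$ or $\efvs{F}$, and the operation $\curlyvee$ deletes precisely these loops, replacing each such vertex by a sink. Consequently $E_\curlyvee$ and $F_\curlyvee$ possess no cyclic components at all, and the temperatures $\tau_{E_\curlyvee}$ and $\tau_{F_\curlyvee}$ take values only in $\{-1,+1\}$. By Lemma~\ref{charKH}\ref{charKH-1}, both $E_\curlyvee$ and $F_\curlyvee$ therefore satisfy Condition~(K), and in particular Condition~(H), so Theorem~\ref{mainthm} is applicable.

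By Lemma~\ref{lem:structure-1} and functoriality, $\Phi$ induces an isomorphism of $\FKRplus(X;C^*(E_\curlyvee))$ and $\FKRplus(X;C^*(F_\curlyvee))$ covering the homeomorphism $\Phi_\sharp\colon \Prime_\gamma(C^*(E_\curlyvee))\to \Prime_\gamma(C^*(F_\curlyvee))$, so Theorem~\ref{mainthm} delivers $E_\curlyvee\MCeq F_\curlyvee$; inspection of its proof shows that the equivalence can be taken to respect the block structure, i.e.\ to realize the particular matching of components encoded by $\Phi_\sharp$. The hypothesis $\mytau[F]\circ\Phi_\sharp=\mytau$, read in terms of the temperatures of the \emph{plugged} graphs $E$ and $F$ (where cyclic singletons carry $\tau=0$ and original sinks carry $\tau=-1$), is exactly the assertion that $\Phi_\sharp$ maps $\efvs{E}$ onto $\efvs{F}$.

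It remains to lift a resulting sequence of moves $E_\curlyvee=G_0\to G_1\to\cdots\to G_n=F_\curlyvee$ to a parallel sequence between graphs obtained by plugging a tracked subset of sinks at each stage. As in Lemma~\ref{CEpassestoplugged}, none of the basic moves \OO, \II, \RR, \SSS, \CC\ is ever performed at a sink, and none creates or destroys sinks, so each move induces a canonical bijection of the sinks of the source graph with the sinks of the target graph. Starting from $\efvs{E}\subseteq G_0^0$ and transporting this distinguished subset through the moves, we obtain subsets $S_i$ of the sinks of $G_i$ with $S_n=\efvs{F}$; plugging precisely the sinks in $S_i$ commutes with every basic move, and since the selective plugging of $G_0$ along $S_0=\efvs{E}$ gives back $E$ and similarly for $G_n$, we arrive at $E\MCeq F$. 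Theorems~\ref{thm:moveimpliesstableisomorphism} and~\ref{thm:cuntz-splice-implies-stable-isomorphism} now yield $C^*(E)\otimes\K\cong C^*(F)\otimes\K$. The main obstacle lies in extracting, from the proof of Theorem~\ref{mainthm}, the refinement that the Cuntz move equivalence it produces realizes the specific homeomorphism $\Phi_\sharp$ rather than an abstract one; only then does the temperature hypothesis propagate to a matching of $\efvs{E}$ with $\efvs{F}$, after which the lifting becomes routine.
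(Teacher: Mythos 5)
Your argument breaks down at its first substantive step. The operation $\curlyvee$ does \emph{not} remove the loop from every cyclic component: by definition $E^0_{\mathrm{iso}}$ consists of sinks and of vertices lying on vertex-simple cycles \emph{with no exits}, so only those cyclic singletons that emit nothing besides their loop --- equivalently, the cyclic components with no successors in $\Gamma_E$ --- are turned into sinks. A cyclic component that does have successors keeps its loop in $E_\curlyvee$ and still contributes a component of temperature $0$. Hence your claims that $E_\curlyvee$ has no cyclic components, that its temperature takes values only in $\{-1,+1\}$, and that $E_\curlyvee$ satisfies Condition (K) (hence (H)) are false in general, and Theorem~\ref{mainthm} cannot be invoked to produce $E_\curlyvee\MCeq F_\curlyvee$. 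The graphs of Figure~\ref{firstexx}(b) are exactly such a case: there the temperature of $E$ is $(0,1,0)$ along the linear order, the temperature of $E_\curlyvee$ is $(0,1,-1)$, Condition (H) fails for $E_\curlyvee$ and $F_\curlyvee$, and indeed $E_\curlyvee\not\MCeq F_\curlyvee$ even though $C^*(E_\curlyvee)\otimes\K\cong C^*(F_\curlyvee)\otimes\K$ (Examples~\ref{notalwayssame} and~\ref{notalwayssameii}). The same example shows the strategy cannot be repaired: if your selective-plugging lift went through it would yield $E\MCeq F$, contradicting Example~\ref{notalwayssamepre}. The proposition is needed precisely in the regime where the hypothesis is a bare \stariso (produced, e.g., by the \emph{ad hoc} extension-theoretic Theorem~\ref{thm:  classification special case}) that is provably \emph{not} realizable by moves, so no argument that routes through $\MCeq$ can succeed.

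The paper's proof is instead a direct operator-algebraic construction. One passes to the stabilized graphs $SE_\curlyvee$ and $SF_\curlyvee$, uses the temperature hypothesis to see that the bijection of minimal ideals induced by $\Phi$ matches unplugged loop-vertices with unplugged loop-vertices and sinks with sinks, conjugates $\Phi$ by a unitary in the multiplier algebra so that it carries the corresponding vertex projections onto each other, and then extends the resulting isomorphism $C^*(SE_\curlyvee)\to C^*(SF_\curlyvee)$ to a \starhomo $C^*(SE)\to C^*(SF)$ by sending each deleted loop $s_{e_v}$ to $s_{e_{w(v)}}$; injectivity is obtained from the General Cuntz--Krieger Uniqueness Theorem and surjectivity by chasing the generators. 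An argument of this kind, working with the given $\Phi$ itself rather than with a chain of moves, is what the statement requires.
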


\begin{proof}
Note first that whenever $v\in (E_\curlyvee)^0_\mathrm{iso}$ is given, $v$ is a sink, so $\{v\}$ is a saturated and hereditary set. Thus it defines an ideal $\J_{v}$ which is minimal in $C^*(E_\curlyvee)$ and Morita equivalent to $\C$. In fact, any such ideal has this form, and since the same is true for $F_\curlyvee$, we conclude that $\Phi(\J_{v})=\J_w$ for some $w\in (F_\curlyvee)^0_\mathrm{iso}$. Since $\mytau[F]\circ\Phi_\sharp=\mytau$, $w$ will be a sink of $F$ precisely when $v$ is a sink of $E$, and thus a bijection \fct{w}{E^0_\mathrm{iso}}{F^0_\mathrm{iso}} is defined with $w(\efvs{E})=\efvs{F}$.

For any graph $G$, let $SG$ be the stabilized graph, \ie, for each
vertex $v \in G^0$, we put an infinite head at $v$, \cf\ \cite[Definition~9.4]{MR2775826}.  Note that
 $\efiso{(SG)} = \efiso{G}$ with $\efvs{(SG)} = \efvs{G}$, so that  $w$ may also be considered as a map from  $(SE)^0_\mathrm{iso}$ to $(SF)^0_\mathrm{iso}$.  Moreover, $v
\in \efiso{G}$ supports a loop if and only if $v \in \efiso{(SG)}$
supports a loop.  By the proof of \cite[Proposition~9.3 and
  Theorem~9.8]{MR2775826}, there exists a \stariso $\chi_G
\colon  C^* (SG) \rightarrow C^* (G) \otimes \K$ such that $\chi_G (
p_v ) = p_v \otimes e_{11}$ for all $v \in G^0$.  Define \fct{\Psi}{C^* ( SE_\curlyvee )}{ C^* (
SF_\curlyvee )} by $\Psi = \chi_{F_\curlyvee}^{-1} \circ \Phi \circ
\chi_{E_\curlyvee}$.

Note that $ \J_{ v } \cong \K$ in $C^*(SE_\curlyvee)$ and $ \J_{w } \cong \K$ in $C^*(SF_\curlyvee)$ for all $v \in E_{\mathrm{iso}}^{0}$ and for all
$w \in F_{ \mathrm{iso} }^{0}$.  Therefore, any generator of $K_{0} (
\J_{v} )_{+}$ is Murray-von Neumann equivalent to $p_{v}$ in $C^*(SE_\curlyvee)$ for all
$v \in E_{\mathrm{iso}}^{0}$ and any generator of $K_{0} ( \J_{w}
)_{+}$ is Murray-von Neumann equivalent to $p_{w}$ in $C^*(SF_\curlyvee)$ for all $w \in F_{
  \mathrm{iso} }^{0}$.  Consequently,
   $\Psi ( p_v ) \sim p_{w(v)}$ in $C^*( SF_\curlyvee)$, so there exists $W_v \in C^*( SF_\curlyvee)$ such that $W_v^* W_v =\Psi( p_v)$ and $W_v W_v^* = p_{w(v)}$.  Set $p = \sum_{ v \in (SE)^0_{\mathrm{iso}} } \Psi ( p_v )$ and $q =  \sum_{ v \in (SE)^0_{\mathrm{iso }}} p_{w(v)}$.  Since $C^*(SF_\curlyvee)$ is a stable $C^*$-algebra, by  \cite[Corollary 1.10]{MR869419}, 
   \[
   1_{ M( C^*(SF_\curlyvee ) ) } - p \sim 1_{ M( C^*(SF_\curlyvee ) ) }  \sim 1_{ M( C^*(SF_\curlyvee ) ) } - q.
   \] 
  Thus, there exists $W \in M( C^*(SF_\curlyvee ))$ such that $W^* W = 1_{ M( C^*(SF_\curlyvee ) } - p$ and $WW^* = 1_{ M( C^*(SF_\curlyvee ) ) } - q$.  Set $u = W + \sum_{ v \in (SE)^0_{\mathrm{iso}} } W_v$.  A computation shows that $u$ is a unitary in $M( C^*( SF_\curlyvee))$ such that $u \Psi( p_v ) u^* = p_{w(v)}$ for all $v \in SE^0_{\mathrm{iso}}$.
   So, without loss of generality, we may
assume that $\Psi ( p_v ) = p_{w(v) }$.

Note that $SE_\curlyvee$ and $SF_\curlyvee$ satisfy Condition~(L) since we have removed all cycles with no exits.  Using the universal property and the Cuntz-Krieger Uniqueness Theorem, there are injective \starhomos $\lambda_E \colon C^* ( SE_\curlyvee ) \rightarrow C^* (SE)$ and $\lambda_F \colon C^* ( SF_\curlyvee ) \rightarrow C^* (SF)$ such that $\lambda_E ( s_e ) = s_e$, $\lambda_E ( p_v ) = p_v$ for all $e \in (SE_\curlyvee)^1 \subseteq (SE)^1$ and for all $v \in (SE_\curlyvee)^0 = (SE)^0$ and $\lambda_F ( s_f ) = s_f$, and $\lambda_F ( p_w ) = p_w$ for all $f \in (SF_\curlyvee)^1 \subseteq (SF)^1$ and for all $w \in (SF_\curlyvee)^0 = (SF)^0$.  So, using these embeddings, we may assume that $C^* (SE_\curlyvee)$ is a sub-algebra of $C^* (SE)$ and $C^* (SF_\curlyvee)$ is a sub-algebra of $C^* (SF)$.

We now define a Cuntz-Krieger $SE$-family in $C^* ( SF)$.  Set $P_v = \Psi ( p_v )$ for all $v \in (SE)^0 = (SE_\curlyvee)^0$ and 
\[
S_e = 
\begin{cases}
\Psi ( s_e )  &\text{if $e \in (SE_\curlyvee)^1$} \\
s_{ e_{w(v)} } &\text{if $e = e_v$ for some $v \in \efvs{(SE)}$}.
\end{cases}
\]
The only nonobvious Cuntz-Krieger relation is at $v \in \efvs{(SE)}$.  But this is also clear since $P_v = \Psi ( p_v ) = p_{w(v)} =  s_{ e_{w(v) }  }s_{ e_{w(v) } }^*= S_{e_v} S_{e_v}^*$.  Therefore, there exists a \starhomo $\fct{\Xi}{C^* (SE)}{ C^* ( SF )}$.  Since the only vertex-simple cycles in $SE$ with no exits are $e_v$ for all $v \in \efvs{SE}$ and $\Xi ( s_ { e_v } ) = s_ { w(v) }$ has full spectrum, by the General Cuntz-Krieger Uniqueness Theorem in \cite{MR1914564}, we have that $\Xi$ is injective.  Note that $\Xi ( C^* ( SE_\curlyvee) ) = \Psi ( C^* (SE_\curlyvee) ) = C^* ( SF_\curlyvee )$.  Let $e \in (SF)^1$ such that $e$ is not an element of $(SF_\curlyvee)^1$.  Then $e= e_{w}$ for some $w \in \efvs{(SF_\curlyvee)}$.  Therefore, there exists $v \in \efvs{(SE_\curlyvee)}$ such that $w(v) = w$.  Hence, $\Xi ( s_{ e_v } ) = s_{ e_{w(v) } } = s_e$, so $\Xi$ is surjective, and thus a \stariso.
\end{proof}

\subsection{Examples}

In this section we let $E$ and $F$ denote the two graphs  given in Figure \ref{firstexx}(b). We note that
Example \ref{linearcase} applies (with $n=3$) to this case. In particular, $\Prime_\gamma(C^*(E))\cong X_3\cong\Prime_\gamma(C^*(F))$.

\begin{example}\label{notalwayssame}
The pair of graphs $E$ and $F$  satisfy condition \ref{FK} of Theorem \ref{mainthm}, but not condition \ref{MOV}. The same is true for the pair of graphs $E_\curlyvee$ and  $F_\curlyvee$.
\end{example}
\begin{proof}
  We have seen in Example \ref{notalwayssamepre} that $E\not\MCeq F$, and since $E=(E_\curlyvee)_\curlywedge$ and $F=(F_\curlyvee)_\curlywedge$ we conclude that $E_\curlyvee\not\MCeq F_\curlyvee$ by transposition of Lemma \ref{CEpassestoplugged}.

To see that the $K$-theories are isomorphic, we note that
\[
U
\begin{pmatrix}0&1&2\\0&1&1\\0&0&0
\end{pmatrix}
V
=
\begin{pmatrix}0&1&0\\0&1&1\\0&0&0
\end{pmatrix}
 \]
 with $V=I$ and 
 \[
 U=\begin{pmatrix}-1&2&0\\0&1&0\\0&0&1\end{pmatrix}
\]
This \GLP-equivalence induces an isomorphism $\FKR(X_3;C^*(E))\cong \FKR(X_3;C^*(F))$ as noted in Section \ref{sec:red-filtered-K-theory-K-web-GLP-and-SLP-equivalences}, and since $V\{i\}=1$ at all blocks, the maps induced by $V^{\mathsf T}$ on the $K_0$-groups are order isomorphisms.
The isomorphism of $K$-theory for  $E=(E_\curlyvee)_\curlywedge$ and $F=(F_\curlyvee)_\curlywedge$ follows from Lemma \ref{Ktheoryplug}.
\end{proof}

In fact, in this particular case, reversal of the chain of implications in Theorem~\ref{mainthm} breaks down at \ref{MOR}$\implies$\ref{MOV}. To prove this, we provide an \emph{ad hoc} classification of a small class of \cas of relevance.

Let $\A$ be a $C^*$-algebra and let $\mathfrak{I}$ be an ideal of $\A$.  Set 
\[
\mathcal{M} ( \A ; \mathfrak{I} ) := \setof{ x \in \mathcal{M}( \A ) }{ \text{$ax , x a \in \mathfrak{I}$ for all $a \in \A$}}
\]        
and set 
\[
\mathcal{Q} ( \A ; \mathfrak{I} ) := ( \mathcal{M} ( \A; \mathfrak{I} ) + \A) / \A.
\]

\begin{lemma}\label{lem:relative-multiplier-alg}
Let $\A$ be a stable separable $C^*$-algebra such that $\A$ has a unique non-trivial ideal $\mathfrak{I}$ with $\mathfrak{I}$ either isomorphic to $\K$ or $\mathfrak{I}$ is a stable, separable, purely infinite simple $C^*$-algebra and $\A / \mathfrak{I}$ is either isomorphic to $\K$ or is a stable, separable, purely infinite simple $C^*$-algebra.  Then $\mathcal{Q} ( \A , \mathfrak{I} )$ is the unique non-trivial ideal of $\mathcal{Q} ( \A )$.
\end{lemma}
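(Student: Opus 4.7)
The strategy is to realise $\mathcal Q(\A)$ as an extension of simple $C^*$-algebras and then argue that this extension is essential, so that its kernel is the unique non-trivial ideal.

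First, I will establish the short exact sequence
\[
0 \to \mathcal Q(\A;\mathfrak I) \to \mathcal Q(\A) \to \mathcal Q(\A/\mathfrak I) \to 0.
\]
Since $\A$ is $\sigma$-unital and $\mathfrak I$ admits an approximate identity of projections (being either $\K$ or a stable separable purely infinite simple $C^*$-algebra), the canonical surjection $\A \to \A/\mathfrak I$ extends to a strictly continuous surjective $*$-homomorphism $\bar\pi\colon \mathcal M(\A) \to \mathcal M(\A/\mathfrak I)$ with kernel exactly $\mathcal M(\A;\mathfrak I)$; the surjectivity is obtained by successively lifting projections out of the approximate identity. Combined with the elementary identity $\mathcal M(\A;\mathfrak I)\cap \A = \mathfrak I$ (verified by applying an approximate identity of $\A$), passing to the quotient by $\A$ produces the displayed sequence.

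Next, both outer terms of this sequence are simple. The cokernel $\mathcal Q(\A/\mathfrak I)$ is either the classical Calkin algebra (if $\A/\mathfrak I \cong \K$) or the corona of a stable separable purely infinite simple $C^*$-algebra, both of which are simple by classical results. For the kernel $\mathcal Q(\A;\mathfrak I) \cong \mathcal M(\A;\mathfrak I)/\mathfrak I$, essentiality of $\mathfrak I$ in $\A$ renders the restriction map $\mathcal M(\A;\mathfrak I) \to \mathcal M(\mathfrak I)$ injective; exhibiting an element of $\mathcal M(\A;\mathfrak I)\setminus\mathfrak I$ (for instance via a strictly summable sequence of mutually orthogonal projections in $\mathfrak I$ whose strict sum in $\mathcal M(\A)$ fails to lie in $\A$) yields a non-zero embedding $\mathcal Q(\A;\mathfrak I) \hookrightarrow \mathcal Q(\mathfrak I)$, so by simplicity of $\mathcal Q(\mathfrak I)$ one can conclude that the only ideals of $\mathcal M(\A)$ squeezed between $\mathfrak I$ and $\mathcal M(\A;\mathfrak I)$ are $\mathfrak I$ and $\mathcal M(\A;\mathfrak I)$ themselves.

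At this stage the ideal lattice of $\mathcal Q(\A)$ is nearly pinned down: any ideal $J$ of $\mathcal Q(\A)$ projects to $0$ or all of $\mathcal Q(\A/\mathfrak I)$, and intersects $\mathcal Q(\A;\mathfrak I)$ in $0$ or in all of $\mathcal Q(\A;\mathfrak I)$. The main obstacle, and the delicate step I expect to require the most care, is to rule out the remaining case of a \emph{complementary} ideal $J$ with $J\cap \mathcal Q(\A;\mathfrak I)=0$ and $J+\mathcal Q(\A;\mathfrak I)=\mathcal Q(\A)$, which would split the extension as a direct sum. Since $\mathcal Q(\A)$ is unital, such a splitting would yield a non-trivial central projection $p\in\mathcal Q(\A)$, and by Brown's projection-lifting theorem (applicable since $\A$ is $\sigma$-unital) this would lift to a projection $P\in\mathcal M(\A)$ which is \emph{quasi-central} in the sense that $[P,x]\in\A$ for every $x\in\mathcal M(\A)$. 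I would derive a contradiction by using such a $P$ to produce, via compression by $P$ and $1-P$ followed by reduction modulo $\A$, a direct-sum decomposition of $\A$ itself—contradicting the hypothesis that $\mathfrak I$ is the unique non-trivial ideal of $\A$, for $\A$ is by assumption a non-trivial extension of $\A/\mathfrak I$ by $\mathfrak I$ rather than a direct sum. With this splitting ruled out, the three ideals $0$, $\mathcal Q(\A;\mathfrak I)$, and $\mathcal Q(\A)$ exhaust the ideal lattice.
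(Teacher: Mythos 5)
Your overall architecture matches the paper's: both proofs rest on the exact sequence $0 \to \mathcal{Q}(\A;\mathfrak{I}) \to \mathcal{Q}(\A) \to \mathcal{Q}(\A/\mathfrak{I}) \to 0$ and on R\o{}rdam's theorem that the corona of $\K$, or of a stable, separable, purely infinite simple \ca, has the smallest possible ideal lattice. The genuine gap is in your treatment of the kernel. You deduce simplicity of $\mathcal{Q}(\A;\mathfrak{I})$ from the existence of a non-zero embedding $\mathcal{Q}(\A;\mathfrak{I}) \hookrightarrow \mathcal{Q}(\mathfrak{I})$ together with simplicity of $\mathcal{Q}(\mathfrak{I})$. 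But an injective \starhomo into a simple \ca tells you nothing about the ideal structure of its domain: every separable \ca embeds into the Calkin algebra, so, for instance, $C[0,1]$ embeds into a simple corona without being simple. What is actually needed --- and what occupies the bulk of the paper's proof --- is that the image $\iota(\mathcal{M}(\A;\mathfrak{I}))$ is a \emph{full hereditary} subalgebra of $\mathcal{M}(\mathfrak{I})$; the paper proves this by an explicit double-centralizer computation showing that $\iota(s)\,x\,\iota(t) \in \iota(\mathcal{M}(\A;\mathfrak{I}))$ for all $s,t \in \mathcal{M}(\A;\mathfrak{I})$ and $x \in \mathcal{M}(\mathfrak{I})$. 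Hereditarity (plus fullness, which uses your correct observation that $\mathcal{M}(\A;\mathfrak{I}) \neq \mathfrak{I}$, obtained from a strictly convergent sum $\sum_n s_n a s_n^*$) is exactly what transports the ideal lattice of $\mathcal{M}(\mathfrak{I})$ --- which by R\o{}rdam has $\mathfrak{I}$ as its only non-trivial ideal --- back to $\mathcal{M}(\A;\mathfrak{I})$, and hence gives simplicity of $\mathcal{Q}(\A;\mathfrak{I})$. Without some such mechanism your argument does not close.

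A secondary remark: you are right that an extension with simple kernel and simple quotient could in principle admit a complementary ideal, and you are more explicit about this than the paper is. However, your proposed resolution is itself shaky. Brown's lifting produces a projection $P \in \mathcal{M}(\A)$ that is central only modulo $\A$, and compressing by $P$ and $1-P$ does not decompose $\A$ into a direct sum of ideals, since $P\A(1-P)$ need not vanish; so the asserted contradiction with the ideal structure of $\A$ does not follow as stated. The cleaner route is to show that $\mathcal{Q}(\A;\mathfrak{I})$ is an essential ideal of $\mathcal{Q}(\A)$, using that $\mathfrak{I}$ is essential in $\A$ (which it is, being the unique non-trivial ideal); any ideal $J$ with $J \cap \mathcal{Q}(\A;\mathfrak{I}) = 0$ then annihilates an essential ideal and must vanish.
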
        

\begin{proof}
Note that $\mathfrak{I}$ is an essential ideal of $\mathcal{M} ( \A , \mathfrak{I} )$.  Hence, this embedding extends to an embedding $\ftn{\iota}{\mathcal{M} ( \A , \mathfrak{I} )}{\mathcal{M} (\mathfrak{I} ) }$.  We claim that $\iota ( \mathcal{M} ( \A , \mathfrak{I} ) )$ is a full hereditary subalgebra of $\mathcal{M} (\mathfrak{I} )$.  

Let $x = ( L_0, R_0 )  \in \mathcal{M} ( \mathfrak{I} )$ and let $s, t \in \mathcal{M} ( \A , \mathfrak{I} )$ (where we are using the double centralizer picture of the multiplier algebra).  Define $\ftn{L,R}{\A}{\A}$ by $L(a) =s ( L_0 (ta) )$ and $R(a) = R_0(as)t$.  Note that $L$ and $R$ are well-defined since $ta$ and $as$ are elements of $\mathfrak{I}$ for all $a \in \A$.  A computation shows that $L$ and $R$ are linear and $\| L \|$ and $\| R \|$ are bounded above by $\|s\|\cdot\|x\|\cdot\|t\|$.

Let $\{ e_n \}_{n = 1 }^\infty$ be an approximate identity for $\mathfrak{I}$.  For all $a, b \in \A$, we have that 
\begin{align*}
R( ab) &= R_0( ab s) t  = \lim_{ n \to \infty } R_0 ( ae_n b s ) t  \\
	&= \lim_{ n \to \infty} (ae_n) \left( R_0( bs) t \right) = \lim_{n \to \infty } a ( e_n R_0(bs) t) \\
	&= a ( R_0(bs) t ) = a R(b), \\
L( ab) &= s L_0 ( tab) = \lim_{n \to \infty } s L_0 ( t a e_n b )  \\
	&= \lim_{ n \to \infty } s L_0 ( t a ) (e_n b ) = \lim_{n \to \infty } ( s L_0 ( ta ) e_n ) b   \\
	&= ( s L_0 ( ta ) ) b = L(a)  b,\\
R(a)b &=( R_0 ( as ) t ) b = R_0(as) (tb)  \\
	&= as L_0 ( t b ) = a ( s L_0 (tb) ) \\
	&= a L(b).
\end{align*}
Hence, $y = (L,R)$ defines an element of $\mathcal{M}(\A)$.  

Let $a , b\in \A$.  Then 
\[
L_a L (b) = L_a (s L_0 (tb) ) = (a s) L_0 (tb)  =  R_0( as ) tb = L_{ R_0(as) t }(b)
\] 
and
\begin{align*}
R R_a (b) &= R ( ba ) = R_0( bas) t = \lim_{ n \to \infty } R_0 ( b e_n as ) t \\
		&= \lim_{ n \to \infty }b (e_n  R_0 ( as )t) = b ( R_0 ( as ) t ) = R_{ R_0(as) t }(b).
\end{align*}
Therefore, $( L_a, R_a )( L, R ) = ( L_a L , R R_a ) =  (  L_{ R_0(as) t } , R_{ R_0(as) t } ) \in \mathfrak{I}$.  Similarly computation shows that $( L, R )( L_a, R_a ) = (L_{s L_0 (ta) } , R_{ s L_0 (ta) } ) \in \mathfrak{I}$.  Hence, $(L, R ) \in \mathcal{M} ( \A , \mathfrak{I} )$.  Note that $\iota ( s ) = ( L_s, R_s )$, where we restrict $L_s$ and $R_s$ to $\mathfrak{I}$.  Similarly, for $\iota (t)$.  Thus, 
\[
\iota( s ) x \iota(t) = ( L_s, R_s ) x ( L_t, R_t ) = (L_s, R_s ) ( L_0, R_0 ) ( L_t , R_t ) = (L_s L_0 L_t , R_t R_0 R_s )
\]
and
\begin{align*}
L_s L_0 L_t (z) &= L_s ( L_0 ( t z) ) = s L_0 ( tz) = L(z) \\
R_t R_0 R_s (z) &= R_t ( R_0 ( z s ) ) = R_0 ( zs ) t = R(z)
\end{align*}
for all $z \in \mathfrak{I}$.  Hence, $\iota (y) = \iota (s ) x \iota (t)$.  Therefore, $\iota ( \mathcal{M} ( \A , \mathfrak{I} ) )$ is a hereditary subalgebra of $\mathcal{M}( \mathfrak{I} )$.

 We claim that $\iota ( \mathcal{M} ( \A , \mathfrak{I} ) ) \neq \mathfrak{I}$.  Let $\{ s_n \}_{ n = 1 }^\infty$ be a collection of isometries in $\mathcal{M}(\A)$ such that $\sum_{ n = 1}^\infty s_n s_n^*$ converges to $1_{\mathcal{M}(\A)}$ in the strict topology (note such a collection of isometries exists since $\A$ is a stable $C^*$-algebra).  Let $a \in \mathfrak{I} \setminus \{0\}$.  Then $\sum_{ n = 1 }^\infty s_n a s_n^*$ converges in the strict topology of $\mathcal{M}(\A)$.  Therefore, $x = \sum_{ n = 1 }^\infty s_n a s_n^*$ is an element of $\mathcal{M}(\A)$.  In fact, $x \in \mathcal{M}(\A, \mathfrak{I} )$ since $a \in \mathfrak{I}$.  Since $\| s_n a s_n^* \| = \|a\| \neq 0$, we have that $x \notin \A$.  Therefore, $\mathfrak{I} \neq \mathcal{M} ( \A ; \mathfrak{I} )$.  So, $\iota ( \mathcal{M} ( \A , \mathfrak{I} ) ) \neq \mathfrak{I}$, which proves our claim.

By \cite[Theorem~3.2]{MR1203034}, $\mathcal{M} ( \mathfrak{I} )$ has exactly one non-trivial ideal $\mathfrak{I}$.  Therefore, $\iota ( \mathcal{M} ( \A ; \mathfrak{I} ) )$ is a full hereditary subalgebra of $\mathcal{M} ( \mathfrak{I} )$.  Thus, $\mathcal{M} ( \A ; \mathfrak{I} )$ has exactly one non-trivial, $\mathfrak{I}$.  Consequently, $\mathcal{Q} ( \A ; \mathfrak{I} )$ is a simple $C^*$-algebra.

Let $\ftn{\pi}{\A}{\A / \mathfrak{I}}$ be the canonical projection.  Then it induces surjective \starhomos $\ftn{\widetilde{\pi}}{\mathcal{M} ( \A ) }{ \mathcal{M} ( \A / \mathfrak{I} ) }$ and $\ftn{\overline{\pi}}{ \mathcal{Q} ( \A )}{ \mathcal{Q} ( \A / \mathfrak{I} )}$.  Note that $\ker( \widetilde{\pi} ) = \mathcal{M} ( \A ; \mathfrak{I} )$ and $\ker( \widetilde{\pi} ) = \mathcal{Q} ( \A ; \mathfrak{I} )$.  Now, we have an exact sequence
\[
0 \to \mathcal{Q} ( \A ; \mathfrak{I} ) \to \mathcal{Q} (\A ) \to \mathcal{Q} ( \A / \mathfrak{I} ) \to 0.
\]
By \cite[Theorem~3.2]{MR1203034}, $\mathcal{Q} ( \A / \mathfrak{I} )$ is a simple $C^*$-algebra.  Thus, $\mathcal{Q} ( \A ; \mathfrak{I} )$ must be the unique non-trivial ideal of $\mathcal{Q} (\A)$. 
\end{proof}

\begin{theorem}\label{thm:  classification special case}
Let $\A_{1}$ and $\A_{2}$ be unital $C^{*}$-algebras equipped with gauge actions.  Suppose for each $i$, there exist gauge invariant ideals $\I_{i,1}$ and $\I_{i,2}$ of $\A_{i}$ such that
\begin{enumerate}[(i)]
\item \label{thm:  classification special case-1}
$\I_{i,1} \subseteq \I_{i,2}$,

\item \label{thm:  classification special case-2}
$\I_{i,1} \cong \K$,

\item \label{thm:  classification special case-3}
$\I_{i,2} / \I_{i,1}$ is isomorphic to the stabilization of a unital, simple purely infinite graph \ca,

\item \label{thm:  classification special case-4}
$\A_{i} / \I_{i,2} \cong C(S^{1})$, and

\item \label{thm:  classification special case-5}
$\I_{i,2} / \I_{i,1}$ is an essential ideal of $\A_{i} / \I_{i,1}$.
\end{enumerate}
If $\FKRplus (X_3; \A_{1} \otimes \K ) \cong \FKRplus (X_3;\A_{2} \otimes \K )$, then $\A_{1} \otimes \K \cong \A_{2} \otimes \K$. 
\end{theorem}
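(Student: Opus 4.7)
The plan is to construct a stable isomorphism $\A_1 \otimes \K \cong \A_2 \otimes \K$ by moving up the three-level ideal lattice, classifying each subquotient and then gluing via extension theory. Set $\B_i = \A_i \otimes \K$ and $\J_{i,k} = \I_{i,k} \otimes \K$; each $\B_i$ then carries the ideal chain $0 \subset \J_{i,1} \subset \J_{i,2} \subset \B_i$ with $\J_{i,1} \cong \K$, the stable Kirchberg algebra $D_i := \J_{i,2}/\J_{i,1}$ in the UCT class, and $\B_i / \J_{i,2} \cong C(S^1) \otimes \K$. The assumed isomorphism of $\FKRplus(X_3; \B_i)$ provides compatible order-isomorphisms of the $K$-theory of every subquotient and intertwines the connecting homomorphisms of the two length-two extensions $\J_{i,2}$ and $\B_i / \J_{i,1}$.

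First I would produce an isomorphism $\varphi \colon \B_1/\J_{1,1} \to \B_2/\J_{2,1}$ carrying $\J_{1,2}/\J_{1,1}$ onto $\J_{2,2}/\J_{2,1}$. The quotient $\B_i / \J_{i,1}$ is an essential extension of $C(S^1) \otimes \K$ by the stable Kirchberg algebra $D_i$, essentiality being exactly assumption (v) (which is preserved by tensoring with $\K$, since $\mathfrak{I} \mapsto \mathfrak{I} \otimes \K$ is a lattice isomorphism of ideals). Kirchberg--Phillips (\cite{MR1796912}, \cite{MR1745197}) provides an isomorphism $D_1 \cong D_2$ realizing the prescribed $K$-theoretic data, while the classification of essential extensions of nuclear separable $C^*$-algebras by stable Kirchberg algebras shows that the two extensions differ up to isomorphism only by the class of the Busby invariant in $\kk^1(C(S^1) \otimes \K, D_i)$. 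Via the UCT this class is detected by the index maps $K_0(C(S^1) \otimes \K) \to K_1(D_i)$ and $K_1(C(S^1) \otimes \K) \to K_0(D_i)$, both of which appear explicitly in $\FKRplus(X_3; \B_i)$, so the data matches and $\varphi$ is produced by an Elliott intertwining argument at the extension level.

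Next I would lift $\varphi$ to a stable isomorphism $\tilde\varphi \colon \B_1 \to \B_2$. This is the classification of the extension $0 \to \K \to \B_i \to \B_i / \J_{i,1} \to 0$. Since $\B_i / \J_{i,1}$ is an extension of two UCT algebras it is itself in the UCT class and nuclear, so extensions of it by $\K$ are classified by $\kk^1(\B_i / \J_{i,1}, \K)$; by the UCT this group is detected by the index $K_1(\B_i / \J_{i,1}) \to K_0(\K) = \Z$, a datum recorded via the six-term sequence in $\FKRplus(X_3; \B_i)$. Here Lemma~\ref{lem:relative-multiplier-alg} plays its structural role: it identifies the unique non-trivial ideal in the corona into which the Busby invariant of this extension lands, and it guarantees that any $*$-homomorphism lifting behavior on the Kirchberg subquotient determines the lift uniquely. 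Combined with the uniqueness part of Kasparov's absorption theorem for essential $\K$-extensions, this yields $\tilde\varphi$ extending $\varphi$.

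The main obstacle is verifying that the index data recorded in $\FKRplus(X_3; \B_i)$ really suffices to pin down the two extension classes appearing above. The plan is to analyze the compatibility of the two six-term sequences (for $\J_{i,2}$ and for $\B_i / \J_{i,1}$) making up $\FKRplus(X_3; \B_i)$ and to match these to the $\kk^1$-data by combining the UCT with the half-exactness of $\kk^1$ in both variables, chasing the resulting long exact sequence
\begin{equation*}
\cdots \to \kk^1(C(S^1) \otimes \K, \J_{i,1}) \to \kk^1(C(S^1) \otimes \K, \J_{i,2}) \to \kk^1(C(S^1) \otimes \K, D_i) \to \cdots
\end{equation*}
and its counterpart for the outer extension. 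A secondary technical step is to upgrade the isomorphism-of-classes produced by classification to a \emph{strong} isomorphism of extensions rather than mere equivalence up to unitary conjugation in the multiplier algebra; in the essential case this follows from the extended Weyl--von Neumann absorption theorem in the Kirchberg--R\o rdam framework, which completes the construction of $\tilde\varphi$.
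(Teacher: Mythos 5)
Your decomposition is genuinely different from the paper's, and the difference is exactly where the argument breaks. You split off the minimal ideal first and try to recover $\A_i\otimes\K$ as an extension $0\to\K\to\B_i\to\B_i/\J_{i,1}\to 0$, to be pinned down by its class in $\kk^{1}(\B_i/\J_{i,1},\K)$. By the UCT this group sits in
$$0\to\operatorname{Ext}^1_{\Z}\bigl(K_0(\B_i/\J_{i,1}),\Z\bigr)\to\kk^{1}(\B_i/\J_{i,1},\K)\to\operatorname{Hom}\bigl(K_1(\B_i/\J_{i,1}),\Z\bigr)\to 0,$$
and the filtered $K$-theory only records the image in the $\operatorname{Hom}$ factor (the index map of the six-term sequence). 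Since $\I_{i,2}/\I_{i,1}$ is the stabilization of an arbitrary unital simple purely infinite graph \ca, $K_0(\B_i/\J_{i,1})$ will in general contain torsion, so the $\operatorname{Ext}^1_{\Z}$ summand is nonzero and is invisible to $\FKRplus$. Matching the invariant therefore does not force the two Busby classes to agree, and the long-exact-sequence chase you propose cannot repair this: the missing datum is simply not contained in the invariant, and you would additionally need to show that automorphisms of $\B_i/\J_{i,1}$ compatible with the given $K$-theoretic data act transitively on the fiber over the $\operatorname{Hom}$-component, which you do not address. A smaller point: Lemma~\ref{lem:relative-multiplier-alg} does not play the role you assign it in this step, since the corona of $\K$ is the Calkin algebra, which is simple.

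The paper avoids the obstruction by cutting the lattice at the other level: it works with the single extension $0\to\I_{i,2}\otimes\K\to\A_i\otimes\K\to(\A_i/\I_{i,2})\otimes\K\to 0$, whose quotient $C(S^1)\otimes\K$ has free $K$-theory in both degrees, so the $\operatorname{Ext}^1_{\Z}$ term vanishes and the $\kk^{1}$-class of the Busby map is completely determined by the index maps recorded in the invariant. The classification of the kernel $\I_{i,2}\otimes\K$ (itself an extension of a stable Kirchberg algebra by $\K$) is delegated to the strong classification results of \cite{arXiv:1301.7695v1}; Lemma~\ref{lem:relative-multiplier-alg} is used to show that the corona of $\I_{i,2}\otimes\K$ has a unique nontrivial ideal, so that fullness of the Busby map can be verified modulo that ideal using assumption (v), and the corona factorization property then makes the extension absorbing. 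Your first step (classifying $\B_i/\J_{i,1}$ as an essential extension by a stable Kirchberg algebra) is essentially sound, but the overall strategy founders at the lift across the $\K$-extension.
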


\begin{proof}
Let $\alpha$ be the isomorphism from $\FKRplus (X_3; \A_{1} \otimes \K
)$ to $\FKRplus (X_3; \A_{2} \otimes \K )$.  Let $\mathfrak{e}_{i}$ be
the extension $0 \to \I_{i,2} \otimes \K \to \A_{i} \otimes \K \to
\A_{i} / \I_{i,2} \otimes \K \to 0$.  We first show that
$\mathfrak{e}_i$ is a full extension.  By
Lemma~\ref{lem:relative-multiplier-alg}, the corona algebra $\corona{
  \I_{i,2} \otimes \K }$ has exactly one nontrivial ideal.  This
ideal is precisely the kernel of the surjective map $\overline{\pi}
\colon \corona{ \I_{i,2} \otimes \K } \rightarrow \corona{ \I_{i,2} /
  \I_{i,1} \otimes \K }$ that is induced by the surjective map $\pi
\colon \I_{i,2} \otimes \K \rightarrow \I_{i,2} / \I_{i,1} \otimes
\K$.  Therefore, $x \in \corona{ \I_{i,2} \otimes \K}$ is full if and
only if its image in $\corona{ \I_{i,2} / \I_{i,1} \otimes \K }$ is
nonzero. 
Note that the diagram
\[
\xymatrix{
0 \ar[r] & \I_{i,2} \otimes \K \ar[r] \ar[d]^{\pi} & \A_{i} \otimes \K \ar[r] \ar[d]  & ( \A_{i} / \I_{i,2} ) \otimes \K \ar[r] \ar@{=}[d]& 0 \\ 
0 \ar[r] & ( \I_{i,2} / \I_{i,1} ) \otimes \K \ar[r] & ( \A_{i} / \I_{i,1} ) \otimes \K \ar[r] & ( \A_{i} / \I_{i,2} ) \otimes \K \ar[r] & 0
}
\]
is commutative.  Hence, with $\boldsymbol{\tau}$ denoting Busby maps, $\overline{\pi} \circ \boldsymbol{\tau}_{\mathfrak{e}_i} = \boldsymbol{\tau}_{\mathfrak{g}_i}$, where $\mathfrak{g}_i$ is the extension
\[
\xymatrix{
0 \ar[r] & ( \I_{i,2} / \I_{i,1} ) \otimes \K \ar[r] & ( \A_{i} / \I_{i,1} ) \otimes \K \ar[r] & ( \A_{i} / \I_{i,2} ) \otimes \K \ar[r] & 0.
}
\]
By assumption \ref{thm:  classification special case-5}, $\boldsymbol{\tau}_{ \mathfrak{g}_i } ( x )$ is nonzero in $\corona{ (\I_{i,2} / \I_{i,1}) \otimes \K }$ for all nonzero $x \in ( \A_{i} / \I_{i,1} ) \otimes \K$.  Hence, by the above observations, $\boldsymbol{\tau}_{\mathfrak{e}_i}(x)$ is full in $\corona{ \I_{i,2} \otimes \K }$. Since $\I_{i,2} \otimes \K$ has the corona factorization property (see, \eg, \cite[Proposition 6.1]{MR3056712}) $\mathfrak e_i$ is an absorbing extension.

Since $\A_i / \I_{i,2} \otimes \K$ is $C( S^{1} ) \otimes \K$, there exists a \stariso $\beta_{2}$ from $\A_{1} / \I_{1,2} \otimes \K$ to $\A_{2} / \I_{2,2} \otimes \K$ which induces 
$\alpha$ restricted to $K_*(\A_1 / \mathfrak{I}_{1,2})$
(we are using the fact that a positive automorphism on $K_{*} ( C(S^{1}) )$ is induced by $\operatorname{id}_{ C( S^{1} )  \otimes \K}$ or $\psi \otimes \operatorname{id}_\K$ where $\psi$ sends the canonical generator of $C( S^{1})$, denoted by $z$, to $z^{-1}$).  Note that $\I_{i,2}$ has a full projection, $\I_{1,2}$ is stably isomorphic to a unital \ca with exactly one nontrivial ideal that is isomorphic to $\K$ and the quotient by this ideal is isomorphic to a unital and simple purely infinite graph \ca.  Using this observation together with \cite[Corollary~4.17 and Proposition~4.19]{arXiv:1301.7695v1}, there exists a \stariso $\ftn{ \beta_{0} }{  \I_{1,2} \otimes \K  }{  \I_{2,2} \otimes \K  }$ which induces $\alpha$ restricted to $K_*( \mathfrak{I}_{1,2})$.

Let $\mathfrak{f}_{1}$ be the extension obtained by pushing forward the extension $\mathfrak{e}_{1}$ via the \stariso $\beta_{0}$ and let $\mathfrak{f}_{2}$ be the extension obtained by pulling back the extension $\mathfrak{e}_{2}$ by the \stariso $\beta_{2}$.  Since $\mathfrak{e}_{i}$ is an absorbing extension, we have that $\mathfrak{f}_{i}$ is an absorbing extension.  By construction, $K_{*} ( \boldsymbol{\tau}_{ \mathfrak{f}_{1}} ) = K_{*} ( \boldsymbol{\tau}_{ \mathfrak{f}_{2} } )$ as homomorphisms from $K_{*} ( ( \A_{1}  /  \I_{1,2} ) \otimes \K )$ to $K_{1-*} (\I_{2,2} \otimes \K )$.  Hence, by the UCT of Rosenberg and Schochet \cite{MR894590}, $[ \boldsymbol{\tau}_{ \mathfrak{f}_{1}} ] = [ \boldsymbol{\tau}_{ \mathfrak{f}_{2} } ]$ in $\mathrm{KK}^{1} ( (\A_{1} /  \I_{1,2} ) \otimes \K  , \I_{2,2} \otimes \K )$ since $K_{i} (  ( \A_{1} \otimes \K ) / ( \I_{1,2} \otimes \K ) ) \cong \Z$ for each $i$. 

Since $\mathfrak{f}_{i}$ are absorbing extensions, there exists a unitary $U$ in $\mathcal{M} ( \I_{2,2} \otimes \K )$ such that $\mathrm{Ad} ( \pi (U) ) \circ \boldsymbol{\tau}_{ \mathfrak{f}_{1} } = \boldsymbol{\tau}_{ \mathfrak{f}_{2}}$.  One checks that $\mathrm{Ad} (U)$ induces a \stariso of extensions from $\mathfrak{f}_{1}$ to $\mathfrak{f}_{2}$.  Since $\mathfrak{e}_{i}$ is isomorphic to $\mathfrak{f}_{i}$, we have that $\A_{1} \otimes \K \cong \A_{2} \otimes \K$.
\end{proof}

It is easy to see that this result applies to conclude that
$C^*(E_\curlyvee)\otimes \K\cong C^*(F_\curlyvee)\otimes \K$ for the
pair of examples in Example \ref{notalwayssame}. To deal with $C^*(E)\otimes \K$
and $C^*(F)\otimes \K$, we apply an unplugging trick to get:

\begin{corollary}\label{cor:  classification special case}
Let $E_1$ and $E_2$ be finite graphs with $\Prime_\gamma(C^*(E_i))\cong X_3$ and $\tau_{E_i}(\{0\})\leq0$.  If $\frX{X_3}{E_1}\cong \frX{X_3}{E_2}$, then $C^*(E_1) \otimes \K \cong C^*(E_2) \otimes \K$. 
\end{corollary}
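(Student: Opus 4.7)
The plan is to reduce to a setting where either Theorem~\ref{thm:  classification special case} or Theorem~\ref{mainthm} applies, and then transfer the resulting stable isomorphism back via Proposition~\ref{prop: unplugging}. First I would invoke Lemma~\ref{lem:structure-2} to replace each $E_i$ by a graph with $\Bsf_{E_i}\in\MPZccc$ without altering $C^*(E_i)\otimes\K$ or the $X_3$-equivariant ordered reduced filtered $K$-theory; each cyclic component then becomes a single vertex carrying a single loop. The hypothesis $\tau_{E_i}(\{0\})\leq 0$ forces the unique bottom component of $E_i$ to be either a sink or a single no-exit loop, so letting $F_i=(E_i)_\curlyvee$, unplugging (if needed) turns that loop into a sink and we have $E_i=(F_i)_\curlywedge$ (vacuously so when the bottom was already a sink). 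Lemma~\ref{Ktheoryplug} then descends the hypothesized isomorphism to $\FKRplus(X_3;C^*(F_1))\cong\FKRplus(X_3;C^*(F_2))$ while also supplying the equality $\tau_{E_1}=\tau_{E_2}\circ\Theta$ for the associated homeomorphism $\Theta$.

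Writing $(t_1,t_2,-1)$ for the common temperature triple of the $F_i$ (identical for $i=1,2$ by Lemma~\ref{taufromK}), I would now split into two cases. If $(t_1,t_2)\neq(0,1)$, Condition~(H) for $F_i$ holds by direct inspection using Lemma~\ref{charKH}: the only potential obstruction, a $\tau=0$ top forced to exit through a $\tau=1$ middle before reaching anything with $\tau\leq 0$, is precisely what we are excluding. Theorem~\ref{mainthm} then yields $C^*(F_1)\otimes\K\cong C^*(F_2)\otimes\K$. If instead $(t_1,t_2)=(0,1)$, the top of $F_i$ is a single vertex with a single loop so $C^*(F_i)/\mathfrak{I}_{i,2}\cong C(S^1)$; the middle is strongly connected with $\tau=1$, making $\mathfrak{I}_{i,2}/\mathfrak{I}_{i,1}$ the stabilization of a unital simple purely infinite graph $C^*$-algebra; the bottom sink is reached through the middle's cycles by infinitely many paths so $\mathfrak{I}_{i,1}\cong\K$; and essentiality of $\mathfrak{I}_{i,2}/\mathfrak{I}_{i,1}$ in $C^*(F_i)/\mathfrak{I}_{i,1}$ follows from the edge from top to middle forced by the linear order on $X_3$. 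Theorem~\ref{thm:  classification special case} then delivers $C^*(F_1)\otimes\K\cong C^*(F_2)\otimes\K$.

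Finally, any resulting \stariso $\Phi\colon C^*(F_1)\otimes\K\to C^*(F_2)\otimes\K$ automatically intertwines $\tau_{F_1}$ and $\tau_{F_2}$, and the induced $\Phi_\sharp$ sends the bottom sink of $F_1$ to that of $F_2$; since $\tau_{E_i}$ and $\tau_{F_i}$ agree off the bottom component while on the bottom we have $\tau_{E_2}(\Phi_\sharp(\text{bot}))=\tau_{E_2}(\text{bot})=\tau_{E_1}(\text{bot})$, the compatibility $\tau_{E_2}\circ\Phi_\sharp=\tau_{E_1}$ required by Proposition~\ref{prop: unplugging} is satisfied, and we conclude $C^*(E_1)\otimes\K\cong C^*(E_2)\otimes\K$. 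The main obstacle I expect is the careful verification of the hypotheses of Theorem~\ref{thm:  classification special case} in Case~II: specifically, confirming $\mathfrak{I}_{i,1}\cong\K$ (rather than a finite-dimensional matrix algebra) requires the middle's cycles to supply infinitely many paths to the sink, and the essentiality condition requires the edges from top to middle forced by linearity of $X_3$ together with the normal form $\Bsf_{F_i}\in\MPZccc$.
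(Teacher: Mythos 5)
Your proof is correct and follows essentially the same route as the paper: a case analysis on the temperature triple, with Condition~(H) and Theorem~\ref{mainthm} covering every case except $(0,1,\cdot)$, where Theorem~\ref{thm:  classification special case} is applied (to the unplugged graphs when the bottom component is a cyclic component) and Proposition~\ref{prop: unplugging} transfers the stable isomorphism back. The only differences are cosmetic --- the paper unplugs only in the final case $(0,1,0)$ rather than systematically at the outset, and your parenthetical claim that $E_i=(F_i)_\curlywedge$ holds ``vacuously'' when the bottom component is already a sink is literally false (plugging would add a loop to that sink, so $(E_i)_\curlywedge\neq E_i$), but this is harmless since in that case $F_i=E_i$ and no descent of the $K$-theoretic isomorphism via Lemma~\ref{Ktheoryplug} is needed.
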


\begin{proof} Assume that  $\frX{X_3}{E_1}\cong \frX{X_3}{E_2}$. We write ${\mathfrak p}^i_j$  for the ideals in $C^*(E_i)$ as in Example \ref{linearcase}. If  $\tau_{E_i}({\mathfrak p}^i_1)=1$ we have Condition (H), and the full force of Theorem \ref{mainthm} applies. We may hence assume that $\tau_{E_i}({\mathfrak p}^i_1)=0$.
Again if $\tau_{E_i}({\mathfrak p}^i_2)=0$, we have Condition (H), so we may assume
$\tau_{E_i}({\mathfrak p}^i_2)=1$.  When $\tau_{E_i}({\mathfrak p}^i_3)=-1$ we note that all the
conditions of Theorem~\ref{thm: classification special case} are met,
so that this result applies to give the desired conclusion. We thus
need only concern ourselves with the case
$\tau_{E_i}({\mathfrak p}^i_3)=0$.

In this case, we pass to $(E_i)_\curlyvee$ and note that
Theorem~\ref{thm: classification special case} applies. Since the
isomorphism provided by that result must satisfy the conditions of
Proposition \ref{prop: unplugging} because the ideal lattice is linear,
we 
get the desired conclusion.
\end{proof}

We conclude:

\begin{example}\label{notalwayssameii}
With $E$ and $F$ the pair of graphs given in Figure \ref{firstexx}(b), we have
\[
C^*(E)\otimes \K\cong C^*(F)\otimes\K,
\]
and
\[
C^*(E_\curlyvee)\otimes \K\cong C^*(F_\curlyvee)\otimes\K.
\]
although (as seen in Example \ref{notalwayssame}) $E\not\MCeq F$ and $E_\curlyvee\not\MCeq F_\curlyvee$.
\end{example}

\section{Applications}\label{applications}

In this section, we give  applications of our results.

\subsection{Type I/postliminal $C^{*}$-algebras}
 In this section we study further the case where  no vertex supports two distinct return paths, \ie\ the case of type I/postliminal \cas in our class, \cf\ the remarks just after Lemma \ref{charKH}.

It was conjectured by Gene Abrams and Mark Tomforde in
\cite{MR2775826} that if the Leavitt path algebras $L_{\C} (E)$ and
$L_{\C} (F)$ are Morita equivalent, then $C^{*} (E)$ and $C^{*}(F)$
are strongly Morita equivalent (see \cite{MR2417402} for the
definition of $L_{\C} (E)$).  Using Theorem~\ref{mainthm}, we can show
that their conjecture holds for finite graphs whose temperatures are
never positive.  Moreover, we show that the converse holds as well in that case.

\begin{theorem}\label{coldiso}
Let $E$ and $F$ be finite graphs where $\max\tau_E,\max\tau_F\leq 0$.  Then the following are equivalent:
\begin{enumerate}[(1)]
\item \label{coldiso-1}
$E \Meq F$.
\item \label{coldiso-2}
$L_{{\mathsf k}} ( E )$ and $L_{{\mathsf k}} (F)$ are Morita equivalent for any field ${\mathsf k}$.
\item \label{coldiso-3} 
$C^{*} (E)\otimes\K\cong C^{*} (F)\otimes \K$.
\end{enumerate}
If  $\tau_{E} =\tau_{F}=\CAtemp$, then \ref{coldiso-1}--\ref{coldiso-3} are equivalent to 
\begin{enumerate}[(1)]\addtocounter{enumi}{3}
\item \label{coldiso-4}
the two-sided shift spaces ${\mathsf X}_{E}$ and ${\mathsf X}_{F}$ are flow equivalent.
\end{enumerate}
If  $(\Bsf_E,\Bsf_F)$ is in standard form, then \ref{coldiso-1}--\ref{coldiso-3}  are equivalent to 
\begin{enumerate}[(1)]\addtocounter{enumi}{4}
\item \label{coldiso-5} 
there exist matrices $U,V\in \SL_{\calP}(\mathbf 1,\Z)$ 
so that $U\Bsf_{E_\curlywedge}V=\Bsf_{F_\curlywedge}$.
\end{enumerate}
\end{theorem}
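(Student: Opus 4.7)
The plan is to exploit the fact that under the blanket hypothesis $\max\tau_E,\max\tau_F\leq 0$, Condition~(H) is automatic: by Lemma~\ref{charKH}\ref{charKH-3} we need only check that every $\mathfrak{p}$ with $\tau_E(\mathfrak{p})=0$ and $\operatorname{Imm}(\mathfrak{p})\neq\emptyset$ admits some $\mathfrak{p}'\in\operatorname{Imm}(\mathfrak{p})$ of temperature $\leq 0$, but since no temperature exceeds $0$ any element of $\operatorname{Imm}(\mathfrak{p})$ qualifies. Consequently Theorem~\ref{mainthm} furnishes \ref{coldiso-1}$\Longleftrightarrow$\ref{coldiso-3} directly.

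For the algebraic equivalence \ref{coldiso-1}$\Longleftrightarrow$\ref{coldiso-2}, the forward direction would reduce to verifying that the Leavitt-path analogues of the moves \OO, \II, \RR, \SSS (mirroring the $C^*$-algebraic proofs of Theorem~\ref{thm:moveimpliesstableisomorphism}) each preserve Morita equivalence of $L_\mathsf{k}(E)$ over every field $\mathsf{k}$, which is by now standard. For the converse I would use the fact that a Morita equivalence of Leavitt path algebras induces an isomorphism of their $V$-monoids, which (as shown by Ara, Moreno and Pardo) coincide with the graph monoid $M_E$; this monoid also computes the positive cone of the filtered $K_0$ of $C^*(E)$. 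In the type I/postliminary regime the graph monoid together with the combinatorial count of sinks and cycles determines the ordered, reduced filtered $K$-theory, since the only $K_1$ contributions arise from the cyclic components, each contributing a visible copy of $\Z$. Thus a Leavitt Morita equivalence would force \ref{coldiso-3}, and Theorem~\ref{mainthm} closes the loop back to \ref{coldiso-1}.

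Under the additional hypothesis $\tau_E=\tau_F=\CAtemp$, every component is a single cycle and neither $E$ nor $F$ has any sink, so Lemma~\ref{flowvsME} directly yields \ref{coldiso-1}$\Longleftrightarrow$\ref{coldiso-4}. Finally, when $(\Bsf_E,\Bsf_F)$ is in standard form with all temperatures $\leq 0$, Definitions~\ref{def:circ} and~\ref{standardform} force every diagonal block to be either the empty $0\times 1$ sink block or the $1\times 1$ zero block, so after plugging $\Bsf_{E_\curlywedge},\Bsf_{F_\curlywedge}\in\MPZ[\mathbf{1}]$; the equivalence \ref{coldiso-1}$\Longleftrightarrow$\ref{coldiso-5} would then come straight from Theorem~\ref{iffcharofMeq}\ref{iffcharofMeqII}.

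The hard part will be the converse \ref{coldiso-2}$\Longrightarrow$\ref{coldiso-1}, where one must track how a purely algebraic Morita equivalence transports the positive cones on each gauge-simple subquotient together with the precise $K_1$-data attached to the cyclic components; the hypothesis $\max\tau\leq 0$ is precisely what renders these extra pieces of data combinatorial, so that the graph-monoid isomorphism from Ara-Moreno-Pardo suffices to rebuild $\FKRplus$.
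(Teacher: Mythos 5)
Your overall skeleton matches the paper's: Condition~(H) is indeed automatic when $\max\tau\leq 0$ by Lemma~\ref{charKH}, the equivalence with \ref{coldiso-4} is Lemma~\ref{flowvsME} exactly as you say, and \ref{coldiso-5} comes from Theorem~\ref{iffcharofMeq} after the observation that plugging reduces everything to $1\times 1$ blocks. However, there are two points to address.

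First, a concrete gap in \ref{coldiso-3}$\Rightarrow$\ref{coldiso-1}: Theorem~\ref{mainthm} does \emph{not} furnish this ``directly,'' because its conclusion is $E\MCeq F$, whereas \ref{coldiso-1} asserts $E\Meq F$. You must add the observation that under $\max\tau_E,\max\tau_F\leq 0$ no vertex supports two distinct return paths, so Move \CC{} is never applicable and hence $\MCeq$ and $\Meq$ coincide on this class of graphs. This is not a pedantic distinction: the paper's Examples~\ref{notalwayssamepre} and~\ref{notalwayssameii} show that in general $\MCeq$ is strictly weaker than stable isomorphism and strictly stronger than nothing, so conflating the two relations without this remark leaves the strongest implication of the theorem unproved.

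Second, for \ref{coldiso-2}$\Rightarrow$\ref{coldiso-3} you take a genuinely different route from the paper. The paper extends $\Prime_\gamma$ to Leavitt path algebras (Remark~\ref{lpacomments}) and then argues as in the ideal-related $K$-theory comparison of Ruiz--Tomforde (Theorem~4.9 of \cite{MR3188556}) to transport a Morita equivalence of $L_{\mathsf k}(E)$ and $L_{\mathsf k}(F)$ into an isomorphism of $\FKRplus$; that citation delivers the full filtered invariant, including the $K_1$-groups and the commuting ladders with the index maps, in one stroke. Your route via the $V$-monoid of Ara--Moreno--Pardo recovers the lattice of graded ideals and the ordered $K_0$-data of all subquotients, but the reduced filtered invariant also records the maps $K_1(\A(\{x\}))\to K_0(\A(R_x))$ from \eqref{eq:longtype}, and an isomorphism of $\FKRplus$ must intertwine these. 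Producing the $K_1$-components of the isomorphism and verifying commutativity of these squares from monoid data alone is exactly the step you defer to ``the hard part,'' and as written it is not done; since the existing literature already closes this gap, I would recommend following the paper and invoking the Ruiz--Tomforde comparison rather than rebuilding $\FKRplus$ from the graph monoid.
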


\begin{proof}
By Section~3 of \cite{MR3045151} (see also \cite{MR3082546}), \ref{coldiso-1}
implies \ref{coldiso-2}.  We can make sense of $\Prime_\gamma$ also for Leavitt path algebras over finite graphs (see Remark \ref{lpacomments}), and
 we have that when $L_{\C} (E)$ and $L_{\C} (F)$ are
Morita equivalent, then $\Prime_\gamma(L_{\C} (E))\cong X\cong \Prime_\gamma(L_{\C} (F))$ for appropriately chosen $X$. Arguing as in the proof of Theorem~4.9
of \cite{MR3188556}, we get that 
$\FKRplus(X,C^*(E))\cong \FKRplus(X,C^*(F))$.  By
this observation together with Theorem~\ref{mainthm}, since obviously
we have Condition~(H), we conclude  that \ref{coldiso-2} implies \ref{coldiso-3}.  Since $\max\tau\leq 0$, no
vertex supports two different return paths, so Move \CC\ is never
allowed, and we have that $E \MCeq F$ if and only if $E \Meq F$.
Therefore, Theorem~\ref{mainthm} gives that \ref{coldiso-3} implies \ref{coldiso-1}.

Assuming now that all components of $E$ and $F$ are cyclic, we get that \ref{coldiso-1} and \ref{coldiso-4} are equivalent by Lemma~\ref{flowvsME}.

Finally we get \ref{coldiso-1}$\Longleftrightarrow$ \ref{coldiso-5}  by appealing to Theorem \ref{iffcharofMeq}. 
\end{proof}

In general (as we shall discuss in \cite{Eilers-Restorff-Ruiz-Sorensen-2}), it may be computationally difficult to determine when two matrices are $\SLP$-equivalent. This is because the problem is equivalent to solving 
\begin{gather}
U\Bsf_E = \Bsf_F W\label{linpart}\\
\forall i\in\calP: \det U\{i\}=\det W\{i\}=1\label{nonlinpart}
\end{gather}
where \eqref{nonlinpart} is not linear.
But when all blocks are $1\times 1$, the determinant conditions are equivalent to all diagonal blocks being identity matrices, and thus deciding if $U\Bsf_EV=\Bsf_F$ as in \ref{coldiso-5} of Theorem \ref{coldiso} reduces to the linear problem \eqref{linpart} which may readily be decided.

\subsection{Quantum lens spaces}

A class of quantum lens spaces $C(L_q(r;(m_1,\dots, m_n)))$ was
studied in \cite{MR2015735}, \cite{arXiv:1603.04678v1} and proved there to be graph
\cas over finite graphs. We immediately see that the \cas are postliminal/type I with every vertex supporting a loop. To decide any isomorphism question among two such \cas one hence need only to compare their $\Primt$-spaces, and if these are homeomorphic, arrange that the corresponding matrices are in standard form and decide $\SLP$-equivalence as in Theorem \ref{coldiso}\ref{coldiso-5} (for each possible homeomorphism).

As an immediate application, we shall see that in fact in some cases there are
several different quantum lens spaces associated to different choices
of secondary parameters $m_i$ even when the dimension $n$ and the
primary parameter $r$ are fixed. Although our classification result applies in the general setting of \cite{arXiv:1603.04678v1}, we will here consider only the original setup from \cite{MR2015735} where $\Primt$ becomes the Alexandrov space of a linear order. 

We emphasize the fact that even though the $K$-groups of the quantum lens spaces carry important information (\cf\  \cite{MR2015735}, \cite{MR3448329}, \cite{arXiv:1603.04678v1}), they are not complete invariants. It follows from Theorem \ref{mainthm} that the reduced ordered filtered $K$-theory is complete, but as we shall see it is much more convenient to work with \SLP-equivalence in this setting.

\begin{definition}
For each $n \in \N$, define the directed graph $L_{2n-1}$ as the graph with $n$ vertices, $L_{2n - 1}^{0} = \{ v_{1} , \dots, v_{n} \}$, and $\frac{ n( n+1 ) }{2}$ edges $\bigcup_{ i = 1}^{n} \{ e_{i,j} \mid j = i, i+1, \dots, n \}$ with $s ( e_{i,j} ) = v_{i}$ and $r( e_{i,j} ) = v_{j}$.  For example, $L_{5}$ is the graph 
\begin{align*}
\xymatrix{
v_{1} \ar@(ul,ur)[]^{e_{1,1} } \ar[rr]^{ e_{1,2} } \ar@/_1pc/[rrrr]_{e_{1,3}} & & v_{2} \ar@(ul,ur)[]^{ e_{2,2} } \ar[rr]^{e_{2,3}} & & v_{3} \ar@(ul,ur)[]^{e_{3,3}}
}
\end{align*}
\end{definition}

\begin{definition}
For each $r , n \in \N$ and $\mm=( m_{1} , \dots, m_{n} ) \in \N^{n}$, we  define the directed graph $L_{2n-1} \times_{\mm} \Z_{r}$ as follows:

\begin{enumerate}[(i)]
\item The set of vertices is
\begin{align*}
( L_{2n-1} \times_{\mm} \Z_{r} )^{0} = L_{2n-1}^{0} \times \Z_{r}.
\end{align*}

\item The set of edges is
\begin{align*}
( L_{2n-1} \times_{\mm} \Z_{r} )^{1} = L_{2n-1}^{1} \times \Z_{r}.
\end{align*}

\item $s( e_{i,j} , k ) = ( v_{i} , k - m_{i} )$ and $r( e_{i,j} , k ) = ( v_{j} , k )$
\end{enumerate}
\end{definition}

For each $i$, let $( L_{2n-1} \times_{\mm} \Z_{r} ) \langle i \rangle$ be the subgraph with vertex set $\{ v_{i} \} \times \Z_{r}$ and edge set $\{ e_{i,i} \} \times \Z_{r}$.  For each $i_{1} \leq i_{2}\leq  \cdots \leq i_{t}$, let $( L_{2n-1} \times_{\mm} \Z_{r} ) \langle i_{1} , i_{2} , \dots, i_{t} \rangle$ be the subgraph with vertex set $\bigcup_{ l = 1}^{t} \{ v_{l} \} \times \Z_{r}$ and edge set the set of all edges $e$ in $L_{2n-1} \times_{\mm} \Z_{r}$ such that $s(e), r(e) \in \bigcup_{ l = 1}^{t} \{ v_{l} \} \times \Z_{r}$.

\begin{definition}
Let $r \in \N$ and $( m_{1} , m_{2} , \dots, m_{n} ) \in \N^{n}$ with  $r \geq 2$ and $\gcd( m_{i} , r ) =1$ for all $i$.  A path $\alpha = ( e_{i_{1} , j_{1} } , k_{1} ) \cdots ( e_{i_{r},j_{r}} , k_{\ell} )$ in $L_{2n-1} \times_{\mm} \Z_{r}$ is called \emph{$0$-simple} if $k_{1} = m_{i_{1}}$, $k_{a} \neq 0$ for $a \neq \ell$, and $k_{\ell} = 0$.
Note  that for each $0$-simple path $\alpha= ( e_{i_{1} , j_{1} } , k_{1} ) \cdots ( e_{i_{\ell},j_{\ell}} , k_{\ell} ) $, we have that $s( \alpha ) = ( v_{i_{1}} , 0 )$ and $r( \alpha ) = ( v_{j_{\ell}} , 0 )$. Thus the $0$-simple paths may be thought of as paths starting and ending at vertices of the form $(v,0)$, but avoiding all such vertices along the way.

A $0$-simple path $\alpha = ( e_{i_{1} , j_{1} } , k_{1} ) \cdots ( e_{i_{\ell},j_{\ell}} , k_{\ell} )$ is called \emph{$k$-step} if there exist positive integers $ t_{1} < t_{2} < \dots < t_{k+1}$ such that $t_{1} = i_{1}$, $t_{k+1} = j_{\ell}$, and for each $2 \leq q \leq k$, we have that 
\begin{align*}
\setof{ r( ( e_{i_{s} , j_{s} } , k_{s} ) ) }{ 1 \leq s \leq \ell } \cap ( ( L_{2n-1} \times_{\mm} \Z_{r} )\langle t_{q} \rangle )^{0}\neq \emptyset
\end{align*}
and 
\begin{align*}
\setof{ r( ( e_{i_{s} , j_{s} } , k_{s} ) )}{ 1 \leq s \leq \ell } \subseteq \bigcup_{ i = 1}^{k+1} ( ( L_{2n-1} \times_{\mm} \Z_{r} )\langle t_{i} \rangle )^{0}.
\end{align*}
\end{definition}

\begin{definition}
Let $r \in \N$ and $\mm=( m_{1} , m_{2} , \dots, m_{n} ) \in \N^{n}$ with $\gcd( m_{i} , r ) =1$ and $r \geq 2$.  Define $L_{2n-1}^{(r; \mm )}$ to be the graph with vertices $\{ (v_{1}, 0) , \dots, (v_{n}, 0 ) \}$, the edges of $L_{2n-1}^{(r; \mm )}$ consisting of all $0$-simple paths in $L_{2n-1} \times_{\mm} \Z_{r}$, and the range and source maps extending the range and source maps of $L_{2n-1} \times_{\mm} \Z_{r}$.
\end{definition}

Note that by our assumption on the $m_i$, they are always units in $(\Z_r\backslash\{0\},\cdot)$. We denote by $m_i^{-1}$ any representative in $\Z$ of a multiplicative inverse to $m_i$ modulo $r$.

\begin{lemma}\label{l:PathsQuantum}
Let $r \in \N$ and $\mm\in \N^{n}$ with $\gcd( m_{i} , r ) =1$ and $r \geq 2$.  
\begin{enumerate}[(i)]
\item\label{l:PathsQuantum:1} For each $i, j$ with $i +1 \leq j$, the number of $1$-step $0$-simple paths from $( v_{i} , 0 )$ to $( v_{j} , 0 )$ is $r$.

\item\label{l:PathsQuantum:2} For each $i,j$ with $i+2 \leq j$, the number of $2$-step $0$-simple paths from $( v_{i} , 0 )$ to $( v_{j} , 0 )$ is $\frac{ r ( r - 1 ) }{ 2 }( j - i - 1 )$.

\item\label{l:PathsQuantum:3} For each $i$, the number of $3$-step $0$-simple paths from $( v_{i} , 0 )$ to $( v_{i+3} , 0 )$ is congruent to $- m_{i+2}^{-1} m_{i+1} \left( \frac{ r ( r - 1 )( r - 2 ) }{ 3 } \right)$ modulo $r$
\end{enumerate}

Consequently, the number of $0$-simple paths from $( v_{i} , 0 )$ to $(v_{i+2}, 0 )$ is $\frac{ r ( r + 1 ) }{ 2 }$ and the number of  $0$-simple paths from $( v_{i} , 0 )$ to $(v_{i+3}, 0 )$ is congruent to $$- m_{i+2}^{-1} m_{i+1} \left( \frac{ r ( r - 1 )( r - 2 ) }{ 3 } \right)$$ modulo $r$.
\end{lemma}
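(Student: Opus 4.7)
The plan is to parametrize each $0$-simple path from $(v_i,0)$ to $(v_j,0)$ by its \emph{profile}: the sequence of visited slots $i=t_1<t_2<\cdots<t_{k+1}=j$ together with counts $u_l\geq 1$ of edges sourced at $v_{t_l}$ for $l\leq k$, and $u_{k+1}\geq 0$ loops at $v_j$. Because every edge sourced at $v_{t_l}$ adds $m_{t_l}$ to the second coordinate, the sequence of second coordinates visited after successive edges is the partial-sum sequence of the weight vector $(m_{t_1},\ldots,m_{t_1},m_{t_2},\ldots,m_{t_2},\ldots)$ (with $m_{t_l}$ repeated $u_l$ times) taken modulo $r$, and the $0$-simple condition translates to all these partial sums being non-zero mod $r$ except the last, which must vanish.

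Part~\ref{l:PathsQuantum:1} is then immediate: $u_1\in\{1,\ldots,r\}$ is free, and once chosen, $u_2\in\{0,1,\ldots,r-1\}$ is uniquely forced ($u_2=0$ if $u_1m_i\equiv 0$ mod $r$, otherwise $u_2$ is the unique element of $\{1,\ldots,r-1\}$ with $u_1m_i+u_2m_j\equiv 0$), giving $r$ paths in total. For~\ref{l:PathsQuantum:2}, fix $t_2$ (one of $j-i-1$ choices); for each $u_1\in\{1,\ldots,r-1\}$ (so that $A:=u_1m_i$ is non-zero; $u_1=r$ would force the intermediate vertex $(v_{t_2},0)$), the analysis from~\ref{l:PathsQuantum:1} applied starting from the non-zero residue $A$ at $v_{t_2}$ produces exactly $k(A):=(-Am_{t_2}^{-1}\bmod r)\in\{1,\ldots,r-1\}$ valid continuations to $(v_j,0)$. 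Since $A\mapsto k(A)$ bijects $\Z_r\setminus\{0\}$ with itself, summing over $u_1$ yields $\sum_{k=1}^{r-1}k=\tfrac{r(r-1)}{2}$, and multiplying by $j-i-1$ gives the claim.

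For~\ref{l:PathsQuantum:3} the intermediate slots are forced to be $t_2=i+1,\ t_3=i+2$. Iterating the analysis once more at $v_{i+2}$ gives
\[
N=\sum_{A\neq 0}\sum_{u_2=1}^{k(A)-1} k_2(A+u_2m_{i+1}),\qquad k_2(B):=(-Bm_{i+2}^{-1}\bmod r).
\]
Setting $\mu:=m_{i+1}m_{i+2}^{-1}$ and substituting $\ell:=k(A)$ (a bijection as $A$ ranges over non-zero residues) and $v:=\ell-u_2$, a short calculation gives $k_2(A+u_2m_{i+1})=v\mu\bmod r$, so after swapping the order of summation and reducing modulo $r$,
\[
N\equiv\mu\sum_{v=1}^{r-1}(r-1-v)v=\mu\cdot\frac{r(r-1)(r-2)}{6}\pmod r.
\]
Since $(r-1)(r-2)$ is even, $\mu\cdot\tfrac{r(r-1)(r-2)}{2}$ is an integer multiple of $r$; adding it converts the right-hand side into $-\mu\cdot\tfrac{r(r-1)(r-2)}{3}$ modulo $r$, proving~\ref{l:PathsQuantum:3}. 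The two final consequences follow by adding the $k$-step counts ($k=1,2$ in the first case, $k=1,2,3$ in the second), noting that both $r$ and $r(r-1)$ vanish modulo $r$.

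The main technical obstacle is the careful bookkeeping in~\ref{l:PathsQuantum:3}: tracking exactly which partial-sum constraints are strict (intermediate non-zero) versus equalities (the final equals zero), especially the boundary case $u_{k+1}=0$ where the path terminates upon entering the final slot and which supplies the extra $+1$ making the per-$u_1$ count equal $k(A)$ rather than $k(A)-1$; and finding the change of variables that collapses the nested sum into the elementary identity $\sum_{v=1}^{r-1}(r-1-v)v=\tfrac{r(r-1)(r-2)}{6}$.
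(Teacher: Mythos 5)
Your proof is correct and follows essentially the same route as the paper's: both enumerate the $k$-step $0$-simple paths by tracking the residue classes at which the path enters and leaves each intermediate column, reducing each count to an elementary sum. The only cosmetic difference is in part (iii), where you parametrize by entry points and arrive at $\mu\sum_{v=1}^{r-1}v(r-1-v)=\mu\, r(r-1)(r-2)/6$, correctly reconciled with $-\mu\, r(r-1)(r-2)/3$ modulo $r$, whereas the paper parametrizes by exit vertices and obtains $-\mu\sum_{l=1}^{r-2}l(l+1)=-\mu\, r(r-1)(r-2)/3$ directly.
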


\begin{proof}
We first prove \ref{l:PathsQuantum:1}.  Note that for each $0 \leq k < r$, there is exactly one edge from $( v_{i} , k )$ to $(L_{2n-1} \times_{\mm} \Z_{r})\langle j \rangle$.  Since there is exactly 1 path from $( v_{j}, l )$ to $( v_{j} , 0 )$ which passes through $( v_{j} , 0 )$ once, we have that the number of $1$-step $0$-simple paths from $( v_{i} , 0 )$ to $( v_{j} , 0 )$ is equal to the number of edges in the subgraph $(L_{2n-1} \times_{\mm} \Z_{r})\langle i \rangle$. This is  equal to $r$, so \ref{l:PathsQuantum:1} holds.

We now prove \ref{l:PathsQuantum:2}.  Let $V$ be the set of all $2$-step $0$-simple paths from $( v_{i}, 0 )$ to $( v_{j} , 0 )$.  For each $l$ with $1 \leq  l \leq j-i-1$, let $V_{l}$ be the set of all $2$-step $0$-simple paths from $( v_{i} , 0 )$ to $( v_{j} , 0 )$ that goes through the subgraph $( L_{2n-1} \times_{\mm} \Z_{r} ) \langle i+l\rangle$.  Then $V = \bigsqcup_{ l = 1}^{j-i-1} V_{l}$.  By symmetry $| V_{l} | = | V_{1} |$,  so $| V | = | V_{1} | (j - i - 1)$.

Let $\alpha = \alpha_{1} \cdots \alpha_{t} \in V_{1}$ and recall that for all $k$, $r(\alpha_k)\neq (v_i,0)$ and $r(\alpha_k)\neq (v_{i+1},0)$. Since for each $1 \leq l \leq r-1$, 
there is exactly one path from $(v_i,0)$ to $(v_i,l-m_i)$ that does not come back to $(v_i,0)$, and there is exactly one edge from $(v_i,l-m_i)$ to $(v_{i+1},l)$,
we have that 
\begin{align*}
| V_{1} | = \sum_{ l = 1}^{r-1} P_{l}
\end{align*}
where $P_{l}$ is the number of paths from $( v_{i+1} , m_{i+1}l )$
 to $( v_{j} , 0 )$ in the subgraph $( L_{2n-1} \times_{\mm} \Z_{r} )\langle i+1, j \rangle$ that do not go through $( v_{i+1} , 0 )$.  Clearly, $P_l=r-l$,  so 
\begin{align*}
| V_{1} | = \sum_{ l = 1}^{r-1} P_{l} = \sum_{ l = 1}^{r-1} ( r - l ) = r (r-1) - \frac{ r(r-1)}{2} = \frac{ r ( r - 1 ) }{ 2 }.
\end{align*}
Therefore, \ref{l:PathsQuantum:2} holds.

We now prove \ref{l:PathsQuantum:3}. For each $1\leq l\leq r-2$, we have an edge $(e_{i+1},m_{i+1}(l+1))$ from $(v_{i+1},m_{i+1}l)$ to $(v_{i+2},m_{i+1}(l+1))$. 
We let $Q_l$ be the number of paths from $(v_{i+2},m_{i+1}(l+1))$ to $(v_ {i+3},0)$ that do not go through $(v_{i+2},0)$ and only go once through $(v_{i+3},0)$. Since there are exactly $l$ paths from $(v_i,0)$ to $(v_{i+1},m_{i+1}l)$ that do not come back to $(v_i,0)$ and do not go through $(v_{i+1},0)$, we have that the number of $3$-step $0$-simple paths from $(v_i,0)$ to $(v_{i+3},0)$ is $\sum_{l=1}^{r-2} l Q_l$ .

Recall that $m_{i+2}^{-1}$ is a representative of the multiplicative inverse of $m_{i+2}$ modulo $r$, and let $s_l$ be the integer such that $0<m_{i+2}^{-1}m_{i+1}(l+1)+rs_l<r$. Since $m_{i+1}(l+1)$ is congruent to $m_{i+2}(m_{i+2}^{-1}m_{i+1}(l+1)+rs_l)$ modulo $r$, it follows from the proof of part \ref{l:PathsQuantum:2} that
$$Q_l=r-(m_{i+2}^{-1}m_{i+1}(l+1)+rs_l).$$  Hence, the number of $3$-step $0$-simple paths from $( v_{i} , 0 )$ to $( v_{i+3} , 0 )$ is 
\begin{align*}
&\sum_{ l = 1}^{r-2} l (r - m_{i+2}^{-1} m_{i+1} (l+1) - r s_{l} ) \\
&\qquad \equiv \sum_{ l = 1}^{r-2}(   - m_{i+2}^{-1} m_{i+1} l(l+1) ) \mod r \\
&\qquad \equiv -m_{i+2}^{-1} m_{i+1} \frac{ r( r-1)( r-2)}{3} \mod r.
\end{align*}
Hence, \ref{l:PathsQuantum:3} holds.  

For the last part of the lemma, by \ref{l:PathsQuantum:1} and \ref{l:PathsQuantum:2}, we have that the number of $0$-simple paths from $( v_{i} , 0 )$ to $( v_{i+2} , 0 )$ is equal to $ r + \frac{ r ( r - 1 ) }{2} = \frac{ r ( r + 1 ) }{ 2 }$ and by \ref{l:PathsQuantum:1}, \ref{l:PathsQuantum:2}, \ref{l:PathsQuantum:3}, we have that the number of $0$-simple paths from $( v_{i}, 0 )$ to $( v_{i+3} , 0 )$ is congruent to $r + \frac{ r ( r - 1 ) }{2} + \frac{ r ( r - 1 ) }{2} - m_{i+2}^{-1} m_{i+1} \left( \frac{ r ( r - 1 )( r - 2 ) }{ 3 } \right)$ modulo $r$.  It is now clear that the conclusion holds.
\end{proof}

\begin{corollary}\label{Kthy}
$K_0(C^{*} ( L_{2n-1}^{(r ; \mm )} ))\cong \Z\oplus G$ for $G$ some group of order $|G|=r^{n-1}$
\end{corollary}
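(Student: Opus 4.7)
The plan is to compute $K_0$ directly from the adjacency matrix of $L_{2n-1}^{(r;\mm)}$, using the standard identification $K_0(C^*(E))\cong\cok(\Bsf_E^t)$ valid for a finite graph $E$ in which every vertex is regular. Every vertex $(v_i,0)$ of $L_{2n-1}^{(r;\mm)}$ is regular: it supports the loop given by the unique $r$-cycle through $(v_i,0)$ in the subgraph $(L_{2n-1}\times_{\mm}\Z_r)\langle i\rangle$ (which is a cycle precisely because $\gcd(m_i,r)=1$), plus finitely many other edges out to $(v_j,0)$ for $j>i$.

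Next I would read off the structure of $\Bsf=\Asf_{L_{2n-1}^{(r;\mm)}}-I$ from Lemma \ref{l:PathsQuantum}. Since $L_{2n-1}$ has edges only from $v_i$ to $v_j$ with $i\leq j$, the same holds for $L_{2n-1}\times_{\mm}\Z_r$ and hence for $L_{2n-1}^{(r;\mm)}$, so $\Asf$ is upper triangular. The diagonal entries are all equal to $1$ (the unique loop described above), so $\Bsf$ is strictly upper triangular. By Lemma \ref{l:PathsQuantum}\ref{l:PathsQuantum:1}, applied with $j=i+1$, the superdiagonal entries $\Bsf(i,i+1)$ are all equal to $r$.

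Finally I would compute $\cok(\Bsf^t)$ by splitting off the free summand. Since every row of $\Bsf$ has a zero in column $1$, the image of $\Bsf^t$ is contained in the subgroup $\Z e_2\oplus\cdots\oplus\Z e_n\subseteq\Z^n$; hence $[e_1]$ generates a free $\Z$ summand of $\cok(\Bsf^t)$. On the remaining coordinates, the relations come from rows $1,\ldots,n-1$ of $\Bsf$ (row $n$ being zero), which after restricting to columns $2,\ldots,n$ form the $(n-1)\times(n-1)$ upper triangular matrix $M=(\Bsf(i,j))_{1\leq i\leq n-1,\;2\leq j\leq n}$ with diagonal $(r,r,\ldots,r)$. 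Thus the remaining cokernel $G$ is finite of order $|\det M|=r^{n-1}$, giving $K_0\cong\Z\oplus G$ with $|G|=r^{n-1}$.

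There is no serious obstacle: the argument is a routine Smith-normal-form computation once the superdiagonal of $\Bsf$ is known. It is worth noting that parts \ref{l:PathsQuantum:2} and \ref{l:PathsQuantum:3} of Lemma \ref{l:PathsQuantum} are not needed for the order of $G$ — only the superdiagonal matters — but they will be essential later to pin down the fine group structure of $G$ and to distinguish nonisomorphic quantum lens spaces having the same primary parameters via $\SLP$-equivalence of the standard forms of the corresponding $\Bsf$-matrices.
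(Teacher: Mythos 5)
Your proof is correct and follows essentially the same route as the paper: identify $K_0$ with the cokernel of the transposed $\Bsf$-matrix, split off a free $\Z$ summand using the zero column of $\Bsf$, and compute the order of the torsion part as the determinant $r^{n-1}$ of the remaining triangular $(n-1)\times(n-1)$ block via Lemma~\ref{l:PathsQuantum}\ref{l:PathsQuantum:1}. Your added justifications (regularity of all vertices, the unique loop giving diagonal entries $1$ in $\Asf$) are accurate, and your closing remark that parts \ref{l:PathsQuantum:2} and \ref{l:PathsQuantum:3} are only needed later is consistent with how the paper uses them in Theorem~\ref{t:QuantumIso}.
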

\begin{proof}
The first row and the last column of $(\Bsf_{L_{2n-1}^{(r;\mm')}})^{\mathsf T}$  are zero. The remaining $(n-1)\times (n-1)$ submatrix is upper triangular and has $r$ in the diagonal as seen in Lemma~\ref{l:PathsQuantum}\ref{l:PathsQuantum:1}, and thus the determinant is $r^{n-1}$. Now the lemma follows (\eg\ by using the Smith normal form).
\end{proof}

Determining $G$ exactly is a difficult problem, \cf\ \cite{MR3448329}. 

Since we obviously have  
\[
|\Primt(C^*(L_{2n-1}^{(r;\mm)}))|=|\Gamma_{L_{2n-1}^{(r;\mm)}}|=n,
\]
 the isomorphism class of 
$C^*(L_{2n-1}^{(r;\mm)})\otimes\K$ determines $n$ and hence, by Corollary \ref{Kthy}, also $r$. Further, Lemma \ref{l:PathsQuantum}\ref{l:PathsQuantum:1} and \ref{l:PathsQuantum:2} show that the graphs and their adjacency matrices are the same irrespective of $\mm$  when $r$ is fixed and $n\leq 3$. When $n=4$, something new happens precisely when $r$ is a multiple of 3.

\begin{theorem}\label{t:QuantumIso}
Let $r\geq 2$ be given and let $\mm=( m_{1} , m_{2} , m_{3} , m_{4} )$ and $\nn=( n_{1} , n_{2} , n_{3} , n_{4} )$ be given in $\N^4$ such that $\gcd( m_{i} , r)=\gcd( n_{i} , r )  =1$ for all $i$. Then the following are equivalent:
\begin{enumerate}[(1)]
\item \label{t:QuantumIso-1}
$C^{*} ( L_{7}^{(r; \mm)} ) \cong C^{*} ( L_{7}^{(r; \nn)} )$,
\item \label{t:QuantumIso-2}
$C^{*} ( L_{7}^{(r ; \mm )} )\otimes\K\cong C^{*} ( L_{7}^{(r; \nn )} )\otimes \K$, 
\item \label{t:QuantumIso-3}
$\left( m_{3}^{-1} m_{2} - n_{3}^{-1} n_{2} \right) \left( \frac{ r ( r - 1)( r - 2 ) }{ 3 } \right) \equiv 0 \mod r$.
\end{enumerate}
\end{theorem}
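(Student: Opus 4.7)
The plan combines the structural analysis of $L_7^{(r;\mm)}$ via Lemma~\ref{l:PathsQuantum}, the classification Theorem~\ref{mainthm}, the matrix criterion Theorem~\ref{iffcharofCeq}, and the isomorphism-preserving row addition in Proposition~\ref{p:isoaddingrows}.

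First I would record that $\Asf_\mm:=\Asf_{L_7^{(r;\mm)}}$ is upper triangular with $\Asf_\mm(i,i)=1$ (each $(v_i,0)$ supports exactly one return loop, of length $r$ in $L_7\times_\mm\Z_r$), $\Asf_\mm(i,i+1)=r$, $\Asf_\mm(i,i+2)=r(r+1)/2$, and only $\Asf_\mm(1,4)$ depending on $\mm$, with $\Asf_\mm(1,4)\equiv -m_3^{-1}m_2\cdot r(r-1)(r-2)/3\pmod r$. Each of the four components of $\Gamma_{L_7^{(r;\mm)}}$ is a singleton supporting one return path, so all temperatures equal $0$ and Condition~(H) holds trivially by Lemma~\ref{charKH}. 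Hence $(\Bsf_\mm,\Bsf_\nn)$ with $\Bsf_\mm:=\Asf_\mm-I$ is in standard form over $\calP=\{1,2,3,4\}$ linearly ordered, with $\Bsf_\mm^\bullet,\Bsf_\nn^\bullet\in\MPplusZ[\mathbf 1]$.

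Since (1)$\Rightarrow$(2) is immediate, the work is in (2)$\Rightarrow$(3) and (3)$\Rightarrow$(1). For (2)$\Rightarrow$(3), I would apply Theorem~\ref{mainthm} to transfer stable isomorphism to an isomorphism of $\FKRplus$, then Theorem~\ref{iffcharofCeq} to extract $U,V\in\GLP(\mathbf 1,\Z)$ with all diagonal entries equal to $1$ (since $\mytau\equiv 0$, both constraints $\mytau(i)\leq 0$ and $\mytau(i)=0$ apply) such that $U\Bsf_\mm V=\Bsf_\nn$. Writing $U=I+N$, $V=I+N'$ with $N,N'$ strictly upper triangular, the equations at $(1,3)$ and $(2,4)$ give $N'(2,3)=-N(1,2)$ and $N'(3,4)=-N(2,3)$; substituting into the $(1,4)$-equation yields
\[
\Bsf_\nn(1,4)-\Bsf_\mm(1,4)=\tfrac{r(r+1)}{2}\bigl(N(1,2)-N(2,3)\bigr)+rK
\]
for some $K\in\Z$. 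The right-hand side ranges over $\gcd\bigl(r,r(r+1)/2\bigr)\Z$, which equals $r\Z$ for $r$ odd and $(r/2)\Z$ for $r$ even. For $r$ odd this forces $\Bsf_\mm(1,4)\equiv\Bsf_\nn(1,4)\pmod r$ directly. For $r$ even with $3\nmid r$, both sides are automatically $\equiv 0\pmod r$ by Lemma~\ref{l:PathsQuantum}. For $r=3s$ even, the restricted residues $\Bsf_\mm(1,4)\pmod r\in\{s,2s\}$ (again by Lemma~\ref{l:PathsQuantum}) combined with $\Bsf_\mm\equiv\Bsf_\nn\pmod{r/2}$ still forces $\Bsf_\mm(1,4)\equiv\Bsf_\nn(1,4)\pmod r$. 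In all cases this rearranges to condition~(3).

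For (3)$\Rightarrow$(1), set $k:=(\Bsf_\nn(1,4)-\Bsf_\mm(1,4))/r\in\Z$. I would construct a chain of $|k|+1$ finite graphs, consecutive members differing only in the $(1,4)$-entry of the adjacency matrix by $\operatorname{sign}(k)\cdot r$, starting at $L_7^{(r;\mm)}$ and ending at $L_7^{(r;\nn)}$. At each step Proposition~\ref{p:isoaddingrows} (applied with $u=v_1$ and $v=v_3$; note $v_3$ is regular, supports a loop, and receives an edge from $v_1$, properties unchanged along the chain) gives an isomorphism of the associated graph \cas, and all intermediate $(1,4)$-entries remain $\geq r>0$, so the graphs are well-defined. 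Composing these yields $C^*(L_7^{(r;\mm)})\cong C^*(L_7^{(r;\nn)})$. The main obstacle is the case analysis in (2)$\Rightarrow$(3) for $r$ even, where $r(r+1)/2\not\equiv 0\pmod r$: one must use the explicit residues of $\Bsf_\mm(1,4)$ from Lemma~\ref{l:PathsQuantum} to rule out the otherwise allowed shifts by $r/2$.
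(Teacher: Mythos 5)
Your proof is correct and follows the same overall route as the paper: reduce, via Condition~(H) and the classification machinery, to a unipotent upper-triangular equivalence $U\Bsf_{\mm}V=\Bsf_{\nn}$ (the paper invokes Theorem~\ref{coldiso}\ref{coldiso-5}, you invoke Theorems~\ref{mainthm} and~\ref{iffcharofCeq}; these give the same matrices here since all temperatures vanish), read off the relation $y-x\in r\Z+\tfrac{r(r+1)}{2}\Z$ from the $(1,4)$-entry, and realize (3)$\Rightarrow$(1) by repeated applications of Proposition~\ref{p:isoaddingrows} shifting the $(1,4)$-entry in steps of $r$ (the paper climbs from both $x$ and $y$ to a common matrix $C$; your monotone chain is an equivalent reorganization). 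The one place you genuinely diverge is the even-$r$ case of (2)$\Rightarrow$(3), where $\tfrac{r(r+1)}{2}\not\equiv 0 \bmod r$: the paper writes $r=2^{t}k$ with $k$ odd and divides the integer identity by $2^{t-1}$ to force the coefficient $s_{2}$ of $\tfrac{r(r+1)}{2}$ to be even, whereas you observe that $x$ and $y$ both vanish mod $r$ when $3\nmid r$, and lie in $\{s,2s\}$ mod $r=3s$ otherwise, so that congruence mod $r/2$ already forces congruence mod $r$. Both arguments are valid; yours trades the $2$-adic manipulation for the explicit residue computation already supplied by Lemma~\ref{l:PathsQuantum}\ref{l:PathsQuantum:3}, which is arguably more transparent but relies on the precise form of $x,y$ rather than only on the matrix identity.
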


\begin{proof}
Let $\Asf_{\mm}$ be the adjacency matrix for $L_{7}^{(r ;\mm )}$ and let $\Asf_{\nn}$ be the adjacency matrix for $L_{7}^{(r ;\nn )}$, with $\Bsf_{\mm}$ and $\Bsf_{\nn}$ obtained by subtraction of the identity matrix as usual.  By Lemma~\ref{l:PathsQuantum},
\begin{align*}
\Asf_{\mm} = 
\begin{pmatrix} 
1 & r &\frac{ r ( r + 1 ) }{2} & x \\
0 & 1 & r&\frac{ r ( r + 1 ) }{2}\\
0&0&1&r\\
 0&0&0& 1
\end{pmatrix} 
\quad \text{and} \quad \Asf_{\nn} = 
\begin{pmatrix} 
1 & r &\frac{ r ( r + 1 ) }{2} & y \\
0 & 1 & r&\frac{ r ( r + 1 ) }{2}\\
0&0&1&r\\
 0&0&0& 1
\end{pmatrix} 
\end{align*} 
where $x \equiv  - m_{3}^{-1} m_{2} \frac{ r(r-1)( r-2) }{3 }  \mod r$ and $y \equiv - n_{3}^{-1} n_{2} \frac{ r(r-1)( r-2) }{3 }  \mod r$.

We first show that \ref{t:QuantumIso-2} implies \ref{t:QuantumIso-3}.  By Theorem~\ref{coldiso}\ref{coldiso-5}, there
exist $U, V \in \operatorname{SL}_{\calP_4} (\mathbf{1}, \Z )$ such that $U \Bsf_{\mm}
V = \Bsf_{\nn}$, with $\calP_4=\{1,2,3,4\}$ ordered linearly.  Note that $U, V$ are upper triangular matrices and
$U\{ i \} = V \{ i \} = 1$ for $i = 1, 2, 3, 4$.
A computation implies that 
\begin{align*}
x + r s_{1} + \frac{ r ( r+1) }{2} s_{2} = y 
 \end{align*}    
for some $s_{1} , s_{2} \in \Z$.  Since $y \equiv - n_{3}^{-1} n_{2} \frac{ r(r-1)( r-2) }{3 }  \mod r $ and $x \equiv - m_{3}^{-1} m_{2} \frac{ r(r-1)( r-2) }{3 }  \mod r $, we have that 
\begin{align*}
( m_{3}^{-1}m_{2} - n_{3}^{-1} n_{2} ) \frac{ r(r-1)( r-2) }{3 } \equiv \frac{r(r+1)}{2}s_{2} \mod r. 
\end{align*}
Thus, 
\begin{align}\label{eq:QuantumIso1}
( m_{3}^{-1}m_{2} - n_{3}^{-1} n_{2} ) \frac{ r(r-1)( r-2) }{3 } + r m = \frac{r(r+1)}{2}s_{2}
\end{align} 
for some $m \in \Z$.

Suppose $r$ is odd.  Then $\frac{r(r+1)}{2}s_{2} \equiv 0 \mod r$ and hence \ref{t:QuantumIso-3} holds.  Suppose $r$ is even, say $r = 2^{t} k$ where $\gcd( k , 2 ) = 1$.  Dividing Equation~(\ref{eq:QuantumIso1}) by $2^{t-1}$, we get 
\begin{align}\label{eq:QuantumIso2}
( m_{3}^{-1}m_{2} - n_{3}^{-1} n_{2} ) \frac{ 2k(r-1)( r-2) }{3 } + 2m k = k(r+1)s_{2}.
\end{align}     
Since $3$ divides $r( r - 1)( r - 2)$, we have that $3$ divides $k(r-1)(r-2)$.  Therefore, $\frac{ k ( r - 1)( r - 2) }{3} \in \Z$.  Hence, the left hand side of Equation~(\ref{eq:QuantumIso2}) is divisible by $2$ which implies that $2$ divides $(r+1) s_{2}$.  Since $r$ is even, $2$ divides $s_{2}$.  Thus, $\frac{ r ( r+1) }{2} s_{2} \equiv 0 \mod r$.  Hence, \ref{t:QuantumIso-3} holds.

We now show that \ref{t:QuantumIso-3} implies \ref{t:QuantumIso-1}.  Since $\left( m_{3}^{-1} m_{2} - n_{3}^{-1} n_{2} \right) \left( \frac{ r ( r - 1)( r - 2 ) }{ 3 } \right) \equiv 0 \mod r$, $x \equiv - m_{3}^{-1} m_{2} \frac{ r(r-1)( r-2) }{3 }  \mod r$, and $y \equiv - n_{3}^{-1} n_{2} \frac{ r(r-1)( r-2) }{3 }  \mod r$, we have that $x \equiv y \mod r$.  Therefore, $x + r s  = y + r t$ for some positive integers $s, t$.  

Consider the matrix
\begin{align*}
C = 
\begin{pmatrix} 
1 & r &\frac{ r ( r + 1 ) }{2} & x+rs \\
0 & 1 & r&\frac{ r ( r + 1 ) }{2}\\
0&0&1&r\\
 0&0&0& 1
\end{pmatrix} = \begin{pmatrix} 
1 & r &\frac{ r ( r + 1 ) }{2} & y+rt\\
0 & 1 & r&\frac{ r ( r + 1 ) }{2}\\
0&0&1&r\\
 0&0&0& 1
\end{pmatrix} \end{align*} 

By applying Proposition~\ref{p:isoaddingrows}, $s$ times (note that $x>0$), we get that $C^{*} ( L_{7}^{(r; \mm)} ) \cong C^{*} ( \Esf_{C} )$.  Similarly, we can apply Proposition~\ref{p:isoaddingrows}, $t$ times, we get that $C^{*} ( L_{7}^{(r; \nn)} ) \cong C^{*} ( \Esf_{C} )$.  
\end{proof}

It is in fact true in general (also in the general setting of \cite{arXiv:1603.04678v1}) that whenever two quantum lens spaces are stably isomorphic, they are isomorphic. We will pursue this in \cite{Eilers-Restorff-Ruiz-Sorensen-2}.

\begin{corollary}\label{c:QuantumIso}
If $3$ does not divide $r$, then 
\begin{align*}
C^{*} ( L_{7}^{(r ; (1,1,1,1) )} ) \cong C^{*} ( L_{7}^{(r ;  \mm )} )
\end{align*}
for all $\mm=( m_{1} , m_{2} , m_{3}, m_{4} ) \in \N^{4}$ with $\gcd( m_{i} , r ) = 1$.

Suppose $r = 3s$ and let $\mm\in \N^{4}$ with $\gcd( m_{i} , r ) = 1$ be given.  Then
\begin{align*}
C^{*} ( L_{7}^{(r ; \mm )} ) \cong C^{*} ( L_{7}^{(r ; (1,1,1,1) )} )
\end{align*}
if and only if $m_{2} \equiv m_{3} \mod 3$ and 
\begin{align*}
C^{*} ( L_{7}^{(r ; \mm )} ) \cong C^{*} ( L_{7}^{(r ; (1, 1, r-1 ,1 ))} )
\end{align*}
 if and only if $m_{2} \not\equiv m_{3} \mod 3$.
\end{corollary}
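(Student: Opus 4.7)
The plan is to derive the corollary as an arithmetic consequence of Theorem~\ref{t:QuantumIso}. By that theorem, $C^*(L_7^{(r;\mm)}) \cong C^*(L_7^{(r;\nn)})$ is equivalent to the congruence
\[
\left( m_3^{-1} m_2 - n_3^{-1} n_2 \right)\,\frac{r(r-1)(r-2)}{3} \equiv 0 \pmod{r}. \tag{$\ast$}
\]
Thus the entire argument is to analyze when $(\ast)$ holds, in the two cases $3 \nmid r$ and $3 \mid r$.

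First I would handle the case $3 \nmid r$. Since one of the three consecutive integers $r$, $r-1$, $r-2$ is divisible by $3$ and it is not $r$, the integer $\frac{(r-1)(r-2)}{3}$ is well-defined, so $\frac{r(r-1)(r-2)}{3}$ is a multiple of $r$ and $(\ast)$ holds automatically for \emph{any} choice of $\mm,\nn$. In particular $C^*(L_7^{(r;(1,1,1,1))}) \cong C^*(L_7^{(r;\mm)})$ for all permissible $\mm$.

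Next, assume $r = 3s$. Then $\frac{r(r-1)(r-2)}{3} = s(r-1)(r-2)$, and $(\ast)$ becomes $s(r-1)(r-2)(m_3^{-1}m_2 - n_3^{-1}n_2) \equiv 0 \pmod{3s}$, i.e., $3 \mid (r-1)(r-2)(m_3^{-1}m_2 - n_3^{-1}n_2)$. Since $r \equiv 0 \pmod 3$, we have $(r-1)(r-2) \equiv (-1)(-2) \equiv -1 \pmod 3$, so $(r-1)(r-2)$ is a unit mod $3$, and $(\ast)$ reduces to
\[
m_3^{-1} m_2 \equiv n_3^{-1} n_2 \pmod 3. \tag{$\ast\ast$}
\]
(Note that $\gcd(m_i,r)=\gcd(n_i,r)=1$ forces $m_i,n_i \not\equiv 0 \pmod 3$, so the inverses make sense modulo $3$.)

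The two specializations are then immediate. For $\nn = (1,1,1,1)$, $(\ast\ast)$ reads $m_2 \equiv m_3 \pmod 3$. For $\nn = (1,1,r-1,1)$, since $r-1 \equiv -1 \pmod 3$, we have $n_3^{-1} n_2 \equiv -1 \pmod 3$, so $(\ast\ast)$ reads $m_2 \equiv -m_3 \pmod 3$; as $m_2, m_3 \in \{1,2\} \pmod 3$, this is equivalent to $m_2 \not\equiv m_3 \pmod 3$. This gives both ``if and only if'' statements. No step presents a real obstacle; the only bookkeeping is the straightforward modular arithmetic in the case $r=3s$.
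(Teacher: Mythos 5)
Your proposal is correct and follows exactly the route the paper intends: the corollary is a direct arithmetic consequence of Theorem~\ref{t:QuantumIso}, and your case analysis ($3\nmid r$ makes the congruence automatic; $r=3s$ reduces it to $m_3^{-1}m_2\equiv n_3^{-1}n_2 \pmod 3$ since $(r-1)(r-2)$ is a unit mod $3$) is precisely the implicit verification, including the correct observation that $m_2\equiv -m_3 \pmod 3$ is equivalent to $m_2\not\equiv m_3\pmod 3$ for residues in $\{1,2\}$.
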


The isomorphism question for quantum lens spaces was introduced in \cite{MR2015735} and some $K$-groups were explicitly computed there. We note here that the $K$-groups in their own right do not contain sufficient information to classify, even if one takes the order into account.

\begin{remark}
The triple 
\begin{align*}
\left( K_{0} ( C^{*} ( L_{7}^{(r ; \mm )} ) ) , K_{0} ( C^{*} ( L_{7}^{(r ;  \mm )} ) )_{+} , K_{1} ( C^{*} ( L_{7}^{(r ;  \mm )} ) ) \right)
\end{align*}
is not a complete isomorphism invariant.  

Set $E = L_{7}^{(3; (1, 1,1,1)) }$ and $F= L_{7}^{(3;(1,1,2,1))}$ with adjacency matrices
\[
\Asf_E=\begin{pmatrix} 
1 & 3 &6& 10 \\
0 & 1 & 3&6\\
0  & 0& 1 &  3 \\
 0 & 0 & 0 & 1
\end{pmatrix} 
\qquad 
\Asf_F=\begin{pmatrix} 
1 & 3 &6& 11 \\
 0& 1 & 3&6\\
0  & 0& 1 &  3 \\
 0 & 0 & 0 & 1
\end{pmatrix} 
\]
  By Corollary~\ref{c:QuantumIso}, we have that $C^{*} (E)$ and $C^{*} (F)$ are not stably isomorphic.  We will show that  
\begin{align*}
\left( K_{0} ( C^{*} ( E ) ) , K_{0} ( C^{*} ( E ) )_{+} , K_{1} ( C^{*} ( E ) )  \right) \cong \left( K_{0} ( C^{*} ( F ) ) , K_{0} ( C^{*} ( F ) )_{+} , K_{1} ( C^{*} ( F ) )  \right)
\end{align*} 
Because of the symmetry in the antidiagonal of these two matrices, we have $\Csf_E=\Bsf_E$ and  $\Csf_F=\Bsf_F$ and may hence  consider the $K$-groups as given by the kernels and cokernels of $\Bsf_E$ and $\Bsf_F$ themselves (see Remark \ref{howtocompute} and Section \ref{sec:red-filtered-K-theory-K-web-GLP-and-SLP-equivalences})

Let $e_{i}$ be the vector with $1$ in the $i$-th coordinate and zero
elsewhere, let $[ e_{i} ]_{E}$ be the class in $\coker(
\Bsf_{E})$, and let $[ e_{i} ]_{F}$ be the class in
$\coker( \Bsf_{F})$.  Under our identification of the $K_0$-groups of $C^*(E)$ and $C^*(F)$ with cokernels of $\Bsf_E$ and $\Bsf_F$, the positive cones become exactly
\begin{align*}
S_{E} = \{ n_{1} [ e_{1} ]_{E} + n_{2} [ e_{2} ]_{E} + n_{3} [ e_{3} ]_{E} + n_{4} [ e_{4} ]_{E} : n_{i} \in \N_{ 0 } \}
\end{align*} 
and 
\begin{align*}
S_{F} = \{ n_{1} [ e_{1} ]_{F} + n_{2} [ e_{2} ]_{F} + n_{3} [ e_{3} ]_{F} + n_{4} [ e_{4} ]_{F} : n_{i} \in \N_{ 0 } \},
\end{align*}  
respectively.
Hence, it is enough to show that
\begin{align*}
( \coker( \Bsf_{E}) , S_{E} , \ker( \Bsf_{E} )) \cong ( \coker( \Bsf_{F}) , S_{F} , \ker( \Bsf_{F} )).
\end{align*}

Set 
\begin{align*}
U= 
\begin{pmatrix} 
10&-18&9&0\\ 6&-11&6&0\\3&-6&4&0\\0&0&0&1
\end{pmatrix} 
\qquad
W=
\begin{pmatrix} 
1&0&0&0\\0&-1&0&-1\\0&0&1&0\\0&3&0&2
\end{pmatrix} 
\end{align*}
A computation shows that $U$ and $W$ are in $\GL_{4} (\Z)$ and 
$U \Bsf_{ E } =  \Bsf_{F} W$.
Thus, $U$ induces an isomorphism from $\coker( \Bsf_{E})$ to $\coker( \Bsf_{F})$ and $W$ induces an isomorphism from $\ker( \Bsf_{E})$ to $\ker( \Bsf_{F})$ as described in Section \ref{UVinduce}.

It is clear that $U ( [e_{i}]_{E} ) \in S_{F}$ for all $i \neq 2$.
Note that in $\coker( \Bsf_{F})$, we have that
\begin{align*}
U ( [ e_{2} ]_{E} ) = 
\begin{pmatrix} 
-18\\-11\\-6\\0
\end{pmatrix}  =
\begin{pmatrix} 
15 \\
7\\
3 \\
0
\end{pmatrix}  
+  \Bsf_{F}\begin{pmatrix} 
0 \\
0 \\
0 \\
-3 
\end{pmatrix}  \in S_{F}
\end{align*}

In the other direction, since
\[
U^{-1} = \begin{pmatrix}   
-8&18&-9&0\\-6&13&-6&0\\-3&6&-2&0\\0&0&0&1\end{pmatrix} 
\]
we may argue similarly.
\end{remark}

\subsection{Atlas of graph $C^{*}$-algebras of small graphs}
Inspired by a similar undertaking for Leavitt path algebras (\cite{MR3201827}), we end by a complete analysis of the stable isomorphism problem for small graphs, focusing on simple graphs with no more than 4 vertices. Although our invariant may be efficiently computed by methods outlined in \cite{serj:ciugc} we do not know an efficient general procedure for deciding whether or not an isomorphism exists between a pair of invariants, and further we will attempt to study also the few cases where our Condition (H) is not met, so instead of appealing exclusively to our invariant we will proceed by defining two equivalences on the set of graphs under investigation, approximating stable isomorphism of the associated graph algebras on both sides. The number of cases in need of further study is then so small that we may resolve it case by case.

\begin{definition}
The \emph{$K$-temperature} of a finite graph $E$ is the map \fctw{\mathfrak{t}_E^K}{\Gamma_E}{\{0,-1\}\cup\Ab} given by  
$$\mathfrak{t}_E^K(\gamma)=\begin{cases}\tau_E(\upsilon_E(\gamma)), & \tau_E(\upsilon_E(\gamma))<1,\\ K_0(C^*(E)(\{\upsilon_E(\gamma)\})),&\tau_E(\upsilon_E(\gamma))=1.\end{cases}$$
\end{definition}

Note that when $\Bsf_E\in\MPZccc$, then $K_0(C^*(E)(\{\upsilon_E(\gamma)\}))\cong \cok((\Bsf_E^\bullet\{\mathcal{Y}_{\Bsf_E}^{-1}(\gamma)\})^{\mathsf T})$.

\begin{definition}
We say that two graphs $E$ and $F$ with $(\Bsf_E,\Bsf_F)$ in standard form are \emph{outer equivalent}, and write $E\oo F$, if
\begin{enumerate}[(i)]
\item $\coker((\Bsf_E^\bullet)^{\mathsf T})\cong\coker((\Bsf_F^\bullet)^{\mathsf T})$, and 
\item for some order isomorphism \fct{h}{\Gamma_E}{\Gamma_F}, $\mathfrak{t}_F^K(h(\gamma))$ and $\mathfrak{t}_E^K(\gamma)$ are either  isomorphic Abelian groups or equal numbers for all $\gamma\in\Gamma_E$
\end{enumerate}
\end{definition}

We will say that a row or column addition in a matrix $\Bsf_E$
representing a simple graph (\ie, all diagonal entries are in $\{-1, 0\}$ and all other entries are in $\{0, 1\}$) is \emph{legal} if
it meets the requirements of Proposition \ref{prop:matrix-moves} and produces another such matrix.
Similarly, we say that a Move \CO is a legal collapse if it is applied to a regular vertex not supporting a loop, and if it takes a simple graph to another simple graph.

\begin{definition} 
Fix an integer $M$ and let $E$ and $F$ be simple graphs both with finite numbers of vertices $m,n\leq M$ respectively. We say that $E$ and $F$ are \emph{elementary equivalent through simple graphs of size $M$} if either $m=n$ and one of 
\begin{enumerate}[(i)]
\item $E$ is isomorphic to $F$,\label{iifirst}
\item $F$ arises from $E$ by performing a legal row addition in $\Bsf^\bullet_E$,
\item $F$ arises from $E$ by performing a legal column addition in $\Bsf^\bullet_E$,
\end{enumerate}
holds, or if $m=n+1$ and
\begin{enumerate}[(i)]\addtocounter{enumi}{3}
\item $F$ arises from $E$ by deleting a regular source,
\item $F$ arises from $E$ by a legal collapse.\label{iilast}
\end{enumerate}
The coarsest equivalence relation containing elementary equivalence through simple graphs of size $M$ is called \emph{$M$-inner equivalence}, and we write $E\ii{M} F$ when $E$ and $F$ are $M$-inner equivalent.
\end{definition}

The following is now clear.

\begin{proposition}\label{iioorel}
When $E$ and $F$ are finite simple graphs both with $M$ vertices or less, we have
\[
\xymatrix@C=4mm{
E\ii{M} F\ar@{=>}[r]&E\Meq F\ar@{=>}[d]\ar@{=>}[r]&L_{{\mathsf k}} ( E )\sim_{\text{Morita}}L_{{\mathsf k}} (F)
\ar@{=>}[r]& E\oo F\\
& E\MCeq F\ar@{=>}[rr]&&C^*(E)\otimes \K\cong C^*(F)\otimes \K\ar@{=>}[u]}
\]
\end{proposition}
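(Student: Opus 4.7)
The plan is to verify each of the six implications in the diagram independently, appealing to results already established in the paper. The leftmost implication $E\ii{M} F\Longrightarrow E\Meq F$ is proved by inspecting the five types of elementary equivalences through simple graphs of size $M$: condition \ref{iifirst} is graph isomorphism and thus part of $\Meq$ by definition, legal row and column additions in $\Bsf^\bullet_E$ are covered by Proposition~\ref{prop:matrix-moves} and Remark~\ref{rmk:columnAdd} (which is exactly why ``legal'' was built to match the hypotheses there), deletion of a regular source is Move~\SSS, and a legal collapse is Move \CO\ which is known to preserve $\Meq$ by \cite[Theorem~5.2]{MR3082546}. The implication $E\Meq F\Longrightarrow E\MCeq F$ is immediate from Definition~\ref{def:graph-equivalences}, and $E\MCeq F\Longrightarrow C^*(E)\otimes\K\cong C^*(F)\otimes\K$ is the combination of Theorems~\ref{thm:moveimpliesstableisomorphism} and \ref{thm:cuntz-splice-implies-stable-isomorphism}.

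For $E\Meq F\Longrightarrow L_{{\mathsf k}} ( E )\sim_{\text{Morita}}L_{{\mathsf k}} (F)$, the plan is to replay the proofs of \cite[Propositions~3.1, 3.2 and 3.3 and Theorem~3.5]{MR3082546} inside the Leavitt path algebra setting; each of the four moves \OO, \II, \RR, \SSS\ is known from \cite{MR3045151} to induce a Morita equivalence of the associated Leavitt path algebras, and composing these gives the desired conclusion.

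The two implications landing on $E\oo F$ are proved in the same way: one shows that both $C^*$-algebra stable isomorphism and Leavitt path algebra Morita equivalence preserve each of the two pieces of data in $\oo$. Condition (i) concerns the cokernel of $(\Bsf^\bullet_E)^{\mathsf T}$, which under the standard identification equals $K_0(C^*(E))$ (for the $C^*$-case, \cf\ Remark~\ref{howtocompute} and \cite{MR2922394}) respectively $K_0(L_{\mathsf k}(E))$ (for the Leavitt case, \cf\ \cite{MR3188556}); hence both stable $C^*$-isomorphism and Morita equivalence force isomorphism of these groups. For condition (ii), one notes that by Lemma~\ref{lem:structure-1} both stable $C^*$-isomorphism and Morita equivalence of Leavitt path algebras induce a homeomorphism of the gauge invariant prime ideal spaces $\Prime_\gamma(C^*(E))\cong\Prime_\gamma(C^*(F))$ (equivalently, via $\wastheta$, an order isomorphism $\Gamma_E\cong\Gamma_F$), and by Lemma~\ref{taufromK} the temperatures are matched. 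Replacing $\tau$ by the $K$-temperature $\mathfrak{t}^K$ adds no new difficulty since the Abelian-group data at each hot component is simply the $K_0$-group of the associated gauge simple subquotient, and this is again preserved by stable (resp.\ Morita) isomorphism.

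The only point that could appear to need more care is the Leavitt path algebra analogue of Lemmas~\ref{lem:structure-1} and \ref{taufromK}, but by \cite{MR2514392} the lattice of graded ideals of $L_{\mathsf k}(E)$ has the same combinatorial description as the lattice of gauge invariant ideals of $C^*(E)$, and Morita equivalence preserves the lattice of graded ideals together with the $K_0$ of each graded subquotient, which is precisely what is required. Together with the four remaining implications this completes the diagram.
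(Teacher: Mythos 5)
Your proof is correct and follows exactly the route the paper intends: the paper dismisses this proposition with ``The following is now clear,'' relying on precisely the ingredients you assemble (Proposition~\ref{prop:matrix-moves} and Move \CO\ for the leftmost arrow, Theorems~\ref{thm:moveimpliesstableisomorphism} and~\ref{thm:cuntz-splice-implies-stable-isomorphism} for stable isomorphism, the Morita-invariance of the moves from \cite{MR3045151} as also invoked in the proof of Theorem~\ref{coldiso}, and the invariance of the data defining $\oo$ under stable isomorphism and Morita equivalence via Lemmas~\ref{lem:structure-1} and~\ref{taufromK} and their Leavitt analogues from \cite{MR2514392}). Your writeup is simply a fuller account of the same argument.
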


Although counting the number of nonisomorphic graphs of a certain size $M$  is easy by Burnside's lemma (\cf\ \cite{oeis} A595), producing lists of them is rather computationally demanding. The most efficient way to obtain such lists is provided by McKay and Piperno (\cite{MR3131381}).
Developing algorithms to decide $M$-inner equivalence is then straightforward by testing for elementary equivalence and partitioning the set (using, \eg, Warshall's algorithm) by the smallest equivalence relation containing the relations found. Drawing on methods developed in \cite{serj:ciugc} it is not much harder to design an algorithm to decide outer equivalence.  At $M=4$, 
it then only takes a few minutes of computing time to partition these sets of graphs into $\ii{4}$- and $\oo$-classes, obtaining the numbers listed in Table~\ref{numberofiioo}. At $M=5$ we have not attempted a complete analysis, as it takes hours even to compute all the $K$-temperatures and divide the graphs into $\oo$-classes.

\begin{table}
\begin{center}
\begin{tabular}{|c|||c|c|c|c|c|}\hline
$M$&1&2&3&4&5\\\hline\hline
nonisomorphic graphs&2&10&104&3044&291968\\
$\ii{M}$-classes&2&8&35&218&?\\
$\oo$-classes&2&8&35&199&1310\\\hline
\end{tabular}
\end{center}
\caption{Number of classes for $M\in\{1,2,3,4,5\}$}\label{numberofiioo}
\end{table}

It follows directly from Proposition \ref{iioorel} that $\oo$-classes are unions of $\ii{M}$-classes, and that when they coincide, they also coincide with $\Meq$-classes, $\MCeq$-classes or stable isomorphism classes of the \cas. Consequently, the \emph{ad hoc} invariant defining outer equivalence is complete whenever the graph has 1, 2  or 3 vertices. Note that this confirms the Abrams-Tomforde conjecture in these special cases.

In the case with $M=4$ vertices, the notions differ by 17 $\oo$-classes being divided into a total of 36 $\ii{4}$-classes, which we now address. We organize these classes into four groups as indicated in Figures \ref{groupI}--\ref{groupIV}, drawing one representative for each $\ii{4}$-class and indicating the boundaries of each $\oo$-class by triple vertical lines. In the cases, explained below,  where the graphs fail to be $\MCeq$-equivalent we draw a  vertical line between them.
\begin{figure}
\begin{center}
\begin{tabular}{c c c c c c c c}\hline
\multicolumn{1}{|||c|}
{
\begin{tikzpicture}
\fournodes
\flo 1
\fed 1 2
\fed 1 3
\flo 2
\flo 3
\fed 4 1
\end{tikzpicture}
}
&
\multicolumn{1}{c|||}
{
\begin{tikzpicture}
\fournodes
\flo 1
\fed 2 1
\flo 2
\fed 2 3
\fed 2 4
\flo 3
\fed 4 1
\end{tikzpicture}
}
&
\multicolumn{1}{c|}
{
\begin{tikzpicture}
\fournodes
\flo 1
\fed 1 3
\flo 2
\fed 2 3
\flo 3
\fed 4 1
\end{tikzpicture}
}
&
\multicolumn{1}{c|||}
{
\begin{tikzpicture}
\fournodes
\flo 1
\fed 2 1
\flo 2
\fed 2 4
\fed 3 1
\flo 3
\fed 4 1
\end{tikzpicture}
}
&
\multicolumn{1}{c|}
{
\begin{tikzpicture}
\fournodes
\flo 1
\fed 1 3
\fed 2 1
\flo 2
\fed 2 4
\flo 3
\fed 4 1
\end{tikzpicture}
}
\\\hline
\multicolumn{1}{|c|||}
{
\begin{tikzpicture}
\fournodes
\flo 1
\fed 2 1
\flo 2
\fed 2 4
\fed 3 2
\flo 3
\fed 4 1
\end{tikzpicture}
}
&
\multicolumn{1}{c|}
{
\begin{tikzpicture}
\fournodes
\flo 1
\fed 1 4
\flo 2
\fed 2 4
\fed 3 1
\end{tikzpicture}
}
&
\multicolumn{1}{c|||}
{
\begin{tikzpicture}
\fournodes
\fed 1 4
\fed 2 1
\flo 2
\fed 2 4
\fed 3 1
\flo 3
\end{tikzpicture}
}
&

\multicolumn{1}{c|}
{
\begin{tikzpicture}
\fournodes
\flo 1
\fed 1 3
\fed 1 4
\fed 2 1
\end{tikzpicture}
}
&
\multicolumn{1}{c|||}
{
\begin{tikzpicture}
\fournodes
\flo 1
\fed 1 2
\fed 1 3
\fed 1 4
\fed 2 3
\end{tikzpicture}
}
\\\hline
\multicolumn{1}{|||c|}
{
\begin{tikzpicture}
\fournodes
\flo 1
\fed 1 4
\fed 2 1
\flo 2
\fed 2 3
\fed 3 1
\end{tikzpicture}
}
&
\multicolumn{1}{c|||}
{
\begin{tikzpicture}
\fournodes
\flo 1
\fed 1 2
\fed 1 4
\fed 2 4
\fed 3 1
\flo 3
\end{tikzpicture}
}
&
\multicolumn{1}{c|}
{
\begin{tikzpicture}
\fournodes
\flo 1
\fed 1 2
\fed 1 4
\flo 2
\fed 3 1
\end{tikzpicture}
}
&
\multicolumn{1}{c|}
{
\begin{tikzpicture}
\fournodes
\flo 1
\fed 2 1
\flo 2
\fed 2 3
\fed 2 4
\fed 3 1
\end{tikzpicture}
}
&
\multicolumn{1}{c|||}
{
\begin{tikzpicture}
\fournodes
\flo 1
\fed 1 2
\fed 1 3
\fed 1 4
\fed 2 4
\flo 3
\end{tikzpicture}
}
\\\hline
\end{tabular}
\end{center}
\caption{Group I}\label{groupI}
\end{figure}

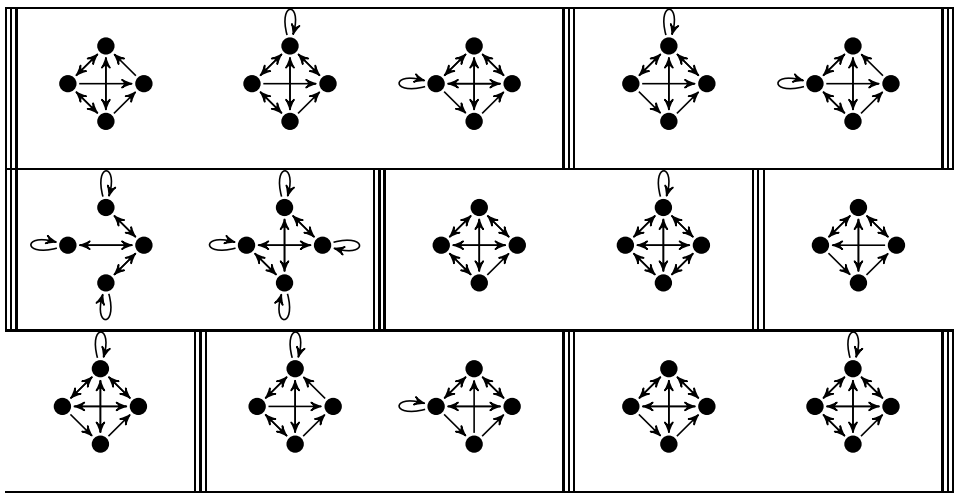
\begin{figure}
\begin{center}
\begin{tabular}{c c c c c c c c}\hline
\multicolumn{1}{|||c}
{
\begin{tikzpicture}
\fournodes
\fed 1 2
\fed 1 3
\fed 2 1
\fed 2 3
\fed 2 4
\fed 3 1
\fed 3 2
\fed 3 4
\fed 4 1
\end{tikzpicture}
}
&
\multicolumn{1}{c}
{
\begin{tikzpicture}
\fournodes
\flo 1
\fed 1 2
\fed 1 3
\fed 1 4
\fed 2 1
\fed 2 3
\fed 2 4
\fed 3 1
\fed 3 2
\fed 3 4
\fed 4 1
\end{tikzpicture}
}
&
\multicolumn{1}{c|||}
{
\begin{tikzpicture}
\fournodes
\fed 1 2
\fed 1 3
\fed 1 4
\fed 2 1
\flo 2
\fed 2 3
\fed 2 4
\fed 3 1
\fed 3 4
\fed 4 1
\fed 4 2
\end{tikzpicture}
}
&
\multicolumn{1}{c}
{
\begin{tikzpicture}
\fournodes
\flo 1
\fed 1 2
\fed 1 3
\fed 1 4
\fed 2 1
\fed 2 3
\fed 2 4
\fed 3 1
\fed 3 4
\fed 4 1
\end{tikzpicture}
}
&
\multicolumn{1}{c|||}
{
\begin{tikzpicture}
\fournodes
\fed 1 2
\fed 1 3
\fed 2 1
\flo 2
\fed 2 3
\fed 2 4
\fed 3 1
\fed 3 2
\fed 3 4
\fed 4 1
\end{tikzpicture}
}
\\\hline
\multicolumn{1}{|||c}
{
\begin{tikzpicture}
\fournodes
\flo 1
\fed 1 4
\flo 2
\fed 2 4
\flo 3
\fed 3 4
\fed 4 1
\fed 4 2
\fed 4 3
\end{tikzpicture}
}
&
\multicolumn{1}{c|||}
{
\begin{tikzpicture}
\fournodes
\flo 1
\fed 1 3
\fed 1 4
\flo 2
\fed 2 3
\fed 2 4
\fed 3 1
\fed 3 2
\flo 3
\fed 4 1
\fed 4 2
\flo 4
\end{tikzpicture}
}
&
\multicolumn{1}{c}
{
\begin{tikzpicture}
\fournodes
\fed 1 2
\fed 1 3
\fed 1 4
\fed 2 1
\fed 2 3
\fed 2 4
\fed 3 1
\fed 3 2
\fed 3 4
\fed 4 1
\fed 4 2
\end{tikzpicture}
}
&
\multicolumn{1}{c|||}
{
\begin{tikzpicture}
\fournodes
\flo 1
\fed 1 2
\fed 1 3
\fed 1 4
\fed 2 1
\fed 2 3
\fed 2 4
\fed 3 1
\fed 3 2
\fed 3 4
\fed 4 1
\fed 4 2
\fed 4 3
\end{tikzpicture}
}
&
\multicolumn{1}{c}
{
\begin{tikzpicture}
\fournodes
\fed 1 2
\fed 1 3
\fed 1 4
\fed 2 1
\fed 2 3
\fed 3 1
\fed 3 4
\fed 4 1
\fed 4 2
\end{tikzpicture}
}
\\\hline
\multicolumn{1}{c|}
{
\begin{tikzpicture}
\fournodes
\flo 1
\fed 1 2
\fed 1 3
\fed 1 4
\fed 2 1
\fed 2 3
\fed 2 4
\fed 3 1
\fed 3 4
\fed 4 1
\fed 4 2
\end{tikzpicture}
}
&
\multicolumn{1}{|||c}
{
\begin{tikzpicture}
\fournodes
\flo 1
\fed 1 2
\fed 1 3
\fed 2 1
\fed 2 3
\fed 2 4
\fed 3 1
\fed 3 2
\fed 3 4
\fed 4 1
\end{tikzpicture}
}
&
\multicolumn{1}{c|||}
{
\begin{tikzpicture}
\fournodes
\fed 1 2
\fed 1 4
\fed 2 1
\flo 2
\fed 2 3
\fed 2 4
\fed 3 1
\fed 3 4
\fed 4 1
\fed 4 2
\end{tikzpicture}
}
&\multicolumn{1}{c}
{
\begin{tikzpicture}
\fournodes
\fed 1 2
\fed 1 3
\fed 1 4
\fed 2 1
\fed 2 3
\fed 2 4
\fed 3 1
\fed 3 4
\fed 4 1
\fed 4 2
\end{tikzpicture}
}
&
\multicolumn{1}{c|||}
{
\begin{tikzpicture}
\fournodes
\flo 1
\fed 1 2
\fed 1 3
\fed 1 4
\fed 2 1
\fed 2 3
\fed 2 4
\fed 3 1
\fed 3 2
\fed 3 4
\fed 4 1
\fed 4 2
\end{tikzpicture}
}\\\hline
\end{tabular}
\end{center}
\caption{Group II}\label{groupII}
\end{figure}
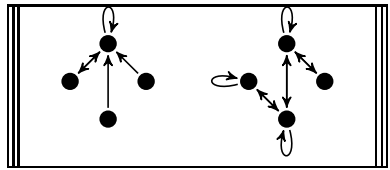
\begin{figure}
\begin{center}
\begin{tabular}{c c}\\\hline
\multicolumn{1}{|||c}
{
\begin{tikzpicture}
\fournodes
\flo 1
\fed 1 2
\fed 2 1
\fed 3 1
\fed 4 1
\end{tikzpicture}
}
&
\multicolumn{1}{c|||}
{
\begin{tikzpicture}
\fournodes
\flo 1
\fed 1 3
\fed 1 4
\flo 2
\flo 3
\fed 2 3
\fed 3 1
\fed 3 2
\fed 4 1
\end{tikzpicture}
}\\\hline
\end{tabular}
\end{center}
\caption{Group III}\label{groupIII}
\end{figure}
\begin{figure}
\begin{center}
\begin{tabular}{c c c c}\\\hline
\multicolumn{1}{|||c}
{
\begin{tikzpicture}
\fournodes
\flo 1
\fed 1 3
\fed 1 4
\fed 2 1
\flo 2
\fed 3 1
\end{tikzpicture}
}
&
\multicolumn{1}{|c|||}
{
\begin{tikzpicture}
\fournodes
\fed 1 3
\fed 1 4
\fed 2 1
\flo 2
\fed 2 4
\fed 3 1
\flo 3
\end{tikzpicture}
}
&
\multicolumn{1}{c|}
{
\begin{tikzpicture}
\fournodes
\flo 1
\fed 1 3
\fed 1 4
\fed 2 1
\flo 2
\flo 3
\fed 4 1
\end{tikzpicture}
}
&
\multicolumn{1}{c|||}
{
\begin{tikzpicture}
\fournodes
\flo 1
\flo 2
\fed 2 4
\fed 3 1
\flo 3
\fed 3 4
\fed 4 1
\fed 4 2
\end{tikzpicture}
}\\\hline
\end{tabular}
\end{center}
\caption{Group IV}\label{groupIV}
\end{figure}
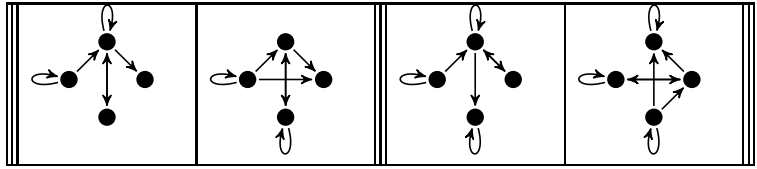

\begin{observation}
None of the graphs in the outer equivalence classes listed in Group I are $\MCeq$-equivalent, and none of them give stably isomorphic \cas.
\end{observation}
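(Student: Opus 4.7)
My plan is to reduce each comparison to a short, explicit matrix computation by invoking Theorem~\ref{mainthm} and Theorem~\ref{iffcharofCeq}. First I would check, using Lemma~\ref{charKH}\ref{charKH-3}, that every graph appearing in Group~I satisfies Condition~(H). This is straightforward because the Hasse diagram of $\Gamma_E$ for each such $E$ has at most four components, and by inspection no component with $\mytau=0$ has all its immediate successors carrying $\mytau=1$. With (H) in hand, Theorem~\ref{mainthm} collapses the three candidate notions in the statement: $E\MCeq F$ holds iff $C^*(E)\otimes\K\cong C^*(F)\otimes\K$ iff their ordered reduced filtered $K$-theories agree over the common $\Primt$-space. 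So for each outer equivalence class containing at least two $\ii{4}$-classes, I only need to exhibit one obstruction -- either at the level of ordered filtered $K$-theory or at the level of admissible matrix equivalences -- to simultaneously rule out both $\MCeq$ and stable isomorphism.

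For the actual obstruction, I would choose for each pair $E,F$ in the same $\oo$-class but different $\ii{4}$-classes a standard-form representative (using Lemma~\ref{taugivesstd} and Proposition~\ref{prop:moveOimpliesisomorphism}/\ref{prop:matrix-moves}), so that $\Bsf_E^\bullet,\Bsf_F^\bullet\in\MPplusZ$ with matching block sizes. By Theorem~\ref{iffcharofCeq}\ref{iffcharofCeqIII}, non-$\MCeq$-equivalence is then equivalent to the absence of a pair $(U,V)$ with $U\in\GLPZ[\mathbf m]$, $V\in\GLPZ$, $U\{i\}=1$ for $\mytau(i)=0$ and $V\{i\}=1$ for $\mytau(i)\leq 0$, solving $U\Bsf_E^\bullet V=\Bsf_F^\bullet$. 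Because the graphs in Group~I yield block sizes $(m_i,n_i)$ of size $(0,1)$, $(1,1)$, or small ``hot'' blocks whose diagonal $U\{i\},V\{i\}$ must be units, the equation $U\Bsf_E^\bullet V=\Bsf_F^\bullet$ reduces to a finite linear system in the off-diagonal entries of $U,V$ together with sign choices from a finite set. I would expand this system entry by entry and, exactly as in Example~\ref{notalwayssamepre} with its system \eqref{nosol}, extract a congruence obstruction: typically some entry of $\Bsf_F^\bullet$ differs from the corresponding entry of $\Bsf_E^\bullet$ by an integer that is not in the image of the relevant $\gcd$-invariant subgroup determined by the admissible diagonal blocks.

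The principal obstacle is not intellectual but bookkeeping: Group~I (as drawn in Figure~\ref{groupI}) breaks into roughly a dozen pairwise comparisons across its seven outer classes, and the exact form of the obstruction depends on the shape of the component poset $\calP$ and on the temperature pattern along it. I would handle these systematically by grouping the comparisons according to $(\calP,\mytau)$: within a fixed shape of $(\calP,\mytau)$, the admissible $(U,V)$-set is the same, and the analysis of the linear system is essentially identical across different entries of $\Bsf_E^\bullet$ and $\Bsf_F^\bullet$. An equivalent, and perhaps more uniform, route would be to compute the reduced filtered $K$-theory directly from the cokernels $\cok(\Bsf_E^\bullet\{s\})^{\mathsf{T}}$ for the distinguished sets $s\in I_0^\calP$ and distinguish the pairs by the image of an appropriate connecting map in the sequence \eqref{eq:longtype} or by the positive cone in a subquotient $K_0$; this is the picture used already in Example~\ref{notalwayssamepre}, and I expect the same type of $\gcd$- or mod-$p$-obstruction to appear in every Group~I case.
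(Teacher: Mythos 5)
Your proposal is correct and, at its core, does what the paper does: reduce each comparison within a Group~I outer class to the (un)solvability of a small integer matrix system and check that no solution exists. The difference is the route. The paper observes that every graph in Group~I has $\max\tau_E\leq 0$ (no vertex supports two distinct return paths), so Theorem~\ref{coldiso} applies directly: the Cuntz splice is never available, hence $\MCeq$ coincides with $\Meq$ and with stable isomorphism, all diagonal blocks are $1\times 1$ with forced determinant $1$, and the whole problem collapses to the purely linear system \eqref{linpart} with no sign choices whatsoever. You instead verify Condition~(H) and invoke Theorem~\ref{mainthm} together with Theorem~\ref{iffcharofCeq}\ref{iffcharofCeqIII}; this works (Condition~(H) is automatic when $\max\tau\leq 0$), but your discussion of ``hot'' blocks whose diagonal entries of $U$ and $V$ must be units is vacuous here --- there are no components with $\mytau(i)=1$ in Group~I, which is precisely the simplification you should exploit. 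One small point to make explicit in a final write-up: since $\MCeq$ need not respect the block structure a priori, the linear system must be checked for every order isomorphism of the tempered component posets, not just one; for the posets occurring in Group~I this is a finite and easy extra check.
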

\begin{proof}
Since Theorem~\ref{coldiso} applies, this follows directly by checking that no solution 
to the small linear systems in \eqref{linpart} exists.
\end{proof}

In this case, the $\ii{4}$-classes coincide with the $\Meq$-classes as well as with the classes giving stably isomorphic graph \cas, and the invariant used to define outer equivalence fails to be complete. This is simply because the information needed to distinguish the matrices up to \SLP-equivalence may not be reconstructed from the partial data contained in the $K_0$-group of the whole system and of the irreducible components.

\begin{observation}
All graphs in the outer equivalence classes listed in
Group II  are mutually $\Meq$-equivalent.
\end{observation}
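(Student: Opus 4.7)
\begin{proofsk}
The plan is to reduce to the matrix-level characterization of move equivalence provided by Theorem~\ref{iffcharofMeq} and verify each case by exhibiting explicit $\SLP$-equivalences.

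First I would verify that every graph appearing in Group II satisfies Condition~(H). Inspection reveals that in each such graph, every component either has temperature $1$ (there are vertices supporting two distinct return paths) or corresponds to an isolated sink with temperature $-1$; no component has temperature $0$ with all immediate successors of temperature $1$. Hence Lemma~\ref{charKH} ensures Condition~(H) holds. Since within each $\oo$-class the $K$-temperature supplies the full $K_0$ of every temperature~$1$ gauge-simple subquotient, one may check that outer equivalence together with the $4$-vertex block structure forces the reduced filtered $K$-theory of the associated \cas to be isomorphic (positive cones of the temperature~$1$ components are all of $K_0$ since they are Kirchberg algebras, and temperature $-1$ components contribute $(\Z,\N_0)$). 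By Theorem~\ref{mainthm}, the graphs within each $\oo$-class in Group II give rise to stably isomorphic graph \cas and are therefore $\MCeq$-equivalent.

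Next I would upgrade $\MCeq$ to $\Meq$. Rather than controlling the Cuntz splice directly, I would pass to representatives with $(\Bsf_E,\Bsf_F)$ in standard form and $\Bsf^\bullet_E,\Bsf^\bullet_F\in \MPplusZ$ (using Lemma~\ref{taugivesstd}; each $\oo$-class in Group II has common temperature pattern) and apply Theorem~\ref{iffcharofMeq}\ref{iffcharofMeq|||}: it suffices to exhibit $U\in \SLPZ[\mathbf m]$ and $V\in \SLPZ$ with $U\Bsf^\bullet_EV=\Bsf^\bullet_F$ for pairs of representatives, one from each $\ii{4}$-class contained in the same $\oo$-class of Group II. For components of temperature $1$, we have already established the existence of a \GLPEe from isomorphism of the $K$-webs, and in the highly connected Group II setting each diagonal block has strictly positive entries and ample flexibility to absorb sign changes by applying the augmentation trick used in the sketch of Proposition~\ref{GunnarRULES}: a potential negative determinant on a block can be corrected by adding a pair of extra vertices via Move~\OO and reverse-\RR, yielding $\SLP$-equivalent representatives $E',F'$ with additional vertices, at the modest cost of leaving the class of four-vertex simple graphs. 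This is precisely why these move equivalences are invisible to $\ii{4}$ but detected by $\Meq$.

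The main obstacle is bookkeeping. Each of the $\oo$-classes in Group II that splits into several $\ii{4}$-classes must be handled by producing (or naming) an explicit \SLP-pair, and for the graphs with many vertices supporting loops and two or more return paths, the candidate matrices $U$, $V$ can be built by standard Smith-normal-form computations on the $\Bsf^\bullet$-matrices once one fixes a common block structure. I would organize the verification as a finite list: for each pair within the same $\oo$-class I would record the elementary row and column operations (all of which are legal by Proposition~\ref{prop:matrix-moves} and Remark~\ref{rmk:columnAdd} since every vertex in the relevant components supports a loop and receives from another vertex that does), composing them to obtain the required $U$ and $V$. Finally I would invoke Theorem~\ref{iffcharofMeq} in the direction \ref{iffcharofMeq|||}$\Longrightarrow$\ref{iffcharofMeqI} to conclude $E\Meq F$ for every such pair, which establishes the observation.
\end{proofsk}
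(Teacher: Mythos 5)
There is a genuine gap in the second paragraph, at the point where you claim that ``a potential negative determinant on a block can be corrected by adding a pair of extra vertices via Move~\OO{} and reverse-\RR, yielding $\SLP$-equivalent representatives.'' The augmentations available without the Cuntz splice (appending a single $-1$ by reverse \RR, possibly repeatedly) flip $\det U\{i\}$ and $\det V\{i\}$ \emph{simultaneously}; the only augmentation in the sketch of Proposition~\ref{GunnarRULES} that changes exactly one of the two signs is the $\begin{smallpmatrix}0&1\\1&0\end{smallpmatrix}$ block, which is produced by Move \CC{} and therefore only preserves $\MCeq$, not $\Meq$. If your sign-correction step were valid it would show that every \GLPEe{} between irreducible graphs upgrades to an $\Meq$, i.e.\ that $\MCeq$ and $\Meq$ coincide for such graphs --- which is false (this is exactly what Group III illustrates, and what the determinant-sign obstruction in Franks' theorem measures). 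So you cannot deduce the existence of the required $U,V\in\SLPZ$ from the $K$-web isomorphism alone; the determinant signs must actually be \emph{checked} to agree within each outer equivalence class, and outer equivalence does not record them.

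For comparison, the paper's proof is a one-liner exploiting that every graph in Group~II is irreducible: by Franks' theorem \cite{MR758893} flow equivalence of irreducible SFTs is completely determined by the Bowen--Franks group together with the sign of the determinant, and flow equivalence gives $\Meq$ via Lemma~\ref{flowvsME} since there are no sinks. The Bowen--Franks groups agree by outer equivalence, so the only remaining task is the finite, explicit verification that the determinant signs match up inside each $\oo$-class --- precisely the computation your argument tries to avoid. Your route through Theorem~\ref{iffcharofMeq} is viable in principle, but only after you either perform that sign check or exhibit the explicit $\SL$-equivalences; as written, the existence step is unjustified.
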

\begin{proof}
In every case the given graph defines an irreducible SFT, and hence by \cite{MR758893} (see also \cite{MR3082546}), since we know that the Bowen-Franks groups are the same in each outer equivalence class, we just need to check --- which is easily done --- that the signs of the determinants match up. 
\end{proof}

This observation contains the result that indeed the $\oo$-classes coincide with the $\MCeq$-classes and $\Meq$-classes as well as the classes with stably isomorphic graph \cas.
The explanation of the lack of success of our approach to establish elementary equivalence through simple graphs is that since the graphs have so many edges, there is not room for enough row or column additions to pass from one to another. Indeed, all the graphs in each outer equivalence class turn out to be $\ii{5}$-equivalent.

\begin{observation}
The graphs in  Group III  are  $\MCeq$-equivalent without being $\Meq$-equivalent. The graphs in the outer equivalence classes listed in
Group IV  fail to be $\MCeq$-equivalent, yet produce stably isomorphic \cas.
\end{observation}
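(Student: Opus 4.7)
\medskip
\noindent\emph{Proof proposal.} The plan is to treat Groups III and IV separately, reducing each claim to small explicit computations on the $\Bsf$-matrices of the representatives, which we first place in standard form via Lemma~\ref{taugivesstd} with $\Bsf_E^\bullet,\Bsf_F^\bullet\in\MPplusZ$.

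For Group III, let $(E,F)$ denote either of the two outer equivalence classes listed. To establish $E\MCeq F$, I would solve the matrix equation $U\Bsf_E^\bullet V=\Bsf_F^\bullet$ explicitly with $U\in\GLPZ[\mathbf m]$ and $V\in\GLPZ$, subject to the sign constraints of Theorem~\ref{iffcharofCeq}\ref{iffcharofCeqIII}: $V\{i\}=1$ when $\mytau(i)\leq 0$ and $U\{i\}=1$ when $\mytau(i)=0$. In each of the two classes this reduces to a handful of linear relations in a handful of integer unknowns, and a solution with one diagonal block of $U$ having determinant $-1$ exists. To disprove $E\Meq F$, I would then invoke Theorem~\ref{iffcharofMeq}\ref{iffcharofMeq|||} and check, in the same matrix setup but now demanding that \emph{every} diagonal block of $U$ and $V$ have determinant $+1$, that the system is unsolvable. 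The obstruction will appear as a determinantal sign that cannot be reconciled, completely analogous to Example~\ref{notalwayssamepre}. Note that Theorem~\ref{mainthm} is not directly available here, because one checks directly from Lemma~\ref{charKH}\ref{charKH-3} that Condition~(H) fails at a vertex whose unique return path is forced to pass through a vertex supporting two distinct return paths.

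For Group IV, the failure of $\MCeq$-equivalence is dealt with in the same way: one verifies that the constrained matrix equation of Theorem~\ref{iffcharofCeq}\ref{iffcharofCeqIII} admits no solution, again mirroring Example~\ref{notalwayssamepre}. For the stable isomorphism, one reads off from the pictures that every graph in Group~IV has $\Primt\cong X_3$ and that the smallest ideal carries temperature $\leq 0$, so Corollary~\ref{cor:classification special case} applies, and it is enough to verify $\FKRplus(X_3;C^*(E))\cong\FKRplus(X_3;C^*(F))$ for each pair. Outer equivalence already yields matching cokernels of $\Bsf_E^\bullet$ and $\Bsf_F^\bullet$, matching $K_0$-groups of the gauge simple subquotients, and matching temperatures, and the linear structure of $X_3$ forces the remaining connecting maps in the sequences~\eqref{eq:exact-seq-Kweb} to line up, exactly as was carried out by hand in Example~\ref{notalwayssame}.

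The main obstacle is this last rigidity step for Group~IV: one must argue that in the concrete small cases at hand, the \emph{ad hoc} invariant used to define $\oo$ is in fact strong enough to reconstruct a full ordered reduced filtered $K$-theory isomorphism, even though in general it records strictly less information. Concretely, I would write down the two nontrivial short exact sequences of \eqref{eq:exact-seq-Kweb} associated to the middle and the outer ideals of $X_3$ for each pair of representatives, observe that the middle cokernel and its extension class are determined up to isomorphism by the endpoint $K_0$-groups together with the temperatures, and conclude by a direct check that positivity is automatically preserved on the gauge simple subquotients. Once this final verification is in place, Corollary~\ref{cor:classification special case} closes the argument and, combined with the $\MCeq$-obstruction above, completes the observation.
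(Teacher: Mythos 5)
Your overall strategy would, once the deferred computations are actually carried out, establish the observation, but it takes a genuinely different and considerably heavier route than the paper. The paper disposes of Group III by observing that the two graphs are move equivalent to the one-vertex graph with adjacency matrix $(2)$ and to its Cuntz splice, respectively: $\MCeq$-equivalence is then immediate because Move \CC\ is one of the generating moves, and the failure of $\Meq$ follows from Lemma~\ref{flowvsME} together with Franks' theorem, since the Cuntz splice flips the sign of $\det(-\Bsf)$ while preserving the (trivial) Bowen--Franks group. For Group IV the paper simply applies Move \CO\ to the unique regular vertex not supporting a loop in each of the four graphs, landing exactly on the four graphs $E$, $F$, $E_\curlyvee$, $F_\curlyvee$ of Figure~\ref{firstexx}(b), for which everything was already established in Examples~\ref{notalwayssamepre}, \ref{notalwayssame} and \ref{notalwayssameii}. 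Your plan replaces both reductions by direct verifications through Theorems~\ref{iffcharofCeq} and \ref{iffcharofMeq} and Corollary~\ref{cor:  classification special case}; this works, but it forces you first to expand the purely infinite blocks so as to land in \MPplusZ, which makes the linear systems to be analysed noticeably larger than the $3\times 3$ system \eqref{nosol} that the paper's reduction leaves behind.

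Two specific points need attention. First, your parenthetical claim that Condition~(H) fails for the Group III graphs is false: in both graphs every vertex supports either no return path or at least two (for instance, in the first graph the vertex receiving the edge from the looped vertex supports both the return path of length two and the one passing once through the loop), so both graphs satisfy Condition~(K), hence Condition~(H), and Theorem~\ref{mainthm} is in fact available. Your argument never uses the claim, so nothing breaks, but the statement should be removed. Second, for Group IV the asserted rigidity --- that the middle cokernel and the relevant extension data are determined by the endpoint $K_0$-groups together with the temperatures --- is not a valid principle; the paper's discussion of Group I shows precisely this kind of reconstruction failing. So the step upgrading outer equivalence to an isomorphism of $\FKRplus(X_3;-)$ cannot rest on that heuristic and genuinely requires exhibiting an explicit \GLPEe with the appropriate positivity on the gauge simple subquotients, as is done by hand in Example~\ref{notalwayssame}. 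You acknowledge that a direct check is needed, and for these small matrices it succeeds, but as written the justification offered for why it must succeed is not one.
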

\begin{proof}
For the first claim, we see that the two graphs given are clearly move equivalent to the graph given by the matrix $(2)$ and its Cuntz splice.
For the second, we note that we get the four graphs  considered in Examples \ref{notalwayssamepre}, \ref{notalwayssame}, \ref{notalwayssameii} after applying Move \CO\ to the unique regular vertex not supporting a loop. 
 \end{proof}

Combining these results, we get

\begin{observation}
The 3044 different simple graphs with four vertices are divided into 210 different $\Meq$-classes and 209 different $\MCeq$-classes. They define a total of 207 different graph \cas, identified up to stable isomorphism.
\end{observation}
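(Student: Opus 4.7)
The plan is essentially combinatorial bookkeeping on top of the computational enumeration recorded in Table \ref{numberofiioo}: the 3044 isomorphism classes of simple graphs with at most four vertices split into 218 $\ii{4}$-classes and 199 $\oo$-classes. By Proposition \ref{iioorel}, the partitions by $\Meq$, $\MCeq$, and stable isomorphism of graph \cas all lie strictly between these two. In particular, the 182 $\oo$-classes that are already singletons as $\ii{4}$-classes each contribute exactly one class to every intermediate relation, and require no further analysis. The entire problem hence reduces to the 17 exceptional $\oo$-classes displayed in Figures \ref{groupI}--\ref{groupIV}.

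I would then dispose of these 17 classes group by group, using the three observations that immediately precede the statement. In Group I, the $\ii{4}$-partition is final: since the temperatures are nonpositive, Theorem \ref{coldiso}\ref{coldiso-5} reduces the question to whether the purely linear system \eqref{linpart} has a solution, and nonsolvability can be checked directly in each of the listed small cases. In Group II, the Franks–Huang theorem (invoking that every component is an irreducible SFT, \cf\ \cite{MR758893}) together with the coincidence of Bowen–Franks groups and determinant signs collapses each $\oo$-class into a single $\Meq$-class. In Group III, the unique $\oo$-class consists of two $\ii{4}$-classes that become equivalent under $\MCeq$ through a Cuntz splice on the matrix $(2)$, but remain inequivalent under $\Meq$. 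In Group IV, the $\oo$-classes split into distinct $\MCeq$-classes by the explicit obstruction of Example \ref{notalwayssamepre}, but collapse to single stable isomorphism classes via the \emph{ad hoc} classification Theorem \ref{thm:  classification special case} together with the unplugging trick in Proposition \ref{prop: unplugging}.

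Assembling these four cases, the counts would follow by straightforward arithmetic: eight $\ii{4}$-merges from Group II reduce 218 to 210 $\Meq$-classes, one $\Meq$-merge from Group III reduces this to 209 $\MCeq$-classes, and two $\MCeq$-merges from Group IV finally reduce this to 207 stable isomorphism classes of \cas, matching the statement.

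The main obstacle is not mathematical but computational: one must verify the enumeration of simple graphs (implemented via McKay and Piperno \cite{MR3131381}), and the correctness of the algorithms partitioning into $\ii{4}$- and $\oo$-classes (by testing all elementary moves and applying Warshall's algorithm to close under equivalence). On the mathematical side, the only genuinely subtle step is Group IV: here ordinary move-theoretic or $K$-theoretic methods are insufficient to establish stable isomorphism, and one must invoke the unplugging machinery of Section \ref{unplugging} together with the explicit extension-theoretic classification in Theorem \ref{thm:  classification special case}, whose hypotheses are met for these particular four-vertex graphs because their gauge invariant ideal spaces are homeomorphic to $X_3$ with appropriate temperatures.
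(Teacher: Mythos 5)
Your proposal is correct and follows essentially the same route as the paper, whose proof of this observation is simply the phrase ``Combining these results'' applied to Table~\ref{numberofiioo} and the three preceding observations on Groups I--IV; your arithmetic ($218\to 210\to 209\to 207$ via the eight Group II merges under $\Meq$, the one Group III merge under $\MCeq$, and the two Group IV merges under stable isomorphism) matches the intended bookkeeping exactly. The attributions are also right: Group I via Theorem~\ref{coldiso}\ref{coldiso-5} and the linear system \eqref{linpart}, Group II via Franks, Group III via the Cuntz splice of $(2)$, and Group IV via Corollary~\ref{cor:  classification special case} (hence Theorem~\ref{thm:  classification special case} and Proposition~\ref{prop: unplugging}) applied after collapsing to the graphs of Figure~\ref{firstexx}(b).
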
 

The number of different Leavitt path algebras (say with $\mathsf k=\C$) defined, identified up to Morita equivalence, is not known, but must be in the range $\{207,208,209,210\}$ since for all the graphs giving isomorphic stabilized $C^*$-algebras except the ones in Group III and IV we have established $\Meq$, which implies Morita equivalence of the Levitt path algebras as well.

\section*{Acknowledgements}

This work was partially supported by the Danish National Research Foundation through the Centre for Symmetry and Deformation (DNRF92), by VILLUM FONDEN through the network for Experimental Mathematics in Number Theory, Operator Algebras, and Topology, by a grant from the Simons Foundation (\# 279369 to Efren Ruiz), and by the Danish Council for Independent Research | Natural Sciences.

The third and fourth named authors would like to thank the School
of Mathematics and Applied Statistics at the University of Wollongong
for hospitality during their visit there, and the first and second named authors likewise thank the Department of
Mathematics, University of Hawaii, Hilo. The initial work was carried out at these two long-term visits, and
it was completed while
all four authors were attending the research program
\emph{Classification of operator algebras: complexity, rigidity, and                                                              
 dynamics} at the Mittag-Leffler Institute, January--April 2016. We
thank the institute and its staff for the excellent work conditions
provided.

The authors would also like to thank Mike Boyle and James Gabe for many fruitful discussions.

\newcommand{\etalchar}[1]{$^{#1}$}
\providecommand{\bysame}{\leavevmode\hbox to3em{\hrulefill}\thinspace}
\providecommand{\MR}{\relax\ifhmode\unskip\space\fi MR }
\providecommand{\MRhref}[2]{%
  \href{http://www.ams.org/mathscinet-getitem?mr=#1}{#2}
}
\providecommand{\href}[2]{#2}


\begin{thebibliography}{ABAPMB{\etalchar{+}}14}

\bibitem[AAP08]{MR2417402}
G.~Abrams and G.~Aranda~Pino, \emph{The {L}eavitt path algebras of arbitrary
  graphs}, Houston J. Math. \textbf{34} (2008), no.~2, 423--442.

\bibitem[ABAPMB{\etalchar{+}}14]{MR3201827}
Pablo Alberca~Bjerregaard, Gonzalo Aranda~Pino, Dolores Mart{\'{\i}}n~Barquero,
  C{\'a}ndido Mart{\'{\i}}n~Gonz{\'a}lez, and Mercedes Siles~Molina,
  \emph{Atlas of {L}eavitt path algebras of small graphs}, J. Math. Soc. Japan
  \textbf{66} (2014), no.~2, 581--611, \href
  {http://dx.doi.org/10.2969/jmsj/06620581} {\path{doi:10.2969/jmsj/06620581}}.

\bibitem[ABK14a]{MR3177344}
Sara~E. Arklint, Rasmus Bentmann, and Takeshi Katsura, \emph{The
  {K}-theoretical range of {C}untz-{K}rieger algebras}, J. Funct. Anal.
  \textbf{266} (2014), no.~8, 5448--5466, \href
  {http://dx.doi.org/10.1016/j.jfa.2014.01.020}
  {\path{doi:10.1016/j.jfa.2014.01.020}}.

\bibitem[ABK14b]{MR3349327}
\bysame, \emph{Reduction of filtered {K}-theory and a characterization of
  {C}untz-{K}rieger algebras}, J. K-Theory \textbf{14} (2014), no.~3, 570--613,
  \href {http://dx.doi.org/10.1017/is014009013jkt281}
  {\path{doi:10.1017/is014009013jkt281}}.

\bibitem[ABL15]{MR3448329}
Francesca Arici, Simon Brain, and Giovanni Landi, \emph{The {G}ysin sequence
  for quantum lens spaces}, J. Noncommut. Geom. \textbf{9} (2015), no.~4,
  1077--1111, \href {http://dx.doi.org/10.4171/JNCG/216}
  {\path{doi:10.4171/JNCG/216}}.

\bibitem[APPSM09]{MR2514392}
G.~Aranda~Pino, E.~Pardo, and M.~Siles~Molina, \emph{Prime spectrum and
  primitive {L}eavitt path algebras}, Indiana Univ. Math. J. \textbf{58}
  (2009), no.~2, 869--890, \href {http://dx.doi.org/10.1512/iumj.2009.58.3516}
  {\path{doi:10.1512/iumj.2009.58.3516}}.

\bibitem[AR15]{MR3391894}
Sara~E. Arklint and Efren Ruiz, \emph{Corners of {C}untz-{K}rieger algebras},
  Trans. Amer. Math. Soc. \textbf{367} (2015), no.~11, 7595--7612, \href
  {http://dx.doi.org/10.1090/S0002-9947-2015-06283-7}
  {\path{doi:10.1090/S0002-9947-2015-06283-7}}.

\bibitem[AT11]{MR2775826}
Gene Abrams and Mark Tomforde, \emph{Isomorphism and {M}orita equivalence of
  graph algebras}, Trans. Amer. Math. Soc. \textbf{363} (2011), no.~7,
  3733--3767, \href {http://dx.doi.org/10.1090/S0002-9947-2011-05264-5}
  {\path{doi:10.1090/S0002-9947-2011-05264-5}}.

\bibitem[AT14]{MR3426227}
\bysame, \emph{A class of {$C^*$}-algebras that are prime but not primitive},
  M\"unster J. Math. \textbf{7} (2014), no.~2, 489--514.

\bibitem[BCW14]{arXiv:1410.2308v1}
Nathan Brownlowe, Toke~Meier Carlsen, and Michael~F. Whittaker, \emph{Graph
  algebras and orbit equivalence}, To appear in Ergodic
  Th.\ Dynam.\ Sys.,  \href {http://dx.doi.org/10.1017/etds.2015.52}
  {\path{doi:10.1017/etds.2015.52}}.

\bibitem[BH03]{MR1990568}
Mike Boyle and Danrun Huang, \emph{Poset block equivalence of integral
  matrices}, Trans. Amer. Math. Soc. \textbf{355} (2003), no.~10, 3861--3886
  (electronic), \href {http://dx.doi.org/10.1090/S0002-9947-03-02947-7}
  {\path{doi:10.1090/S0002-9947-03-02947-7}}.

\bibitem[BHRS02]{MR1988256}
Teresa Bates, Jeong~Hee Hong, Iain Raeburn, and Wojciech Szyma{\'n}ski,
  \emph{The ideal structure of the {$C^*$}-algebras of infinite graphs},
  Illinois J. Math. \textbf{46} (2002), no.~4, 1159--1176, URL:
  \url{http://projecteuclid.org/euclid.ijm/1258138472}.

\bibitem[Bla06]{MR2188261}
B.~Blackadar, \emph{Operator algebras}, Encyclopaedia of Mathematical Sciences,
  vol. 122, Springer-Verlag, Berlin, 2006, Theory of $C{^{*}}$-algebras and von
  Neumann algebras, Operator Algebras and Non-commutative Geometry, III, \href
  {http://dx.doi.org/10.1007/3-540-28517-2} {\path{doi:10.1007/3-540-28517-2}}.

\bibitem[Boy02]{MR1907894}
Mike Boyle, \emph{Flow equivalence of shifts of finite type via positive
  factorizations}, Pacific J. Math. \textbf{204} (2002), no.~2, 273--317, \href
  {http://dx.doi.org/10.2140/pjm.2002.204.273}
  {\path{doi:10.2140/pjm.2002.204.273}}.

\bibitem[BP04]{MR2054048}
Teresa Bates and David Pask, \emph{Flow equivalence of graph algebras}, Ergodic
  Theory Dynam. Systems \textbf{24} (2004), no.~2, 367--382, \href
  {http://dx.doi.org/10.1017/S0143385703000348}
  {\path{doi:10.1017/S0143385703000348}}.

\bibitem[Bro77]{MR0454645}
Lawrence~G. Brown, \emph{Stable isomorphism of hereditary subalgebras of
  {$C\sp*$}-algebras}, Pacific J. Math. \textbf{71} (1977), no.~2, 335--348.

\bibitem[BS16]{arXiv:1603.04678v1}
Tomasz Brzezi{\'n}ski and Wojciech Szyma{\'n}ski, \emph{The {$C^*$}-algebras of
  quantum lens and weighted projective spaces}, ArXiv e-prints (2016), \href
  {http://arxiv.org/abs/1603.04678v1} {\path{arXiv:1603.04678v1}}.

\bibitem[CET12]{MR2922394}
Toke~Meier Carlsen, S{\o}ren Eilers, and Mark Tomforde, \emph{Index maps in the
  {$K$}-theory of graph algebras}, J. K-Theory \textbf{9} (2012), no.~2,
  385--406, \href {http://dx.doi.org/10.1017/is011004017jkt156}
  {\path{doi:10.1017/is011004017jkt156}}.

\bibitem[DHS03]{MR2001940}
Klaus Deicke, Jeong~Hee Hong, and Wojciech Szyma{\'n}ski, \emph{Stable rank of
  graph algebras. {T}ype {I} graph algebras and their limits}, Indiana Univ.
  Math. J. \textbf{52} (2003), no.~4, 963--979, \href
  {http://dx.doi.org/10.1512/iumj.2003.52.2350}
  {\path{doi:10.1512/iumj.2003.52.2350}}.

\bibitem[DT05]{MR2117597}
D.~Drinen and M.~Tomforde, \emph{The {$C^*$}-algebras of arbitrary graphs},
  Rocky Mountain J. Math. \textbf{35} (2005), no.~1, 105--135, \href
  {http://dx.doi.org/10.1216/rmjm/1181069770}
  {\path{doi:10.1216/rmjm/1181069770}}.

\bibitem[EJ]{serj:ciugc}
S.~Eilers and R.~Johansen, \emph{Computing invariants for unital graph
  {$C^*$}-algebras}, In preparation.

\bibitem[ERR13a]{MR3056712}
S{\o}ren Eilers, Gunnar Restorff, and Efren Ruiz, \emph{Classifying
  {$C^*$}-algebras with both finite and infinite subquotients}, J. Funct. Anal.
  \textbf{265} (2013), no.~3, 449--468, \href
  {http://dx.doi.org/10.1016/j.jfa.2013.05.006}
  {\path{doi:10.1016/j.jfa.2013.05.006}}.

\bibitem[ERR13b]{arXiv:1301.7695v1}
\bysame, \emph{Strong classification of extensions of classifiable
  {$C\sp*$}-algebras}, ArXiv e-prints (2013), \href
  {http://arxiv.org/abs/1301.7695v1} {\path{arXiv:1301.7695v1}}.

\bibitem[ERRS16a]{Eilers-Restorff-Ruiz-Sorensen-2}
S{\o}ren Eilers, Gunnar Restorff, Efren Ruiz, and Adam P.~W. S{\o}rensen,
  \emph{The complete classification of unital graph {$C^*$}-algebras: Geometric
  and strong}, In preparation, 2016.

\bibitem[ERRS16b]{arXiv:1602.03709v2}
\bysame, \emph{Invariance of the {C}untz splice}, ArXiv e-prints (2016), \href
  {http://arxiv.org/abs/1602.03709v2} {\path{arXiv:1602.03709v2}}.

\bibitem[ERS12]{MR3047630}
S{\o}ren Eilers, Efren Ruiz, and Adam P.~W. S{\o}rensen, \emph{Amplified graph
  {$C^*$}-algebras}, M\"unster J. Math. \textbf{5} (2012), 121--150.

\bibitem[FLR00]{MR1670363}
Neal~J. Fowler, Marcelo Laca, and Iain Raeburn, \emph{The {$C^*$}-algebras of
  infinite graphs}, Proc. Amer. Math. Soc. \textbf{128} (2000), no.~8,
  2319--2327, \href {http://dx.doi.org/10.1090/S0002-9939-99-05378-2}
  {\path{doi:10.1090/S0002-9939-99-05378-2}}.

\bibitem[Fra84]{MR758893}
John Franks, \emph{Flow equivalence of subshifts of finite type}, Ergodic
  Theory Dynam. Systems \textbf{4} (1984), no.~1, 53--66, \href
  {http://dx.doi.org/10.1017/S0143385700002261}
  {\path{doi:10.1017/S0143385700002261}}.

\bibitem[Gab13]{MR3142035}
James Gabe, \emph{Graph {$C^*$}-algebras with a {$T_1$} primitive ideal space},
  Operator algebra and dynamics, Springer Proc. Math. Stat., vol.~58, Springer,
  Heidelberg, 2013, pp.~141--156, \href
  {http://dx.doi.org/10.1007/978-3-642-39459-1_7}
  {\path{doi:10.1007/978-3-642-39459-1_7}}.


\bibitem[HLM{\etalchar{+}}14]{MR3310950}
Damon Hay, Marissa Loving, Martin Montgomery, Efren Ruiz, and Katherine Todd,
  \emph{Non-stable {$K$}-theory for {L}eavitt path algebras}, Rocky Mountain J.
  Math. \textbf{44} (2014), no.~6, 1817--1850, \href
  {http://dx.doi.org/10.1216/RMJ-2014-44-6-1817}
  {\path{doi:10.1216/RMJ-2014-44-6-1817}}.

\bibitem[HS03a]{MR1989499}
Jeong~Hee Hong and Wojciech Szyma{\'n}ski, \emph{Purely infinite
  {C}untz-{K}rieger algebras of directed graphs}, Bull. London Math. Soc.
  \textbf{35} (2003), no.~5, 689--696, \href
  {http://dx.doi.org/10.1112/S0024609303002364}
  {\path{doi:10.1112/S0024609303002364}}.

\bibitem[HS03b]{MR2015735}
\bysame, \emph{Quantum lens spaces and graph algebras}, Pacific J. Math.
  \textbf{211} (2003), no.~2, 249--263, \href
  {http://dx.doi.org/10.2140/pjm.2003.211.249}
  {\path{doi:10.2140/pjm.2003.211.249}}.

\bibitem[HS04]{MR2023453}
\bysame, \emph{The primitive ideal space of the {$C^\ast$}-algebras of infinite
  graphs}, J. Math. Soc. Japan \textbf{56} (2004), no.~1, 45--64, \href
  {http://dx.doi.org/10.2969/jmsj/1191418695}
  {\path{doi:10.2969/jmsj/1191418695}}.

\bibitem[Jeo04]{MR2069031}
Ja~A. Jeong, \emph{Real rank of {$C^*$}-algebras associated with graphs}, J.
  Aust. Math. Soc. \textbf{77} (2004), no.~1, 141--147, \href
  {http://dx.doi.org/10.1017/S1446788700010211}
  {\path{doi:10.1017/S1446788700010211}}.
  
\bibitem[Kir00]{MR1796912}
Eberhard Kirchberg, \emph{Das nicht-kommutative {M}ichael-{A}uswahlprinzip und
  die {K}lassifikation nicht-einfacher {A}lgebren}, {$C^*$}-algebras
  ({M}\"unster, 1999), Springer, Berlin, 2000, pp.~92--141. 


\bibitem[KP99]{MR1738948}
Alex Kumjian and David Pask, \emph{{$C^*$}-algebras of directed graphs and
  group actions}, Ergodic Theory Dynam. Systems \textbf{19} (1999), no.~6,
  1503--1519, \href {http://dx.doi.org/10.1017/S0143385799151940}
  {\path{doi:10.1017/S0143385799151940}}.

\bibitem[Min87]{MR869419}
J.~A. Mingo, \emph{{$K$}-theory and multipliers of stable {$C^\ast$}-algebras},
  Trans. Amer. Math. Soc. \textbf{299} (1987), no.~1, 397--411, \href
  {http://dx.doi.org/10.2307/2000501} {\path{doi:10.2307/2000501}}.

\bibitem[MM14]{MR3276420}
Kengo Matsumoto and Hiroki Matui, \emph{Continuous orbit equivalence of
  topological {M}arkov shifts and {C}untz-{K}rieger algebras}, Kyoto J. Math.
  \textbf{54} (2014), no.~4, 863--877, \href
  {http://dx.doi.org/10.1215/21562261-2801849}
  {\path{doi:10.1215/21562261-2801849}}.

\bibitem[MN09]{MR2545613}
Ralf Meyer and Ryszard Nest, \emph{{$C^*$}-algebras over topological spaces:
  the bootstrap class}, M\"unster J. Math. \textbf{2} (2009), 215--252.

\bibitem[MP14]{MR3131381}
Brendan~D. McKay and Adolfo Piperno, \emph{Practical graph isomorphism, {II}},
  J. Symbolic Comput. \textbf{60} (2014), 94--112, \href
  {http://dx.doi.org/10.1016/j.jsc.2013.09.003}
  {\path{doi:10.1016/j.jsc.2013.09.003}}.

\bibitem[Mur90]{MR1074574}
Gerard~J. Murphy, \emph{{$C^*$}-algebras and operator theory}, Academic Press,
  Inc., Boston, MA, 1990.

\bibitem[Phi00]{MR1745197}
N.~Christopher Phillips, \emph{A classification theorem for nuclear purely
  infinite simple {$C^*$}-algebras}, Doc. Math. \textbf{5} (2000), 49--114
  (electronic).



\bibitem[PS75]{MR0405385}
Bill Parry and Dennis Sullivan, \emph{A topological invariant of flows on
  {$1$}-dimensional spaces}, Topology \textbf{14} (1975), no.~4, 297--299.
  


\bibitem[Rae05]{MR2135030}
Iain Raeburn, \emph{Graph algebras}, CBMS Regional Conference Series in
  Mathematics, vol. 103, Published for the Conference Board of the Mathematical
  Sciences, Washington, DC; by the American Mathematical Society, Providence,
  RI, 2005.

\bibitem[Res06]{MR2270572}
Gunnar Restorff, \emph{Classification of {C}untz-{K}rieger algebras up to
  stable isomorphism}, J. Reine Angew. Math. \textbf{598} (2006), 185--210,
  \href {http://dx.doi.org/10.1515/CRELLE.2006.074}
  {\path{doi:10.1515/CRELLE.2006.074}}.

\bibitem[R{\o}r91]{MR1203034}
M.~R{\o}rdam, \emph{Ideals in the multiplier algebra of a stable {$C^*$}-algebra}, J. Operator Theory \textbf{25} (1991), no. 2, 283--298.    


\bibitem[R{\o}r95]{MR1340839}
Mikael R{\o}rdam, \emph{Classification of {C}untz-{K}rieger algebras},
  $K$-Theory \textbf{9} (1995), no.~1, 31--58, \href
  {http://dx.doi.org/10.1007/BF00965458} {\path{doi:10.1007/BF00965458}}.

\bibitem[RS87]{MR894590}
Jonathan Rosenberg and Claude Schochet, \emph{The {K}\"unneth theorem and the
  universal coefficient theorem for {K}asparov's generalized {$K$}-functor},
  Duke Math. J. \textbf{55} (1987), no.~2, 431--474, \href
  {http://dx.doi.org/10.1215/S0012-7094-87-05524-4}
  {\path{doi:10.1215/S0012-7094-87-05524-4}}.

\bibitem[RT13a]{MR3045151}
Efren Ruiz and Mark Tomforde, \emph{Classification of unital simple {L}eavitt
  path algebras of infinite graphs}, J. Algebra \textbf{384} (2013), 45--83,
  \href {http://dx.doi.org/10.1016/j.jalgebra.2013.03.004}
  {\path{doi:10.1016/j.jalgebra.2013.03.004}}.

\bibitem[RT13b]{MR3188556}
\bysame, \emph{Ideal-related {$K$}-theory for {L}eavitt path algebras and graph
  {$C^*$}-algebras}, Indiana Univ. Math. J. \textbf{62} (2013), no.~5,
  1587--1620, \href {http://dx.doi.org/10.1512/iumj.2013.62.5123}
  {\path{doi:10.1512/iumj.2013.62.5123}}.

\bibitem[Slo]{oeis}
N.~J.~A. Sloane, \emph{The on-line encyclopedia of integer sequences}, URL:
  \url{http://oeis.org}.

\bibitem[S{\o}r13]{MR3082546}
Adam P.~W. S{\o}rensen, \emph{Geometric classification of simple graph
  algebras}, Ergodic Theory Dynam. Systems \textbf{33} (2013), no.~4,
  1199--1220, \href {http://dx.doi.org/10.1017/S0143385712000260}
  {\path{doi:10.1017/S0143385712000260}}.

\bibitem[Szy02]{MR1914564}
Wojciech Szyma{\'n}ski, \emph{General {C}untz-{K}rieger uniqueness theorem},
  Internat. J. Math. \textbf{13} (2002), no.~5, 549--555, \href
  {http://dx.doi.org/10.1142/S0129167X0200137X}
  {\path{doi:10.1142/S0129167X0200137X}}.

\bibitem[Tom03]{MR1962131}
Mark Tomforde, \emph{The ordered {$K_0$}-group of a graph {$C^*$}-algebra}, C.
  R. Math. Acad. Sci. Soc. R. Can. \textbf{25} (2003), no.~1, 19--25.

\bibitem[Wea03]{MR2003352}
Nik Weaver, \emph{A prime {$C^*$}-algebra that is not primitive}, J. Funct.
  Anal. \textbf{203} (2003), no.~2, 356--361, \href
  {http://dx.doi.org/10.1016/S0022-1236(03)00196-4}
  {\path{doi:10.1016/S0022-1236(03)00196-4}}.

\end{thebibliography}
\end{document}